% FŽvrier 9 , 2007
\documentclass{amsart}
\usepackage{amsmath, amssymb}
\usepackage[all]{xy}

\usepackage{hyperref}%[colorlinks=true]

%\usepackage[notcite,notref]{showkeys}
%\renewcommand{\thepage}{\arabic{page}\ -- \ \today}

%% The following causes equations to be numbered within  the

%%theorem/lemma/proposition etc. they belong to.Ä

\theoremstyle{plain}
\newtheorem*{thm*}{Theorem}
\newtheorem*{prop*}{Proposition}
\newtheorem*{rem*}{Remark}

\newtheorem{thm}{Theorem}[section]
\newtheorem{cor}[thm]{Corollary}
\newtheorem{defi}[thm]{Definition}
\newtheorem{prop}[thm]{Proposition}
\newtheorem{lm}[thm]{Lemma}

\newtheorem{claim*}{Claim}

\newtheorem{rem}[thm]{Remark}
\newtheorem{nota}[thm]{Notation}
\numberwithin{equation}{thm}

%\newcommand{\g}{\mathrm{gl} }

%\newcommand{\}{\math }
%\newcommand{\}{\math }

%%%%%%%%%%%%%%%%%%%%%%%%%%%%%%%%%%%%%%%%%%%%%%%%%%
\newcommand{\A}{{\mathcal{A}}}
\newcommand{\C}{{\mathcal{C}}}
\newcommand{\D}{{\mathcal{D}}}
\newcommand{\F}{{\mathcal{F}}}
\newcommand{\E}{{\mathcal{E}}}
\newcommand{\FF}{{\mathbb{F}_2}}

\newcommand{\Eq}{{\ensuremath{\mathcal{E}_{q}}}}
\newcommand{\Eqd}{{\mathcal{E}_{q}^{\mathrm{deg}}}}
\newcommand{\Tq}{{\ensuremath{\mathcal{T}_{q}}}}

\newcommand{\Fq}{{\ensuremath{\mathcal{F}_{quad}}}}
\newcommand{\Gq}{{\ensuremath{\mathcal{F}_{iso}}}}

\newcommand{\M}[1]{{\ensuremath{\mathrm{Mix}_{#1}}}}

\newcommand{\m}[1]{{\ensuremath{\mathrm{\Sigma}_{#1}}}}

%%%%%%%%%%%%%%%%%%%%%%%%%%%%%%%%%%%%%%%%%%%%%%%%%%%%%%

\begin{document}

\title[Generic representations of orthogonal groups:  projective functors]{Generic representations of orthogonal groups:  projective functors in the category $\mathcal{F}_{quad}$}

\author{Christine Vespa}

\address{ Ecole Polytechnique F\'ed\'erale de Lausanne, Institut de G\'eom\'etrie, Alg\`ebre et Topologie, Lausanne, Switzerland. }                         
\email{christine.vespa@epfl.ch}    

\date{\today}

\begin{abstract}
In this paper, we continue the study of the category of functors $\Fq$, associated to $\FF$-vector spaces equipped with a nondegenerate quadratic form, initiated in
\cite{math.AT/0606484} and \cite{Vespa2}. We define a filtration of the standard
projective objects in $\Fq$; this refines to give a decomposition into
indecomposable factors of the two first standard projective objects in
$\Fq$: $P_{H_0}$ and $P_{H_1}$. As an application of these two decompositions, we give a complete
description of the polynomial functors of the category $\Fq$.\\
$\  $\\
\textit{Mathematics Subject Classification:} 18A25, 16D90, 20C20.
$\ $\\
\textit{Keywords}: functor categories; quadratic forms over $\FF$;
Mackey functors; representations of orthogonal groups over $\FF$.
\end{abstract}

\maketitle

%\tableofcontents

\section*{Introduction}
In the paper \cite{math.AT/0606484} we defined the category of functors $\Fq$
from a category having as objects the nondegenerate $\FF$-quadratic
spaces  to the category $\E$ of $\mathbb{F}_2$-vector spaces, where $\FF$ is
the field with two elements. The motivation for the construction of
this category is to obtain an analogous framework for the orthogonal
groups over $\FF$, to that which exists for the general linear groups. We
recall that the category $\F$ of
functors from the category $\E^f$ of finite dimensional
$\mathbb{F}_2$-vector spaces to the category $\E$ of all $\mathbb{F}_2$-vector spaces is a very useful tool for the study of the stable cohomology
of the general linear groups with suitable coefficients (see
\cite{FFSS}). Another motivation, in topology, for the study of the category $\F$ is the connection which exists between this category and unstable modules over the Steenrod algebra (see \cite{Sch}). In order to have a good understanding of the category
$\Fq$, we seek to classify its simple objects.
We constructed in \cite{math.AT/0606484} two families of simple objects in
$\Fq$. The first one is obtained by the fully-faithful, exact functor $\iota: \F \rightarrow
\Fq$, defined in \cite{math.AT/0606484}, which preserves simple objects. By \cite{KuhnII}, the simple
objects in $\F$ are in one-to-one correspondence with the irreducible
representations of finite general linear groups over $\FF$. The second family is obtained
by the fully-faithful, exact functor $\kappa: \Gq \rightarrow
\Fq$, which preserves simple objects, where $\Gq$ is
equivalent to the product of the categories of modules over the
orthogonal groups of possibly degenerate quadratic forms. In \cite{Vespa2}, we constructed two families of
simple objects in the category $\Fq$ which are neither in the image of
$\iota$ nor in the image of $\kappa$. These simple objects are
subfunctors of the tensor product between an object in the image of
$\iota$ and an object in the image of $\kappa$. We proved that
these simple objects in $\Fq$ are the composition factors of two
particular mixed functors, defined in \cite{Vespa2}.

The aim of this paper is to begin a programme to obtain a complete
classification of the simple objects in $\Fq$. Accordingly, we seek
to decompose the projective generators of this category into 
indecomposable factors 
and to obtain the simple factors of these indecomposable factors. 
This paper begins the study of the standard projective
objects in the category $\Fq$. Although explicit decompositions of
all the projective generators are not provided in this paper, we give several useful tools, results
and examples for the realization of this programme. Furthermore, we deduce from the results contained in this paper several interesting consequences for the structure of the category $\Fq$.
In work in progress, we obtain a general decomposition of standard projective object $P_H$ of $\Fq$ which is indexed by the subspaces of $H$. Here we present explicit decompositions of the standard projective objects  associated to ``small'' quadratic spaces, since these decompositions play a fundamental r\^{o}le in the category $\Fq$ (for example, for the description of the polynomial functors of $\Fq$). Furthermore, recall that the decompositions of the injective standard $I_{\FF}^{\F}$ of the category $\F$ and thus, by duality, that of the projective standard $P_{\FF}^{\F}$, is fundamental for the comprehension of the other injective standards of $\F$. Hence, the decompositions of the two smaller projective standard of $\Fq$ represent an important step in the understanding of the category $\Fq$.

We briefly summarize the contents of this paper. After some recollections on the category $\Fq$, where we recall the definitions of the isotropic functors and the mixed functors, we define a filtration
of the standard projective objects $P_V$ in $\Fq$: 
$$0 \subset P_V^{(0)} \subset P_V^{(1)} \subset \ldots \subset
P_V^{(\mathrm{dim}(V)-1)} \subset P_V^{(\mathrm{dim}(V))}=P_V.$$
We obtain a general
description of the two extremities of this filtration. 

\begin{thm*}
Let $V$ be a nondegenerate $\FF$-quadratic space.
\begin{enumerate}
\item \label{PV0-1}
There is a natural equivalence: $P_V^{(0)} \simeq \iota(P^{\F}_{\epsilon(V)})$,
where $\iota : \F \rightarrow \Fq$, $\epsilon$ is the functor that forgets the quadratic form and $P^{\F}_{\epsilon(V)}$ is the standard projective object in $\F$ associated to the vector space $\epsilon(V)$.
\item \label{PV0-2}
The functor $P_V^{(0)}$ is a direct summand of $P_V$.
\end{enumerate}
\end{thm*}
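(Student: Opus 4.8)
The plan is to prove the two parts together, by exhibiting the equivalence of part~(\ref{PV0-1}) as the restriction to $P_V^{(0)}$ of a natural transformation defined on all of $P_V$; the splitting in part~(\ref{PV0-2}) then comes for free. I work with the standard projective in the form $P_V=\FF[\mathrm{Hom}_{\Tq}(V,-)]$, so that $\mathrm{End}_{\Fq}(P_V)=P_V(V)=\FF[\mathrm{Hom}_{\Tq}(V,V)]$, and I use that $\iota$ is exact and continuous, hence has a left adjoint $\lambda$ (in the model where $\iota G=G\circ\epsilon$, one takes $\lambda$ to be left Kan extension along $\epsilon$).

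First I would identify $\lambda(P_V)$: for every $G\in\F$ one has, naturally in $G$,
\[
\mathrm{Hom}_{\F}(\lambda(P_V),G)=\mathrm{Hom}_{\Fq}(P_V,\iota G)=(\iota G)(V)=G(\epsilon(V)),
\]
so by Yoneda $\lambda(P_V)\simeq P^{\F}_{\epsilon(V)}$. Together with part~(\ref{PV0-1}) this turns the unit of the adjunction into a natural transformation $\alpha:=\eta_{P_V}\colon P_V\to\iota\lambda(P_V)\simeq\iota(P^{\F}_{\epsilon(V)})\simeq P_V^{(0)}$. Evaluated on a nondegenerate quadratic space $W$ it is the $\FF$-linear map $\FF[\mathrm{Hom}_{\Tq}(V,W)]\to\FF[\mathrm{Hom}_{\FF}(V,W)]$ that records the underlying linear map — the ``linear shadow'' — of a morphism of $\Tq$. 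I would write $\alpha$ out in these elementary terms, so as not to depend on the categorical machinery, and check that it is natural (functoriality of the linear-shadow assignment) and surjective (every linear map $V\to W$ is the shadow of some, indeed of a most degenerate, morphism of $\Tq$).

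The crucial step is to show that $\alpha$ restricts to an isomorphism $P_V^{(0)}\xrightarrow{\ \sim\ }\iota(P^{\F}_{\epsilon(V)})$. This is where the definition of the filtration enters: one identifies $P_V^{(0)}(W)$ with the span of the most degenerate (``rank~$0$'') morphisms $V\to W$ in $\Tq$ and shows that the linear-shadow map sets up a bijection between those morphisms and $\mathrm{Hom}_{\FF}(V,W)$, compatibly with the $\Tq$- and $\E^f$-actions. (If one prefers to treat part~(\ref{PV0-1}) as a black box, the same computation reappears as the statement that the equivalence there is $\alpha|_{P_V^{(0)}}$.) Granting this, part~(\ref{PV0-2}) is immediate: writing $j\colon P_V^{(0)}\hookrightarrow P_V$ for the inclusion and $\beta$ for the inverse of $\alpha|_{P_V^{(0)}}$, the endomorphism $r:=\beta\circ\alpha\colon P_V\to P_V^{(0)}$ satisfies $r\circ j=\mathrm{id}_{P_V^{(0)}}$, so $P_V\simeq P_V^{(0)}\oplus\ker r$; equivalently $j\circ r\in\FF[\mathrm{Hom}_{\Tq}(V,V)]$ is an idempotent cutting out $P_V^{(0)}$.

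The main obstacle is precisely that middle step — controlling the bottom stratum of $\mathrm{Hom}_{\Tq}(V,W)$ and seeing that it is ``purely linear''. It is a combinatorial analysis of morphisms in $\Tq$ and of their composition, carried out in characteristic~$2$, where the splitting idempotent $j\circ r$ cannot be produced by averaging over the orthogonal group $O(V)$ and must be extracted by hand from the stratification of the endomorphism monoid $\mathrm{Hom}_{\Tq}(V,V)$. I would organise the verification by evaluating on a test space equipped with a well-chosen basis and checking the assertions on generators, reducing the remaining naturality to bookkeeping about composition in $\Tq$.
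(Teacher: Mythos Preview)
Your plan is structurally the same as the paper's: the map $\alpha$ you produce via the adjunction is exactly the map the paper writes down directly from the forgetful functor $\epsilon$, and the splitting in part~(\ref{PV0-2}) is obtained in both cases by observing that $\alpha|_{P_V^{(0)}}$ is an isomorphism, so that the corresponding idempotent in $\FF[\mathrm{Hom}_{\Tq}(V,V)]$ cuts out $P_V^{(0)}$. The adjunction packaging is a nice way to see where $\alpha$ comes from, but it does not add content beyond what the paper does.

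Where your proposal falls short is in the ``crucial step'', which you correctly flag but then mischaracterise. Showing that $\alpha|_{P_V^{(0)}}$ is a bijection splits into surjectivity and injectivity, and neither is pure bookkeeping. Surjectivity requires constructing, for an arbitrary linear map $f\colon\epsilon(V)\to\epsilon(W)$, a rank-zero morphism in $\Tq$ with linear shadow $f$; the paper does this by an explicit inductive construction, adjoining enough hyperbolic summands to absorb all the failures of $f$ to preserve $q$ and $B$ while keeping the projection to the complement injective. More seriously, injectivity is \emph{not} a combinatorial check on generators: given two rank-zero morphisms $[V\xrightarrow{\alpha}W\bot W'\hookleftarrow W]$ and $[V\xrightarrow{\alpha'}W\bot W''\hookleftarrow W]$ with the same shadow, one must produce an isometry of the ambient space carrying one diagram to the other. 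The paper invokes Witt's extension theorem for possibly degenerate subspaces (their Theorem~\ref{2.12}) to do this, and some such transitivity statement for the orthogonal group is unavoidable here. Your description (``evaluating on a test space \ldots\ and checking the assertions on generators'') does not suggest you have this ingredient in hand; without it the argument does not close.
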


\begin{prop*} 
Let $V$ be a nondegenerate $\FF$-quadratic space, we have a natural equivalence:
$$P_V / P_V^{(\mathrm{dim}(V)-1)} \simeq \kappa(\mathrm{iso}_V)$$
where $\kappa: \Gq \rightarrow \Fq$ and $\mathrm{iso}_V$ is an isotropic functor in $\Gq$.
\end{prop*}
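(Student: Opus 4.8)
The plan is to produce a canonical natural epimorphism $\theta\colon P_V\twoheadrightarrow\kappa(\mathrm{iso}_V)$ and to show that its kernel is exactly $P_V^{(\mathrm{dim}(V)-1)}$; then $\theta$ descends to the asserted natural equivalence. Since $P_V$ is a standard projective object of $\Fq$, Yoneda's lemma gives a natural isomorphism $\Hom_{\Fq}(P_V,F)\cong F(V)$ for every $F\in\Fq$, under which a transformation $\psi$ corresponds to the image of the canonical generator of $P_V(V)$ under $\psi_V$. As recalled above, $\mathrm{iso}_V$ is the isotropic functor of $\Gq$ attached to $V$ and $\kappa(\mathrm{iso}_V)$ is computed explicitly; in particular $\kappa(\mathrm{iso}_V)(V)$ carries a distinguished element $x_V$, coming from $\mathrm{id}_V$. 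Let $\theta$ be the transformation corresponding to $x_V$: on an object $W$, $\theta_W$ sends the basis element $[f]$ of $P_V(W)$ attached to a morphism $f\colon V\to W$ to $\kappa(\mathrm{iso}_V)(f)(x_V)$, i.e.\ to the ``isometric part'' of $f$ selected by the structure maps of $\kappa(\mathrm{iso}_V)$.

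The transformation $\theta$ is an epimorphism, which may be checked on each object $W$: every isometric embedding $j\colon V\hookrightarrow W$ is a morphism of the source category of $\Fq$, and $\theta_W([j])$ is the corresponding basis element of $\kappa(\mathrm{iso}_V)(W)$, so the images $\theta_W([f])$ already span $\kappa(\mathrm{iso}_V)(W)$. To identify $\ker\theta$, recall that $P_V^{(i)}(W)$ is the subspace of $P_V(W)$ spanned by the morphisms $f\colon V\to W$ whose stratifying invariant is at most $i$. Combining this description with the explicit structure maps of $\kappa(\mathrm{iso}_V)$, one shows that $\theta_W([f])=0$ exactly when the invariant of $f$ is strictly less than $\mathrm{dim}(V)$; this yields at once the inclusion $P_V^{(\mathrm{dim}(V)-1)}\subseteq\ker\theta$ and the injectivity of the induced map $\bar\theta\colon P_V/P_V^{(\mathrm{dim}(V)-1)}\to\kappa(\mathrm{iso}_V)$ on the canonical basis of the source, which is indexed by the top stratum of each $\Hom(V,W)$. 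Being also surjective and bijective on bases, $\bar\theta$ is an isomorphism, which is the proposition. (Alternatively, if $\kappa$ admits an exact adjoint, $\theta$ can be obtained by adjunction; the Yoneda description is used here because it makes the kernel computation transparent.)

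The main obstacle is precisely this kernel computation, i.e.\ the equivalence $\theta_W([f])=0\iff f\in P_V^{(\mathrm{dim}(V)-1)}(W)$. Proving it requires a good enough grip on composition in the source category of $\Fq$ to control the behaviour of the stratifying invariant under post-composition, together with the precise form of the structure maps of $\kappa(\mathrm{iso}_V)$: concretely, one must verify that evaluating $\kappa(\mathrm{iso}_V)(f)$ at $x_V$ extracts exactly the nondegenerate isometric part of $f$ and returns $0$ as soon as that part has dimension $<\mathrm{dim}(V)$. The remaining ingredients — Yoneda, projectivity of $P_V$, and the exactness and full faithfulness of $\kappa$ recalled in the introduction — are formal.
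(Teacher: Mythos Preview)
Your approach is exactly the paper's: construct $\theta$ via Yoneda from the distinguished element of $\kappa(\mathrm{iso}_V)(V)$, check surjectivity on embeddings $[V\xrightarrow{j}W\xleftarrow{\mathrm{Id}}W]$, and identify $\ker\theta$ with $P_V^{(\dim V-1)}$ by a basis argument on the top stratum.

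One point in your write-up needs sharpening. From ``$\theta_W([f])\ne 0$ for each top-rank $f$'' you cannot conclude that $\bar\theta$ is injective; this only says no single basis vector lies in the kernel. You still need that \emph{distinct} top-rank morphisms map to \emph{distinct} basis elements of $\kappa(\mathrm{iso}_V)(W)$. The paper isolates this as a separate lemma: if $T=[V\xrightarrow{g}W\bot W'\xleftarrow{i_W}W]$ has rank $\dim V$, then the pullback being all of $V$ forces $g(V)\subset W$, so $T=[V\xrightarrow{f}W\xleftarrow{\mathrm{Id}}W]$ for a unique $f\in\Hom_{\Eq}(V,W)$. Thus the top stratum of $\Hom_{\Tq}(V,W)$ is canonically in bijection with $\Hom_{\Eq}(V,W)$, and under this identification $\bar\theta_W$ sends $[V\xrightarrow{f}W\xleftarrow{\mathrm{Id}}W]$ to $[V\xleftarrow{\mathrm{Id}}V\xrightarrow{f}W]$, visibly a bijection of bases. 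This is the concrete content behind your phrase ``extracts exactly the nondegenerate isometric part of $f$''; once stated, the rest of your outline goes through verbatim.
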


An important consequence of the Theorem concerning  the functor $P_V^{(0)}$ is given in following result.

\begin{thm*} 
The category $\iota(\F)$ is a thick subcategory of  $\Fq$.
\end{thm*}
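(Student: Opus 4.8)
First, recall that a full subcategory $\mathcal{C}$ of an abelian category $\mathcal{A}$ is \emph{thick} if it is closed under subobjects, quotients, and extensions. Since $\iota$ is fully faithful and exact, $\iota(\F)$ is an abelian subcategory of $\Fq$, so the only issue is closure under these three operations computed in $\Fq$. The plan is to reduce everything to the projective generators. Since $\F$ has enough projectives and $\iota$ is exact, every object of $\iota(\F)$ is a quotient of a (possibly infinite) direct sum of objects $\iota(P^{\F}_{W})$ for finite-dimensional $W$; and by part (1) of the Theorem, $\iota(P^{\F}_{\epsilon(V)}) \simeq P_V^{(0)}$, which by part (2) is a \emph{direct summand} of the projective generator $P_V$ of $\Fq$. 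Hence every object of $\iota(\F)$ is already projective-presented inside $\Fq$ in a controlled way.

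Next I would establish closure under subobjects and quotients. Let $F$ be an object of $\iota(\F)$, say $F = \iota(G)$, and let $F' \hookrightarrow F$ be a subfunctor in $\Fq$. I want to show $F'$ lies in $\iota(\F)$. The key point is to characterize the essential image of $\iota$ intrinsically: a functor $X$ in $\Fq$ lies in $\iota(\F)$ if and only if $X$ is ``supported on the nondegenerate part'' in the appropriate sense — concretely, because $\iota$ is defined via the forgetful/linearization adjunction between $\Fq$ and $\F$, one expects $\iota$ to be right adjoint (or left adjoint) to some exact functor $\rho: \Fq \to \F$, with $\iota(\F)$ being exactly the objects on which the (co)unit is an isomorphism. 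If such an exact adjoint $\rho$ exists with $\rho\iota \simeq \mathrm{id}$, then for $F = \iota(G)$ and $F' \subset F$ we get $F' \hookrightarrow \iota(G)$, apply $\rho$ to get $\rho(F') \hookrightarrow G$, and then compare $\iota\rho(F')$ with $F'$: exactness of $\iota$ and $\rho$ plus the fact that the inclusion $\iota\rho(F') \to \iota(G)=F$ factors through $F'$ should force $\iota\rho(F') \simeq F'$. The same argument with cokernels handles quotients. So the crux is to exhibit the exact (co)adjoint $\rho$ and the isomorphism $\rho\iota \simeq \mathrm{id}$; this is where the detailed construction of $\iota$ from \cite{math.AT/0606484} enters, and it is the main obstacle — one must check $\rho$ is exact, not merely additive.

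Finally, for closure under extensions, suppose $0 \to \iota(G_1) \to X \to \iota(G_2) \to 0$ is exact in $\Fq$. Applying the exact functor $\rho$ gives $0 \to G_1 \to \rho(X) \to G_2 \to 0$ in $\F$ (using $\rho\iota \simeq \mathrm{id}$), and applying $\iota$ (exact) gives $0 \to \iota(G_1) \to \iota\rho(X) \to \iota(G_2) \to 0$; the unit (or counit) $X \to \iota\rho(X)$ (resp.\ $\iota\rho(X) \to X$) is then a map of short exact sequences which is the identity on the two outer terms, hence an isomorphism on $X$ by the five lemma. An alternative, more self-contained route avoiding an abstract adjoint: use part (2) of the Theorem to note $\iota(P^{\F}_W)$ is a direct summand of $P_W$-type projectives in $\Fq$, deduce that $\Ext^1_{\Fq}(\iota(G_2), \iota(G_1))$ can be computed from a projective resolution of $\iota(G_2)$ by summands of standard projectives, and then identify this $\Ext$-group with $\Ext^1_{\F}(G_2, G_1)$ — showing every extension in $\Fq$ comes from one in $\F$, hence lies in $\iota(\F)$. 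I expect the adjoint-functor approach to be cleanest, with the verification that $\rho$ is exact (equivalently, that $\iota$ preserves injectives \emph{and} projectives, or that the relevant Kan extension is exact) being the single nontrivial step; everything else is then a formal diagram chase with the five lemma.
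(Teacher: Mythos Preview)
Your proposal has a genuine gap in the main approach: you hinge the entire argument on the existence of an \emph{exact} adjoint $\rho$ to $\iota$ with $\rho\iota\simeq\mathrm{id}$, explicitly flag this as ``the single nontrivial step,'' and then do not carry it out. Precomposition by $\epsilon$ certainly has both Kan-extension adjoints, but neither is exact for free; left Kan extension is only right exact and right Kan extension only left exact. Even granting an exact $\rho$, your subobject argument is incomplete: from a counit map $\iota\rho(F')\to F'$ sitting under $\iota\rho(F)\xrightarrow{\sim}F$, exactness alone does not force the map $\iota\rho(F')\to F'$ to be an isomorphism---that is exactly the statement that $F'$ lies in the essential image, which is what you are trying to prove.

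The paper avoids this entirely. For closure under subobjects and quotients it uses a direct, elementary fact (Proposition~\ref{2.20}): if $F:\C\to\D$ is full and essentially surjective, then the image of the precomposition functor $-\circ F$ is closed under subobjects and quotients. One checks that (after replacing $\E^f$ by its full subcategory of even-dimensional spaces, which does not change $\F$) the forgetful functor $\epsilon$ is full and essentially surjective, and the result follows with no adjoints and no exactness hypotheses. For extensions, the paper's argument is close in spirit to your ``alternative route'': given $0\to\iota(G^{\F})\to F\to\iota(H^{\F})\to 0$, take projective presentations of $G^{\F}$ and $H^{\F}$ in $\F$, apply $\iota$, and use the horseshoe lemma to present $F$. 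The key input is Lemma~\ref{2.18}, the isomorphism $\mathrm{Hom}_{\F}(P^{\F}_{\epsilon(V)},P^{\F}_{\epsilon(W)})\xrightarrow{\sim}\mathrm{Hom}_{\Fq}(P_V^{(0)},P_W^{(0)})$, which comes from the Theorem on $P_V^{(0)}$; this forces the differential in the presentation of $F$ to come from $\F$, so $F\cong\iota(\mathrm{Coker}(f))$. Thus the paper uses the splitting $P_V^{(0)}\mid P_V$ not to build an exact adjoint but to identify Hom-sets between the relevant projectives---a more concrete and shorter path than the adjoint-functor framework you propose.
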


Then by an explicit study of the filtration of the functors $P_{H_0}$ and $P_{H_1}$  we obtain the following fundamental decompositions of these two standard projective functors.

\begin{thm*} 
\begin{enumerate}
\item
The standard projective object $P_{H_0}$ admits the following decomposition:
$$P_{H_0}=\iota(P^{\F}_{\FF^{\oplus 2}}) \oplus
(\M{0,1}^{\oplus 2} \oplus \M{1,1}) \oplus \kappa(iso_{H_0})$$
where $\M{0,1}$ and $\M{1,1}$ are two mixed functors and
$iso_{H_0}$ is an isotropic functor.
\item
The standard projective object $P_{H_1}$ admits the following decomposition:
$$P_{H_1}=\iota(P^{\F}_{\FF^{\oplus 2}}) \oplus
\M{1,1}^{\oplus 3}  \oplus \kappa(iso_{H_1})$$
where $\M{1,1}$ is a mixed functor and
$iso_{H_1}$ is an isotropic functor.
\end{enumerate}
\end{thm*}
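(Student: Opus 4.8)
The plan is to read the three-step filtration $0 \subset P_{H_\epsilon}^{(0)} \subset P_{H_\epsilon}^{(1)} \subset P_{H_\epsilon}^{(2)} = P_{H_\epsilon}$ (valid since $\dim H_\epsilon = 2$) off the two general results already established, and to fill in its middle layer by a direct computation. The Theorem identifies $P_{H_\epsilon}^{(0)} \simeq \iota(P^{\F}_{\FF^{\oplus 2}})$ and exhibits a retraction $r \colon P_{H_\epsilon} \to P_{H_\epsilon}^{(0)}$, while the Proposition gives $P_{H_\epsilon}/P_{H_\epsilon}^{(1)} \simeq \kappa(iso_{H_\epsilon})$. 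Restricting $r$ to $P_{H_\epsilon}^{(1)}$ splits the inclusion $P_{H_\epsilon}^{(0)} \hookrightarrow P_{H_\epsilon}^{(1)}$, so that $P_{H_\epsilon}^{(1)} \simeq \iota(P^{\F}_{\FF^{\oplus 2}}) \oplus \big( P_{H_\epsilon}^{(1)}/P_{H_\epsilon}^{(0)} \big)$ for free. Only two points then remain: (i) to identify the layer $P_{H_\epsilon}^{(1)}/P_{H_\epsilon}^{(0)}$ with the stated sum of mixed functors, and (ii) to split the extension $0 \to P_{H_\epsilon}^{(1)} \to P_{H_\epsilon} \to \kappa(iso_{H_\epsilon}) \to 0$.

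For (i) I would unwind the construction of the filtration $(P_V^{(k)})_k$. After linearizing the relevant morphism sets, the layer $P_V^{(k)}/P_V^{(k-1)}$ should decompose as a direct sum indexed by the $k$-dimensional subspaces $W$ of $V$, the summand attached to $W$ being a mixed-type functor built from an $\iota$-part depending on $V/W$ and a $\kappa$-part depending on the quadratic space $(W, q|_W)$; by functoriality under $\mathrm{O}(V)$ and base change along isometries this summand depends only on the isometry class of $(W, q|_W)$. Taking $k = 1$ and matching with the mixed functors of \cite{Vespa2} (whose second index is the dimension of the line and whose first index records its isometry type), an isotropic line of $V$ contributes a copy of $\M{0,1}$ and an anisotropic line a copy of $\M{1,1}$; since the filtration indices $0$ and $1$ are consecutive, $P_{H_\epsilon}^{(1)}/P_{H_\epsilon}^{(0)}$ is exactly this direct sum. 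It remains to count lines: the hyperbolic plane $H_0$ ($q(x,y) = xy$) has two isotropic lines and one anisotropic line, giving $\M{0,1}^{\oplus 2} \oplus \M{1,1}$, whereas $H_1$ ($q(x,y) = x^2 + xy + y^2$) has no isotropic line and three anisotropic lines, giving $\M{1,1}^{\oplus 3}$.

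For (ii) I would prove the vanishing $\Ext^1_{\Fq}\big( \kappa(iso_{H_\epsilon}), F \big) = 0$ for $F$ in the image of $\iota$ and for $F = \M{\alpha,1}$; since $P_{H_\epsilon}^{(1)}$ is, by (i), a direct sum of such functors, the extension $0 \to P_{H_\epsilon}^{(1)} \to P_{H_\epsilon} \to \kappa(iso_{H_\epsilon}) \to 0$ then splits and the claimed decompositions follow. This $\Ext^1$-orthogonality between the three families ($\iota$-type, mixed, $\kappa$-type) can be handled either through composition factors --- they share none, and no nontrivial extension arises in these low degrees --- or via the adjoint functors of $\iota$ and $\kappa$; equivalently, one may exhibit a complete set of orthogonal idempotents in the finite-dimensional algebra $\End_{\Fq}(P_{H_\epsilon}) \cong P_{H_\epsilon}(H_\epsilon)$ and read the summands off the resulting idempotent decomposition. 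This last splitting is the main obstacle: the full filtration of $P_V$ is not split for larger $V$, so the argument must genuinely exploit $\dim H_\epsilon = 2$. A secondary point needing care is the precise identification in (i) of the graded layers with the functors $\M{0,1}$ and $\M{1,1}$ of \cite{Vespa2} rather than with some other mixed functors.
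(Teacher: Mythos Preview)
Your architecture matches the paper's and your line count is correct. The genuine gap is step (ii). None of your three routes to splitting $0 \to P_{H_\epsilon}^{(1)} \to P_{H_\epsilon} \to \kappa(iso_{H_\epsilon}) \to 0$ works as stated: disjoint composition factors do \emph{not} imply $\Ext^1 = 0$ (this is simply false in general, and nothing rules it out here a priori); the adjoint-functor suggestion is never made precise and would require computing the adjoint on the infinite functors $\M{\alpha,1}$; and the idempotent route, while correct in principle, is not carried out --- producing those idempotents is precisely the work to be done. Note also that the projectivity of $\kappa(iso_{H_\epsilon})$ in $\Fq$, which would trivialize your $\Ext^1$ claim, is deduced in the paper as a \emph{consequence} of the decomposition (Proposition~3.27), so it is not available as an input.

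The paper instead builds the section by hand. It first establishes composition rules (Lemma~3.9) describing how an arbitrary $T \in \Hom_{\Tq}(V,W)$ acts on the rank-$0$ generators $t_f$, the three families of rank-$1$ generators $A_{v,w}, B_{v,w}, C_{v,w}$ (one type per line of $H_0$), and the rank-$2$ generators $D_f$. It then sets
\[
s_V([D_f]) \;=\; [D_f] + [A_{f(a_0),f(b_0)}] + [B_{f(a_0),f(b_0)}] + [C_{f(a_0),f(b_0)}]
\]
and checks naturality case by case: in each of the cases where $T \circ D_f$ drops rank, the four terms cancel in pairs over $\FF$ (Proposition~3.10). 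This finite check is exactly where $\dim H_\epsilon = 2$ enters. The same composition rules are also what make your step (i) honest: they are needed to show that each $F_A, F_B, F_C$ is genuinely a subfunctor of the quotient (Lemma~3.11), and to verify the explicit natural isomorphisms with the mixed functors, e.g.\ $[A_{v,w}] \mapsto [(w,v+w)] \in \M{0,1}(V)$ (Lemma~3.12). Your abstract description of the rank-$1$ layer as ``an $\iota$-part times a $\kappa$-part'' is a reasonable heuristic but does not substitute for these verifications.
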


These decompositions have several interesting consequences. Firstly, thanks to this theorem we can complete the study of the functors  \M{0,1} and \M{1,1} started in \cite{Vespa2} by the following result.

\begin{prop*}
The functors \M{0,1} and \M{1,1} are indecomposable.
\end{prop*}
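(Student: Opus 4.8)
The plan is to deduce the indecomposability of $\M{0,1}$ and $\M{1,1}$ from the two explicit decompositions of $P_{H_0}$ and $P_{H_1}$ given in the previous theorem, using a Krull–Schmidt argument together with the already-established partial information about these mixed functors from \cite{Vespa2}. First I would recall that both $P_{H_0}$ and $P_{H_1}$ are objects of finite length in $\Fq$ (or, at least, that they admit Krull–Schmidt-type uniqueness for the relevant direct sum decompositions, since they are projective and noetherian of finite type), so that any two decompositions into indecomposables coincide up to permutation and isomorphism. In particular the summand $\M{0,1}^{\oplus 2} \oplus \M{1,1}$ of $P_{H_0}$ and the summand $\M{1,1}^{\oplus 3}$ of $P_{H_1}$ each have a well-defined decomposition into indecomposables, and it suffices to show that $\M{0,1}$ and $\M{1,1}$ are themselves indecomposable.

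For $\M{1,1}$, the cleanest route is to compare the two decompositions: $P_{H_0}$ contributes one copy of $\M{1,1}$ and $P_{H_1}$ contributes three copies of the same functor $\M{1,1}$. If $\M{1,1}$ were to split as $A \oplus B$ with $A, B \neq 0$, then the decomposition of $P_{H_1}$ would contain $A^{\oplus 3} \oplus B^{\oplus 3}$ as a direct summand; I would then use the structural results on $\M{1,1}$ from \cite{Vespa2} — namely the knowledge of its composition factors, or of $\End(\M{1,1})$, or of its socle/cosocle — to rule this out. Concretely, the expected mechanism is that one computes $\Hom_{\Fq}(P_V, \M{1,1})$ for the relevant ``small'' $V$ by Yoneda (it is just the evaluation $\M{1,1}(V)$), and reads off from the known value of this evaluation, combined with the projectivity of $\M{1,1}$ as a summand of a standard projective, that $\End_{\Fq}(\M{1,1})$ is local — equivalently has no nontrivial idempotents. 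The same argument applies verbatim to $\M{0,1}$ using its appearance in $P_{H_0}$.

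Alternatively, and perhaps more in the spirit of the paper, I would argue via the filtration: since $\M{0,1}$ and $\M{1,1}$ are, by the results recalled from \cite{Vespa2}, subfunctors of a tensor product $\iota(-) \otimes \kappa(-)$ with an explicitly described (short) composition series, one can pin down the head and the socle of each, check that the head is simple, and conclude indecomposability from the fact that a functor with simple head (cosocle) has no nontrivial direct sum decomposition. The key input here is the list of composition factors of $\M{0,1}$ and $\M{1,1}$ established in \cite{Vespa2}, together with the fact — furnished by the decomposition theorem above, since these functors are direct summands of projectives — that they are projective objects in $\Fq$ and hence that the top of the filtration coincides with a well-understood projective cover datum.

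The main obstacle I anticipate is the Krull–Schmidt bookkeeping: strictly speaking one needs to know that the decompositions of $P_{H_0}$ and $P_{H_1}$ are decompositions \emph{into indecomposables up to the ambiguity inside the mixed part}, i.e. that $\iota(P^{\F}_{\FF^{\oplus 2}})$ and $\kappa(iso_{H_i})$ do not further decompose in a way that would produce extra copies of $\M{0,1}$ or $\M{1,1}$. This is handled by observing that $\iota$ and $\kappa$ are full and faithful and that $P^{\F}_{\FF^{\oplus 2}}$, $iso_{H_i}$ have known decompositions in $\F$ and $\Gq$ respectively whose factors have composition factors disjoint from those of the mixed functors — so no cancellation or redistribution between the three blocks can occur. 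Once that separation is in place, the indecomposability of $\M{0,1}$ and $\M{1,1}$ follows formally.
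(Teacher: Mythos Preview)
Your second paragraph lands on exactly the mechanism the paper uses: compute $\Hom_{\Fq}(P_{H_0},\M{\alpha,1})=\M{\alpha,1}(H_0)$ by Yoneda, feed in the decomposition of $P_{H_0}$, and read off $\End_{\Fq}(\M{\alpha,1})$. Concretely, $\dim\M{0,1}(H_0)=4$ and $\M{0,1}$ occurs in $P_{H_0}$ with multiplicity~$2$, so (once one checks the other summands contribute nothing, exactly the ``separation'' you flag) $\dim\End(\M{0,1})=2$; the paper then exhibits the basis $\{\mathrm{Id},\tau\}$ with $\tau([(u,v)])=[(v,u)]$ and checks by hand that this $4$-element ring has no nontrivial idempotent. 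For $\M{1,1}$ one has $\dim\M{1,1}(H_0)=2$ and multiplicity~$1$ in $P_{H_0}$, and the same game finishes. So the actual proof is the short direct computation you sketch, not a Krull--Schmidt comparison.

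Two genuine problems with the surrounding scaffolding. First, your opening claim that $P_{H_0}$ and $P_{H_1}$ have finite length is false: Theorem~\ref{1.9}(1) says the mixed functors $\M{\alpha,1}$ are \emph{infinite}, so the standard projectives containing them are not finite, and you cannot invoke Krull--Schmidt in that form. (Nor is a ``noetherian of finite type'' substitute available here without further work.) Second, your alternative ``simple head'' route is circular: from \cite{Vespa2} we only know the short exact sequence $0\to\Sigma_{\alpha,1}\to\M{\alpha,1}\to\Sigma_{\alpha,1}\to0$ with $\Sigma_{\alpha,1}$ uniserial; whether this sequence splits is exactly equivalent to whether $\M{\alpha,1}$ is decomposable, so you cannot assert that the head of $\M{\alpha,1}$ is simple without first proving the proposition. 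Indeed, the paper records the non-splitting of this sequence only as a \emph{corollary} of the indecomposability.
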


We want to emphasize that the complete structure of the direct summands of the decompositions of $P_{H_0}$ and $P_{H_1}$ is understood. The structure of the isotropic functors is given in \cite{math.AT/0606484}, those of the mixed functors \M{0,1} and \M{1,1}  is the main result of \cite{Vespa2} and is completed by the previous proposition and those of $P^{\F}_{\FF^{\oplus 2}}$ follows from \cite{KuhnII}.
Then, these decompositions give rise to a classification of the simple
functors $S$ of $\Fq$ such that $S(H_0) \ne 0$ or $S(H_1) \ne 0$. 

\begin{prop*}
The isomorphism classes of non-constant simple functors of $\Fq$ such
that either  $S(H_0) \ne 0$ or  $S(H_1) \ne 0$ are:
$$\iota(\Lambda^1),\ \iota(\Lambda^2),\ \iota(S_{(2,1)}),\
\kappa(iso_{(x,0)}),\ \kappa(iso_{(x,1)}),\ R_{H_0},\ R_{H_1},\ S_{H_1}$$
where $ R_{H_0},\ R_{H_1}$ and $ S_{H_1}$ are the simple functors
introduced in Corollary \ref{1.7}.
\end{prop*}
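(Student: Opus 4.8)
The plan is to exploit the fact that each such simple functor is a quotient of one of the projective generators $P_{H_0}$ or $P_{H_1}$, and then to read off the simple composition factors from the explicit decompositions of these two projectives provided in the preceding theorem. Concretely, the key general principle is: since $\{P_V\}$ (for $V$ a nondegenerate quadratic space) is a set of projective generators of $\Fq$, a simple functor $S$ satisfies $S(V) \neq 0$ if and only if $\Hom_{\Fq}(P_V, S) \neq 0$ if and only if $S$ is a quotient of $P_V$, hence a composition factor of $P_V$. So the first step is to observe that the list of simple functors with $S(H_0) \neq 0$ or $S(H_1) \neq 0$ coincides with the set of composition factors of $P_{H_0} \oplus P_{H_1}$.

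Next I would analyze the composition factors summand-by-summand using the decompositions
$$P_{H_0}=\iota(P^{\F}_{\FF^{\oplus 2}}) \oplus \M{0,1}^{\oplus 2} \oplus \M{1,1} \oplus \kappa(iso_{H_0}), \qquad P_{H_1}=\iota(P^{\F}_{\FF^{\oplus 2}}) \oplus \M{1,1}^{\oplus 3} \oplus \kappa(iso_{H_1}).$$
For the summand $\iota(P^{\F}_{\FF^{\oplus 2}})$: since $\iota$ is exact and fully faithful and preserves simple objects, its composition factors are the images under $\iota$ of the composition factors of $P^{\F}_{\FF^{\oplus 2}}$ in $\F$; by \cite{KuhnII} (and the standard structure of $P^{\F}_{\FF^{\oplus 2}}$), the non-constant such factors are exactly $\Lambda^1$, $\Lambda^2$ and $S_{(2,1)}$, giving $\iota(\Lambda^1),\ \iota(\Lambda^2),\ \iota(S_{(2,1)})$. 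For the summands $\kappa(iso_{H_0})$ and $\kappa(iso_{H_1})$: again $\kappa$ is exact, fully faithful and preserves simples, and the composition factors of the isotropic functors $iso_{H_0}$, $iso_{H_1}$ in $\Gq$ are known from \cite{math.AT/0606484}; these contribute $\kappa(iso_{(x,0)})$ and $\kappa(iso_{(x,1)})$ (together with constant functors, which are excluded from the statement). For the mixed summands $\M{0,1}$ and $\M{1,1}$: the main result of \cite{Vespa2} identifies their composition factors, which — beyond factors already accounted for by the $\iota$- and $\kappa$-parts — are precisely the new simple functors $R_{H_0}$, $R_{H_1}$ and $S_{H_1}$ introduced in Corollary \ref{1.7}.

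The final step is to check that the union of these lists is exactly the list in the statement, with no repetitions and no omissions, and that each listed functor genuinely does not vanish on $H_0$ or $H_1$ (the "only if" direction of the correspondence above already guarantees every composition factor of $P_{H_0}\oplus P_{H_1}$ is nonzero on $H_0$ or $H_1$, so this is automatic). One also needs to discard the constant (simple) functors from the composition series, which is why the statement restricts to non-constant functors. The main subtlety — and the step I expect to require the most care — is bookkeeping the overlaps: for instance one must verify that no mixed functor $\M{i,1}$ contains an extra copy of some $\iota(\Lambda^j)$ or $\kappa(iso)$ beyond what is already listed, and that $R_{H_0}$, $R_{H_1}$, $S_{H_1}$ are pairwise non-isomorphic and distinct from the $\iota$- and $\kappa$-images; all of this is controlled by the cited structure results for the mixed and isotropic functors, but it must be assembled carefully. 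I do not anticipate any genuinely new argument being needed here — the proposition is essentially an application of the decomposition theorem combined with the known internal structure of its summands.
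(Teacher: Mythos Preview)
There is a genuine gap in your argument: you conflate ``simple quotient of $P_V$'' with ``composition factor of $P_V$''. Your Yoneda argument correctly shows that $S(H_\epsilon)\neq 0$ if and only if $S$ is a \emph{simple quotient} of $P_{H_\epsilon}$; the further step ``hence a composition factor'' is only one implication, and the converse fails here. Indeed, by Theorem~\ref{1.9} each mixed functor $\M{\alpha,1}$ has infinitely many composition factors $L^n_\alpha$ (for $n\geq 1$), and for $n\geq 3$ these vanish on both $H_0$ and $H_1$ since $L^n_\alpha$ sits inside $\iota(\Lambda^n)\otimes\kappa(iso_{(x,\alpha)})$ and $\Lambda^n(\FF^{\oplus 2})=0$. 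So your assertion that ``every composition factor of $P_{H_0}\oplus P_{H_1}$ is nonzero on $H_0$ or $H_1$'' is false, and listing composition factors would give infinitely many simples, not the eight in the statement. The fix, which is what the paper does, is to observe that each indecomposable summand of $P_{H_\epsilon}$ is an indecomposable projective and therefore has a \emph{unique} simple head; one then reads off these heads rather than all composition factors.

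You have also swapped the attributions of the simples. By Corollary~\ref{1.7}, the composition factors (and heads) of $\kappa(iso_{H_0})$ and $\kappa(iso_{H_1})$ are $R_{H_0}$, $R_{H_1}$, $S_{H_1}$, not $\kappa(iso_{(x,\alpha)})$. Conversely, by Theorem~\ref{1.9} the head of $\M{\alpha,1}$ is $\kappa(iso_{(x,\alpha)})$, so it is the mixed summands that contribute $\kappa(iso_{(x,0)})$ and $\kappa(iso_{(x,1)})$, not the isotropic ones. Once you correct both points, the argument goes through exactly as in the paper: the simple heads of the indecomposable summands of $P_{H_0}\oplus P_{H_1}$ are precisely the eight listed functors (together with the constant functor, which is excluded).
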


These decompositions also allow us to derive some homological calculations in the category $\Fq$.

\begin{prop*}
For $n$ a natural number, we have:
$$\mathrm{Ext}^n_{\Fq}(R_{H_0},R_{H_0}) \simeq \FF \mathrm{\quad and
  \quad} \mathrm{Ext}^n_{\Fq}(R_{H_1},R_{H_1}) \simeq \FF $$
where $R_{H_0}$ and $R_{H_1}$ are the simple functors introduced in
Corollary \ref{1.7}.
\end{prop*}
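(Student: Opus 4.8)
The plan is to build, for each of $R_{H_0}$ and $R_{H_1}$, an explicit $1$-periodic minimal projective resolution, using the decompositions of $P_{H_0}$ and $P_{H_1}$ proved above together with the structure of the mixed functors established in \cite{Vespa2}. Concretely, the decomposition of $P_{H_0}$ (resp. $P_{H_1}$) shows that $\M{0,1}$ (resp. $\M{1,1}$) is a direct summand of a standard projective object, hence is projective; being indecomposable by the preceding proposition, it is the projective cover of its simple head, which by Corollary \ref{1.7} and the computation of composition factors in \cite{Vespa2} is $R_{H_0}$ (resp. $R_{H_1}$). In particular $\mathrm{Hom}_{\Fq}(\M{0,1},R_{H_0}) \simeq \mathrm{End}_{\Fq}(R_{H_0}) \simeq \FF$, which disposes of the case $n=0$. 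The second ingredient I would extract from \cite{Vespa2} is that $\M{0,1}$ (resp. $\M{1,1}$) is uniserial of composition length two with both composition factors isomorphic to $R_{H_0}$ (resp. $R_{H_1}$); equivalently, there is a non-split short exact sequence $0 \to R_{H_0} \to \M{0,1} \to R_{H_0} \to 0$ (and its analogue for $\M{1,1}$ and $R_{H_1}$), so that $\mathrm{rad}(\M{0,1})$ equals the socle and is isomorphic to $R_{H_0}$.

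Granting this, splicing countably many copies of this short exact sequence produces the complex
$$\cdots \longrightarrow \M{0,1} \stackrel{\partial}{\longrightarrow} \M{0,1} \stackrel{\partial}{\longrightarrow} \M{0,1} \longrightarrow R_{H_0} \longrightarrow 0,$$
where $\partial$ is the composite $\M{0,1} \twoheadrightarrow R_{H_0} \hookrightarrow \M{0,1}$; it is exact, hence is a projective resolution of $R_{H_0}$, and it is minimal since $\mathrm{im}(\partial)=\mathrm{rad}(\M{0,1})$. Applying $\mathrm{Hom}_{\Fq}(-,R_{H_0})$ and using that any morphism $\M{0,1}\to R_{H_0}$ annihilates $\mathrm{rad}(\M{0,1}) \supseteq \mathrm{im}(\partial)$, every differential of the resulting cochain complex is zero, so
$$\mathrm{Ext}^n_{\Fq}(R_{H_0},R_{H_0}) \simeq \mathrm{Hom}_{\Fq}(\M{0,1},R_{H_0}) \simeq \FF \qquad \text{for all } n\geq 0.$$
Repeating the argument word for word with $\M{1,1}$ in place of $\M{0,1}$ yields $\mathrm{Ext}^n_{\Fq}(R_{H_1},R_{H_1}) \simeq \FF$ for all $n$.

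The only real obstacle is the structural input imported from \cite{Vespa2}: one must know precisely that the relevant mixed functor is the unique non-split self-extension of $R_{H_i}$ — in particular that its composition length is exactly two and that $R_{H_i}$ is simultaneously its head and its socle — since any extra composition factor, or a larger Loewy length, would destroy the $1$-periodicity and could change the answer in higher degrees. As a consistency check, note that the mere existence of the indecomposable length-two functor $\M{0,1}$ (resp. $\M{1,1}$) already forces $\mathrm{Ext}^1_{\Fq}(R_{H_i},R_{H_i}) \ne 0$, in agreement with the stated value $\FF$; and a wholly parallel computation using injective resolutions would give the same result once one knows that $\M{0,1}$ (resp. $\M{1,1}$) is also the injective envelope of $R_{H_0}$ (resp. $R_{H_1}$).
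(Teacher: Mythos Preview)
Your periodic-resolution strategy is sound, but you have chosen the wrong projective cover, and the ``structural input from \cite{Vespa2}'' that you flag as the crucial hypothesis is simply false. The mixed functor $\M{0,1}$ is \emph{not} a length-two self-extension of $R_{H_0}$: by Theorem~\ref{1.9} it is infinite, and its composition factors are $\kappa(iso_{(x,0)})$ together with the simples $L^d_0$ for $d\ge 2$; none of them is $R_{H_0}$. Accordingly (Proposition~\ref{3.29}) $\M{0,1}$ is the projective cover of $\kappa(iso_{(x,0)})$, not of $R_{H_0}$, and in fact $\mathrm{Hom}_{\Fq}(\M{0,1},R_{H_0})=0$, so your resolution never gets started. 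The same objections apply verbatim to $\M{1,1}$ and $R_{H_1}$.

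The object you actually want is $\kappa(iso_{H_0})$: it is projective (Proposition~\ref{3.27}), indecomposable, and sits in the non-split sequence $0\to R_{H_0}\to\kappa(iso_{H_0})\to R_{H_0}\to 0$ of Corollary~\ref{1.7}(2). For $R_{H_1}$ use the indecomposable summand $F_{H_1}$ of $\kappa(iso_{H_1})$ from Corollary~\ref{1.7}(3). With these substitutions your argument runs word for word. This is essentially the paper's proof unwound: the paper packages the same computation as the statement (Proposition~\ref{3.31}) that projectivity of $\kappa(iso_{H_i})$ makes the fully faithful exact functor $\tilde\kappa$ induce an isomorphism $\mathrm{Ext}^n_{\Fq}(R_{H_i},R_{H_i})\simeq H^n(O(H_i),\FF)$, and then invokes $O(H_0)\simeq C_2$ and $O(H_1)\simeq\mathfrak{S}_3$; the underlying resolution of $\FF$ over $\FF[O(H_i)]$ is exactly the $1$-periodic one you were aiming for.
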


Finally, after having introduced the notion of polynomial functor for the
category $\Fq$, which generalizes that for $\F$, we obtain the following result as an application of the classification of the simple
functors $S$ of $\Fq$ such that $S(H_0) \ne 0$ or $S(H_1) \ne 0$ and of the thickness of the subcategory $\iota(\F)$ of $\Fq$.

\begin{thm*}
The polynomial functors of $\Fq$ are in the image of the
functor $\iota: \F \rightarrow \Fq$.
\end{thm*}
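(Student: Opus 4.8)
The plan is to reduce the statement to the classification of non-constant simple functors $S$ with $S(H_0)\neq 0$ or $S(H_1)\neq 0$ together with the thickness of $\iota(\F)$, which are both available. First I would set up the notion of polynomial functor in $\Fq$ so that it behaves well under the ambient structure: one defines the difference functors $\Delta_V$ (obtained by the natural decomposition coming from adding an orthogonal summand, i.e. the analogue of the difference functor in $\F$), declares $F$ polynomial of degree $\le d$ if $\Delta_V^{d+1}F = 0$ for all $V$, and notes that the full subcategory $\mathrm{Pol}_d(\Fq)$ of polynomial functors of degree $\le d$ is closed under subobjects, quotients and extensions — this last point is the formal analogue of the corresponding fact in $\F$ and should follow from exactness of the $\Delta_V$. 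In particular $\mathrm{Pol}_d(\Fq)$ is a thick subcategory.

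Next I would argue by induction on the polynomial degree $d$. The key reduction is: a polynomial functor of degree $\le d$ has a finite composition series all of whose factors are simple polynomial functors of degree $\le d$ (again using that $\mathrm{Pol}_d$ is closed under subquotients, plus the fact that polynomial functors are "locally finite" in the appropriate sense, so that a finite filtration with simple subquotients exists). Hence it suffices to prove that every \emph{simple} polynomial functor of $\Fq$ lies in $\iota(\F)$, and then invoke thickness of $\iota(\F)$ (the second Theorem of the excerpt) to conclude that an arbitrary polynomial functor, being built from such simple pieces by finitely many extensions, is itself in $\iota(\F)$.

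So the heart of the argument is: a non-constant simple polynomial functor $S$ of $\Fq$ is in the image of $\iota$. Here I would use that such an $S$ must be nonzero on some quadratic space, hence — this is the point where I expect to need a small lemma — nonzero on $H_0$ or on $H_1$. Intuitively, a simple functor that vanishes on $H_0$ and $H_1$ is "supported in high dimension", which forces infinite polynomial degree; more precisely, evaluating the difference functors and using that $H_0, H_1$ generate the small quadratic spaces, one shows $S(H_0)=S(H_1)=0$ contradicts polynomiality unless $S$ is constant. Granting that, the classification Proposition of the excerpt lists the eight possibilities for $S$; among $\iota(\Lambda^1)$, $\iota(\Lambda^2)$, $\iota(S_{(2,1)})$, $\kappa(\mathrm{iso}_{(x,0)})$, $\kappa(\mathrm{iso}_{(x,1)})$, $R_{H_0}$, $R_{H_1}$, $S_{H_1}$, the first three are manifestly in $\iota(\F)$, and it remains to check that the five functors $\kappa(\mathrm{iso}_{(x,0)})$, $\kappa(\mathrm{iso}_{(x,1)})$, $R_{H_0}$, $R_{H_1}$, $S_{H_1}$ are \emph{not} polynomial. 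For the $\kappa(\mathrm{iso})$'s this should be immediate from the explicit structure of isotropic functors (they are nonzero in unboundedly many "degrees", so no $\Delta_V^{d+1}$ annihilates them). For $R_{H_0}$, $R_{H_1}$, $S_{H_1}$ I would exploit the homological computation $\Ext^n_{\Fq}(R_{H_i},R_{H_i})\simeq \FF$ for all $n$: a nonzero self-extension in every cohomological degree is incompatible with being polynomial, since $\mathrm{Pol}_d(\Fq)$ is thick and the simple polynomial functors are (as in $\F$) expected to have the property that $\Ext$ groups between them vanish in high degree — alternatively, and more robustly, one computes $\Delta_V$ directly on these functors using the decompositions of $P_{H_0}$ and $P_{H_1}$ and sees they are not annihilated by any power.

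The main obstacle is precisely this last verification — showing the five "exotic" simple functors fail to be polynomial. The cleanest route is the direct one: using the decompositions of $P_{H_0}$ and $P_{H_1}$ from the fourth Theorem of the excerpt, identify how $\Delta_V$ acts on the mixed summands $\M{0,1}$, $\M{1,1}$ and on $\kappa(\mathrm{iso}_{H_i})$, deduce that these summands are not polynomial, and since $R_{H_i}$, $S_{H_1}$ appear as composition factors of the mixed functors (by \cite{Vespa2}) that genuinely "see" the non-$\iota$ part, conclude they cannot be polynomial either. Once all non-$\iota$ simple functors on $H_0, H_1$ are excluded, the induction closes and the theorem follows.
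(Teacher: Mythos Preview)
Your reduction to the classification of simple $S$ with $S(H_0)\ne 0$ or $S(H_1)\ne 0$ does not go through: the ``small lemma'' you expect --- that a non-constant simple polynomial functor of $\Fq$ must be nonzero on $H_0$ or $H_1$ --- is false. For any $n\ge 3$ the functor $\iota(\Lambda^n)$ is simple (since $\iota$ preserves simples), polynomial, and satisfies $\iota(\Lambda^n)(H_\epsilon)=\Lambda^n(\FF^{\oplus 2})=0$ for $\epsilon\in\{0,1\}$. So the eight-functor list of Proposition~\ref{3.30} is not exhaustive for simple polynomial functors, and even if you verify that the five non-$\iota$ members of that list are not polynomial, infinitely many simple polynomial functors remain unaccounted for. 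Your $\Ext$-based exclusion of $R_{H_0}$, $R_{H_1}$, $S_{H_1}$ also rests on an unproved expectation about vanishing of high $\Ext$ groups between polynomial simples in $\Fq$; the paper does not establish or use such a statement.

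The paper's argument really does use the induction on degree that you set up but then abandon. The step replacing your ``small lemma'' is Proposition~\ref{4.7}: if $S$ is simple and not in $\iota(\F)$, then at least one of $\Delta_{H_0}S$, $\Delta_{H_1}S$ is not in $\iota(\F)$. For $S$ with minimal support in dimension $>2$ this is proved via the idempotents $e_V$ of Proposition~\ref{2.15} and the characterization of $\iota(\F)$ in Lemma~\ref{2.21}, factoring $e_W = e_{H_0}\bot e_{H_\epsilon}\bot e_V$ by Lemma~\ref{2.17} and reading off $S(e_W)S(W)=S(W)$ from the assumption that both $\Delta_{H_\epsilon}S$ lie in $\iota(\F)$. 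This idempotent argument is the genuine missing idea in your proposal. With Proposition~\ref{4.7} in hand the induction closes: if $S$ is simple polynomial of degree $d+1$, both $\Delta_{H_\epsilon}S$ are polynomial of degree $\le d$, hence (by induction, Lemma~\ref{4.8}, and thickness via Lemma~\ref{4.9}) in $\iota(\F)$, so by the contrapositive of Proposition~\ref{4.7} one gets $S\in\iota(\F)$. The classification of Proposition~\ref{3.30} is then only invoked for the degree-zero base case (Lemma~\ref{4.6}) and for the $\dim W = 2$ case inside Proposition~\ref{4.7}, both handled by direct computation of $\Delta_{H_0}$ on the listed functors.
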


Most of the results of this paper are contained in the Ph.D. thesis of the author  \cite{Vespa-these}.

%%%%%%%%%%%%%%%%%%%%%%%%%%%%%%%%%%%%%%%%%%%%%%
\section{The category $\Fq$: some recollections} \label{1}
We recall in this section some definitions and results about the
category $\Fq$ obtained in \cite{math.AT/0606484}.

Let $\Eq$ be the category having as objects finite dimensional $\FF$-vector spaces equipped with a non
degenerate quadratic form and with morphisms linear maps that
preserve the quadratic forms. By the classification of quadratic forms
over the field $\FF$ (see, for instance, \cite{Pfister}) we know that only spaces of even
dimension can be nondegenerate and, for a fixed even dimension, there are  two non-equivalent nondegenerate spaces, which are distinguished by
the Arf invariant. We will denote by $H_0$ (resp. $H_1$) the
nondegenerate quadratic space of dimension two such that $\mathrm{Arf}(H_0)=0$
(resp. $\mathrm{Arf}(H_1)=1$). The orthogonal sum of two nondegenerate quadratic spaces $(V,q_V)$ and $(W,q_W)$ is, by definition, the quadratic space $(V \oplus W,q_{V \oplus W})$ where $q_{V \oplus W}(v,w)=q_V(v)+q_W(w)$. Recall that the spaces $H_0 \bot H_0$
and $H_1 \bot H_1$ are isomorphic. Observe that the morphisms of $\Eq$
are injective linear maps and this category does not admit push-outs
or pullbacks. There exists a pseudo push-out in $\Eq$ that allows us to generalize the construction of the category of
co-spans of B\'{e}nabou  \cite{Benabou} and thus to define the category $\Tq$ in which there
exist retractions.
\begin{defi} \label{1.1}
The category $\Tq$ is the category having as objects those of $\Eq$
and, for $V$ and $W$ objects in $\Tq$, $\mathrm{Hom}_{\Tq}(V,W)$ is
the set of equivalence classes of diagrams in $\Eq$ of the form $ V
\xrightarrow{f} X \xleftarrow{g} W $  for the equivalence relation
generated by the relation $\mathcal{R}$ defined as follows: $ V \xrightarrow{f} X_1 \xleftarrow{g} W \quad  \mathcal{R}\quad   V
\xrightarrow{u} X_2 \xleftarrow{v} W $ if there exists a morphism
$\alpha$ of $\Eq$ such that $\alpha \circ f=u$ and $\alpha \circ g=v$.
The composition is defined using the pseudo push-out. The morphism of
$\mathrm{Hom}_{\Tq}(V,W)$ represented by the diagram $ V
\xrightarrow{f} X \xleftarrow{g} W $ will be denoted by $[ V
\xrightarrow{f} X \xleftarrow{g} W ]$.
\end{defi}

\begin{rem} \label{1.2}
A morphism of $\mathrm{Hom}_{\Tq}(V,W)$ is represented by a diagram of the form: $V \xrightarrow{g} W \bot W' \xleftarrow{i_W}
W$, where $i_W$ is the canonical inclusion. In the following, we will
use this representation of a morphism, without further comment.
\end{rem}

 By definition,
the category $\Fq$ is the category of functors from $\Tq$ to $\E$.
Hence $\Fq$ is
abelian and has enough projective objects. By the Yoneda lemma, for any object $V$ of $\Tq$, the functor $P_V=\FF \lbrack
\mathrm{Hom}_{\Tq}(V,-) \rbrack $ is a projective object and there is
a natural isomorphism: $\mathrm{Hom}_{\Fq}(P_V, F) \simeq F(V)$,
for all objects $F$ of $\Fq$. The set of functors $\{ P_V | V \in
\mathcal{S} \}$, named the standard projective objects in $\Fq$, is a set of
projective generators of $\Fq$, where $\mathcal{S}$ is a set of
representatives of isometry classes of nondegenerate quadratic
spaces.

There is a forgetful functor $\epsilon: \Tq \rightarrow \E^f$ in $\Fq$, defined by $\epsilon(V)=\mathcal{O}(V)$ and 
$$\epsilon([V \xrightarrow{f} W \bot W' \xleftarrow{g} W ])=p_g\circ
\mathcal{O}(f)$$
where $p_g$ is the orthogonal projection from $W \bot W'$ to $W$ and $\mathcal{O}: \Eq \rightarrow \E^f$ is the functor which forgets the quadratic form.  By the fullness of the functor $\epsilon$ and an argument of essential surjectivity, we obtain the following theorem.
\begin{thm} \label{1.3} \cite{math.AT/0606484}
There is a functor $\iota: \F \rightarrow \Fq$, which is exact,
fully faithful and preserves simple objects.
\end{thm}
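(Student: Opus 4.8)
The plan is to realise $\iota$ as the precomposition functor $\epsilon^{*}=(-)\circ\epsilon\colon\F\to\Fq$ along the forgetful functor $\epsilon\colon\Tq\to\E^{f}$, and to deduce each of the three properties from two features of $\epsilon$: its fullness and its essential surjectivity onto the even-dimensional spaces. Exactness is the cheapest: kernels and cokernels in both functor categories are computed objectwise in $\E$, and $\iota(F)(W)=F(\epsilon W)$, so a short exact sequence $0\to F'\to F\to F''\to 0$ in $\F$ is carried to the objectwise-exact sequence $0\to F'(\epsilon W)\to F(\epsilon W)\to F''(\epsilon W)\to 0$ for every object $W$ of $\Tq$. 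Thus $\iota$ is exact with no input from the geometry of quadratic forms.

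For fully faithfulness I would first dispose of the fact that $\epsilon$ is \emph{not} essentially surjective onto all of $\E^{f}$: its image consists of even-dimensional spaces, since a nondegenerate $\FF$-quadratic space has even dimension. Let $\mathcal{D}_{0}\subset\E^{f}$ be the full subcategory of even-dimensional spaces. Every finite-dimensional space is a retract of an even-dimensional one (a space $V$ of odd dimension is the image of a splitting idempotent $e$ of the even-dimensional space $V\oplus\FF$, and $e$ lies in $\mathcal{D}_{0}$), so $\E^{f}$ is the idempotent completion of $\mathcal{D}_{0}$; since $\E$ is idempotent complete, restriction $\F=\mathrm{Fun}(\E^{f},\E)\xrightarrow{\ \sim\ }\mathrm{Fun}(\mathcal{D}_{0},\E)$ is an equivalence by the universal property of the idempotent completion. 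It then suffices to show $\epsilon^{*}\colon\mathrm{Fun}(\mathcal{D}_{0},\E)\to\Fq$ is fully faithful, where now $\epsilon\colon\Tq\to\mathcal{D}_{0}$ \emph{is} full and essentially surjective (every even-dimensional space carries a nondegenerate quadratic form). Faithfulness is immediate from essential surjectivity together with naturality. For fullness I would take $\eta\colon\epsilon^{*}F\to\epsilon^{*}G$, choose for each even-dimensional $U$ an object $c_{U}$ of $\Tq$ and an isomorphism $\phi_{U}\colon\epsilon c_{U}\xrightarrow{\sim}U$, and set $\theta_{U}=G(\phi_{U})\circ\eta_{c_{U}}\circ F(\phi_{U})^{-1}$; fullness of $\epsilon$ provides, for any $g\colon U\to U'$, a $\Tq$-morphism lifting $\phi_{U'}^{-1}g\,\phi_{U}$, and naturality of $\eta$ along this lift yields both independence of the choices and naturality of $\theta$. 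Thus $\theta$ is the required preimage, and $\iota$ is fully faithful.

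Preservation of simples I would handle by the same descent, now applied to subobjects. Given $S$ simple in $\F$ and a nonzero subfunctor $T\subseteq\iota(S)=\epsilon^{*}S$ in $\Fq$, I would define a subfunctor $T'\subseteq S$ (viewed in $\mathrm{Fun}(\mathcal{D}_{0},\E)$) by $T'(U)=S(\phi_{U})\big(T(c_{U})\big)\subseteq S(U)$; fullness of $\epsilon$ and the fact that $T$ is a subfunctor guarantee that $T'$ is independent of the choices and natural in $U$, and one checks $\epsilon^{*}T'=T$. Since $\epsilon^{*}$ detects nonzero functors (essential surjectivity), $T'\neq0$, hence $T'=S$ by simplicity and therefore $T=\epsilon^{*}S=\iota(S)$; as $\iota(S)\neq0$, this shows $\iota(S)$ is simple. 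Equivalently, $\epsilon^{*}$ reflects the subobject lattice, so it carries simples to simples, and restriction to $\mathcal{D}_{0}$ already preserves them.

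The genuinely substantial point, and the one I expect to be the main obstacle, is the \emph{fullness} of $\epsilon$: every linear map $\phi\colon\mathcal{O}(V)\to\mathcal{O}(W)$ between the underlying spaces of two nondegenerate quadratic spaces must be realised as $p_{g}\circ\mathcal{O}(f)$ for some cospan $[V\xrightarrow{f}X\xleftarrow{g}W]$ in $\Tq$. This is where the flexibility of the pseudo push-out and the existence of nondegenerate ambient forms enter: one embeds $V$ and $W$ isometrically into a common nondegenerate space $X$ so that the orthogonal projection of $f(V)$ onto $g(W)$ reproduces $\phi$, using that an orthogonal summand $W\bot W'$ absorbs the failure of $\phi$ to be an isometry. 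Granting this (established in \cite{math.AT/0606484}), the three assertions follow by the formal arguments above; all the arithmetic of quadratic forms over $\FF$ is concentrated in the fullness statement.
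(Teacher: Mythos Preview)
Your proposal is correct and follows the same approach that the paper indicates: the paper does not give a proof here but simply states, just before Theorem~\ref{1.3}, that the result follows ``by the fullness of the functor $\epsilon$ and an argument of essential surjectivity,'' deferring the details to \cite{math.AT/0606484}. You have accurately reconstructed those details, including the passage through the full subcategory $\mathcal{D}_0$ of even-dimensional spaces (which the paper itself invokes later, in the proof of Theorem~\ref{2.19}, via the equivalent category $\F'$) and the descent of subobjects along a full, essentially surjective functor (which is the content of Proposition~\ref{2.20}, also imported from \cite{math.AT/0606484}).
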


In order to define another subcategory of $\Fq$, we consider the
category $\Eqd$ having as objects finite dimensional $\FF$-vector spaces
equipped with a (possibly degenerate) quadratic
form and with morphisms injective linear maps which
preserve the quadratic forms. The category $\Eqd$ admits
pullbacks; consequently the category of spans $\mathrm{Sp}(\Eqd)$ (\cite{Benabou}) is
defined. By definition, the category $\Gq$ is the category of functors
from $\mathrm{Sp}(\Eqd)$ to $\E$. As in the case of the category $\Fq$, the
category $\Gq$ is abelian and has enough projective objects: by the Yoneda lemma, for any object $V$ of $\mathrm{Sp}(\Eqd)$, the functor $Q_V=\FF \lbrack
\mathrm{Hom}_{\mathrm{Sp}(\Eqd)}(V,-) \rbrack $ is a projective object
in $\Gq$. We define a particular family of functors of $\Gq$, the isotropic
functors, which form a set of projective generators and injective
cogenerators of $\Gq$.
The category $\Gq$ is related to $\Fq$ by the following theorem.
\begin{thm} \label{1.4} \cite{math.AT/0606484}
There is a functor $\kappa: \Gq \rightarrow \Fq$, which is exact,
fully-faithful and preserves simple objects.
\end{thm}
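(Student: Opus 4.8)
The plan is to obtain $\kappa$ as the functor induced on functor categories by a comparison functor between the underlying small categories $\mathrm{Sp}(\Eqd)$ and $\Tq$. This parallels the construction of $\iota$ in Theorem \ref{1.3}, which is precomposition along the forgetful functor $\epsilon\colon \Tq \to \E^{f}$, the point there being that $\epsilon$ is full and essentially surjective, so that precomposition is automatically fully faithful. For $\kappa$ the situation is reversed: the category $\mathrm{Sp}(\Eqd)$ has strictly more objects than $\Tq$ — degenerate quadratic spaces, and in particular all odd-dimensional spaces, occur in $\Eqd$ but never in $\Eq$ — so no essentially surjective functor $\Tq \to \mathrm{Sp}(\Eqd)$ can exist and plain precomposition cannot yield a faithful $\kappa$. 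Instead I would produce a \emph{fully faithful} functor $\Psi\colon \mathrm{Sp}(\Eqd) \to \Tq$ and set $\kappa := \Psi_{!}$, the left Kan extension along $\Psi$, given pointwise by $\kappa(G)(V) = \mathrm{colim}_{(\Psi \downarrow V)} G$. I would make this formula explicit on the isotropic generators $\mathrm{iso}_V$ so as to pin $\kappa$ down concretely; in particular $\Psi_{!}$ sends the standard projective $Q_U$ of $\Gq$ to the standard projective $P_{\Psi(U)}$ of $\Fq$, which is the behaviour one expects of $\kappa$.

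The construction of $\Psi$ is the geometric core. On objects I would send a possibly degenerate quadratic space $U$ to a nondegenerate completion $\Psi(U)$, obtained by adjoining to $U$ a space dual to the radical of its polar form; here one must be careful because over $\FF$ the quadratic form restricts to a \emph{linear} form on that radical, so the completion has to be chosen compatibly with the Arf data, and a nondegenerate space is sent to itself. On morphisms I would turn a span $U \xleftarrow{a} Y \xrightarrow{b} U'$ of $\Eqd$ into a cospan of $\Eq$ by gluing the completions $\Psi(U)$ and $\Psi(U')$ along the common subspace $Y$, producing a class $[\Psi(U) \to Z \leftarrow \Psi(U')]$ in $\Tq$. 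One then verifies that the assignment is independent of the chosen representative span, that it carries the pullback composition of $\mathrm{Sp}(\Eqd)$ to the pseudo pushout composition of $\Tq$, and, above all, that it induces a bijection $\mathrm{Hom}_{\mathrm{Sp}(\Eqd)}(U,U') \xrightarrow{\ \sim\ } \mathrm{Hom}_{\Tq}(\Psi U, \Psi U')$. This bijection, matching spans of degenerate spaces with cospans of their nondegenerate hulls, carries all the real content and is the step I expect to be the main obstacle.

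Granting full faithfulness of $\Psi$, the three asserted properties of $\kappa$ become largely formal. Full faithfulness of $\kappa = \Psi_{!}$ is immediate, since for fully faithful $\Psi$ the unit $\mathrm{id} \to \Psi^{*}\Psi_{!}$ of the adjunction $\Psi_{!} \dashv \Psi^{*}$ is an isomorphism. For exactness I would compute the Kan extension pointwise and argue that the comma category $(\Psi \downarrow V)$ governing it has a terminal object, coming from the maximal nondegenerate subspace sitting inside $V$, so that the defining colimit collapses to a single value of $G$; then $\kappa$ is computed objectwise by an exact formula and is therefore exact. (Equivalently, this cofinality forces $\Psi_{!} \simeq \Psi_{*}$, whence exactness.)

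Finally, preservation of simple objects follows from exactness together with full faithfulness, provided the essential image of $\kappa$ is closed under subobjects in $\Fq$. An exact fully faithful functor induces, for each $G$, an order-preserving injection of subobject lattices $\mathrm{Sub}_{\Gq}(G) \hookrightarrow \mathrm{Sub}_{\Fq}(\kappa G)$; closure of the image under subobjects upgrades this to a bijection, so that $G$ is simple if and only if $\kappa G$ is simple and nonzero. To obtain the closure property I would use the adjunction $\Psi_{!} \dashv \Psi^{*}$ with $\Psi^{*}$ the exact restriction functor: for a subobject $T \hookrightarrow \kappa(G)$ one has $\Psi^{*}T \hookrightarrow \Psi^{*}\kappa(G) \simeq G$, and, using that the isotropic functors generate $\Gq$ and the explicit pointwise description of $\kappa$, I would check that the counit $\kappa\,\Psi^{*}T \to T$ is an isomorphism, so that $T$ indeed lies in the image of $\kappa$.
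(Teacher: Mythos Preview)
This theorem is not proved in the present paper at all: it is a recollection from \cite{math.AT/0606484}, so there is no proof here to compare against. Nevertheless, your proposed construction is not merely different from the original --- it cannot work.

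The fatal gap is the existence of a fully faithful $\Psi\colon \mathrm{Sp}(\Eqd)\to\Tq$. No such functor exists, for elementary counting reasons. In $\mathrm{Sp}(\Eqd)$ the one-dimensional space $(x,0)$ has exactly two endomorphisms: the identity span and the span through $0$. But in $\Tq$ no object has exactly two endomorphisms: $\mathrm{End}_{\Tq}(0)$ is a singleton, while for any nonzero $V$ the rank filtration described in Section~\ref{2} already gives $|\mathrm{Hom}_{\Tq}^{(0)}(V,V)|=|\mathrm{Hom}_{\E^f}(\epsilon V,\epsilon V)|\ge 16$. So there is nowhere to send $(x,0)$ faithfully, and your ``nondegenerate completion'' construction cannot yield a fully faithful $\Psi$.

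There is also an internal inconsistency. You assert that $\kappa=\Psi_{!}$ would carry the projective $Q_U$ to the projective $P_{\Psi(U)}$, and more generally a left Kan extension along a functor between small categories always preserves projectives (its right adjoint $\Psi^{*}$ is exact). But the present paper shows explicitly that $\kappa$ does \emph{not} preserve projectives: by Proposition~\ref{3.29} the functor $\kappa(\mathrm{iso}_{(x,0)})$ is not projective in $\Fq$, whereas $\mathrm{iso}_{(x,0)}$ is projective in $\Gq$ by Theorem~\ref{1.5}. So $\kappa$ cannot be a left Kan extension of this shape.

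The construction in \cite{math.AT/0606484} goes the other way: there is a functor $\Tq\to\mathrm{Sp}(\Eqd)$ sending a cospan in $\Eq$ to its pullback span in $\Eqd$ (this is exactly the operation used in Definition~\ref{2.1} to define the rank of a morphism), and $\kappa$ is precomposition by it. Your objection that such a functor is not essentially surjective is correct but irrelevant: what matters is that every possibly degenerate space is a \emph{retract} in $\mathrm{Sp}(\Eqd)$ of a nondegenerate one (embed $D$ into some nondegenerate $V$ and use the spans $D\xleftarrow{=}D\hookrightarrow V$ and $V\hookleftarrow D\xrightarrow{=}D$), together with fullness of the comparison functor. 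That is enough to make precomposition fully faithful, and it is automatically exact.
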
 
We obtain the classification of the simple objects of the category
$\Gq$ from the following theorem.
\begin{thm} \label{1.5}  \cite{math.AT/0606484}
There is a natural equivalence of categories 
$$\Gq \simeq \prod_{V \in \mathcal{S}} \FF[O(V)]-mod$$
where $\mathcal{S}$ is a set of representatives of isometry classes of
quadratic spaces (possibly degenerate) and $O(V)$ is the orthogonal group.
\end{thm}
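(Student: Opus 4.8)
\emph{Proof plan.} The plan is to identify the isotropic functors as a system of small projective generators which is moreover \emph{orthogonal}, and then to invoke the Morita--Gabriel recognition theorem for module categories over a ringoid. First I would make the category $\mathrm{Sp}(\Eqd)$ explicit. Since every morphism of $\Eqd$ is an injective isometry, a morphism $V \to W$ of $\mathrm{Sp}(\Eqd)$ is an equivalence class of spans $V \xleftarrow{f} X \xrightarrow{g} W$ with $f,g$ isometric embeddings, two such being identified when they differ by an isometry of the apex $X$; composition is computed by the pullback in $\Eqd$, which is the intersection of the images, so the isometry class of the apex can only decrease under composition. From this description one reads off that a span is invertible precisely when both legs are isomorphisms, whence $\mathrm{Aut}_{\mathrm{Sp}(\Eqd)}(V) \cong O(V)$, the residual $O(V)$ arising after normalising one leg to the identity. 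In particular the isometry classes $V \in \mathcal{S}$ index simultaneously the objects of $\mathrm{Sp}(\Eqd)$ up to isomorphism, the automorphism groups $O(V)$, and the isotropic functors $\mathrm{iso}_V$.

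The key computation, and the main obstacle, is the evaluation of the Hom-algebra of the isotropic functors:
$$\Hom_{\Gq}(\mathrm{iso}_V, \mathrm{iso}_W) \cong \begin{cases} \FF[O(V)] & \text{if } V \cong W,\\ 0 & \text{otherwise.}\end{cases}$$
Here I would use that $\mathrm{iso}_V$ is the top piece of a filtration of the standard projective $Q_V = \FF[\Hom_{\mathrm{Sp}(\Eqd)}(V,-)]$ by the isometry-dimension of the apex: because composition only shrinks the apex, the subspaces spanned by spans whose apex has dimension $< k$ form a subfunctor, and the associated graded isolates, in top degree, the spans with full apex $X \cong V$, that is, the genuine isometric embeddings of $V$ taken modulo $O(V)$. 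Projectivity of $\mathrm{iso}_V$ then reduces $\Hom_{\Gq}(\mathrm{iso}_V, -)$ to an evaluation carrying a residual $O(V)$-action, giving $\End_{\Gq}(\mathrm{iso}_V) \cong \FF[O(V)]$; the vanishing for $V \not\cong W$ follows because a nonzero natural transformation would have to match the supports of the two functors, which live in different isometry degrees. The delicate point is controlling the off-diagonal terms of the filtration, and this is exactly where I expect to use the injectivity of $\mathrm{iso}_W$ (recalled in the text) to force the lower-apex contributions to split off rather than interfere.

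Finally I would conclude by the recognition theorem. The family $\{\mathrm{iso}_V\}_{V \in \mathcal{S}}$ is a set of projective generators of the cocomplete abelian category $\Gq$, and each $\mathrm{iso}_V$ is small: being projective and finitely generated it is a retract of a finite sum of representables $Q_{V'}$, and $\Hom_{\Gq}(Q_{V'}, -)$ is evaluation at $V'$, which commutes with colimits. By the Morita--Gabriel theorem the functor $F \mapsto (\Hom_{\Gq}(\mathrm{iso}_V, F))_{V \in \mathcal{S}}$ is then an equivalence of $\Gq$ onto the category of additive functors on the full subcategory spanned by the $\mathrm{iso}_V$. By the orthogonality computation this subcategory is the disjoint union of the one-object categories with endomorphism ring $\End_{\Gq}(\mathrm{iso}_V) = \FF[O(V)]$, so the target is $\prod_{V \in \mathcal{S}} \mathrm{Mod}\text{-}\FF[O(V)]$. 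Since $\FF[O(V)]$ is a group algebra, the antipode $g \mapsto g^{-1}$ identifies right and left modules, yielding the asserted equivalence $\Gq \simeq \prod_{V \in \mathcal{S}} \FF[O(V)]\text{-mod}$.
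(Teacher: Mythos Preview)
This theorem is not proved in the present paper: it is recalled, with citation, from \cite{math.AT/0606484}, so there is no proof here against which to compare. I can still comment on your plan on its own merits.

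The Morita--Gabriel strategy is the right shape, and your description of $\mathrm{Sp}(\Eqd)$, the identification $\mathrm{Aut}_{\mathrm{Sp}(\Eqd)}(V)\cong O(V)$, and the realisation of $\mathrm{iso}_V$ as the top quotient of the apex-dimension filtration of $Q_V$ are all correct. Two points, however, need tightening.

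First, the justification you give for the off-diagonal vanishing is not right: the supports of $\mathrm{iso}_V$ and $\mathrm{iso}_W$ do \emph{not} ``live in different isometry degrees'' --- both are nonzero on any object containing isometric copies of $V$ and of $W$, for instance on $V\bot W$. A clean argument uses only that $\mathrm{iso}_V$ is a quotient of $Q_V$: one has
\[
\Hom_{\Gq}(\mathrm{iso}_V,F)=\{x\in F(V)\ :\ F(r_X)x=0\ \text{for every proper}\ X\subsetneq V\},
\]
where $r_X=[V\hookleftarrow X=X]$ is the restriction span. Taking $F=\mathrm{iso}_W$ with $\dim W<\dim V$, any nonzero $x=\sum_i[g_i]\in\mathrm{iso}_W(V)$ has nonzero restriction to $X=g_1(W)\subsetneq V$, since the $g_i$ whose image lies in $X$ restrict to linearly independent elements of $\mathrm{iso}_W(X)$; hence $x=0$. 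The case where $W$ does not embed in $V$ is immediate, and no appeal to injectivity is needed.

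Second, you invoke projectivity of $\mathrm{iso}_V$ as though it followed from its being the top filtration quotient of $Q_V$; it does not --- the naive section (reinserting full-apex spans back into $Q_V$) fails to be natural, because post-composing a full-apex span with a span of smaller apex lands in $Q_V^{<\dim V}$ rather than at $0$. The paper records projectivity of the isotropic functors as a fact imported from \cite{math.AT/0606484}, so you may cite it, but in a self-contained argument it is itself a substantial step. One way to sidestep it is to build the inverse equivalence directly, sending $(M_V)_V$ to $\bigoplus_V \mathrm{iso}_V\otimes_{\FF[O(V)]}M_V$, and check the unit and counit by hand; projectivity of $\mathrm{iso}_V$ then drops out as a corollary rather than an input.
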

The object of $\Gq$ which corresponds, by this equivalence, to the
module $\FF[O(V)]$ is the isotropic functor $iso_V$, defined in
\cite{math.AT/0606484}. Recall that, as a vector space, $iso_V(W)$ is isomorphic to the subspace of $Q_V(W)$ generated
by the elements $[ V \xleftarrow{\mathrm{Id}} V \rightarrow W]$. 
%The canonical generator $[ V \xleftarrow{\mathrm{Id}} V \xrightarrow{h} W]$ of $iso_V(W)$  will be denoted by $[ V \xrightarrow{h} W]$ or, more simply, by $[h]$.

A straightforward consequence of the classification of simple objects of $\Gq$ given in
Theorem \ref{1.5} is given in the following corollary. Recall that, by definition, an object $F$ of $\Fq$ is finite if it has
a finite composition series with simple subquotients.
\begin{cor} \label{1.6}
The isotropic functors $\kappa(iso_V)$ are finite in the category $\Fq$.
\end{cor}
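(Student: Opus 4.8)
The plan is to reduce the assertion to the finiteness of $iso_V$ inside $\Gq$ and then transport this property across $\kappa$, using the good homological behaviour of $\kappa$ recorded in Theorem \ref{1.4}. The relevant inputs are that $\kappa$ is exact, faithful, and preserves simple objects, together with the identification of $iso_V$ furnished by the classification of simple objects of $\Gq$ in Theorem \ref{1.5}. In other words, finiteness is a property that passes along exact functors preserving simples, and $iso_V$ is manifestly finite on the $\Gq$ side because it corresponds to a representation of a \emph{finite} group.

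Concretely, I would argue as follows. Under the equivalence of Theorem \ref{1.5}, the functor $iso_V$ corresponds to the regular module $\FF[O(V)]$; since a finite-dimensional $\FF$-quadratic space $V$ has finite orthogonal group $O(V)$ (it is a subgroup of $\GL(V)$, which is finite as $V$ is a finite set), the module $\FF[O(V)]$ is finite-dimensional over $\FF$ and hence admits a finite composition series
$$0 = M_0 \subset M_1 \subset \cdots \subset M_k = \FF[O(V)]$$
with each $M_i/M_{i-1}$ simple. Translating back through Theorem \ref{1.5}, this gives a finite filtration $0 = G_0 \subset G_1 \subset \cdots \subset G_k = iso_V$ in $\Gq$ whose successive quotients are simple objects of $\Gq$. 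Applying the exact functor $\kappa$ yields a finite filtration
$$0 = \kappa(G_0) \subset \kappa(G_1) \subset \cdots \subset \kappa(G_k) = \kappa(iso_V)$$
in $\Fq$, with $\kappa(G_i)/\kappa(G_{i-1}) \simeq \kappa(G_i/G_{i-1})$; since $\kappa$ preserves simple objects, each of these quotients is simple in $\Fq$. By the definition of a finite object recalled just before the statement, $\kappa(iso_V)$ is therefore finite in $\Fq$.

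I do not expect a genuine obstacle: the argument is a formal consequence of Theorems \ref{1.4} and \ref{1.5} together with the finiteness of orthogonal groups over $\FF$. The only point that deserves a word of care is the verification that the subquotients $\kappa(G_i/G_{i-1})$ are genuinely simple and nonzero rather than zero objects, but this is precisely what ``$\kappa$ preserves simple objects'' asserts (and nonvanishing alone already follows from $\kappa$ being faithful, since an exact faithful functor between abelian categories cannot annihilate a nonzero object).
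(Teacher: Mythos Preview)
Your argument is correct and is exactly the reasoning the paper has in mind: the text simply states that Corollary~\ref{1.6} is a ``straightforward consequence'' of Theorem~\ref{1.5} without writing out a proof, and your proposal supplies precisely the details one would expect---finiteness of $O(V)$, hence of $\FF[O(V)]$, transported via the equivalence of Theorem~\ref{1.5} and then via the exact, simple-preserving functor $\kappa$ of Theorem~\ref{1.4}.
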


In section \ref{3}, we will require the composition series for
the isotropic functors associated to some small quadratic spaces. 
For $\alpha \in \{ 0,1 \}$, let $(x,\alpha)$ be the degenerate quadratic space of
dimension one generated by $x$ such that $q(x)=\alpha$. Since the
orthogonal groups $O(x,0)$ and $O(x,1)$ are trivial and $O(H_0) \simeq
\mathfrak{S}_2$ and $O(H_1) \simeq \mathfrak{S}_3$, we deduce from Theorem \ref{1.5} and
\ref{1.4}, the following corollary.
\begin{cor} \label{1.7} 
\begin{enumerate}
\item
The functors $\kappa(iso_{(x,0)})$ and $\kappa(iso_{(x,1)})$
are simple in $\Fq$.
\item
The functor $\kappa(iso_{H_0})$ is indecomposable. We have the following
non-split short exact sequence:
$$0 \rightarrow R_{H_0} \rightarrow \kappa(iso_{H_0}) \rightarrow
R_{H_0} \rightarrow 0$$
where $R_{H_0}$ is the functor obtained from the trivial
representation of $O(H_0)$.
\item
The functor $\kappa(iso_{H_1})$ admits the following decomposition:
$$\kappa(iso_{H_1})=F_{H_1} \oplus (S_{H_1})^{\oplus 2} $$
where $S_{H_1}$ is the functor obtained from the natural representation of $O(H_1)$ and $F_{H_1}$ is an indecomposable functor for which we
have the following non-split short exact sequence:
$$0 \rightarrow R_{H_1} \rightarrow F_{H_1} \rightarrow
R_{H_1} \rightarrow 0$$
where $R_{H_1}$ is the functor obtained from the trivial
representation of $O(H_1)$.
\end{enumerate}
\end{cor}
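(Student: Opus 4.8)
The plan is to reduce all three parts to the ordinary and modular representation theory of the relevant orthogonal groups, transporting the module-theoretic structure first across the equivalence of Theorem~\ref{1.5} and then across the functor $\kappa$ of Theorem~\ref{1.4}. The key observation is that, under the equivalence $\Gq \simeq \prod_{V} \FF[O(V)]\mathrm{-mod}$, the isotropic functor $iso_V$ corresponds to the regular representation $\FF[O(V)]$ (as recalled just before the statement). Hence the lattice of subfunctors, the indecomposable summands, and the (non-)splitting of short exact sequences involving $iso_V$ are exactly those of $\FF[O(V)]$ viewed as a module over itself. Since $\kappa$ is exact it is additive, so it preserves direct sums and carries short exact sequences to short exact sequences; since it is fully faithful it induces a ring isomorphism $\mathrm{End}_{\Fq}(\kappa(iso_V)) \simeq \mathrm{End}_{\Gq}(iso_V)$, so it preserves and reflects idempotents, hence indecomposability, and a short exact sequence in the image of $\kappa$ splits in $\Fq$ if and only if its preimage splits in $\Gq$. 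Thus it suffices to describe $\FF[O(V)]$ as a module over itself for $V \in \{(x,0),(x,1),H_0,H_1\}$.

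For part (1), the groups $O(x,0)$ and $O(x,1)$ are trivial, so $\FF[O(x,\alpha)] \simeq \FF$ is one-dimensional, hence simple over itself. Therefore $iso_{(x,\alpha)}$ is simple in $\Gq$, and because $\kappa$ preserves simple objects the functor $\kappa(iso_{(x,\alpha)})$ is simple in $\Fq$. For part (2), $O(H_0) \simeq \mathfrak{S}_2 \simeq \Z/2$, so $\FF[O(H_0)] \simeq \FF[\Z/2] \simeq \FF[t]/(t-1)^2$ is a local ring; as a module over itself it is indecomposable of length two, uniserial with both composition factors the trivial module $\FF$, which yields a non-split short exact sequence $0 \to \FF \to \FF[\Z/2] \to \FF \to 0$. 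Transporting this along the equivalence and applying $\kappa$ gives the indecomposability of $\kappa(iso_{H_0})$ together with the asserted non-split sequence, with $R_{H_0}=\kappa(\FF)$ the functor attached to the trivial representation.

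The substance of the corollary is part (3), where $O(H_1) \simeq \mathfrak{S}_3 \simeq \mathrm{GL}_2(\FF)$ and $2 \mid |\mathfrak{S}_3|$, so one must invoke the $2$-modular representation theory of $\mathfrak{S}_3$. There are exactly two simple $\FF[\mathfrak{S}_3]$-modules, one per $2$-regular class: the trivial module $\FF$ and the $2$-dimensional natural module $N$ coming from $\mathrm{GL}_2(\FF)$ acting on $\FF^2$; both are absolutely simple, so $\FF$ is a splitting field. First I would check that $N$ is projective: a transposition interchanges the two basis vectors, so $N$ restricted to a Sylow $2$-subgroup is free, and $N$ is therefore its own projective cover. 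The splitting-field block decomposition $\FF[\mathfrak{S}_3] \simeq P(\FF)^{\oplus 1} \oplus N^{\oplus 2}$ of the regular representation (multiplicities equal to the dimensions of the simples) then forces $\dim P(\FF)=2$, and since $\mathrm{Ext}^1_{\mathfrak{S}_3}(\FF,\FF) \simeq \mathrm{Hom}(\mathfrak{S}_3,\FF) \simeq \FF$ is one-dimensional, $P(\FF)$ is the unique non-split self-extension of $\FF$, i.e.\ there is a non-split sequence $0 \to \FF \to P(\FF) \to \FF \to 0$ with $P(\FF)$ indecomposable. Transporting $\FF[\mathfrak{S}_3] \simeq P(\FF)\oplus N^{\oplus 2}$ across the equivalence and applying the additive, exact, fully faithful, simple-preserving functor $\kappa$ yields $\kappa(iso_{H_1}) \simeq F_{H_1} \oplus (S_{H_1})^{\oplus 2}$ with $F_{H_1}=\kappa(P(\FF))$ indecomposable, $S_{H_1}=\kappa(N)$ simple, and the displayed non-split sequence with $R_{H_1}=\kappa(\FF)$.

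I expect the main obstacle to be the analysis in part (3): one must correctly identify the indecomposable summands of $\FF[\mathfrak{S}_3]$ and the internal (Loewy) structure of the principal indecomposable $P(\FF)$ in the defining characteristic, rather than the semisimple picture one would obtain over a field of characteristic prime to $6$. The remaining bookkeeping---identifying $R_{H_0}, R_{H_1}, S_{H_1}$ with the functors attached to the trivial and natural representations, and transferring indecomposability and non-splitting through $\kappa$---is formal once the module-theoretic facts above are in place.
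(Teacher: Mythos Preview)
Your proposal is correct and follows exactly the approach indicated in the paper: the paper does not give an explicit proof but simply states that the corollary is deduced from Theorems~\ref{1.4} and~\ref{1.5} together with the identifications $O(x,0)=O(x,1)=\{1\}$, $O(H_0)\simeq\mathfrak{S}_2$, $O(H_1)\simeq\mathfrak{S}_3$. Your argument is precisely a fleshed-out version of this, correctly carrying out the $2$-modular representation theory of $\mathfrak{S}_2$ and $\mathfrak{S}_3$ and transporting the resulting module-theoretic structure through the equivalence of Theorem~\ref{1.5} and the exact, fully faithful, simple-preserving functor $\kappa$.
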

In \cite{Vespa2}, we define a new family of functors of $\Fq$, named
the mixed functors and we decompose two particular functors of this
family: the functors \M{0,1}\ and \M{1,1}. We recall the following
description of these functors.
\begin{prop} \cite{Vespa2}
For $\alpha \in \{0,1 \}$, the functors $\M{\alpha,1}: \Tq
\rightarrow \E$ are defined by
$$\M{\alpha,1}(V)=\FF[S_V]$$
where $S_V=\{(v_1,v_2)\ | v_1 \in V, v_2 \in V,\ q(v_1+v_2)=
\alpha,\ B(v_1,v_2)=1\}$ and 
$$\M{\alpha,1}([V \xrightarrow{f} W \bot L \xleftarrow{i_W} W])[
(v_1,v_2)]=
\left\lbrace
\begin{array}{cc}
[(p_W \circ f(v_1), p_W \circ f (v_2))] &\mathrm{if\ } f(v_1+v_2) \in W\\
0 & \mathrm{otherwise}
\end{array}
\right.$$
where $p_W$ is the orthogonal projection.
\end{prop}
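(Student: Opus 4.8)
The content to establish is that the displayed assignments genuinely define a functor $\M{\alpha,1}\colon \Tq \to \E$: on objects $V \mapsto \FF[S_V]$ is automatic (it is the free $\FF$-vector space on a set), so the work is to check that the value of a morphism is a legitimate element of the target, that the formula depends only on the class of the representing diagram, and that identities and composites are respected. Throughout I would write $B$ for the symmetric bilinear form associated to $q$, recalling that over $\FF$ it is alternating, so $B(x,x)=0$ for every $x$; and I would use Remark \ref{1.2}, that every morphism of $\Tq$ has a representative $[V \xrightarrow{f} W \bot L \xleftarrow{i_W} W]$ with $f$ a morphism of $\Eq$, hence an isometric embedding.

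First I would check that, when $f(v_1+v_2) \in W$, the pair $(p_W f(v_1), p_W f(v_2))$ indeed lies in $S_W$. Writing $f(v_i) = w_i + \ell_i$ with $w_i = p_W f(v_i) \in W$ and $\ell_i \in L$, the hypothesis $f(v_1+v_2) \in W$ forces $\ell_1 + \ell_2 = 0$, so $w_1 + w_2 = f(v_1+v_2)$ and $\ell_1 = \ell_2$. As $f$ is an isometry, $q(w_1+w_2) = q(f(v_1+v_2)) = q(v_1+v_2) = \alpha$, giving the first defining condition. For the second, orthogonality of the sum $W \bot L$ gives $B(f(v_1),f(v_2)) = B(w_1,w_2)+B(\ell_1,\ell_2)$, and since $\ell_1=\ell_2$ the alternating property makes $B(\ell_1,\ell_2)=0$; hence $B(w_1,w_2)=B(f(v_1),f(v_2))=B(v_1,v_2)=1$, as required.

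Next I would establish independence of the chosen representative. The equivalence relation on diagrams is generated by the elementary relation $\mathcal R$ of Definition \ref{1.1}, so it suffices to treat two diagrams $V \xrightarrow{f} X_1 \xleftarrow{g} W$ and $V \xrightarrow{\theta f} X_2 \xleftarrow{\theta g} W$ linked by an isometry $\theta\colon X_1 \to X_2$ of $\Eq$ (I rename the connecting map $\theta$ to avoid clash with the parameter $\alpha\in\{0,1\}$). Here I would first reformulate the defining formula for an arbitrary diagram $V \xrightarrow{f} X \xleftarrow{g} W$: since $g$ is an isometric embedding of the nondegenerate space $W$, its image $g(W)$ is nondegenerate, so $X = g(W) \bot g(W)^\perp$, and composing the orthogonal projection $p_{g(W)}$ with $g^{-1}$ defines the map, recovering the stated formula when $g=i_W$. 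Because $\theta$ is an isometry it carries $g(W)$ isometrically onto $\theta g(W)$ and sends $g(W)^\perp$ into $(\theta g(W))^\perp$; thus $p_{\theta g(W)}\circ\theta = \theta\circ p_{g(W)}$, and the support condition is preserved, $f(v_1+v_2)\in g(W) \iff \theta f(v_1+v_2) \in \theta g(W)$. A short computation with $(\theta g)^{-1}\theta = g^{-1}$ on $g(W)$ then shows the two induced maps coincide, giving well-definedness.

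Finally I would check functoriality. The identity of $V$ is $[V \xrightarrow{\mathrm{Id}} V \xleftarrow{\mathrm{Id}} V]$, for which the support condition always holds and the projection is the identity, so $\M{\alpha,1}(\mathrm{Id})$ is the identity. The remaining point, which I expect to be the main obstacle, is compatibility with composition. Given composable morphisms represented by $V \xrightarrow{f} X \xleftarrow{g} W$ and $W \xrightarrow{h} Y \xleftarrow{k} U$, one forms the composite in $\Tq$ from the pseudo push-out of $X \xleftarrow{g} W \xrightarrow{h} Y$, and must verify that $\M{\alpha,1}$ of the composite equals the composite of the two induced maps. The difficulty is twofold: tracking how the orthogonal projections onto $g(W)$ and then onto $k(U)$ compose through the pseudo push-out, and, crucially, checking that the two conditional clauses match up, i.e. that the two-step support condition holds precisely when both factors are nonzero on the element $(v_1,v_2)$, so that the ``otherwise $0$'' cases agree. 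I would handle this by choosing representatives adapted to the pseudo push-out (as in Remark \ref{1.2}), reducing the pseudo push-out to an honest orthogonal decomposition, and then checking the equality on each basis element of $S_V$ by the same isometry-plus-alternating-form bookkeeping used above. Once composition is verified, $\M{\alpha,1}$ is a functor as asserted.
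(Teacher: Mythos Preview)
The paper does not prove this proposition at all: it is stated in the recollection section with a citation to \cite{Vespa2}, and no argument is given here. So there is no ``paper's own proof'' to compare against; the content is imported as a definition/fact from the earlier paper.

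That said, your verification that the assignment is a functor is essentially sound. The check that $(p_W f(v_1), p_W f(v_2))\in S_W$ when $f(v_1+v_2)\in W$ is correct, and the well-definedness argument using the generating relation $\mathcal R$ is the right strategy. The one place where your write-up is genuinely incomplete is the composition step: you correctly identify the issue (matching the two-step support condition with the support condition for the pseudo push-out composite, and tracking the projections), but you only describe what you would do rather than carrying it out. To finish, you would take representatives $[V\xrightarrow{f} W\bot L \xleftarrow{i_W} W]$ and $[W\xrightarrow{g} U\bot M \xleftarrow{i_U} U]$ and use that the composite is represented by $[V\xrightarrow{(g\bot\mathrm{Id}_L)\circ f} U\bot M\bot L \xleftarrow{i_U} U]$; then for $(v_1,v_2)\in S_V$ with $f(v_i)=w_i+\ell_i$ and $g(w_i)=u_i+m_i$, one checks that $(g\bot\mathrm{Id})f(v_1+v_2)\in U$ iff both $\ell_1=\ell_2$ and $m_1=m_2$, which is exactly the conjunction of the two individual support conditions, and that in that case the projections agree. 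This is routine once written down, but as it stands your argument is a sketch rather than a proof.
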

In \cite{Vespa2}, for a positive integer $n$, we defined subfunctors $L^n_{\alpha}$ of $\iota(\Lambda^n) \otimes
\kappa(iso_{(x,\alpha)})$, where $\Lambda^n$ is the $n$th exterior
power and we proved that these functors are simple. The functor $L^1_{\alpha}$ is equivalent to the functor
$\kappa(iso_{(x,\alpha)})$. We obtain the following result.

\begin{thm} \label{1.9} \cite{Vespa2}
Let $\alpha$ be an element in $\{0,1 \}$.
\begin{enumerate}
\item
The functor $\M{\alpha,1}$ is infinite.
\item
There exists a subfunctor $\m{\alpha,1}$ of $\M{\alpha,1}$ such that we have the following  short exact sequence
$$0 \rightarrow \m{\alpha,1} \rightarrow \M{\alpha,1} \rightarrow \m{\alpha,1} \rightarrow 0.$$
\item
The functor $\m{\alpha,1}$ is uniserial with unique composition series given by the decreasing filtration given by the subfunctors $k_d \m{\alpha,1}$ of $\m{\alpha,1}$:
$$\ldots \subset k_d \m{\alpha,1} \subset \ldots \subset k_1
\m{\alpha,1} \subset k_0 \m{\alpha,1}= \m{\alpha,1}.$$
\begin{enumerate}
\item
The head of $\m{\alpha,1}$ (i.e. $\m{\alpha,1} / k_1
\m{\alpha,1} $) is isomorphic to the functor  $\kappa(iso_{(x,\alpha)})$ where $iso_{(x,\alpha)}$ is a simple object in $\Gq$.
\item
For $d>0$ 
$$k_d \m{\alpha,1}/k_{d+1} \m{\alpha,1} \simeq L^{d+1}_\alpha $$
where $L^{d+1}_\alpha$ is a simple object of the category $\Fq$ that is neither in the image of
$\iota$ nor in the image of $\kappa$. 

The functor $L^{d+1}_\alpha$ is a subfunctor of $\iota(\Lambda^{d+1}) \otimes \kappa(iso_{(x,\alpha)})$, where $\Lambda^{d+1}$ is the $(d+1)$st exterior power functor.
\end{enumerate}

\end{enumerate}
\end{thm}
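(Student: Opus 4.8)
The plan is to reduce the statement to the combinatorics of the sets $S_V$ and to import into $\M{\alpha,1}$ the classical ``augmentation'' filtration of a group-algebra functor $U\mapsto\FF[U]$, whose associated graded is $\bigoplus_j\Lambda^j(U)$ (the same mechanism that governs the decomposition of $P^{\F}_{\FF}$).

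First I would settle (2). For each $V$ the transposition $\sigma(v_1,v_2)=(v_2,v_1)$ is an involution of $S_V$ with no fixed point (since $B(v,v)=0$ in characteristic two) which commutes with the structure maps of $\M{\alpha,1}$, because the conditions $q(v_1+v_2)=\alpha$, $B(v_1,v_2)=1$ and the formula $(v_1,v_2)\mapsto(p_Wf(v_1),p_Wf(v_2))$ are symmetric in $v_1,v_2$. Thus $1+\sigma$ is an endomorphism of $\M{\alpha,1}$ with $(1+\sigma)^2=0$, and as $\langle\sigma\rangle$ acts freely on the basis $S_V$ one gets $\ker(1+\sigma)=\mathrm{im}(1+\sigma)$ in each degree. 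Setting $\m{\alpha,1}:=\mathrm{im}(1+\sigma)$ produces the exact sequence $0\to\m{\alpha,1}\to\M{\alpha,1}\to\M{\alpha,1}/\m{\alpha,1}\to 0$, and $\M{\alpha,1}/\m{\alpha,1}\simeq\m{\alpha,1}$ because both identify with $\FF[S_V/\sigma]$ on $V$.

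For (3) I would use the $\sigma$-invariant surjection $S_V\to\{w\in V:q(w)=\alpha\}$, $(v_1,v_2)\mapsto v_1+v_2$. It induces a natural epimorphism $\m{\alpha,1}\simeq\FF[S_V/\sigma]\twoheadrightarrow\FF[\{w:q(w)=\alpha\}]\simeq\kappa(iso_{(x,\alpha)})$, whose kernel is defined to be $k_1\m{\alpha,1}$; this gives the head in (3)(a), with $L^1_\alpha=\kappa(iso_{(x,\alpha)})$. Over a vector $w$ with $q(w)=\alpha$ the fibre of the surjection is the affine space $A_w/\langle w\rangle$, where $A_w=\{v_1:B(v_1,w)=1\}$ has direction space $w^\perp$ and $\sigma$ acts on $A_w$ by translation by $w$; the relevant direction space is therefore $w^\perp/\langle w\rangle$, of dimension $\dim V-2$. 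Transporting the augmentation filtration of $\FF[A_w/\langle w\rangle]$ fibrewise yields the decreasing filtration $k_d\m{\alpha,1}$ (the ``degree $\ge d$'' part), whose $d$-th subquotient ($d\ge 1$) is assembled from $\Lambda^d(w^\perp/\langle w\rangle)$ and the vector $w$ by wedging with $w$, producing a natural transformation to $\iota(\Lambda^{d+1})\otimes\kappa(iso_{(x,\alpha)})$. The point to prove is that the image of this transformation is exactly the simple subfunctor $L^{d+1}_\alpha$ recalled above: the two-dimensional drop from $w^\perp/\langle w\rangle$ together with the extra dimension absorbed by the condition $B(v_1,w)=1$ should cut the tensor product down to precisely $L^{d+1}_\alpha$. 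I expect this identification to be the main obstacle, as it requires a precise handle on $\iota(\Lambda^{d+1})\otimes\kappa(iso_{(x,\alpha)})$ and on its simple subfunctor, and a matching of the ``$\Lambda$ of the reduction, wedged with $w$'' construction with the definition of $L^{d+1}_\alpha$ from \cite{Vespa2}; by contrast, separatedness $\bigcap_d k_d\m{\alpha,1}=0$ is automatic, the augmentation ideal of a finite group algebra being nilpotent.

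Finally, (3)(c) and (1). The subquotients $k_d\m{\alpha,1}/k_{d+1}\m{\alpha,1}\simeq L^{d+1}_\alpha$ are simple (recalled above) and pairwise non-isomorphic, since the smallest quadratic space on which $L^{n}_\alpha$ is nonzero grows with $n$. I would then show that each $k_d\m{\alpha,1}$ is generated, modulo $k_{d+1}\m{\alpha,1}$, by any element lifting a nonzero element of $L^{d+1}_\alpha$ --- using that each augmentation layer of $\FF[A]$ is generated over the next one by the structure maps coming from lower-dimensional affine spaces --- so that $k_{d+1}\m{\alpha,1}$ is the unique maximal subfunctor of $k_d\m{\alpha,1}$. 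With all layers simple this forces the subfunctor lattice of $\m{\alpha,1}$ to be the chain $\cdots\subset k_2\m{\alpha,1}\subset k_1\m{\alpha,1}\subset\m{\alpha,1}$, i.e.\ $\m{\alpha,1}$ is uniserial with the asserted composition series. Statement (1) is then immediate: $\m{\alpha,1}$, hence $\M{\alpha,1}$, has the infinitely many pairwise non-isomorphic composition factors $L^{d+1}_\alpha$ ($d\ge 0$), so it is not finite.
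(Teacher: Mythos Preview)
This theorem is not proved in the present paper: it appears in Section~\ref{1} (``some recollections'') as a citation from \cite{Vespa2}, with no argument given here. There is therefore no proof in this paper against which to compare your proposal.

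That said, your outline is broadly reasonable and is in the spirit of what one expects from \cite{Vespa2}. The construction of $\m{\alpha,1}$ via the involution $\sigma$ and the identification of the head via $(v_1,v_2)\mapsto v_1+v_2$ are the natural moves. Your fibrewise augmentation filtration is also the right shape. The genuine work, as you correctly flag, is the identification of the layers with the simple functors $L^{d+1}_\alpha$: this is not just a matter of matching vector-space dimensions but of checking the $\Tq$-functoriality of your ``wedge with $w$'' map into $\iota(\Lambda^{d+1})\otimes\kappa(iso_{(x,\alpha)})$ and then invoking the simplicity results of \cite{Vespa2}. You should also be careful with the uniseriality step: knowing that each layer is simple and that the $L^n_\alpha$ are pairwise non-isomorphic does not by itself force the subfunctor lattice to be a chain; you need the stronger statement that $k_{d+1}\m{\alpha,1}$ is the \emph{radical} of $k_d\m{\alpha,1}$, which is what your generation argument is aiming at but which would need to be made precise in the $\Fq$ setting rather than just fibrewise.
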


\section{Filtration of the standard projective functors $P_V$ of
  $\Fq$} \label{2}

In this section, we define a filtration of the standard projective
functors $P_V$ of $\Fq$. This construction gives rise to an essential
tool to obtain, in section \ref{3}, the direct decompositions of the projective objects $P_{H_0}$ and $P_{H_1}$
of $\Fq$, into indecomposable summands. 

After defining this filtration, we will deduce general
results about the projective $P_V$ of $\Fq$. In Theorem
\ref{2.6} we prove that the rank zero part is a direct summand of
$P_V$ and we identify this functor. This result allows us to prove that
$\iota(\F)$ is a thick subcategory of $\Fq$. We will also show that
the top quotient of this filtration is isomorphic to $\kappa(iso_V)$, where $iso_V$ is the isotropic functor.

\subsection{Definition of the filtration}
We recall that a morphism in $\Tq$ from $V$ to $W$, where $V$ and $W$
are nondegenerate quadratic spaces, is represented by a diagram 
$V \rightarrow X \leftarrow W.$

\begin{defi} \label{2.1}
A morphism $[V \rightarrow X \leftarrow W]$ in $\Tq$ has rank equal to $i$ if
the pullback in $\Eqd$ of the diagram $V \rightarrow X \leftarrow W$
is a quadratic space of dimension $i$.
\end{defi}

\begin{nota} 
We denote by $\mathrm{Hom}^{(i)}_{\Tq}(V,W)$ the subset of $\mathrm{Hom}_{\Tq}(V,W)$ of morphisms
 of rank less than or equal to $i$.
\end{nota}
We have the following proposition:
\begin{prop} \label{2.3}
For $W$ an object in $\Tq$, the following subvector space of $P_V(W)$:
$$P_V^{(i)}(W)=\FF[\mathrm{Hom}^{(i)}_{\Tq}(V,W)]$$
defines a subfunctor $P_V^{(i)}$ of $P_V$.
\end{prop}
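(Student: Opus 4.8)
The plan is to check directly that the assignment $W \mapsto P_V^{(i)}(W) = \FF[\mathrm{Hom}^{(i)}_{\Tq}(V,W)]$ is stable under the action of morphisms of $\Tq$; since each $P_V^{(i)}(W)$ is by construction a subvector space of $P_V(W)$, and $P_V$ is a functor, the only thing to verify is that for any morphism $\phi = [W \xrightarrow{g} Y \xleftarrow{h} Z]$ in $\Tq$ the induced map $P_V(\phi) \colon \FF[\mathrm{Hom}_{\Tq}(V,W)] \to \FF[\mathrm{Hom}_{\Tq}(V,Z)]$ sends $\FF[\mathrm{Hom}^{(i)}_{\Tq}(V,W)]$ into $\FF[\mathrm{Hom}^{(i)}_{\Tq}(V,Z)]$. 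Because $P_V(\phi)$ is the linearization of post-composition $\psi \mapsto \phi \circ \psi$ on $\mathrm{Hom}_{\Tq}$, it suffices to show the rank of a morphism cannot increase under composition, i.e.
$$\mathrm{rk}(\phi \circ \psi) \le \mathrm{rk}(\psi) \quad \text{for all } \psi \in \mathrm{Hom}_{\Tq}(V,W),\ \phi \in \mathrm{Hom}_{\Tq}(W,Z).$$
In fact it is enough to prove $\mathrm{rk}(\phi \circ \psi) \le \min(\mathrm{rk}(\psi), \mathrm{rk}(\phi))$, but the bound by $\mathrm{rk}(\psi)$ is what we need here.

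The key step is therefore a pseudo-pullback/pullback computation in $\Eqd$. Recall $\mathrm{rk}(\psi)$ is the dimension of the pullback in $\Eqd$ of $V \xrightarrow{f} X \xleftarrow{k} W$ representing $\psi$, and the composite $\phi \circ \psi$ is computed using the pseudo push-out of $X \xleftarrow{k} W \xrightarrow{g} Y$: one forms $X \to P \leftarrow Y$ with $P$ the pseudo push-out, and $\phi \circ \psi$ is represented by $V \to P \leftarrow Z$ (composing the relevant legs). I would then identify the pullback of $V \to P \leftarrow Z$ with a subspace of the pullback of $V \xrightarrow{f} X \xleftarrow{k} W$: an element of the former is (the image of) a pair $(v, z)$ agreeing in $P$; pushing $z$ through $Y$ and using that $W \to Y \to P$ and $W \to X \to P$ agree, one extracts from such data a pair $(v,w)$ agreeing in $X$, and this assignment is linear and injective on the pullback. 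Thus the pullback for $\phi\circ\psi$ injects into the pullback for $\psi$, giving the inequality on dimensions. The only subtlety is that $P$ is a \emph{pseudo} push-out rather than an honest one, so the comparison of pullbacks must be done with the explicit model of the pseudo push-out from Definition \ref{1.1} (equivalently, using the representation of morphisms from Remark \ref{1.2}, writing $\psi$ as $V \to W \bot W' \xleftarrow{i_W} W$ and computing ranks in terms of intersections of subspaces inside the orthogonal sum).

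I expect the main obstacle to be precisely the bookkeeping with the pseudo push-out: verifying that the degenerate quadratic form on the pullback behaves well and that no dimension is spuriously gained when passing through $P$. Using the normal form of Remark \ref{1.2} should reduce this to a concrete statement about subspaces of $V \bot (\text{extra summands})$, where the rank becomes $\dim$ of an explicit intersection, and then monotonicity under composition is a transparent linear-algebra fact. Once $\mathrm{rk}(\phi\circ\psi)\le\mathrm{rk}(\psi)$ is established, functoriality of $P_V^{(i)}$ — compatibility with identities and with composition — is inherited from $P_V$, so $P_V^{(i)} \subset P_V$ is a subfunctor and the proof is complete.
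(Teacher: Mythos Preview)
Your proposal is correct and follows essentially the same approach as the paper: reduce to showing that rank does not increase under composition, then observe that the pullback computing the rank of $\phi\circ\psi$ injects into the pullback computing the rank of $\psi$. The paper streamlines your ``bookkeeping with the pseudo push-out'' into a single commutative diagram displaying the two pullbacks $P'\to P$ and invokes the fact that all morphisms in $\Eqd$ are injective to conclude $\dim P'\le\dim P\le i$; no explicit linear-algebra computation with the normal form of Remark \ref{1.2} is needed.
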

\begin{proof}
It is sufficient to verify that for all morphisms $f=[W \rightarrow Y
\leftarrow Z]$ of $\Tq$ and $g=[V \rightarrow X
\leftarrow W]$ of $\mathrm{Hom}^{(i)}_{\Tq}(V,W)$, the composition  $f
\circ g$ has rank less than or equal to $i$. The composition $f \circ g$ is represented by the following commutative diagram:

$$\xymatrix{
P' \ar[rr] \ar[d] & & Z \ar[d]\\
P \ar[r] \ar[d] & W \ar[r] \ar[d] & Y \ar[d] \\
V \ar[r] & X \ar[r] & X \underset{W}{\bot} Y  
}$$
where $P$ and $P'$ are the pullbacks and $X \underset{W}{\bot} Y$
is the pseudo push-out defined in \cite{math.AT/0606484}. Consequently: 
$f \circ g=[V \rightarrow X
\underset{W}{\bot} Y \leftarrow Z].$
Since $[V \rightarrow X
\leftarrow W]$ is an element of $P_V^{(i)}(W)$, we know that the
dimension of $P$ is less than or equal to $i$. We deduce from the
injectivity of the morphisms of
$\Eqd$, that $P'$ has dimension smaller than or equal to $i$.
\end{proof}
The following lemma is a straightforward consequence of Definition \ref{2.1}.
\begin{lm}
There exists a natural equivalence: $P_V^{(\mathrm{dim}(V))}\simeq P_V.$
\end{lm}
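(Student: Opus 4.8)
The plan is to reduce everything to a dimension count on pullbacks. Concretely, I will show that for every object $W$ of $\Tq$ the inclusion $\mathrm{Hom}^{(\dim V)}_{\Tq}(V,W) \subseteq \mathrm{Hom}_{\Tq}(V,W)$ is an equality. Once this is known, Definition of $P_V^{(i)}$ (Proposition \ref{2.3}) gives $P_V^{(\dim V)}(W) = \FF[\mathrm{Hom}_{\Tq}(V,W)] = P_V(W)$ as subspaces, for every $W$; since $P_V^{(\dim V)}$ and $P_V$ are subfunctors of $P_V$ taking the same value on each object, the identity maps assemble into the desired natural equivalence $P_V^{(\dim V)} \simeq P_V$.

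So it remains to check that every morphism of $\Tq$ from $V$ to $W$ has rank at most $\dim(V)$. By Definition \ref{2.1}, the rank of a morphism represented by a diagram $V \xrightarrow{f} X \xleftarrow{g} W$ is the dimension of the pullback $P$ of this diagram in $\Eqd$. I would recall the explicit description of this pullback: $P$ is the fiber product $\{(v,w) \in V \times W \mid f(v) = g(w)\}$, equipped with the quadratic form obtained by restricting $q_V$ along the first projection (this is consistent with restricting $q_W$ along the second projection, because $f$ and $g$ preserve the forms, so $q_V(v) = q_X(f(v)) = q_X(g(w)) = q_W(w)$ on $P$), together with the projections $\pi_V \colon P \to V$ and $\pi_W \colon P \to W$.

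The key point is then that $\pi_V$ is a morphism of $\Eqd$: it preserves the quadratic form by construction, and it is injective because $f$ is injective, so $(v,w), (v,w') \in P$ forces $g(w) = f(v) = g(w')$ and hence $w = w'$ by injectivity of $g$ (symmetrically, $\pi_W$ is injective). In particular the injective linear map $\pi_V \colon P \hookrightarrow V$ yields $\dim(P) \le \dim(V)$, i.e.\ the morphism has rank at most $\dim(V)$. Since the chosen representative $V \xrightarrow{f} X \xleftarrow{g} W$ of an arbitrary morphism was arbitrary, this proves $\mathrm{Hom}^{(\dim V)}_{\Tq}(V,W) = \mathrm{Hom}_{\Tq}(V,W)$ and completes the argument. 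There is essentially no obstacle here — the whole content is that morphisms in $\Eqd$ are injective, hence the pullback, receiving such a morphism into $V$, cannot have dimension exceeding $\dim(V)$; the only step deserving an explicit line is the verification that the pullback projection $P \to V$ is indeed a morphism of $\Eqd$, which is immediate.
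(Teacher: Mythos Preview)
Your proof is correct and follows the same idea the paper has in mind: the paper simply states that the lemma is ``a straightforward consequence of Definition \ref{2.1}'', and your argument unpacks exactly this --- the pullback injects into $V$ because morphisms of $\Eqd$ are injective, so its dimension is at most $\dim(V)$. You have written out more detail than the paper does, but the approach is the same.
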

We deduce the following proposition.
\begin{prop}
The functors $P_V^{(i)}$, for $i=0, \ldots, \mathrm{dim}(V)$,
define an increasing filtration of the functor $P_V$.
\end{prop}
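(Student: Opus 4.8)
The plan is to observe that the statement is formal: it follows by assembling Definition \ref{2.1}, Proposition \ref{2.3} and the lemma just proved, with no new construction required. First I would invoke Proposition \ref{2.3}, which already asserts that each $P_V^{(i)}$ is a genuine subfunctor of $P_V$, with structure maps obtained by restricting those of $P_V$; this settles in advance every question of naturality, so the only thing left to verify is the chain of inclusions of these subfunctors.

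For the inclusions $P_V^{(i)} \subseteq P_V^{(i+1)}$ inside $P_V$, I would argue objectwise. Fix an object $W$ of $\Tq$. By Definition \ref{2.1}, a morphism $[V \rightarrow X \leftarrow W]$ whose pullback in $\Eqd$ has dimension $\leq i$ has a fortiori pullback of dimension $\leq i+1$; hence $\mathrm{Hom}^{(i)}_{\Tq}(V,W) \subseteq \mathrm{Hom}^{(i+1)}_{\Tq}(V,W)$ as subsets of $\mathrm{Hom}_{\Tq}(V,W)$. Applying the free $\FF$-vector space functor gives $P_V^{(i)}(W) \subseteq P_V^{(i+1)}(W)$, and since both subspaces carry the restriction maps inherited from $P_V$, this objectwise inclusion is automatically a morphism of subfunctors. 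Thus $P_V^{(0)} \subseteq P_V^{(1)} \subseteq \cdots \subseteq P_V^{(\mathrm{dim}(V))}$.

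It then remains only to close the two extremities. At the bottom, $P_V^{(0)}$ is well defined because the rank of a morphism, being the dimension of a quadratic space, is a nonnegative integer; one may add that $P_V^{(0)} \neq 0$, since for every $W$ the class $[V \hookrightarrow W \bot V \hookleftarrow W]$ of the canonical inclusions has pullback $W \cap V = 0$ and therefore lies in $\mathrm{Hom}^{(0)}_{\Tq}(V,W)$. At the top, the preceding lemma identifies $P_V^{(\mathrm{dim}(V))}$ with $P_V$. Putting everything together yields the increasing filtration
$$0 \subset P_V^{(0)} \subseteq P_V^{(1)} \subseteq \cdots \subseteq P_V^{(\mathrm{dim}(V))} = P_V$$
announced in the introduction. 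I do not anticipate any genuine obstacle: the single point that deserves a line of care is checking that the objectwise set-theoretic inclusions are compatible with the functor structure, and this is immediate once everything is expressed in terms of restrictions of the structure maps of $P_V$.
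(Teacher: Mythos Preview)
Your proof is correct and follows essentially the same approach as the paper's: both argue that the objectwise inclusions $P_V^{(i)}(W)\subset P_V^{(i+1)}(W)$ are clear from the definition of rank, and then invoke Proposition~\ref{2.3} to conclude that these are inclusions of subfunctors. Your version is more detailed (in particular you spell out why the objectwise inclusions are automatically natural, and you comment on the two extremities), but the underlying argument is the same.
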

\begin{proof}
The inclusion of vector spaces $P_V^{(i)}(W) \subset P_V^{(i+1)}(W)$
is clear, for $W$ an object in $\Tq$. Consequently, $P_{V}^{(i)}$ is a
subfunctor of $P_{V}^{(i+1)}$ by the proposition \ref{2.3}.
\end{proof}

\subsection{Extremities of the filtration}

In the previous section, we have obtained, for all objects $V$ of
$\Tq$, the following filtration of the functor $P_V$:
$$0 \subset P_V^{(0)} \subset P_V^{(1)} \subset \ldots \subset
P_V^{(\mathrm{dim}(V)-1)} \subset P_V^{(\mathrm{dim}(V))}=P_V.$$
The aim of this section is to study the two extremities of this
filtration, namely, the functor $P_V^{(0)}$ and the quotient
$P_V/P_V^{(\mathrm{dim}(V)-1)}$.

\subsubsection{The functor $P_V^{(0)}$} \label{PV0}
For $V$ an object in $\Tq$, we recall that the functor  $P^{\F}_{\epsilon(V)}$ of $\F$ defined by  $P^{\F}_{\epsilon(V)}(-)=\FF
[\mathrm{Hom}_{\E^f}(\epsilon(V),-)]$, where $\epsilon : \Tq
\rightarrow \E^f$  is the forgetful functor of $\Fq$, is  projective, by the Yoneda lemma.
The aim of this paragraph is to prove the following theorem:
\begin{thm} \label{2.6}
Let $V$ be an object of $\Tq$.
\begin{enumerate}
\item \label{PV0-1}
There is a natural equivalence: $P_V^{(0)} \simeq \iota(P^{\F}_{\epsilon(V)})$,
where $\iota : \F \rightarrow \Fq$ is the functor given in
Theorem \ref{1.3}.
\item \label{PV0-2}
The functor $P_V^{(0)}$ is a direct summand of $P_V$.
\end{enumerate}
\end{thm}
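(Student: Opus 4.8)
The plan is to prove both statements at once by constructing an explicit natural transformation that splits off $P_V^{(0)}$ from $P_V$ and simultaneously identifies the summand with $\iota(P^{\F}_{\epsilon(V)})$. First I would unwind what a rank-zero morphism $[V \to X \leftarrow W]$ in $\Tq$ is: its pullback in $\Eqd$ has dimension $0$, which means the images of $V$ and $W$ in $X$ intersect only in $0$. Using Remark \ref{1.2}, such a morphism has a representative of the form $V \xrightarrow{g} W \bot W' \xleftarrow{i_W} W$, and rank zero forces $g(V) \cap W = 0$; after composing with the pseudo push-out one can normalize so that $W'$ is an orthogonal complement receiving $g(V)$ injectively. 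The upshot I would aim to establish is a natural bijection between $\mathrm{Hom}^{(0)}_{\Tq}(V,W)$ and $\mathrm{Hom}_{\E^f}(\epsilon(V), \epsilon(W))$, sending $[V \to X \leftarrow W]$ to $p_g \circ \mathcal{O}(f) = \epsilon([V \to X \leftarrow W])$, i.e. the rank-zero morphisms are exactly those detected faithfully by the forgetful functor $\epsilon$. Linearizing this bijection gives the natural equivalence $P_V^{(0)} \simeq \iota(P^{\F}_{\epsilon(V)})$ of part \ref{PV0-1}, once one checks compatibility with composition (which is essentially the fact that $\epsilon$ is a functor, together with the observation that the composite of two rank-zero morphisms is rank zero — a special case of the rank inequality proved in Proposition \ref{2.3}).

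For part \ref{PV0-2}, the idea is to produce a natural retraction $r : P_V \to P_V^{(0)}$ of the inclusion. Since $P_V = \FF[\mathrm{Hom}_{\Tq}(V,-)]$, it suffices to define $r$ on basis elements: given an arbitrary morphism $[V \xrightarrow{f} X \xleftarrow{g} W]$, I would send it to the rank-zero morphism determined by $\epsilon([V \to X \leftarrow W]) \in \mathrm{Hom}_{\E^f}(\epsilon(V),\epsilon(W))$ under the bijection from part \ref{PV0-1}. Concretely, $r$ is the composite $P_V \twoheadrightarrow \iota(P^{\F}_{\epsilon(V)}) \xrightarrow{\sim} P_V^{(0)} \hookrightarrow P_V$, where the first map is induced by $\epsilon$ on hom-sets (it is the natural surjection coming from the fact that $\epsilon$ is full, Theorem \ref{1.3}). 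That $r$ restricts to the identity on $P_V^{(0)}$ is precisely the statement that $\epsilon$ is the identity on rank-zero morphisms, which is the bijection of part \ref{PV0-1}. The only thing left is naturality of $r$ in $W$, i.e. that $r$ commutes with the action of morphisms $[W \to Y \leftarrow Z]$ of $\Tq$; this reduces to checking that forming the pseudo push-out and then applying $\epsilon$ agrees with first applying $\epsilon$ and then composing in $\E^f$ — again just functoriality of $\epsilon$.

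I expect the main obstacle to be the careful bookkeeping in identifying rank-zero morphisms with linear maps: one must verify that the equivalence relation $\mathcal{R}$ defining $\mathrm{Hom}_{\Tq}$ collapses exactly enough so that the map to $\mathrm{Hom}_{\E^f}(\epsilon(V),\epsilon(W))$ is both well-defined and injective on the rank-zero part, and that every linear map $\epsilon(V) \to \epsilon(W)$ lifts to a (unique, up to $\mathcal{R}$) rank-zero morphism — the natural candidate being $[V \xrightarrow{(\text{incl}, \phi)} W' \bot W \xleftarrow{i_W} W]$ for a suitable nondegenerate $W'$ containing an isometric copy of the domain, where $\phi$ encodes the linear map. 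Getting the quadratic-form constraints on $W'$ right (it must be nondegenerate so the target lies in $\Eq$, and the map out of $V$ must be an isometry onto its image) is the delicate point; once the bijection is nailed down, the splitting in part \ref{PV0-2} and the compatibility with composition needed for part \ref{PV0-1} follow formally from the functoriality of $\epsilon$ and the rank estimate of Proposition \ref{2.3}.
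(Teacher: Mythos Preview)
Your overall strategy matches the paper's: establish a bijection $\mathrm{Hom}^{(0)}_{\Tq}(V,W) \xrightarrow{\sim} \mathrm{Hom}_{\E^f}(\epsilon(V),\epsilon(W))$ induced by $\epsilon$, linearize to get part~(\ref{PV0-1}), and use the resulting idempotent (your retraction $r$) for part~(\ref{PV0-2}). The paper handles surjectivity by an explicit inductive construction of a rank-zero lift of any linear map, which is what your sketch $[V \xrightarrow{(\mathrm{incl},\phi)} W' \bot W \leftarrow W]$ is gesturing at.

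There is, however, one substantive missing idea. You describe injectivity as ``careful bookkeeping'' about the relation $\mathcal{R}$, but it is not: it requires Witt's extension theorem for possibly \emph{degenerate} subspaces of a nondegenerate quadratic space (Theorem~\ref{2.12}). Concretely, given two rank-zero representatives $T = [V \xrightarrow{\alpha} W \bot W' \xleftarrow{i_W} W]$ and $T' = [V \xrightarrow{\alpha'} W \bot W'' \xleftarrow{i'_W} W]$ with $p_W \circ \alpha = p'_W \circ \alpha'$, the $W'$- and $W''$-components of $\alpha,\alpha'$ determine linearly independent families spanning subspaces $\underline{W'} \subset W'$ and $\underline{W''} \subset W''$, and the constraints from $\alpha,\alpha'$ being isometries force the obvious linear map $\underline{W'} \to \underline{W''}$ to be an isometry of (generally degenerate) quadratic spaces. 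To conclude $T = T'$ one must extend this to an isometry of the nondegenerate space $W' \bot W''$, and that is precisely Witt's theorem; there is no canonical normalization of $W'$ that lets you avoid it. So the ``delicate point'' is not only building the lift for surjectivity, but recognizing that injectivity rests on a nontrivial structural theorem about quadratic forms rather than on unwinding the equivalence relation.
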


Before proving this result, we give the following useful characterization of
the morphisms of rank zero, which is a straightforward consequence of
the definition of the rank of a morphism.
\begin{lm} \label{2.7}
Let $V$ be an object in $\Tq$. A morphism $T=[ V
\xrightarrow{\alpha} W \bot W' \xleftarrow{i_W} W]$ has rank zero if
and only if $ p_{W'} \circ \alpha$ is an injective linear map, where $p_{W'}$ is the orthogonal projection from $W \bot W'$ to $W'$.
\end{lm}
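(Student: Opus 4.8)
I want to prove Lemma \ref{2.7}: a morphism $T=[V\xrightarrow{\alpha}W\bot W'\xleftarrow{i_W}W]$ has rank zero if and only if $p_{W'}\circ\alpha$ is injective. By Definition \ref{2.1}, the rank of $T$ is the dimension of the pullback in $\Eqd$ of the diagram $V\xrightarrow{\alpha}W\bot W'\xleftarrow{i_W}W$, so the whole proof is just a direct identification of this pullback. The approach is to compute the pullback explicitly as a set and then translate the dimension-zero condition into a statement about $\alpha$.

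First I would recall that since $\Eqd$ has pullbacks and the morphisms are injective linear maps preserving the (possibly degenerate) quadratic form, the pullback $P$ of $V\xrightarrow{\alpha}W\bot W'\xleftarrow{i_W}W$ is, as a vector space, $\{v\in V\ :\ \alpha(v)\in i_W(W)\}=\alpha^{-1}(W\bot 0)$, equipped with the restriction of the quadratic form of $V$ (this is the form-theoretic pullback in $\Eqd$, and one checks the universal property is satisfied since $i_W$ is an inclusion). Then I would observe that the condition $\alpha(v)\in W\bot 0$ is exactly $p_{W'}(\alpha(v))=0$, i.e. $v\in\ker(p_{W'}\circ\alpha)$. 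Hence $P\cong\ker(p_{W'}\circ\alpha)$ as quadratic spaces, and in particular $\mathrm{rk}(T)=\dim\ker(p_{W'}\circ\alpha)$.

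The conclusion is then immediate: $\mathrm{rk}(T)=0$ if and only if $\ker(p_{W'}\circ\alpha)=0$ if and only if $p_{W'}\circ\alpha$ is injective, which is what we wanted. I should also note for cleanliness that this description of $T$ via Remark \ref{1.2} (every morphism of $\Tq$ is represented in the form $V\to W\bot W'\xleftarrow{i_W}W$) is exactly the shape the statement assumes, so no loss of generality is incurred, and the rank is well-defined on equivalence classes because equivalent diagrams have isomorphic pullbacks.

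**Main obstacle.** There is essentially no difficulty here; the only point requiring a little care is checking that the naive set-theoretic pullback $\alpha^{-1}(W\bot 0)$ with the restricted quadratic form genuinely is the categorical pullback in $\Eqd$ — that is, verifying the universal property and, in particular, that the resulting form is the correct one (it is, because both projections out of $P$ are restrictions of $\alpha$ and of the identity, and these must agree as maps into $W\bot W'$). Once that is granted — and it is the standard construction of pullbacks in a category of spaces-with-structure and injective structure-preserving maps — the equivalence falls out in one line. So I would keep the proof to a couple of sentences, as the excerpt already signals ("a straightforward consequence of the definition of the rank").
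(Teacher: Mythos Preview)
Your argument is correct and is exactly the computation the paper has in mind: the paper gives no proof beyond calling the lemma ``a straightforward consequence of the definition of the rank of a morphism,'' and your identification of the pullback with $\ker(p_{W'}\circ\alpha)$ is precisely that straightforward consequence.
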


For $V$ and $W$ objects in $\Tq$, the forgetful functor $\epsilon: \Tq \rightarrow \E^f$ gives rise to a
map \linebreak $\mathrm{Hom}_{\Tq}(V,W) \rightarrow \mathrm{Hom}_{\E^f}(\epsilon (V),
\epsilon (W))$. By passage to the vector spaces freely generated by these sets and by functoriality of $\epsilon$, we deduce the existence of a morphism from $P_V$ to $\iota(P^{\F}_{\epsilon(V)})$. As the functors $P_V^{(0)}$ are subfunctors of $P_V$, we obtain a morphism $f$ from $P_V^{(0)}$ to $\iota(P^{\F}_{\epsilon(V)})$. Consequently, to prove Theorem \ref{2.6}, it is sufficient to prove
the following proposition.

\begin{prop} \label{2.10}
The map $P_V^{(0)}(W) \xrightarrow{f_W}
P^{\F}_{\epsilon(V)}(\epsilon(W))$ is an isomorphism for $V$ and $W$
objects in $\Tq$.
\end{prop}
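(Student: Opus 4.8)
The plan is to show $f_W$ is both injective and surjective, by exhibiting an explicit combinatorial description of the fibres of the forgetful map $\mathrm{Hom}_{\Tq}(V,W) \to \mathrm{Hom}_{\E^f}(\epsilon(V),\epsilon(W))$ and checking that the rank-zero morphisms pick out exactly one element in each nonempty fibre. First I would use Remark \ref{1.2} to write a general element of $\mathrm{Hom}_{\Tq}(V,W)$ as $T=[V \xrightarrow{\alpha} W \bot W' \xleftarrow{i_W} W]$, and recall from the definition of $\epsilon$ that $\epsilon(T) = p_W \circ \mathcal{O}(\alpha) \in \mathrm{Hom}_{\E^f}(\epsilon(V),\epsilon(W))$, where $p_W$ is the orthogonal projection onto $W$. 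So $f_W$ sends the basis element $[T]$ to the basis element $p_W \circ \mathcal{O}(\alpha)$ of $\FF[\mathrm{Hom}_{\E^f}(\epsilon(V),\epsilon(W))]$.

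For surjectivity, given any linear map $\varphi \colon \epsilon(V) \to \epsilon(W)$, I would construct a rank-zero lift. The idea is to take $W'$ to be a nondegenerate quadratic space large enough that $V$ embeds isometrically into $W \bot W'$ with first component $\varphi$: concretely, one builds a second component $\psi \colon V \to W'$ which is injective (so that $p_{W'} \circ \alpha$ is injective, hence $T$ has rank zero by Lemma \ref{2.7}) and which corrects the quadratic form, i.e. so that $q_{W'}(\psi(v)) = q_V(v) - q_W(\varphi(v))$ and the associated bilinear identity holds. Since $\FF$-quadratic forms on finite-dimensional spaces can always be enlarged to nondegenerate ones and any injective quadratic map into a suitable space exists, such $\psi$ can be found (taking $W'$ of dimension $\dim V$ plus a correction to ensure nondegeneracy and even dimension). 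Then $\alpha = (\varphi,\psi) \colon V \to W \bot W'$ is an isometric embedding, $T=[V \xrightarrow{\alpha} W\bot W' \xleftarrow{i_W} W]$ is a rank-zero morphism, and $f_W([T]) = \varphi$.

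For injectivity, the key point is that two rank-zero morphisms with the same image under $\epsilon$ must be equal in $\mathrm{Hom}_{\Tq}(V,W)$. Suppose $T_1=[V \xrightarrow{(\varphi,\psi_1)} W\bot W'_1 \xleftarrow{i_W} W]$ and $T_2 = [V \xrightarrow{(\varphi,\psi_2)} W\bot W'_2 \xleftarrow{i_W} W]$ are rank-zero with the same first component $\varphi$. I would produce a common quotient object realizing the equivalence relation $\mathcal{R}$ of Definition \ref{1.1}: because $\psi_1$ and $\psi_2$ are injective (Lemma \ref{2.7}) and $W'_i$ is spanned, together with $W$, by the image of $V$ — more precisely, the relevant subobject of $W\bot W'_i$ generated by $W$ and the image of $V$ is, via $\psi_i$, an isometric copy of the same pushout-type object — one can find isometries from these subobjects to a common $X$ compatible with $\alpha_i$ and $i_W$, which by the $\mathcal{R}$-relation forces $[T_1]=[T_2]$. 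Since $f_W$ carries the basis $\mathrm{Hom}^{(0)}_{\Tq}(V,W)$ bijectively onto the basis $\mathrm{Hom}_{\E^f}(\epsilon(V),\epsilon(W))$, it is an isomorphism of vector spaces, and naturality in $W$ has already been arranged, so this proves Proposition \ref{2.10} and hence Theorem \ref{2.6}. The main obstacle I anticipate is the injectivity step: making the identification of the two lifts precise requires carefully tracking the equivalence relation generated by $\mathcal{R}$ (which is not a priori symmetric, so one may need to pass through an intermediate diagram dominating both), and verifying that the quadratic-form data really does match up so that the mediating maps are genuine morphisms of $\Eq$.
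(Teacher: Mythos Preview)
Your approach is essentially the same as the paper's: surjectivity via an explicit rank-zero lift of an arbitrary linear map, and injectivity by showing that two rank-zero morphisms with the same $W$-component are equal in $\Tq$ by producing a mediating isometry. The paper's surjectivity argument is an explicit induction on $\tfrac{1}{2}\dim V$ building exactly the kind of corrector $\psi$ you describe.

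The gap is precisely where you flag it. For injectivity you have two injective maps $\psi_1\colon V\to W'_1$ and $\psi_2\colon V\to W'_2$, and since $(\varphi,\psi_i)$ preserves the quadratic form, both $\psi_i(V)$ carry the same (possibly degenerate) quadratic form $v\mapsto q_V(v)+q_W(\varphi(v))$. So there is an isometry $\underline{f}\colon \psi_1(V)\to \psi_2(V)$ of degenerate subspaces. But the equivalence relation $\mathcal{R}$ requires a morphism of $\Eq$, i.e.\ an isometry of the ambient \emph{nondegenerate} space, and extending $\underline{f}$ to such a map is not automatic. The paper handles this by embedding both $W'_1$ and $W'_2$ into the nondegenerate space $W'_1\bot W'_2$ and invoking Witt's theorem for degenerate subspaces (Theorem~\ref{2.12}): any isometry between two possibly degenerate subspaces of a nondegenerate space extends to an isometry of the whole space. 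This yields an isometry $f\colon W'_1\bot W'_2\to W'_1\bot W'_2$ restricting to $\underline{f}$, hence $\mathrm{Id}_W\bot f$ witnesses the relation $T_1\,\mathcal{R}\,T_2$ after passing to the common target $W\bot W'_1\bot W'_2$. Without Witt's theorem (or an equivalent extension result) your injectivity argument does not close; with it, your proof is complete and matches the paper's.
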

The surjectivity of $f_W$ relies on the following lemma, which is an improved version of the fullness of the forgetful
functor $\epsilon$ given in \cite{math.AT/0606484}.
\begin{lm}
Let $(V,q_V)$ and $(W,q_W)$ be two objects of $\Tq$ and \linebreak $f \in
\mathrm{Hom}_{\E^f}(\epsilon(V,q_V), \epsilon (W,q_W))$ a linear map, then there exists a morphism \linebreak $T= [V
\xrightarrow{\varphi} W \bot Y \xleftarrow{i_W} W]$ of rank zero
such that $\epsilon(T)=f$.
\end{lm}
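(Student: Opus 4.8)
The plan is to build the required lift $\varphi$ explicitly, in coordinates adapted to the orthogonal splitting $W \bot Y$ of its target. First I would look for $\varphi$ of the shape $\varphi(v) = (f(v), \psi(v))$, where $\psi : V \to Y$ is a linear map into a nondegenerate quadratic space $Y$ still to be chosen. For a $\varphi$ of this shape the lifting condition $p_W \circ \varphi = f$ holds automatically, whence $\epsilon(T) = p_W \circ \mathcal{O}(\varphi) = f$; moreover, by Lemma \ref{2.7} the rank-zero condition is equivalent to $\psi = p_Y \circ \varphi$ being injective, which also forces $\varphi$ itself injective. The sole remaining requirement is that $\varphi$ be an isometry, that is $q_W(f(v)) + q_Y(\psi(v)) = q_V(v)$; since $\FF$ has characteristic two, this reads $q_Y(\psi(v)) = q_V(v) + q_W(f(v)) =: r(v)$.

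The next step is to note that $r = q_V + f^{*}q_W$ is a genuine quadratic form on $V$: it is the sum of $q_V$ and the pullback of $q_W$ along the (arbitrary, possibly non-injective) linear map $f$, the latter having polarization $(u,v) \mapsto B_W(f(u), f(v))$. The problem thus reduces to embedding the possibly degenerate quadratic space $(V, r)$ isometrically and injectively into a nondegenerate one, which is the heart of the matter.

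For this I would use the doubling $Y = V \oplus V^{*}$ with quadratic form $Q(v, \xi) = r(v) + \xi(v)$, together with $\psi(v) = (v, 0)$; then $Q(\psi(v)) = r(v)$, so $\psi$ is an injective isometry. Nondegeneracy of $(Y, Q)$ is verified by computing the polar form $B_Q((v,\xi),(w,\eta)) = B_r(v,w) + \xi(w) + \eta(v)$ and checking that its radical is zero: testing a radical vector against all $(0, \eta)$ forces $v = 0$, and then testing against all $(w, 0)$ forces $\xi = 0$. As $\dim Y = 2\dim V$ is even, $(Y, Q)$ is an object of $\Eq$, and $W \bot Y$ is again nondegenerate.

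Assembling the pieces, $\varphi(v) = (f(v), (v,0))$ is an isometry because the two occurrences of $q_W(f(v))$ cancel in characteristic two and leave $q_V(v)$; no separate check on the bilinear form is needed, since for a linear map preservation of $q$ automatically entails preservation of $B$ over $\FF$. Hence $T = [V \xrightarrow{\varphi} W \bot Y \xleftarrow{i_W} W]$ is a morphism of $\Tq$, of rank zero by Lemma \ref{2.7} since $\psi$ is injective, and with $\epsilon(T) = f$. I expect the main obstacle to be the choice of $Y$: the rank-zero constraint demands that the component of $\varphi$ transverse to $W$ be injective, so $Y$ cannot be taken too small, and the doubling $V \oplus V^{*}$ is exactly what accommodates all of $V$ injectively while realizing precisely the residual form $r$.
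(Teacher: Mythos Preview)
Your proof is correct and takes a genuinely different route from the paper. The paper argues by induction on $n=\tfrac12\dim V$: fixing a symplectic basis of $V$, it extends a lift already constructed on a codimension-two subspace by appending explicit correction terms lying in a fresh block $H_0^{\bot n}\bot H_0^{\bot n}\bot H_1\bot H_0\bot H_0$, the coefficients being written down by hand so that the quadratic form is preserved and the projection away from $W$ stays injective. You instead isolate the obstruction once and for all as the residual quadratic form $r=q_V+f^{*}q_W$ on $V$, and then dispose of it in a single step via the hyperbolic doubling $Y=V\oplus V^{*}$ with $Q(v,\xi)=r(v)+\xi(v)$, embedding $V$ by $v\mapsto(v,0)$. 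Your argument is coordinate-free, avoids the induction entirely, and makes clear why \emph{some} nondegenerate enlargement must work: any (possibly degenerate) quadratic space embeds in a nondegenerate one. The paper's construction is more laborious; what it buys is an explicit description of $Y$ as an orthogonal sum of the standard pieces $H_0$ and $H_1$, but that is not needed for the lemma as stated.
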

\begin{proof}
As the quadratic space $V$ is nondegenerate, we know that it has even
dimension. We write $\mathrm{dim}(V)=2n$. We prove the result by induction on
$n$.

To start the induction, let $(V,q_V)$ be a nondegenerate quadratic
space of dimension two, with symplectic basis $\{ a,b \}$ and $f: V
\rightarrow W$ be a linear map. The following linear
map preserves the quadratic form:
$$ \begin{array}{cccl}
g_1: & V& \rightarrow & W \bot H_1 \bot H_0  \bot H_0 \simeq W \bot
\mathrm{Span}(a_1,b_1)  \bot \mathrm{Span}(a_0,b_0) \bot \mathrm{Span}(a'_0,b'_0)\\
       & a & \longmapsto & f(a)+(q(a)+q(f(a))) a_1+a_0\\
       & b & \longmapsto & f(b)+(q(b)+q(f(a))) a_1+ (1+B(f(a), f(b))) b_0+a'_0.
\end{array}$$ 
Consequently, the morphism:
$T= V \xrightarrow{g_1} W \bot H_1 \bot H_0 \hookleftarrow W$,
is a morphism of rank zero of $\Tq$ such that $\epsilon(T)=f$.

Let $V_n$ be a nondegenerate quadratic space of dimension $2n$, $\{a_1,
b_1, \ldots, a_n, b_n \}$ be a symplectic basis of $V_n$ and $f_n: V_n
\rightarrow W$ be a linear map. By induction, there exists a  map:
$$ \begin{array}{cccc}
g_n: & V_n& \rightarrow & W \bot Y\\
       & a_i & \longmapsto & f_n(a_i)+y_i\\
       & b_i & \longmapsto & f_n(b_i)+z_i\\
 \end{array}$$
where $y_i$ and $z_i$, for all integers $i$ between $1$ and $n$, are
elements of $Y$. The map $g_n$ preserves the quadratic form and the morphism 
$T=[ V_n \xrightarrow{g_n} W \bot Y \hookleftarrow W]$
is of rank zero and verifies 
$\epsilon([V_n \xrightarrow{g_n} W \bot Y \hookleftarrow W])=f_n.$

Let $V_{n+1}$ be a nondegenerate quadratic space of dimension
$2(n+1)$, \linebreak
$\{a_1,
b_1, \ldots, a_n, b_n, a_{n+1}, b_{n+1} \}$ a symplectic basis of
$V_{n+1}$ and $f_{n+1}: V_{n+1} \rightarrow W$ a linear map. To define the
map $g_{n+1}$, we will consider the restriction of $f_{n+1}$ to
$V_n$ and extend the map $g_n$ given by the inductive assumption. For
that, we need the following space:
$E \simeq W \bot W' \bot H_0^{\bot n} \bot H_0^{\bot n} \bot H_1 \bot H_0 \bot
H_0 $
for which we specify the notations for a basis:
$$E \simeq W \bot W' \bot ( \bot_{i=1}^n \mathrm{Span}(a_0^i, b_0^i)) \bot ( \bot_{i=1}^n \mathrm{Span}(A_0^i, B_0^i)) 
  \bot \mathrm{Span}(A_1, B_1) $$
  $$\bot \mathrm{Span}(C_0, D_0) \bot
  \mathrm{Span}(E_0, F_0).$$
The following map: 
$$ \begin{array}{cccl}
g_{n+1}: & V& \rightarrow & W \bot W' \bot H_0^{\bot n} \bot H_0^{\bot
  n} \bot H_1 \bot H_0 \bot H_0\\
        & a_i & \longmapsto & f_{n+1}(a_i)+y_i+a_0^i  \quad \mathrm{for} \ i\ \mathrm{between}\ 1\ \mathrm{and}\ n\\
       & b_i & \longmapsto & f_{n+1}(b_i)+z_i+A_0^i\\
             & a_{n+1} & \longmapsto &
       f_{n+1}(a_{n+1})+(q(a_{n+1})+q(f_{n+1}(a_{n+1})))A_1+C_0\\
       &         &             &+\sum_{i=1}^n
       B(f_{n+1}(a_i),f_{n+1}(a_{n+1})) b_0^i\\
       &         &             &+\sum_{i=1}^n B(f_{n+1}(b_i),f_{n+1}(a_{n+1})) B_0^i \\
       & b_{n+1} & \longmapsto &
       f_{n+1}(b_{n+1})+(q(b_{n+1})+q(f_{n+1}(b_{n+1})))A_1\\
       &        &              &+(1+B(f_{n+1}(a_{n+1}),
       f_{n+1}(b_{n+1})))D_0\\
       &        &              &+\sum_{i=1}^n
       B(f_{n+1}(a_i), f_{n+1}(b_{n+1})) b_0^i\\
       &         &             &+\sum_{i=1}^n  B(f_{n+1}(b_i), f_{n+1}(b_{n+1})) B_0^i + E_0      
\end{array}$$
preserves the quadratic form. Furthermore, the morphism 
$$T=[ V_{n+1} \xrightarrow{g_{n+1}} W \bot W' \bot H_0^{\bot n} \bot H_0^{\bot
  n} \bot H_1 \bot H_0 \bot H_0 \hookleftarrow W]$$
is of rank zero and satisfies: $\epsilon(T)=f_{n+1}$, which completes the inductive step.

\end{proof}

The proof of the injectivity of $f_W$ relies on the following
 result, which can be regarded as Witt's theorem for
degenerate quadratic forms.

\begin{thm} \label{2.12}
Let $V$ be a nondegenerate quadratic space, $D$ and $D'$
subquadratic spaces (possibly degenerate) of $V$ and
$\underline{f}:D \rightarrow D'$ an isometry between these two
quadratic spaces. Then, there exists an isometry
$f: V \rightarrow V$ such that the following diagram is commutative:
 $$\xymatrix{
V  \ar[r]^f & V  \\
D \ar@{^{(}->}[u] \ar[r]_{\underline{f}}& D'. \ar@{^{(}->}[u] 
}$$
\end{thm}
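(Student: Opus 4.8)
The plan is to reduce the statement to the classical Witt extension theorem for nondegenerate quadratic spaces over $\FF$ by a suitable "hyperbolic enlargement" of the degenerate pieces. First I would decompose each of $D$ and $D'$ according to its radical: write $D = R \perp D_{0}$ and $D' = R' \perp D_{0}'$, where $R = D \cap D^{\perp}$ is the radical of the restricted form (so $D_{0}$ is a nondegenerate complement) and similarly for $D'$. Since $\ul f\colon D \to D'$ is an isometry, it carries the radical of $D$ onto the radical of $D'$, so $\dim R = \dim R'$ and $\ul f$ restricts to an isometry $D_{0}\to D_{0}'$ of nondegenerate subspaces of $V$. The nondegenerate part is already handled by the classical Witt theorem, so the entire difficulty is concentrated in extending over the radical.

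The key step is to absorb the radical into a larger nondegenerate subspace inside $V$. Here one uses the standard fact that in a nondegenerate quadratic space, any totally isotropic subspace — more generally, the radical $R$ of any subspace — can be enlarged to a nondegenerate subspace $\widetilde{R}$ of $V$ of dimension $2\dim R$ by adjoining a "dual" totally isotropic subspace $R^{*}$ with $R\perp R^{*}$ hyperbolic (a choice of hyperbolic basis extending a basis of $R$); this works over $\FF$ because every nondegenerate quadratic space over $\FF$ is an orthogonal sum of copies of $H_{0}$ and $H_{1}$ and contains hyperbolic planes pairing with any prescribed isotropic vector. Do the same for $R'$ inside $V$, obtaining $\widetilde{R}'$. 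Now I would extend $\ul f$ to an isometry $\widetilde{f}_{0}\colon \widetilde{R}\perp D_{0}\to \widetilde{R}'\perp D_{0}'$ between \emph{nondegenerate} subspaces of $V$: on $D_{0}$ it is $\ul f$, on $R$ it is $\ul f|_{R}$, and on the chosen dual isotropic complement $R^{*}$ one sends a hyperbolic basis to the corresponding hyperbolic basis of ${R'}^{*}$ — this is an isometry onto its image because hyperbolic bases are sent to hyperbolic bases and the two blocks are orthogonal. Then the classical Witt extension theorem applied to the nondegenerate subspaces $\widetilde{R}\perp D_{0}$ and $\widetilde{R}'\perp D_{0}'$ of $V$ produces an isometry $f\colon V\to V$ with $f|_{\widetilde{R}\perp D_{0}} = \widetilde{f}_{0}$; restricting to $D = R\perp D_{0}\subseteq \widetilde{R}\perp D_{0}$ gives $f|_{D} = \ul f$, which is exactly the commutativity of the displayed square.

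The main obstacle I anticipate is the "hyperbolic enlargement" step: one must check that the radical $R$ of $D$, as a totally isotropic subspace of the ambient nondegenerate $V$, genuinely admits a complementary totally isotropic $R^{*}$ with $R\oplus R^{*}$ a nondegenerate (hyperbolic) subspace, and that this can be done compatibly for $R$ and $R'$ so that the chosen hyperbolic bases match up under $\ul f|_{R}$. Over $\FF=\mathbb{F}_{2}$ one has to be slightly careful about quadratic (as opposed to merely bilinear) forms — the quadratic refinement of the hyperbolic plane can be $H_{0}$ or $H_{1}$ — but since $R$ and $R'$ are totally isotropic one is free to choose the dual isotropic vectors to be isotropic as well, giving copies of $H_{0}$, and then $\ul f|_{R}$ extends block-by-block. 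Once this enlargement is in place, everything else is a direct appeal to the classical Witt theorem over $\FF$, and the diagram commutes by construction.
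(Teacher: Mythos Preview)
The paper does not give its own proof of this result; it simply refers the reader to Bourbaki, Alg\`ebre, \S4, Th\'eor\`eme~1. Your plan --- enlarge the possibly degenerate subspace $D$ to a nondegenerate subspace $\widetilde D\subset V$ in such a way that $\underline f$ extends to an isometry $\widetilde D\to\widetilde{D'}$, then invoke the nondegenerate case of Witt's theorem --- is the standard strategy and is essentially what one finds in Bourbaki and related references.

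There is, however, one technical inaccuracy worth flagging. You assert that the radical $R$ of $D$ (with respect to the bilinear form $B$) is \emph{totally isotropic}, and use this to arrange that the enlarged planes are all copies of $H_{0}$. Over $\FF$ in characteristic~$2$ this is false in general: the radical of $B|_{D}$ consists of vectors $r$ with $B(r,d)=0$ for all $d\in D$, but $q(r)$ may well equal $1$. For example, take $D=\mathrm{Span}(a_{1})$ inside $H_{1}$; then $R=D$ but $q(a_{1})=1$. The restriction $q|_{R}$ is an $\FF$-linear functional (since $B$ vanishes on $R$), so $R$ splits as $\ker(q|_{R})$ plus at most a one-dimensional piece on which $q=1$.

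This does not break your argument, but the enlargement step must be stated more carefully. Choose a basis $r_{1},\dots,r_{k}$ of $R$ and dual vectors $r_{1}^{\ast},\dots,r_{k}^{\ast}$ in $D_{0}^{\perp}$ (so that $\widetilde R\perp D_{0}$ is genuinely an orthogonal sum inside $V$), with $B(r_{i},r_{j}^{\ast})=\delta_{ij}$ and the $r_{i}^{\ast}$ pairwise orthogonal. Each plane $\mathrm{Span}(r_{i},r_{i}^{\ast})$ is then isometric to $H_{0}$ or $H_{1}$ according to the values $q(r_{i})$, $q(r_{i}^{\ast})$. To build the matching $\widetilde{R'}$ on the other side, transport the basis via $r_{i}'=\underline f(r_{i})$ (so $q(r_{i}')=q(r_{i})$ since $\underline f$ is an isometry) and choose duals $(r_{i}')^{\ast}$ in $(D_{0}')^{\perp}$ with $q((r_{i}')^{\ast})=q(r_{i}^{\ast})$; one checks this freedom is available inside the nondegenerate complement $(D_{0}')^{\perp}$. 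With these adjustments your extended map $\widetilde f_{0}$ is an isometry between nondegenerate subspaces of $V$, and the appeal to classical Witt goes through as you describe.
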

\begin{proof}
For a proof of this result, we refer the reader to \cite{Bourbaki} \S4, theorem 1.
\end{proof}

\begin{proof}[Proof of the injectivity of $f_W$]
The natural map $f$ is induced by the natural map  
$$\mathrm{Hom}_{\Tq}^{(0)}(V,-) \rightarrow
\mathrm{Hom}_{\E^f}(\epsilon(V),\epsilon(-))$$
by passage to the vector spaces freely generated by these
sets. So, $f$ is injective if and only if this natural map
is injective. Consequently, it is sufficient to verify that, for $T=[V \xrightarrow{\alpha} W \bot W'
\xleftarrow{i_W} W ]$ and $T'=[V \xrightarrow{\alpha'} W \bot W''
\xleftarrow{i'_W} W ]$ two generators of $P_V^{(0)}(W)$ such that  
\begin{equation}
p_W \circ \mathcal{O}(\alpha)=p'_W \circ \mathcal{O}(\alpha'), \label{inj-1}
\end{equation} 
we have $T=T'$. 

Let $\{ a_1, b_1, \ldots, a_n, b_n \}$ be a symplectic
basis of $V$. We deduce from \ref{inj-1} that, for all $i \in \{ 1,
\ldots, n \}$, we have:
\begin{equation}
 \alpha(a_i)=w_i +w'_i ,\  \alpha(b_i)=x_i +x'_i \label{inj-2}
\end{equation}
and 
\begin{equation}
\alpha'(a_i)=w_i +w''_i ,\  \alpha'(b_i)=x_i +x''_i \label{inj-3}
\end{equation}
where, for all $i \in \{1, \ldots, n \}$, $w_i$ and $x_i$ are in $W$, $w'_i$ and $x'_i$ are in $W'$ and $x''_i$ and $w''_i$ are in $W''$. By Lemma \ref{2.7}, since the morphisms are of rank zero, $\{ w'_1, x'_1, \ldots, w'_n, x'_n \}$ and $\{ w''_1, x''_1, \ldots,
w''_n, x''_n \}$ are two linearly  independent families of vectors.

We will denote by $\underline{W'}=\mathrm{Span}(w'_1, x'_1, \ldots, w'_n, x'_n
)$ (respectively \linebreak $\underline{W''}=\mathrm{Span}(w''_1, x''_1, \ldots, w''_n, x''_n
)$) the subquadratic space (possibly degenerate), of $W'$
(respectively $W''$) and we define the linear map $\underline{f}:
\underline{W'} \rightarrow \underline{W''}$ by
$\underline{f}(w'_i)=w''_i$ and $\underline{f}(x'_i)=x''_i$ for all
$i \in \{1, \ldots, n \}$.

Since $\alpha$ and $\alpha'$ preserve the quadratic forms, we deduce
from the relations \ref{inj-2} and \ref{inj-3} that $\underline{f}$
preserves the quadratic form. Hence, we can apply Theorem
\ref{2.12} to the nondegenerate space $W' \bot W''$, which gives a morphism $f: W' \bot W'' \rightarrow W'
\bot W''$ of $\Eq$, such that, the restriction of this morphism to $W'$ coincides with $\underline{f}$. We deduce the
commutativity of the following diagram:

$$\xymatrix{
    & W \ar@{^{(}->}[d]^{i_W} \ar@{^{(}->}[ddr]^{i'_W} & \\
V \ar[r]^(.3){\tilde{\alpha}} \ar[drr]_{\tilde{\alpha'}} & W \bot(W' \bot W'')
    \ar[rd]^(.4){\mathrm{Id} \bot f} & \\
  &  &  W \bot(W' \bot W'')
}$$
where $\tilde{\alpha}=i_{W \bot W'} \circ \alpha$ and $\tilde{\alpha'}=i_{W \bot W''} \circ \alpha'$.
Consequently, we obtain the equality $T=T'$ since, by inclusion, we have
$$T=[V \xrightarrow{\alpha} W \bot W' \xleftarrow{i_W} W ]=[V
\xrightarrow{\tilde{\alpha}} W \bot W' \bot W'' \xleftarrow{i_W} W ]$$ and $$T'=[V \xrightarrow{\alpha'} W \bot W'' \xleftarrow{i'_W} W ]=[V
\xrightarrow{\tilde{\alpha'}} W \bot W' \bot W'' \xleftarrow{i'_W} W ].$$

\end{proof}

\begin{nota} \label{2.13}
For $V$ and $W$ two objects of \Eq, and $f$ a
morphism of $\mathrm{Hom}_{\E^f}(\epsilon(V), \epsilon (W))$,
we denote by $t_f$ the morphism of $\mathrm{Hom}^{(0)}_{\Tq}(V,W)$
corresponding to $f$ and by $[t_f]$ the canonical generator of
$P_V^{(0)}(W)$ obtained from $t_f$. To simplify the notation, we will denote the morphism $t_{\mathrm{Id}_V}$ of $\mathrm{Hom}^{(0)}_{\Tq}(V,V)$ by $e_V$.
\end{nota}

%\subsection{Le foncteur $P_V^{(0)}$ est facteur direct de $P_V$}
We deduce from the first point of Theorem \ref{2.6} the following corollary.
\begin{cor} \label{2.14}
For $V$, $W$ and $X$ objects of \Eq, $f: \epsilon(W) \rightarrow \epsilon(X)$ and $g:
\epsilon(V) \rightarrow \epsilon(W)$ morphisms of $\E^f$, we have:
$$t_f \circ t_g =t_{f \circ g}$$
where $t_f,\ t_g$ and $t_{f \circ g}$ are respectively the morphisms of $\mathrm{Hom}^{(0)}_{\Tq}(W,X)$, $\mathrm{Hom}^{(0)}_{\Tq}(V,W)$ and
$\mathrm{Hom}^{(0)}_{\Tq}(V,X)$ associated to the linear maps $f,\ g$ and $f \circ g$.
\end{cor}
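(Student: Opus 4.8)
The plan is to deduce the corollary directly from the first point of Theorem~\ref{2.6}, that is, from Proposition~\ref{2.10} asserting that for every object $X$ of $\Tq$ the comparison map
$$f_X\colon P_V^{(0)}(X)\longrightarrow P^{\F}_{\epsilon(V)}(\epsilon(X))$$
is an isomorphism, together with the fact that $\epsilon\colon\Tq\to\E^f$ is a functor. The only preliminary point needed is that $t_f\circ t_g$, a priori just a morphism of $\mathrm{Hom}_{\Tq}(V,X)$, actually has rank zero: this is immediate from (the proof of) Proposition~\ref{2.3}, since $t_g$ has rank zero and precomposition of any morphism of $\Tq$ with a rank zero morphism produces a rank zero morphism. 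Hence $t_f\circ t_g\in\mathrm{Hom}^{(0)}_{\Tq}(V,X)$, and $[t_f\circ t_g]=P_V^{(0)}(t_f)\big([t_g]\big)$ is a genuine canonical generator of $P_V^{(0)}(X)=\FF[\mathrm{Hom}^{(0)}_{\Tq}(V,X)]$.

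Next I would apply $f_X$. Recall from Notation~\ref{2.13} that $t_f$, $t_g$, $t_{f\circ g}$ are characterised among rank zero morphisms by $\epsilon(t_f)=f$, $\epsilon(t_g)=g$, $\epsilon(t_{f\circ g})=f\circ g$, and that $f_X$ is induced on free vector spaces by the set map $T\mapsto\epsilon(T)$. Since $\epsilon$ respects composition, this gives
$$f_X\big([t_f\circ t_g]\big)=\big[\epsilon(t_f\circ t_g)\big]=\big[\epsilon(t_f)\circ\epsilon(t_g)\big]=[f\circ g]=\big[\epsilon(t_{f\circ g})\big]=f_X\big([t_{f\circ g}]\big).$$
As $f_X$ is injective by Proposition~\ref{2.10}, we get $[t_f\circ t_g]=[t_{f\circ g}]$ in $\FF[\mathrm{Hom}^{(0)}_{\Tq}(V,X)]$; since both sides are basis elements, this forces the equality of morphisms $t_f\circ t_g=t_{f\circ g}$.

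I do not expect a genuine obstacle here: all the substance — the fullness input and, for the injectivity of $f_X$, Witt's theorem for degenerate quadratic forms (Theorem~\ref{2.12}) — is already packed into Theorem~\ref{2.6}, and what remains above is bookkeeping around the definitions. Conceptually, the corollary merely records that $\epsilon$ restricts to an isomorphism of categories from the wide subcategory of $\Tq$ keeping only the rank zero morphisms onto $\E^f$ (restricted to even-dimensional spaces), with $V\mapsto V$ on objects and $f\mapsto t_f$ the inverse on morphisms: Theorem~\ref{2.6} provides the bijection on each Hom-set, and the displayed computation is exactly the assertion that this inverse is compatible with composition.
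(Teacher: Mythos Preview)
Your proof is correct and follows exactly the route the paper indicates: the paper simply states that the corollary is deduced from the first point of Theorem~\ref{2.6} without giving further details, and you have spelled out precisely those details (rank zero stability under composition via Proposition~\ref{2.3}, then injectivity of the comparison map from Proposition~\ref{2.10} combined with functoriality of $\epsilon$).
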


We can apply this result to the idempotents of the ring of
endomorphisms $\mathrm{End}(P_V)$, to obtain the following proposition.
\begin{prop} \label{2.15}
The canonical generator $[e_V]$ of $P_V^{(0)}$ is an idempotent of the
ring of endomorphisms $\mathrm{End}(P_V)$
such that $P_V.[e_V] \simeq P_V^{(0)}.$
\end{prop}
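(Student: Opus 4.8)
The plan is to exploit Corollary \ref{2.14} to recognise $[e_V]$ as an idempotent, and then to identify the image of right-multiplication by $[e_V]$ with the subfunctor $P_V^{(0)}$. First I would pin down the correspondence between elements of $P_V(V)$ and endomorphisms of $P_V$: by the Yoneda isomorphism $\mathrm{Hom}_{\Fq}(P_V,P_V)\simeq P_V(V)$, the canonical generator $[e_V]\in P_V^{(0)}(V)\subset P_V(V)$ corresponds to a unique endomorphism of $P_V$, and under Yoneda the composition of endomorphisms corresponds to a product on $P_V(V)$ given by $(x,y)\mapsto P_V(y)(x)$ for $x,y$ morphisms in $\Tq$ (extended linearly), i.e. to precomposition of morphisms in $\Tq$. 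So the ring structure on $\mathrm{End}(P_V)$ transported to $P_V(V)$ is exactly $[t]\cdot[s]=[t\circ s]$ for morphisms $t,s$ of $\Tq$ from $V$ to $V$.

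Next I would check that $[e_V]$ is idempotent. Since $e_V=t_{\mathrm{Id}_V}$ is the rank-zero morphism associated with $\mathrm{Id}_{\epsilon(V)}$, Corollary \ref{2.14} gives $e_V\circ e_V=t_{\mathrm{Id}_V}\circ t_{\mathrm{Id}_V}=t_{\mathrm{Id}_V\circ\mathrm{Id}_V}=t_{\mathrm{Id}_V}=e_V$, hence $[e_V]\cdot[e_V]=[e_V]$ in $\mathrm{End}(P_V)$. This is the cleanest point: it is an immediate consequence of the multiplicativity statement in Corollary \ref{2.14}.

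It then remains to identify $P_V\cdot[e_V]$, the image of the idempotent endomorphism, with $P_V^{(0)}$. Evaluating at an object $W$ of $\Tq$, the endomorphism $[e_V]$ acts on $P_V(W)$ by sending a generator $[T]$, $T=[V\rightarrow X\leftarrow W]$, to $[T\circ e_V]$. Using the composition-of-spans description (the commutative pullback/pseudo-pushout diagram from the proof of Proposition \ref{2.3}), composing with the rank-zero morphism $e_V$ does not increase the rank, so the image lands in $P_V^{(0)}(W)$; thus $P_V\cdot[e_V]\subseteq P_V^{(0)}$. For the reverse inclusion, take a generator $[t_f]$ of $P_V^{(0)}(W)$ with $f\in\mathrm{Hom}_{\E^f}(\epsilon(V),\epsilon(W))$; by Corollary \ref{2.14}, $t_f\circ e_V=t_{f\circ\mathrm{Id}_V}=t_f$, so $[t_f]=[t_f]\cdot[e_V]$ lies in the image of $[e_V]$. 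Since the $[t_f]$ generate $P_V^{(0)}(W)$ (by Theorem \ref{2.6}, which identifies $P_V^{(0)}$ with $\iota(P^{\F}_{\epsilon(V)})$ whose value at $W$ is spanned by such generators), we get $P_V^{(0)}\subseteq P_V\cdot[e_V]$, hence equality. The only mild subtlety — and the step I would be most careful about — is checking that the Yoneda-transported product on $P_V(V)$ really is precomposition $[t]\cdot[s]=[t\circ s]$ rather than postcomposition, so that Corollary \ref{2.14} applies in the form needed; once the variance is fixed correctly, everything else is formal.
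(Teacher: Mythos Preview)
Your proof is correct and follows essentially the same approach as the paper's: idempotency from Corollary~\ref{2.14}, the inclusion $P_V\cdot[e_V]\subset P_V^{(0)}$ from the definition of the rank filtration (your appeal to Proposition~\ref{2.3}), and the reverse inclusion from $[t_f]=[t_f]\cdot[e_V]$. The paper's argument is terser---it omits the explicit Yoneda discussion you spell out---but the logical content is the same; your variance worry is harmless here since idempotency and the image computation are insensitive to passing to the opposite ring.
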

\begin{proof}
The  canonical generator $[e_V]$ is an idempotent of $\mathrm{End}(P_V)$
by Corollary \ref{2.14}. By definition of the rank
filtration $P_V.[e_V] \subset P_V^{(0)}$ and, for a canonical generator $[t_f]$ of $P_V^{(0)}$, we
have $[t_f]=[t_f] \cdot [e_V]$.
\end{proof}

The idempotent  $[e_V]$ plays a central r\^{o}le in the proof of the
thickness of the subcategory $\iota(\F)$ in $\Fq$, which is the
subject of the following paragraph. For that, the following result is necessary. 

\begin{lm} \label{2.18}
Let $V$ and $W$ be objects of $\Tq$, the functor $\iota$ induces an
isomorphism:
$$\mathrm{Hom}_{\F}(P_{\epsilon(V)}^{\F}, P_{\epsilon(W)}^{\F}) \xrightarrow{\simeq}\mathrm{Hom}_{\Fq}(P_V^{(0)}, P_W^{(0)}), $$
where $\epsilon: \Tq \rightarrow \E$ is the forgetful functor.
\end{lm}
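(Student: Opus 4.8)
The plan is to establish the isomorphism by combining the adjunction property of the standard projectives $P_V^{(0)} \simeq \iota(P_{\epsilon(V)}^{\F})$ from Theorem \ref{2.6}(\ref{PV0-1}) with the Yoneda-type computation of morphism spaces out of a standard projective. First I would use Theorem \ref{2.6}(\ref{PV0-1}) to rewrite the target: since $P_W^{(0)} \simeq \iota(P_{\epsilon(W)}^{\F})$, the space $\Hom_{\Fq}(P_V^{(0)}, P_W^{(0)})$ is naturally isomorphic to $\Hom_{\Fq}(\iota(P_{\epsilon(V)}^{\F}), \iota(P_{\epsilon(W)}^{\F}))$. Because $\iota$ is fully faithful (Theorem \ref{1.3}), this last space is naturally isomorphic to $\Hom_{\F}(P_{\epsilon(V)}^{\F}, P_{\epsilon(W)}^{\F})$, and one checks that the composite isomorphism is precisely the map induced by $\iota$ together with the identifications of Theorem \ref{2.6}.

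The one point that needs care is that the isomorphism in the statement is asserted to be the map \emph{induced by the functor $\iota$}, not merely an abstract isomorphism of vector spaces; so I would make sure the diagram
$$\xymatrix{
\Hom_{\F}(P_{\epsilon(V)}^{\F}, P_{\epsilon(W)}^{\F}) \ar[r]^(.52){\iota} \ar[d] & \Hom_{\Fq}(\iota(P_{\epsilon(V)}^{\F}), \iota(P_{\epsilon(W)}^{\F})) \ar[d] \\
\Hom_{\Fq}(P_V^{(0)}, P_W^{(0)}) \ar@{=}[r] & \Hom_{\Fq}(P_V^{(0)}, P_W^{(0)})
}$$
commutes, where the vertical maps come from the natural equivalences of Theorem \ref{2.6}(\ref{PV0-1}) applied to $V$ and to $W$. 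This is immediate from the naturality of those equivalences (they are the maps $f$ of Proposition \ref{2.10}, which are themselves induced by $\epsilon$), so the commutativity reduces to the functoriality of $\epsilon$ and $\iota$. Alternatively, and perhaps more cleanly, one can argue directly via Yoneda: $\Hom_{\Fq}(P_V^{(0)}, P_W^{(0)}) \simeq \Hom_{\Fq}(P_V \cdot [e_V], P_W^{(0)})$ by Proposition \ref{2.15}, which by adjunction equals $[e_V] \cdot P_W^{(0)}(V) = [e_V]\cdot \FF[\Hom^{(0)}_{\Tq}(W,V)]$, and then identify this with $\FF[\Hom_{\E^f}(\epsilon(V),\epsilon(W))] \simeq \Hom_{\F}(P^{\F}_{\epsilon(V)}, P^{\F}_{\epsilon(W)})$ using Theorem \ref{2.6}(\ref{PV0-1}) and Corollary \ref{2.14}, which records exactly that composition of rank-zero morphisms matches composition of linear maps.

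I do not expect a serious obstacle here: every ingredient (full faithfulness of $\iota$, the identification $P_V^{(0)} \simeq \iota(P^{\F}_{\epsilon(V)})$, and the multiplicativity $t_f \circ t_g = t_{f\circ g}$) is already in hand, and the proof is essentially a bookkeeping exercise threading these together. If anything is mildly delicate it is only checking that the chosen isomorphism really is the one induced by $\iota$ rather than some twist of it; this is settled by tracking a canonical generator $[t_g]$ of $P_V^{(0)}(W) = \Hom_{\Fq}(P_V^{(0)}, P_W^{(0)})$-ish description through both routes and observing both send it to (the class of) the linear map $\epsilon(g)$. Hence the lemma follows.
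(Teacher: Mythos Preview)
Your proposal is correct. Your primary argument—identifying both $P_V^{(0)}$ and $P_W^{(0)}$ with objects in the image of $\iota$ via Theorem~\ref{2.6} and then invoking the full faithfulness of $\iota$ from Theorem~\ref{1.3}—is slightly more direct than the paper's route, which instead unwinds the Yoneda description through the idempotent $[e_V]$: the paper computes
\[
\Hom_{\Fq}(P_V^{(0)}, P_W^{(0)}) \simeq P_W^{(0)}(e_V)\,P_W^{(0)}(V) \simeq P_W^{(0)}(V) \simeq \iota(P_{\epsilon(W)}^{\F})(V) \simeq P_{\epsilon(W)}^{\F}(\epsilon(V)) \simeq \Hom_{\F}(P_{\epsilon(V)}^{\F}, P_{\epsilon(W)}^{\F}),
\]
which is precisely the ``alternative'' argument you sketch in your second paragraph. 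Your first approach buys conceptual clarity (it makes manifest that the isomorphism is the one induced by $\iota$), while the paper's approach has the mild advantage of not separately verifying compatibility, since it follows the canonical identifications step by step; either way the content is the same and your care about checking that the isomorphism is the one induced by $\iota$ is appropriate.
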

\begin{proof}
By Proposition \ref{2.15} and Theorem \ref{2.6} we have the following equivalences:
$$\mathrm{Hom}_{\Fq}(P_V^{(0)}, P_W^{(0)})\simeq
P_W^{(0)}(e_V) P_W^{(0)}(V)\simeq P_W^{(0)}(V)$$
$$\simeq\iota
(P_{\epsilon(W)}^{\F})(V) \simeq P_{\epsilon(W)}^{\F}(\epsilon(V))\simeq\mathrm{Hom}_{\F}(P_{\epsilon(V)}^{\F}, P_{\epsilon(W)}^{\F}).$$
\end{proof}

To conclude this paragraph, we give the following property of $e_V$ which will be useful in section \ref{4} concerning the polynomial functors of $\Fq$.

\begin{lm} \label{2.17}
For $V$ and $W$ two objects of $\Tq$, we have: $e_{V \bot W}=e_V \bot e_W$,
where $\bot: \Tq \times \Tq \rightarrow \Tq$ is the functor induced by
the orthogonal sum.
\end{lm}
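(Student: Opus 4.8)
The plan is to unwind the definitions of both sides in $\mathrm{Hom}^{(0)}_{\Tq}(V \bot W, V \bot W)$ and check they produce the same equivalence class of diagram in $\Eq$. Recall from Notation \ref{2.13} that $e_{V\bot W}$ is the rank-zero morphism $t_{\mathrm{Id}_{V\bot W}}$ corresponding under Theorem \ref{2.6}\eqref{PV0-1} (equivalently, via the forgetful functor $\epsilon$ and Proposition \ref{2.10}) to the identity linear map of $\epsilon(V \bot W)=\mathcal{O}(V)\oplus\mathcal{O}(W)$; concretely it is represented by the trivial cospan $[V\bot W \xrightarrow{\mathrm{Id}} V\bot W \xleftarrow{\mathrm{Id}} V\bot W]$. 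Likewise $e_V=[V\xrightarrow{\mathrm{Id}}V\xleftarrow{\mathrm{Id}}V]$ and $e_W=[W\xrightarrow{\mathrm{Id}}W\xleftarrow{\mathrm{Id}}W]$. So the heart of the matter is to compute $e_V \bot e_W$, i.e. the image of the pair $(e_V,e_W)$ under the functor $\bot:\Tq\times\Tq\to\Tq$ induced by orthogonal sum.

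First I would recall explicitly how $\bot$ acts on morphisms of $\Tq$: given $[V\to X\leftarrow V']$ and $[W\to Y\leftarrow W']$, their orthogonal sum is $[V\bot W \to X\bot Y \leftarrow V'\bot W']$, where the maps are the orthogonal sums of the component maps. (One should check this is well defined on equivalence classes — if $\alpha\circ f = u$ on one side, then $(\alpha\bot\beta)\circ(f\bot g)=(u\bot v)$, so the relation $\mathcal{R}$ is respected; this is routine and I would mention it in one line, or cite that $\bot$ is already known to be a bifunctor on $\Tq$.) Applying this to $e_V=[V\xrightarrow{\mathrm{Id}}V\xleftarrow{\mathrm{Id}}V]$ and $e_W=[W\xrightarrow{\mathrm{Id}}W\xleftarrow{\mathrm{Id}}W]$ gives immediately
$$e_V\bot e_W=[\,V\bot W \xrightarrow{\mathrm{Id}_V\bot\,\mathrm{Id}_W} V\bot W \xleftarrow{\mathrm{Id}_V\bot\,\mathrm{Id}_W} V\bot W\,]=[\,V\bot W\xrightarrow{\mathrm{Id}} V\bot W \xleftarrow{\mathrm{Id}} V\bot W\,],$$
which is precisely the representative of $e_{V\bot W}$ noted above. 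Hence $e_{V\bot W}=e_V\bot e_W$.

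Alternatively, and perhaps more cleanly, I would invoke the naturality already established: by construction the isomorphism $P^{(0)}_{-}\simeq\iota(P^{\F}_{\epsilon(-)})$ of Theorem \ref{2.6} is compatible with $\epsilon$, and $\epsilon$ is (strict) monoidal for $\bot$ on the source and $\oplus$ on the target, with $\epsilon(\mathrm{Id}_{V\bot W})=\mathrm{Id}_{\epsilon(V)\oplus\epsilon(W)}=\mathrm{Id}_{\epsilon(V)}\oplus\mathrm{Id}_{\epsilon(W)}$. Since $e_V$ is characterized among rank-zero morphisms by $\epsilon(e_V)=\mathrm{Id}_{\epsilon(V)}$ (injectivity part of Proposition \ref{2.10}), and $\epsilon(e_V\bot e_W)=\epsilon(e_V)\oplus\epsilon(e_W)=\mathrm{Id}_{\epsilon(V)}\oplus\mathrm{Id}_{\epsilon(W)}=\mathrm{Id}_{\epsilon(V\bot W)}=\epsilon(e_{V\bot W})$, with both $e_V\bot e_W$ and $e_{V\bot W}$ of rank zero, the injectivity statement of Proposition \ref{2.10} forces $e_V\bot e_W=e_{V\bot W}$. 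I do not expect any real obstacle here; the only point requiring a modicum of care is confirming that $e_V\bot e_W$ indeed has rank zero (its pullback in $\Eqd$ is $V\bot W$ pulled back along two identities — of dimension $0$ after the appropriate bookkeeping, or directly: the cospan is trivial so rank $0$ by Lemma \ref{2.7}), so that the uniqueness clause of Proposition \ref{2.10} applies.
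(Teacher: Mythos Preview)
There is a genuine misunderstanding at the heart of your proposal: the morphism $e_V$ is \emph{not} the trivial cospan $[V \xrightarrow{\mathrm{Id}} V \xleftarrow{\mathrm{Id}} V]$. That cospan is the identity of $V$ in $\Tq$, and by Definition~\ref{2.1} its rank equals $\dim(V)$ (the pullback of $V \xrightarrow{\mathrm{Id}} V \xleftarrow{\mathrm{Id}} V$ in $\Eqd$ is $V$ itself), not zero. By Notation~\ref{2.13}, $e_V = t_{\mathrm{Id}_V}$ lies in $\mathrm{Hom}^{(0)}_{\Tq}(V,V)$, so it must be a rank-zero morphism; concretely, by Lemma~\ref{2.7}, any representative has the form $[V \xrightarrow{\alpha} V \bot V' \xleftarrow{i_V} V]$ with $p_V \circ \alpha = \mathrm{Id}_V$ and $p_{V'} \circ \alpha$ injective, so in particular $V' \neq 0$ whenever $V \neq 0$. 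Your first argument therefore collapses entirely, and the parenthetical in your second argument (``the cospan is trivial so rank $0$ by Lemma~\ref{2.7}'') is wrong for the same reason.

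Your second approach, however, is exactly what the paper intends (its proof consists of a single appeal to Proposition~\ref{2.10}), and it is easily repaired once $e_V$ is described correctly. Take representatives $e_V = [V \xrightarrow{\alpha} V \bot V' \xleftarrow{i_V} V]$ and $e_W = [W \xrightarrow{\beta} W \bot W' \xleftarrow{i_W} W]$ as above. Then $e_V \bot e_W$ is represented by
\[
[\,V \bot W \xrightarrow{\ \alpha \bot \beta\ } (V \bot W) \bot (V' \bot W') \xleftarrow{\ i_{V\bot W}\ } V \bot W\,],
\]
and the projection to $V' \bot W'$ composed with $\alpha \bot \beta$ is $(p_{V'}\circ\alpha) \bot (p_{W'}\circ\beta)$, which is injective since each factor is. Hence $e_V \bot e_W$ has rank zero by Lemma~\ref{2.7}, and $\epsilon(e_V \bot e_W) = (p_V\circ\alpha) \oplus (p_W\circ\beta) = \mathrm{Id}_{\epsilon(V)} \oplus \mathrm{Id}_{\epsilon(W)} = \mathrm{Id}_{\epsilon(V\bot W)} = \epsilon(e_{V\bot W})$. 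The injectivity in Proposition~\ref{2.10} now gives $e_V \bot e_W = e_{V\bot W}$.
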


\begin{proof}
This is a straightforward consequence of Proposition \ref{2.10}.
\end{proof}

\subsubsection{The category $\iota(\F)$ is a thick subcategory of  $\Fq$}

The aim of this paragraph is to prove the following result.
\begin{thm} \label{2.19}
The category $\iota(\F)$ is a thick subcategory of 
  $\Fq$, where \linebreak  $\iota : \F \rightarrow \Fq$ is the functor
  defined in Theorem \ref{1.3}.
\end{thm}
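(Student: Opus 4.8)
The plan is to show that $\iota(\F)$ is closed under subobjects, quotients, and extensions in $\Fq$; recall that a thick (or Serre) subcategory is a full subcategory closed under these operations. The strategy is to exhibit $\iota(\F)$ as the category of modules over the idempotent-like structure furnished by the family $\{[e_V]\}_{V}$, or more precisely to reduce everything to the projective objects $P_V^{(0)} \simeq \iota(P^{\F}_{\epsilon(V)})$ and use the exactness and full faithfulness of $\iota$ (Theorem \ref{1.3}) together with the homological control given by Proposition \ref{2.15} and Lemma \ref{2.18}.

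First I would record the easy half: since $\iota$ is exact and full, the image of a morphism between objects in $\iota(\F)$ lies in $\iota(\F)$, and $\iota$ preserves kernels and cokernels, so if $0 \to A \to \iota(G) \to \iota(G') \to 0$ is exact in $\Fq$ with $\iota(G), \iota(G')$ in the image, it is not immediately clear $A$ is — this is the real content. The key idea is projective resolution: every object $F$ of $\F$ admits a resolution by standard projectives $P^{\F}_{\epsilon(V)}$, and applying the exact functor $\iota$ gives a resolution of $\iota(F)$ in $\Fq$ by the functors $P_V^{(0)}$, which by Theorem \ref{2.6}(2) are projective in $\Fq$ (being direct summands of the projectives $P_V$). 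So $\iota$ sends projective resolutions to projective resolutions. Consequently, for $F, F'$ in $\F$, Lemma \ref{2.18} upgrades to an isomorphism $\mathrm{Ext}^*_{\F}(F,F') \xrightarrow{\simeq} \mathrm{Ext}^*_{\Fq}(\iota(F), \iota(F'))$; in particular $\mathrm{Ext}^1$ agrees, which immediately gives closure under extensions: an extension of $\iota(G')$ by $\iota(G)$ in $\Fq$ is classified by a class in $\mathrm{Ext}^1_{\Fq}$, which comes from $\mathrm{Ext}^1_{\F}$, hence the extension is (isomorphic to) $\iota$ of an extension in $\F$.

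For closure under subobjects and quotients, I would argue as follows. Let $A \hookrightarrow \iota(F)$ be a subfunctor. Choose a surjection $P^{\F}_{\epsilon(V)} \twoheadrightarrow F$ in $\F$; applying $\iota$ gives a surjection $P_V^{(0)} \twoheadrightarrow \iota(F)$ in $\Fq$, and since $P_V^{(0)}$ is a direct summand of $P_V$ with complementary projection $[e_V]$, one can pull $A$ back to a subfunctor of $P_V^{(0)} = P_V \cdot [e_V]$. The crucial point is then that any subfunctor (equivalently, any quotient) of $\iota(P^{\F}_{\epsilon(V)}) = P_V^{(0)}$ that arises inside $\Fq$ already lies in $\iota(\F)$; this follows because $P_V^{(0)}$ is, via the idempotent $[e_V]$ and Lemma \ref{2.18}, a ``relatively small'' projective on which $\Fq$-morphisms are detected entirely by evaluation on the spaces $\epsilon(W)$ — so a subfunctor of $P_V^{(0)}$ is determined by its values on the full subcategory $\epsilon(\Tq) = \E^f$, i.e. is in the image of the (exact, fully faithful) functor $\iota$. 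Dually the same works for quotients. Combining: an arbitrary subobject $A$ of $\iota(F)$ is a quotient of a subobject of $P_V^{(0)}$, hence in $\iota(\F)$, and symmetrically for quotients.

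I expect the main obstacle to be making precise the assertion that subfunctors and quotients of $P_V^{(0)}$ inside $\Fq$ stay in $\iota(\F)$ — i.e. transferring the ``detection on $\E^f$'' phenomenon from $\Hom$-groups (Lemma \ref{2.18}) to the level of subobjects. The clean way is probably to observe that $\iota$, being exact and fully faithful, identifies $\F$ with a full abelian subcategory of $\Fq$, and that $\iota(\F)$ is exactly the class of functors $G$ in $\Fq$ admitting a presentation $P_W^{(0)} \to P_V^{(0)} \to G \to 0$ by the projectives $P_\bullet^{(0)}$; such presentations are preserved under the relevant operations once one knows $\mathrm{Ext}^1$ and $\Hom$ between the $P_\bullet^{(0)}$ agree with those computed in $\F$, which is precisely Lemma \ref{2.18} plus its $\mathrm{Ext}$-extension above. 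So the proof reduces to: (i) $\iota$ preserves projective resolutions (from Theorem \ref{2.6}(2)); (ii) hence $\mathrm{Ext}$ is preserved (Lemma \ref{2.18}); (iii) a Serre-subcategory criterion phrased in terms of having such a projective presentation. Step (iii), the categorical bookkeeping, is where care is needed but no deep new input is required.
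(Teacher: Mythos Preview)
Your treatment of closure under extensions is correct and matches the paper in spirit: both arguments use that $\iota$ carries projective resolutions in $\F$ to projective resolutions in $\Fq$ (via Theorem~\ref{2.6}(\ref{PV0-2})) together with the $\Hom$-isomorphism of Lemma~\ref{2.18}. You phrase this as an $\mathrm{Ext}^1$-isomorphism; the paper does the equivalent horseshoe-lemma computation explicitly.

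The gap is in closure under subobjects and quotients. Your ``crucial point'' --- that any subfunctor of $P_V^{(0)}$ in $\Fq$ already lies in $\iota(\F)$ --- is asserted but not proved: the sentence about $\Fq$-morphisms being ``detected by evaluation on $\epsilon(W)$'' is too vague to carry weight, and Lemma~\ref{2.18} only controls $\Hom$-sets between objects already known to be in $\iota(\F)$. Worse, your bootstrap ``an arbitrary subobject $A$ of $\iota(F)$ is a quotient of a subobject of $P_V^{(0)}$, hence in $\iota(\F)$'' is circular: it uses closure under quotients of objects of $\iota(\F)$, which is precisely one of the statements to be established. The ``Serre-subcategory criterion phrased in terms of projective presentations'' you propose in (iii) does not help here either, because there is no a~priori reason a subobject $A \subset \iota(F)$ should admit a presentation by the $P_\bullet^{(0)}$.

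The paper avoids all of this by a direct, elementary argument (Proposition~\ref{2.20}): since $\iota$ is precomposition along the functor $\epsilon' : \Tq \to \E^{f-(\mathrm{even})}$, which is full and essentially surjective, any subfunctor $A \subset \iota(F)$ automatically factors through $\epsilon'$. Concretely, for two morphisms $T,T'$ in $\Tq$ with $\epsilon(T)=\epsilon(T')$ one has $\iota(F)(T)=\iota(F)(T')$, hence $A(T)=A(T')$ as restrictions; fullness and essential surjectivity of $\epsilon'$ then let one define $A'$ in $\F$ with $\iota(A')=A$. This works for any object of $\iota(\F)$, so the reduction to $P_V^{(0)}$ is unnecessary. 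No projective machinery is needed for this half of the theorem.
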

To prove this theorem, we need the following general result about the
precomposition functor which is proved in the Appendix of \cite{math.AT/0606484}.
\begin{prop} \label{2.20}
Let $\C$ and $\D$ be two small categories, $\A$ be an abelian category,
$F: \C \rightarrow \D$ be a functor and $- \circ F: \mathrm{Func}(\D, \A)
\rightarrow \mathrm{Func}(\C, \A)$ be the precomposition functor, where
$\mathrm{Func}(\C, \A)$ is the category of functors from $\C$ to $\A$.
If $F$ is full and essentially surjective, then
any subobject (respectively quotient) of an object in the image of the precomposition functor
is isomorphic to an object in the image of the precomposition functor.
\end{prop}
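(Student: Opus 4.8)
The plan is to argue directly, treating the subobject case in full detail and then indicating that the quotient case follows by the formally dual argument. Write $F^{*} = - \circ F$, so that the image of $F^{*}$ consists of the functors $G \circ F$ with $G \in \mathrm{Func}(\D,\A)$. Fix such an $M = G \circ F$ and a subobject $\nu : N \rightarrowtail M$ in $\mathrm{Func}(\C,\A)$. Since $\A$ is abelian, monomorphisms (respectively epimorphisms) in the functor categories are detected objectwise, so every component $\nu_{c} : N(c) \rightarrowtail M(c)$ is a monomorphism. My goal is to construct a functor $G' \in \mathrm{Func}(\D,\A)$ together with a natural isomorphism $G' \circ F \cong N$, exhibiting $N$ in the essential image of $F^{*}$.

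For the construction, I use essential surjectivity of $F$ to choose, for each object $d$ of $\D$, an object $c_{d}$ of $\C$ and an isomorphism $u_{d} : F(c_{d}) \xrightarrow{\sim} d$, and I set $G'(d) = N(c_{d})$. For a morphism $\psi : d \to d'$ of $\D$, the composite $u_{d'}^{-1} \circ \psi \circ u_{d}$ is a morphism $F(c_{d}) \to F(c_{d'})$ of $\D$; since $F$ is full it lifts to some $\phi : c_{d} \to c_{d'}$ with $F(\phi) = u_{d'}^{-1} \circ \psi \circ u_{d}$, and I define $G'(\psi) = N(\phi)$. The crux is the well-definedness of $G'(\psi)$, i.e. its independence of the chosen lift $\phi$; this is exactly the place where fullness, essential surjectivity and the monomorphism hypothesis on $N$ combine. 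If $\phi, \phi' : c_{d} \to c_{d'}$ both lift $u_{d'}^{-1} \circ \psi \circ u_{d}$, then $F(\phi) = F(\phi')$, hence $M(\phi) = G(F(\phi)) = G(F(\phi')) = M(\phi')$. Naturality of $\nu$ then gives $\nu_{c_{d'}} \circ N(\phi) = M(\phi) \circ \nu_{c_{d}} = M(\phi') \circ \nu_{c_{d}} = \nu_{c_{d'}} \circ N(\phi')$, and since $\nu_{c_{d'}}$ is monic we conclude $N(\phi) = N(\phi')$. A routine verification then shows $G'$ is a functor: taking $\psi = \mathrm{id}_{d}$ one may lift by $\mathrm{id}_{c_{d}}$, and for composable $\psi, \psi'$ a product of lifts lifts the composite, so well-definedness forces $G'$ to preserve identities and composition.

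It remains to produce the natural isomorphism $\eta : G' \circ F \to N$. For an object $c$ of $\C$, the isomorphism $u_{F(c)} : F(c_{F(c)}) \xrightarrow{\sim} F(c)$ lifts, by fullness, to a morphism $\phi_{c} : c_{F(c)} \to c$ with $F(\phi_{c}) = u_{F(c)}$, and I set $\eta_{c} = N(\phi_{c}) : (G' \circ F)(c) = N(c_{F(c)}) \to N(c)$. Lifting $u_{F(c)}^{-1}$ in the same way yields a two-sided inverse to $\eta_{c}$ (again via the well-definedness principle, since $F(\phi_{c}) F(\phi_{c}^{\mathrm{inv}})$ and its reverse lift identities), so each $\eta_{c}$ is an isomorphism. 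Naturality of $\eta$ reduces once more to the same principle: for $h : c \to c'$ one checks $F(\phi_{c'}\,\phi) = F(h)\,u_{F(c)} = F(h\,\phi_{c})$, where $\phi$ is the lift defining $(G' \circ F)(h)$, and well-definedness gives $N(\phi_{c'}\,\phi) = N(h\,\phi_{c})$, which is exactly the naturality square. I expect this last step — reconciling the arbitrary choices $c_{F(c)}$ with $c$ itself so as to obtain a genuinely natural, not merely objectwise, isomorphism — together with the well-definedness argument to be the only real obstacles; everything else is bookkeeping.

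For the quotient case the argument is formally dual: one replaces the monomorphism $\nu : N \rightarrowtail M$ by an epimorphism $\pi : M \twoheadrightarrow Q$, sets $G'(d) = Q(c_{d})$ and $G'(\psi) = Q(\phi)$, and establishes well-definedness by noting that $F(\phi) = F(\phi')$ forces $Q(\phi) \circ \pi_{c_{d}} = \pi_{c_{d'}} \circ M(\phi) = \pi_{c_{d'}} \circ M(\phi') = Q(\phi') \circ \pi_{c_{d}}$, so that $Q(\phi) = Q(\phi')$ because $\pi_{c_{d}}$ is epic. The construction of the natural isomorphism $G' \circ F \cong Q$ then proceeds verbatim, completing the proof.
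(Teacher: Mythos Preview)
Your argument is correct. The key observation you isolate — that if $F(\phi)=F(\phi')$ then $M(\phi)=G(F(\phi))=G(F(\phi'))=M(\phi')$, whence $N(\phi)=N(\phi')$ via the naturality square for $\nu$ and the fact that $\nu_{c_{d'}}$ is monic — is exactly what makes the construction of $G'$ well-defined, and the rest (functoriality of $G'$, invertibility and naturality of $\eta$) all reduces cleanly to repeated applications of this same principle. The dual argument for quotients is likewise correct.

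As for comparison with the paper: the paper does not actually prove Proposition~\ref{2.20}; it merely states it and refers the reader to the Appendix of \cite{math.AT/0606484} for a proof. So there is no internal proof to compare your approach against. Your direct construction is the standard one and is self-contained; it would serve perfectly well here.
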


\begin{proof}[Proof of Theorem \ref{2.19}]
\begin{itemize}
\item The subcategory $\iota(\F)$ of $\Fq$ is full by Theorem \ref{1.3}.
\item Let $F^{\F}$ be an object in $\F$ and $G$ a subobject of $\iota(F^{\F}) $. Let $\F'$ be the category of functors
from $\E^{f-(even)}$ to $\E$, where $\E^{f-(even)}$ is the full subcategory of $\E^f$ having as
objects the $\FF$-vector spaces of even dimension. The categories $\F$ and $\F'$ are equivalent \cite{math.AT/0606484}. The functor $\epsilon:\Eq \rightarrow \E^{f}$ factorizes through the inclusion $
\E^{f-(even)} \hookrightarrow \E^{f}$. This induces a functor
$\epsilon': \Eq \rightarrow \E^{f-(even)}$ which is full and
essentially surjective. Consequently, we can use Proposition \ref{2.20} to obtain: $G \simeq \iota(G^{\F})$. Similarly, we obtain the result for the quotient.

\item
Let $G^{\F}$ and $H^{\F}$ be objects of $\F$, we set $G= \iota(G^{\F})$ and $H=\iota(H^{\F})$. For a short exact sequence: $0 \rightarrow G \rightarrow F \rightarrow H \rightarrow 0,$
we have to prove that there exists a functor $F^{\F}$ in $\F$
such that $F=\iota(F^{\F})$.

Let $ P_1 \rightarrow P_0 \rightarrow G^{\F}
\rightarrow 0$ and $  Q_1 \rightarrow Q_0 \rightarrow H^{\F}
\rightarrow 0$
be projective presentations of $G^{\F}$ and $H^{\F}$ in $\F$, we have
the following commutative diagram
$$\xymatrix{
     0 \ar[r]   &   \iota(P_1)  \ar[r] \ar[d]  &   \iota(P_1) \oplus \iota(Q_1) \ar[r] \ar[d]  &
  \iota(Q_1) \ar[r] \ar[d]  & 0   \\
0 \ar[r]   &   \iota(P_0)  \ar[r] \ar[d]  &   \iota(P_0) \oplus \iota(Q_0) \ar[r] \ar[d]  &
  \iota(Q_0) \ar[r]  \ar[d] & 0   \\
0 \ar[r]   &   G \ar[d]  \ar[r]  &  F \ar[d] \ar[r]  &
  H \ar[d] \ar[r]   & 0   \\
            &  0 & 0 & 0}$$
where the columns are projective resolutions in $\Fq$, by the horseshoe
lemma. By Lemma \ref{2.18}, the morphism $\iota(P_1) \oplus
     \iota(Q_1) \rightarrow \iota(P_0) \oplus \iota(Q_0)$ is induced
     by a morphism of $\F$ denoted by $f$. Consequently, \linebreak $F \cong \iota(\mathrm{Coker}(f)) \in \iota(\F)$.

\end{itemize}
\end{proof}
By Theorem \ref{2.19}, we deduce from Lemma \ref{2.18}, the following characterization of the simple
functors of $\F$ in $\Fq$ which will be used in section 
\ref{4} of this paper concerning the polynomial functors of $\Fq$.

\begin{lm} \label{2.21}
\begin{enumerate}
\item
Let $F$ be a functor of $\Fq$, then $F$ is in the image of the functor
$\iota: \F \rightarrow \Fq$ if and only if, for all objects $V$ in $\Tq$,
$$F(e_V) F(V)=F(V).$$
\item
Let $S$ be a simple object in $\Fq$, then $S$ is in the image of the
functor $\iota: \F \rightarrow \Fq$ if and only if there exists an
object $W$ in $\Tq$ such that 
$$S(e_W) S(W) \ne 0.$$
\end{enumerate}
\end{lm}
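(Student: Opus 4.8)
The plan is to prove Lemma \ref{2.21} by using Theorem \ref{2.6} and Proposition \ref{2.15} to recognize $\iota(\F)$ internally inside $\Fq$ via the idempotent $e_V$. For part (1), the key observation is that by Proposition \ref{2.15} the element $[e_V]$ is an idempotent of $\End(P_V)$ with $P_V \cdot [e_V] \simeq P_V^{(0)} \simeq \iota(P^{\F}_{\epsilon(V)})$ (using Theorem \ref{2.6}(\ref{PV0-1})). Since $\Hom_{\Fq}(P_V, F) \simeq F(V)$ naturally, precomposition with the idempotent $[e_V]$ corresponds, under Yoneda, to the map $F(e_V): F(V) \to F(V)$. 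Hence the image of $\Hom_{\Fq}(P_V^{(0)}, F) \hookrightarrow \Hom_{\Fq}(P_V,F) \simeq F(V)$ is exactly $F(e_V)F(V)$, the fixed part of the idempotent $F(e_V)$.

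First I would establish the ``only if'' direction of part (1): if $F = \iota(F^{\F})$ for some $F^{\F}$ in $\F$, then for every $V$ in $\Tq$ one has $F(e_V)F(V) = F(V)$. This follows because $e_V = t_{\mathrm{Id}_V}$ is, by construction (Notation \ref{2.13} and Theorem \ref{2.6}(\ref{PV0-1})), the morphism of $\Tq$ corresponding to $\mathrm{Id}_{\epsilon(V)}$, so $\iota(F^{\F})(e_V) = F^{\F}(\mathrm{Id}_{\epsilon(V)}) = \mathrm{Id}_{F^{\F}(\epsilon(V))}$, and $\iota(F^{\F})(V) = F^{\F}(\epsilon(V))$ by definition of $\iota$. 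The ``if'' direction is the substantive one: suppose $F(e_V)F(V) = F(V)$ for all $V$. The idea is to show $F$ factors through the forgetful functor $\epsilon: \Tq \to \E^f$ (equivalently $\epsilon': \Eq \to \E^{f-(even)}$ as in the proof of Theorem \ref{2.19}), i.e.\ that $F$ lies in the image of the precomposition functor $-\circ \epsilon'$, which by definition of $\iota$ is precisely $\iota(\F)$. Concretely, since every $F(V)$ is unchanged by the idempotent $F(e_V)$, and since by Corollary \ref{2.14} the $t_f$ compose functorially, the assignment $V \mapsto F(V)$ together with $f \mapsto F(t_f)$ for linear maps $f$ of $\E^f$ defines a functor $F^{\F}$ on $\E^{f-(even)}$ (extended to all of $\E^f$ via the equivalence $\F \simeq \F'$). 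The point then is that an arbitrary morphism $T = [V \xrightarrow{\alpha} W\bot W' \xleftarrow{i_W} W]$ of $\Tq$ acts on $F$ through its image: one checks that $F(T) = F(T)\circ F(e_V) = F(T \circ e_V)$, and since $T \circ e_V$ has rank zero (composition of a rank-zero morphism with anything keeps rank $\le 0$, cf.\ the proof of Proposition \ref{2.3}), $T\circ e_V = t_{\epsilon(T)}$; symmetrically using $F(e_W)F = F$ one gets $F(T) = F(t_{\epsilon(T)})$, so $F(T)$ depends only on $\epsilon(T)$. This shows $F \simeq \iota(F^{\F})$.

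For part (2), when $S$ is simple the condition simplifies to a single test object. If $S$ is in the image of $\iota$, then by part (1) $S(e_W)S(W) = S(W)$ for all $W$, and since $S \ne 0$ there is some $W$ with $S(W) \ne 0$, hence $S(e_W)S(W) \ne 0$. Conversely, suppose $S(e_W)S(W) \ne 0$ for some $W$. Then $S(e_W)$ is a nonzero idempotent endomorphism of $S(W)$; more usefully, the composite $P_W^{(0)} \hookrightarrow P_W \twoheadrightarrow$ (something surjecting onto $S$) shows the nonzero map $P_W^{(0)} \to S$ corresponding under Yoneda and Proposition \ref{2.15} to a nonzero element of $S(e_W)S(W)$. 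Since $P_W^{(0)} \simeq \iota(P^{\F}_{\epsilon(W)})$ is in $\iota(\F)$ and $S$ is simple, this nonzero map is an epimorphism, so $S$ is a quotient of an object of $\iota(\F)$; by the thickness of $\iota(\F)$ (Theorem \ref{2.19}), $S$ is itself in $\iota(\F)$.

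The main obstacle I anticipate is the careful bookkeeping in the ``if'' direction of part (1): one must verify that $V \mapsto F(V)$, $f \mapsto F(t_f)$ genuinely defines a functor (functoriality on composites is Corollary \ref{2.14}, on identities is the definition of $e_V$) and, crucially, that the natural isomorphism $F \simeq \iota(F^{\F})$ is compatible with the action of \emph{all} morphisms of $\Tq$, not just the rank-zero ones — this is where the hypothesis $F(e_V)F(V) = F(V)$ together with the rank-filtration behaviour of composition (any morphism composed with $e_V$ or $e_W$ becomes rank zero) does the real work. Everything else is a direct consequence of results already established: Theorem \ref{2.6}, Proposition \ref{2.15}, Corollary \ref{2.14}, and Theorem \ref{2.19}.
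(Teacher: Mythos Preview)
Your argument is correct. It matches the paper for the forward implication of (1) and for (2), though for (2) the paper cites Lemma~\ref{2.18} (correspondence of indecomposable summands of $P_W^{(0)}$ and $P_{\epsilon(W)}^{\F}$) rather than Theorem~\ref{2.19}; either reference closes the argument.

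For the reverse implication of (1) you take a genuinely different, more direct route than the paper. You observe that the hypothesis forces each idempotent $F(e_V)$ to be the identity on $F(V)$, and then use $T\circ e_V = t_{\epsilon(T)}$ (rank-zero composition via Proposition~\ref{2.3} and the bijection of Proposition~\ref{2.10}) to get $F(T)=F(t_{\epsilon(T)})$ for \emph{every} morphism $T$ of $\Tq$; hence $F$ factors through $\epsilon'$ and lies in $\iota(\F)$ by construction. The paper instead reads the hypothesis as $\Hom_{\Fq}(P_V^{(0)},F)=F(V)$ for all $V$, concludes that $F$ is a quotient of a direct sum of copies of $P_V^{(0)}\in\iota(\F)$, and then invokes the thickness of $\iota(\F)$ (Theorem~\ref{2.19}). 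Your route has the advantage of not needing Theorem~\ref{2.19} for part~(1) and of making the factorization through $\epsilon$ explicit; the paper's route is quicker once thickness is in hand and highlights the role of the $P_V^{(0)}$ as projective generators for $\iota(\F)$. One small remark: the ``symmetrically using $F(e_W)$'' step you mention is unnecessary, since $F(T)=F(T\circ e_V)=F(t_{\epsilon(T)})$ already follows from the one-sided identity $F(e_V)=\mathrm{Id}$.
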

\begin{proof}
\begin{enumerate}
\item
The forward implication is a consequence of the following fact: for a functor
$F$ in the image of Ä $\iota$, $\mathrm{Hom}_{\Fq}(P_V^{(0)},F)=F(e_V) F(V)=F(V).$
The reverse implication relies on the fact that the condition \linebreak  $F(e_V)
F(V)=F(V)$ implies that $F$ is a quotient of a sum of projective
objects of the form $P_V^{(0)}$. Since the category $\iota(\F)$ is
thick in $\Fq$ by Theorem \ref{2.19}, we obtain the result.

\item
Observe that, if $S(e_W)S(W) \ne
0$, we have $\mathrm{Hom}_{\Fq}(P_W^{(0)},S) \ne 0$, thus $S$ is a
quotient of $P_W^{(0)}$ by simplicity of $S$. Lemma \ref{2.18}
implies that there exists a one-to-one correspondance between the
indecomposable factors of $P_V^{(0)}$ and those of
$P_{\epsilon(V)}^{\F}$. We deduce that the simple quotients of
$P_V^{(0)}$ arise from $\F$. Consequently, $S$ is in the image of the functor $\iota$.
\end{enumerate}
\end{proof}

\subsubsection{The quotient $P_V/P_V^{(\mathrm{dim}(V)-1)}$}
The aim of this paragraph is to prove the following result:

\begin{prop} \label{2.22}
Let $V$ be an object in \Tq, we have a natural equivalence:
$$P_V / P_V^{(\mathrm{dim}(V)-1)} \simeq \kappa(\mathrm{iso}_V)$$
where $\mathrm{iso}_V$ is an isotropic functor and $\kappa: \Gq
\rightarrow \Fq$ is the functor given in Theorem \ref{1.4}.
\end{prop}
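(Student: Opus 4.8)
The plan is to identify both sides as functors on $\Tq$ via their values on objects and on morphisms, using the explicit descriptions of $P_V$ and of the isotropic functor recalled in Section~\ref{1}. First I would compute the quotient $P_V/P_V^{(\dim(V)-1)}$ pointwise: for an object $W$ of $\Tq$, the vector space $(P_V/P_V^{(\dim(V)-1)})(W)$ is freely generated by the classes of those morphisms $[V \xrightarrow{f} X \xleftarrow{g} W]$ in $\Hom_{\Tq}(V,W)$ whose rank equals $\dim(V)$, i.e. (by Definition~\ref{2.1}) those for which the pullback in $\Eqd$ of $V \to X \leftarrow W$ has dimension exactly $\dim(V)$. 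Since the pullback injects into $V$, having full dimension $\dim(V)$ means the composite $V \to X$ factors (as a map of the underlying spaces) through $W$ in a way compatible with the structure; more precisely, using the representation of Remark~\ref{1.2} a rank-$\dim(V)$ morphism is one of the form $[V \xrightarrow{\alpha} W \bot W' \xleftarrow{i_W} W]$ with $\alpha(V) \subseteq W$, so $\alpha$ corresponds to an isometric embedding (possibly with degenerate image) $V \hookrightarrow W$ of quadratic spaces. Hence, at the level of generators, the top quotient is spanned by isometric embeddings $V \hookrightarrow W$, which is exactly what $\mathrm{iso}_V(W)$ is (recall that $\mathrm{iso}_V(W)$ is the subspace of $Q_V(W)$ spanned by the $[V \xleftarrow{\mathrm{Id}} V \to W]$, i.e. by the span diagrams whose left leg is the identity on $V$ — equivalently, by embeddings of $V$ into $W$).

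Next I would check that this bijection on generators is natural, i.e. is compatible with the action of morphisms of $\Tq$ on both sides. On the left, a morphism $[W \to Y \leftarrow Z]$ of $\Tq$ acts on a rank-$\dim(V)$ generator by the pseudo-pushout composition of the diagram in the proof of Proposition~\ref{2.3}; the key point is that if $g: V \to W$ has rank $\dim(V)$ then either the composite still has rank $\dim(V)$ (and is again a genuine embedding of $V$ into the new target, now landing in the image of $Z$) or its rank drops, in which case it becomes zero in the quotient. On the right, the functor $\kappa$ transports the span-category structure of $\Gq$ into $\Tq$, and one must match the pullback-composition in $\mathrm{Sp}(\Eqd)$ defining the action on $\mathrm{iso}_V$ with the pseudo-pushout composition in $\Tq$. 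The relevant compatibility is essentially the universal property comparing the pseudo-pushout of $\Eqd$-diagrams with the pullback used to define spans; this is the same mechanism already invoked in \cite{math.AT/0606484} to construct $\kappa$, so I expect it to go through by unwinding definitions. I would also verify directly that the surjection $P_V \twoheadrightarrow \kappa(\mathrm{iso}_V)$ (which exists because $\mathrm{iso}_V$ is a quotient of $Q_V$, hence $\kappa(\mathrm{iso}_V)$ is generated in degree $V$ and corresponds under Yoneda to an element of $\kappa(\mathrm{iso}_V)(V)$) has kernel exactly $P_V^{(\dim(V)-1)}$, by comparing generators: a generator of $P_V(W)$ maps to zero in $\kappa(\mathrm{iso}_V)(W)$ precisely when the corresponding span has strictly smaller pullback, i.e. rank $< \dim(V)$.

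The main obstacle will be the bookkeeping in the naturality check: matching the composition law of $\Tq$ (co-spans with pseudo-pushouts) against that of $\mathrm{Sp}(\Eqd)$ (spans with pullbacks) through the functor $\kappa$, while keeping careful track of when a morphism's rank is preserved versus when it strictly drops (and hence dies in the quotient). In particular one must confirm that the assignment "rank-$\dim(V)$ co-span $\mapsto$ underlying embedding $V \hookrightarrow W$" is well-defined on equivalence classes of diagrams (independent of the choice of representative $V \to X \leftarrow W$), which uses that any two representatives differ by an $\Eq$-morphism $\alpha$ with $\alpha f = u$, $\alpha g = v$, and that restricting such an $\alpha$ to the (full-dimensional) image does not change the resulting embedding up to the identification built into $\mathrm{iso}_V$. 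Once these compatibilities are in place, the pointwise isomorphisms assemble into the desired natural equivalence $P_V/P_V^{(\dim(V)-1)} \simeq \kappa(\mathrm{iso}_V)$.
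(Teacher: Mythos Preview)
Your proposal is correct and follows essentially the same line as the paper: identify the rank-$\dim(V)$ generators of $P_V(W)$ with isometric embeddings $V\hookrightarrow W$ (the paper's Lemma~\ref{2.26}) and match these with the canonical generators of $\kappa(\mathrm{iso}_V)(W)$. The one organizational difference is that the paper first produces the natural transformation $P_V\to\kappa(\mathrm{iso}_V)$ via Yoneda (your second suggested route) and then checks it is a pointwise bijection, which makes your ``main obstacle'' of verifying naturality disappear; your emphasis on building the bijection first and checking naturality afterward works too but is more laborious than necessary.
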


To prove this proposition, we need the following notation and result:

\begin{nota}
Denote by $\sigma_f$ the natural map $P_V
\xrightarrow{\sigma_f} \kappa(\mathrm{iso}_V)$  which corresponds to the canonical generator $[V \xleftarrow{\mathrm{Id}}  V
\xrightarrow{f} V]$ of $\mathrm{iso}_V(V)$ by the equivalence $\mathrm{Hom}(P_V,\kappa(\mathrm{iso}_V)) \simeq \mathrm{iso}_V(V) \simeq \FF[O(V)]$ given by the Yoneda lemma.
\end{nota}

\begin{lm} \label{2.23}
The functor $\kappa(\mathrm{iso}_V)$ of $\Fq$  is a quotient of the
functor $P_V=\FF[\mathrm{Hom}_{\Tq}(V,-)]$.
\end{lm}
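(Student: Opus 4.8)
The plan is to exhibit $\kappa(\mathrm{iso}_V)$ as a quotient of $P_V$ by producing a surjection $P_V \twoheadrightarrow \kappa(\mathrm{iso}_V)$. By the Yoneda lemma, $\Hom_{\Fq}(P_V, \kappa(\mathrm{iso}_V)) \simeq \kappa(\mathrm{iso}_V)(V) \simeq \mathrm{iso}_V(V) \simeq \FF[O(V)]$, so there is a canonical morphism $\sigma_{\mathrm{Id}} : P_V \to \kappa(\mathrm{iso}_V)$ corresponding to the generator $[V \xleftarrow{\mathrm{Id}} V \xrightarrow{\mathrm{Id}} V]$ of $\mathrm{iso}_V(V)$. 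The content of the lemma is that this map is surjective.

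First I would recall that, as a vector space, $\mathrm{iso}_V(W)$ is the subspace of $Q_V(W)$ spanned by the elements $[V \xleftarrow{\mathrm{Id}} V \xrightarrow{h} W]$ as $h$ ranges over morphisms of $\Eqd$ with source $V$; these elements generate $\mathrm{iso}_V(W)$ by definition. So to prove surjectivity of $\sigma_{\mathrm{Id}}$, it suffices to check that each such generator $[V \xleftarrow{\mathrm{Id}} V \xrightarrow{h} W]$ lies in the image of $(\sigma_{\mathrm{Id}})_W : P_V(W) = \FF[\Hom_{\Tq}(V,W)] \to \mathrm{iso}_V(W)$. The natural candidate preimage is a canonical generator $[V \to X \leftarrow W]$ of $P_V(W)$; I would track through the Yoneda correspondence to see that $(\sigma_{\mathrm{Id}})_W$ sends such a generator to $\kappa(\mathrm{iso}_V)([V \to X \leftarrow W])$ applied to $[V \xleftarrow{\mathrm{Id}} V \xrightarrow{\mathrm{Id}} V]$, i.e. to the image of the identity-labelled generator under the structure map of $\mathrm{iso}_V$ induced by the $\Tq$-morphism $[V \to X \leftarrow W]$.

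The key computation is then to identify how $\kappa$ turns a $\Tq$-morphism into a span and how $\mathrm{iso}_V$ acts on it: the morphism $[V \xrightarrow{f} X \xleftarrow{g} W]$ of $\Tq$ is sent by $\kappa$ to a morphism in $\mathrm{Sp}(\Eqd)$, and the value of $\mathrm{iso}_V$ on the generator $[V \xleftarrow{\mathrm{Id}} V \to V]$ under this map is obtained by forming the relevant pullback; one reads off that it equals $[V \xleftarrow{\mathrm{Id}} V \to W]$ (possibly up to the restriction of $f$), which is precisely an arbitrary generator of $\mathrm{iso}_V(W)$ as $f, g$ vary. Concretely, taking for a given $h : V \to W$ a morphism $[V \xrightarrow{h} W \xleftarrow{\mathrm{Id}} W]$ of $\Tq$, its image under $\sigma_{\mathrm{Id}}$ is the generator $[V \xleftarrow{\mathrm{Id}} V \xrightarrow{h} W]$, so every generator of $\mathrm{iso}_V(W)$ is hit. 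Hence $\sigma_{\mathrm{Id}}$ is an epimorphism.

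The main obstacle is purely bookkeeping: correctly unwinding the definition of $\kappa$ on morphisms (the passage from the co-span category $\Tq$ to the span category $\mathrm{Sp}(\Eqd)$, as set up in \cite{math.AT/0606484}) and the definition of the structure maps of $iso_V$ via pullbacks, so as to verify that the generator $[V \xleftarrow{\mathrm{Id}} V \xrightarrow{\mathrm{Id}} V]$ is transported to the desired generator $[V \xleftarrow{\mathrm{Id}} V \xrightarrow{h} W]$ without sign or degeneracy errors. There is no real conceptual difficulty: once the Yoneda identification and the explicit description of $iso_V$ are in hand, surjectivity is immediate because the chosen $\Tq$-morphisms $[V \xrightarrow{h} W \xleftarrow{\mathrm{Id}} W]$ manifestly cover all generators of $\mathrm{iso}_V(W)$.
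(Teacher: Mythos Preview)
Your proposal is correct and follows essentially the same route as the paper: use the Yoneda identification $\Hom_{\Fq}(P_V,\kappa(\mathrm{iso}_V))\simeq \FF[O(V)]$ to pick the map $\sigma_{\mathrm{Id}}$, and then observe that the generator $[V\xleftarrow{\mathrm{Id}}V\xrightarrow{h}W]$ of $\kappa(\mathrm{iso}_V)(W)$ has preimage $[V\xrightarrow{h}W\xleftarrow{\mathrm{Id}}W]$ in $P_V(W)$. The paper states this one-line computation (for a general $\sigma_f$, with preimage $[V\xrightarrow{g\circ f^{-1}}W\xleftarrow{\mathrm{Id}}W]$) without the additional bookkeeping you outline, but the argument is the same.
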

\begin{proof}
The natural map  $P_V
\xrightarrow{\sigma_f} \kappa(\mathrm{iso}_V)$ is surjective: a pre-image of the
canonical generator $[V \xleftarrow{\mathrm{Id}} V
\xrightarrow{g} W]$ of $\kappa(\mathrm{iso}_V)(W)$ by $(\sigma_f)_W$,
is the morphism \linebreak $[V \xrightarrow{g \circ f^{-1}} W \xleftarrow{\mathrm{Id}} W]$.
\end{proof}
A formal consequence of the previous lemma is given in the
following result.
\begin{lm} \label{2.24}
The functor $\kappa(\mathrm{iso}_V)$ of $\Fq$ is a quotient of the functor
$P_V/ P_V^{(\mathrm{dim}(V)-1)}$.
\end{lm}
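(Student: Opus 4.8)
The plan is to prove Lemma \ref{2.24} by showing that the surjection produced in Lemma \ref{2.23} descends to the quotient $P_V/P_V^{(\mathrm{dim}(V)-1)}$. Recall from the Notation preceding Lemma \ref{2.23} that, for an isometry $f \in O(V)$, the map $\sigma_f : P_V \to \kappa(\mathrm{iso}_V)$ corresponds under the Yoneda isomorphism $\mathrm{Hom}_{\Fq}(P_V,\kappa(\mathrm{iso}_V)) \simeq \mathrm{iso}_V(V)$ to the canonical generator $[V \xleftarrow{\mathrm{Id}} V \xrightarrow{f} V]$, and that $\sigma_f$ is surjective by Lemma \ref{2.23}. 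So it suffices to show that $\sigma_f$ vanishes on the subfunctor $P_V^{(\mathrm{dim}(V)-1)}$: granting this, $\sigma_f$ factors as $P_V \twoheadrightarrow P_V/P_V^{(\mathrm{dim}(V)-1)} \to \kappa(\mathrm{iso}_V)$ with the second arrow surjective, which is exactly the assertion of the lemma.

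First I would reduce the vanishing to a check on generators. Since $P_V^{(\mathrm{dim}(V)-1)}(W) = \FF[\mathrm{Hom}^{(\mathrm{dim}(V)-1)}_{\Tq}(V,W)]$ is spanned by the classes of morphisms of rank at most $\mathrm{dim}(V)-1$, it is enough to verify that $(\sigma_f)_W([g]) = 0$ for every generator $g = [V \xrightarrow{a} X \xleftarrow{b} W]$ with $\mathrm{rank}(g) < \mathrm{dim}(V)$. By naturality and the Yoneda description of $\sigma_f$, one has $(\sigma_f)_W([g]) = \kappa(\mathrm{iso}_V)(g)\big([V \xleftarrow{\mathrm{Id}} V \xrightarrow{f} V]\big)$, so the computation is local: I only need to understand how a single $\Tq$-morphism $g$ acts on the isotropic generator attached to $f$.

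The heart of the argument is the explicit form of this action, which I would extract from the constructions of $\kappa$ and of the isotropic functors in \cite{math.AT/0606484}. The morphism $g$ acts through the pullback $P$ in $\Eqd$ of $V \xrightarrow{a} X \xleftarrow{b} W$, which carries injective legs $p : P \to V$ and $q : P \to W$ and whose dimension is precisely $\mathrm{rank}(g)$ by Definition \ref{2.1}; the result is the isotropic element indexed by the span $[V \xleftarrow{p} P \xrightarrow{q} W]$ applied to $f$. Because $\mathrm{iso}_V$ corresponds under Theorem \ref{1.5} to the regular module $\FF[O(V)]$ concentrated at $V$, its value on a span sends the generator indexed by an isometric embedding to zero unless that embedding factors through the apex of the span. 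Here the embedding is the isometry $f : V \hookrightarrow V$, which is surjective, whereas $p(P) \subsetneq V$ since $\mathrm{dim}(P) = \mathrm{rank}(g) < \mathrm{dim}(V)$. Thus $f$ cannot factor through $p(P)$, and the action is zero; this gives $(\sigma_f)_W([g]) = 0$ for every rank-deficient generator, hence $\sigma_f|_{P_V^{(\mathrm{dim}(V)-1)}} = 0$ and the desired surjection $P_V/P_V^{(\mathrm{dim}(V)-1)} \twoheadrightarrow \kappa(\mathrm{iso}_V)$.

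The hard part will be making the action of $\kappa(\mathrm{iso}_V)$ on a morphism of $\Tq$ explicit enough to see the vanishing on rank-deficient spans; this is the one place that genuinely invokes the pullback description of $\kappa$ and the structure of the isotropic functors from \cite{math.AT/0606484}. Everything else is routine: reducing to generators, identifying $\mathrm{rank}(g)$ with $\mathrm{dim}(P)$, and factoring a surjection through a quotient once it is known to annihilate the relevant subfunctor. This is the sense in which the statement is a formal consequence of Lemma \ref{2.23}.
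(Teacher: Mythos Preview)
Your proposal is correct and follows essentially the same route as the paper: both take the surjection $\sigma_f$ from Lemma~\ref{2.23}, check that it kills $P_V^{(\dim V-1)}$, and factor through the quotient. The paper uses $\sigma_{\mathrm{Id}}$ and simply asserts the vanishing ``by definition of $\sigma_{\mathrm{Id}}$'', whereas you unpack this step via the pullback description of $\kappa$ and the structure of $\mathrm{iso}_V$; that extra detail is fine and makes the terse assertion in the paper explicit.
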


\begin{proof} 
By definition of the filtration and by the previous lemma, we
have the diagram:
$$\xymatrix{
0 \ar[r] &  P_V^{(\mathrm{dim}(V)-1)} \ar[r]^(.7){i} & P_V \ar@{>>}[d]^{\sigma_{\mathrm{Id}}}
\ar@{>>}[r] & P_V/ P_V^{(\mathrm{dim}(V)-1)} \ar[r] \ar@{.>>}[dl]^{\tau}& 0\\
 & &\kappa(\mathrm{iso}_V) & & }$$
where $i$ is the canonical inclusion of $P_V^{(\mathrm{dim}(V)-1)}$ in $P_V$.
By definition of $\sigma_{\mathrm{Id}}$, we have $\sigma_{\mathrm{Id}}
\circ i=0$, from which we deduce the existence of the surjection $\tau : P_V/ P_V^{(\mathrm{dim}(V)-1)} \rightarrow \kappa(\mathrm{iso}_V) .$
\end{proof}
We will prove below that this natural map is an
isomorphism. It is sufficient to prove the following result.

\begin{prop} \label{2.25}
For $V$ and $W$ two objects of \Tq, we have an isomorphism
$$(P_V/ P_V^{(\mathrm{dim}(V)-1)})(W) \simeq \kappa(\mathrm{iso}_V)(W).$$
\end{prop}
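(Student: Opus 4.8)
The plan is to prove Proposition \ref{2.25} by computing both sides explicitly as vector spaces and checking the surjection $\tau$ from Lemma \ref{2.24} is injective. First I would identify $(P_V / P_V^{(\mathrm{dim}(V)-1)})(W)$ with the quotient $\FF[\mathrm{Hom}_{\Tq}(V,W)] / \FF[\mathrm{Hom}^{(\mathrm{dim}(V)-1)}_{\Tq}(V,W)]$, which has as basis the classes of morphisms of rank exactly $\mathrm{dim}(V)$. By Definition \ref{2.1}, a morphism $[V \xrightarrow{f} X \xleftarrow{g} W]$ has rank $\mathrm{dim}(V)$ precisely when the pullback in $\Eqd$ of $V \to X \leftarrow W$ has dimension $\dim(V)$; since this pullback injects into $V$, this forces the pullback to be all of $V$, i.e. (identifying via the pullback projection) $f$ itself factors through $g$ in the sense that there is a linear isometric embedding $h : V \to W$ with $g \circ h = f$ — in other words, using the normal form of Remark \ref{1.2}, the morphism is represented by $[V \xrightarrow{i_W \circ h} W \bot W' \xleftarrow{i_W} W]$ for an isometric embedding $h : V \hookrightarrow W$. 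Two such data $(W', h)$ and $(W'', h')$ give the same morphism in $\Tq$ iff $h = h'$ (one checks that the equivalence relation $\mathcal{R}$ collapses the choice of complement $W'$, exactly as in the proof of injectivity of $f_W$ above, using Theorem \ref{2.12}). Hence the rank-$\dim(V)$ part of $P_V(W)$ has a basis indexed by the isometric embeddings $V \hookrightarrow W$.

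Next I would recall from the excerpt that, as a vector space, $\kappa(\mathrm{iso}_V)(W) = iso_V(W)$ is the subspace of $Q_V(W)$ spanned by the elements $[V \xleftarrow{\mathrm{Id}} V \xrightarrow{\ell} W]$, i.e. it is the free $\FF$-module on the set of isometric embeddings $\ell : V \hookrightarrow W$ (the morphisms of $\mathrm{Sp}(\Eqd)$ of this shape being exactly such embeddings, with no further identifications since the left leg is the identity). Thus both $(P_V / P_V^{(\mathrm{dim}(V)-1)})(W)$ and $\kappa(\mathrm{iso}_V)(W)$ are free $\FF$-modules on the set $\mathrm{Iso}(V,W)$ of isometric embeddings $V \hookrightarrow W$, of the same (finite) dimension.

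Then I would check that the surjection $\tau_W$ of Lemma \ref{2.24} sends the class of $[V \xrightarrow{i_W \circ h} W \bot W' \xleftarrow{i_W} W]$ to $[V \xleftarrow{\mathrm{Id}} V \xrightarrow{h} W] \in iso_V(W)$: indeed $\tau$ is induced by $\sigma_{\mathrm{Id}} : P_V \to \kappa(\mathrm{iso}_V)$, which by the formula in the proof of Lemma \ref{2.23} sends a morphism $[V \xrightarrow{g} W \xleftarrow{\mathrm{Id}} W]$ to the generator obtained by composing in $\mathrm{Sp}(\Eqd)$, so on the rank-top classes it is precisely the identification of both bases with $\mathrm{Iso}(V,W)$. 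Therefore $\tau_W$ carries a basis bijectively to a basis, hence is an isomorphism; this gives Proposition \ref{2.25}, and combined with Lemma \ref{2.24} (naturality of $\tau$) it gives Proposition \ref{2.22}.

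The main obstacle is the bookkeeping in the first step: one must verify carefully that a rank-$\dim(V)$ morphism really is, up to the equivalence relation of Definition \ref{1.1}, uniquely represented by a single isometric embedding $h : V \hookrightarrow W$, independently of the auxiliary complement $W'$. This is the analogue, at the level of the top stratum, of the injectivity argument for $f_W$ given earlier, and it again rests on Witt-type extension (Theorem \ref{2.12}) applied to a nondegenerate space containing the relevant complements; the rank hypothesis is what guarantees the pullback is all of $V$ and hence that no information beyond $h$ survives. Everything after that is formal identification of free modules and a compatibility check on generators.
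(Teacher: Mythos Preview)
Your proposal is correct and follows the same approach as the paper, which isolates your first step as Lemma \ref{2.26} and then observes that $\tau_W$ is visibly a bijection on the resulting bases indexed by isometric embeddings $V \hookrightarrow W$. One simplification: the uniqueness you flag as the main obstacle does not need Theorem \ref{2.12}, since once $g(V) \subset W$ the equivalence relation $\mathcal{R}$ (via $\alpha = i_W : W \to W \bot W'$) directly reduces the morphism to the canonical representative $[V \xrightarrow{h} W \xleftarrow{\mathrm{Id}} W]$, and applying the forgetful functor $\epsilon$ recovers $h$ uniquely.
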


The proof of this proposition relies on the following lemma.
\begin{lm} \label{2.26}
For a non-zero canonical generator of  $(P_V/ P_V^{(\mathrm{dim}(V)-1)})(W)$ represented by the morphism $T=[V \xrightarrow{g} W \bot W' \xleftarrow{i_W} W
] $ of $\Tq$, we have $g(V) \subset W$ and $T=[V \xrightarrow{f} W  \xleftarrow{\mathrm{Id}} W ]$, where $g=i_W \circ f$.
\end{lm}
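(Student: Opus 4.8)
The plan is to exploit the elementary observation that a canonical generator $[T]$ of $(P_V/P_V^{(\mathrm{dim}(V)-1)})(W)$ is non-zero precisely when the morphism $T$ of $\Tq$ has rank exactly $\mathrm{dim}(V)$. Indeed, by Proposition \ref{2.3}, $P_V^{(\mathrm{dim}(V)-1)}(W) = \FF[\mathrm{Hom}^{(\mathrm{dim}(V)-1)}_{\Tq}(V,W)]$, so a canonical generator of rank $\leq \mathrm{dim}(V)-1$ is a basis element of $P_V^{(\mathrm{dim}(V)-1)}(W)$ and hence maps to $0$ in the quotient; on the other hand the rank of any morphism issued from $V$ is at most $\mathrm{dim}(V)$, since the relevant pullback in $\Eqd$ embeds into $V$ (the morphisms of $\Eqd$ being injective). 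Thus the non-vanishing hypothesis on $[T]$ forces $\rk(T) = \mathrm{dim}(V)$.

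Next I would compute the pullback attached to $T = [V \xrightarrow{g} W \bot W' \xleftarrow{i_W} W]$. Because the right-hand arrow is the canonical inclusion $i_W$, the pullback in $\Eqd$ of the diagram $V \xrightarrow{g} W \bot W' \xleftarrow{i_W} W$ is the subquadratic space $g^{-1}(W) = \{v \in V \mid g(v) \in W\}$ of $V$, equipped with the restriction of $q_V$. Hence $\rk(T) = \mathrm{dim}\, g^{-1}(W)$, and from $\rk(T) = \mathrm{dim}(V)$ we get $\mathrm{dim}\, g^{-1}(W) = \mathrm{dim}(V)$. Since $g$ is injective (morphisms of $\Eq$ are injective linear maps), this yields $g^{-1}(W) = V$, that is, $g(V) \subset W$, which is the first assertion.

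Finally, $g(V) \subset W$ lets us factor $g = i_W \circ f$ for a unique linear map $f : V \to W$; as $g$ preserves the quadratic form and is injective and $i_W$ is an isometric embedding, $f$ is a morphism of $\Eq$. Taking $\alpha = i_W$ in the relation $\mathcal{R}$ of Definition \ref{1.1}, we have $\alpha \circ f = i_W \circ f = g$ and $\alpha \circ \mathrm{Id}_W = i_W$, so the diagram $V \xrightarrow{f} W \xleftarrow{\mathrm{Id}} W$ is $\mathcal{R}$-equivalent to $V \xrightarrow{g} W \bot W' \xleftarrow{i_W} W$; therefore $T = [V \xrightarrow{f} W \xleftarrow{\mathrm{Id}} W]$ with $g = i_W \circ f$, as required. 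No step here is genuinely hard: the only point deserving care is the identification of the pullback with $g^{-1}(W)$ together with the rank bound $\rk(T) \leq \mathrm{dim}(V)$; everything else is formal manipulation of the relation defining $\mathrm{Hom}_{\Tq}$.
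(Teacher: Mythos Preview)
Your proof is correct and follows essentially the same approach as the paper: both identify that a non-zero canonical generator must have rank exactly $\dim(V)$, use the pullback (which embeds in $V$) to conclude $g(V)\subset W$, and then invoke the equivalence relation of Definition~\ref{1.1} to rewrite $T$. Your version is slightly more explicit in naming the pullback as $g^{-1}(W)$ and in spelling out the witness $\alpha=i_W$ for the relation $\mathcal{R}$, but the argument is the same.
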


\begin{proof}
By definition of the filtration, for $V$ and $W$ two objects of \Tq,
the vector space $(P_V/ P_V^{(\mathrm{dim}(V)-1)})(W)$ is generated by
 $\mathrm{Hom}_{\Tq}^{[\mathrm{dim}(V)]}(V,W)$
where $\mathrm{Hom}_{\Tq}^{[\mathrm{dim}(V)]}(V,W)$ is the set of
morphisms from $V$ to $W$ whose the pullback $D$ in $\Eqd$ is a
quadratic space such that $\mathrm{dim}(D)=\mathrm{dim}(V)$. We deduce
from the existence of a monomorphism from $D$ to $V$ and from the
equality of the dimensions, that $D$ and $V$ are
isometric. Consequently, for the morphism $T$ of the statement, we
have, by definition of the pullback, $g(V) \subset W$. Thus, we have $T=[V \xrightarrow{f} W
\xleftarrow{\mathrm{Id}} W ]$, where $g=i_W \circ f$, by the equivalence relation defined
over the morphisms of $\Tq$ in Definition \ref{1.1}.

\end{proof}

\begin{proof}[Proof of Proposition \ref{2.25}]
The natural map $\tau$ obtained in the proof of Lemma \ref{2.24}
defines, for $W$ an object in $\Tq$, the linear map
$$ \begin{array}{cccc}
\tau_W : & (P_V/ P_V^{(\mathrm{dim}(V)-1)})(W) & \rightarrow & \kappa(\mathrm{iso}_V)(W)\\
       & T=[V \xrightarrow{f} W  \xleftarrow{\mathrm{Id}} W ] &
       \longmapsto & [V \xleftarrow{\mathrm{Id}} V \xrightarrow{f} W]
\end{array}$$
which is clearly an isomorphism.
\end{proof}

\section{Decomposition of the standard projective functors $P_{H_0}$
  and $P_{H_1}$} \label{3}

On abelian categories, the
decompositions into direct summands of a functor $F$ of $\Fq$ correspond to
decompositions into orthogonal idempotents of $1$ in the ring
$\mathrm{End}_{\Fq}(F)$ (see for example \cite{Curtis-Reiner-I}). One of the difficulties of the category $\Fq$ lies in the fact that
the rings of endomorphisms of projectives $P_V$ and their
representations are not well-understood. The decompositions of projectives $P_V$, obtained in work in preparation, using a refinement of the rank filtration will allow us to understand the structure of these rings  better.

In this section, we obtain the decompositions into indecomposable
factors of the projective objects $P_{H_0}$ and $P_{H_1}$ by an
explicit study of the filtration defined in section \ref{2}. This section concludes by several consequences of these
decompositions. In particular, we give a classification of the ``small''
simple functors of $\Fq$, which is an essential ingredient in the following
section about the polynomial functors of $\Fq$. 

\subsection{Decomposition of $P_{H_0}$}

To obtain the decomposition of the functor
$P_{H_0}$ into indecomposable factors, we give an explicit description of the subquotients
of the filtration; then, we prove that the filtration splits for
this functor and we identify the factors of this decomposition.

\subsubsection{Explicit description of the subquotients of the filtration}

The aim of this paragraph is to give a basis of the vector spaces
$P_{H_0}^{(0)} (V)$, $P_{H_0}^{(1)}/P_{H_0}^{(0)} (V)$  and
$P_{H_0}/P_{H_0}^{(1)} (V)$ for $V$ a given object in \Tq.

We deduce from Theorem \ref{2.6} and Notation \ref{2.13}, the
following result.
\begin{lm} \label{3.1}
A basis $\mathcal{B}_{H_0}^{(0)}$ of $P_{H_0}^{(0)} (V)$ is given by
the set:
$$\mathcal{B}_{H_0}^{(0)}=\{ [t_f] \  \mathrm{for\ } f \in
\mathrm{Hom}_{\E^f}(\FF^{\oplus 2}, \epsilon(V)) \}.$$
\end{lm}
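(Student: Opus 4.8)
The plan is to read this basis off directly from the identification of $P_{H_0}^{(0)}$ established in Theorem \ref{2.6}, so the proof should be little more than unwinding definitions. First I would specialize part (1) of Theorem \ref{2.6} to $V = H_0$: since $H_0$ is two-dimensional, $\epsilon(H_0) \cong \FF^{\oplus 2}$ in $\E^f$, so Theorem \ref{2.6} provides a natural equivalence $P_{H_0}^{(0)} \simeq \iota(P^{\F}_{\FF^{\oplus 2}})$. Evaluating at an arbitrary object $V$ of $\Tq$ and unwinding the definitions of $\iota$ (Theorem \ref{1.3}) and of the standard projective object of $\F$, one gets $\iota(P^{\F}_{\FF^{\oplus 2}})(V) = P^{\F}_{\FF^{\oplus 2}}(\epsilon(V)) = \FF[\mathrm{Hom}_{\E^f}(\FF^{\oplus 2}, \epsilon(V))]$, whose canonical basis is, by definition of the free $\FF$-vector space on a set, the set $\mathrm{Hom}_{\E^f}(\FF^{\oplus 2}, \epsilon(V))$ itself.

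The remaining point is to check that, under the isomorphism $P_{H_0}^{(0)}(V) \xrightarrow{\simeq} P^{\F}_{\FF^{\oplus 2}}(\epsilon(V))$, this canonical basis corresponds exactly to $\mathcal{B}_{H_0}^{(0)}$. For this I would invoke Proposition \ref{2.10}: the relevant isomorphism $f_V$ is the one obtained by freely linearizing the bijection $\mathrm{Hom}^{(0)}_{\Tq}(H_0, V) \to \mathrm{Hom}_{\E^f}(\FF^{\oplus 2}, \epsilon(V))$ induced by $\epsilon$. By Notation \ref{2.13}, for each $f \in \mathrm{Hom}_{\E^f}(\FF^{\oplus 2}, \epsilon(V))$ the morphism $t_f \in \mathrm{Hom}^{(0)}_{\Tq}(H_0,V)$ is precisely the (unique, since $f_V$ is injective) preimage of $f$, and $[t_f]$ is the associated canonical generator of $P_{H_0}^{(0)}(V)$. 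Hence $f \mapsto [t_f]$ is a bijection between the canonical basis of $P^{\F}_{\FF^{\oplus 2}}(\epsilon(V))$ and the set $\mathcal{B}_{H_0}^{(0)}$ which is compatible with the isomorphism $f_V$, and therefore $\mathcal{B}_{H_0}^{(0)}$ is a basis of $P_{H_0}^{(0)}(V)$, as claimed.

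I expect essentially no obstacle here: the statement is pure bookkeeping resting on Theorem \ref{2.6} and Notation \ref{2.13}. The only point demanding a moment's care is the last identification — making sure that the abstract natural equivalence of Theorem \ref{2.6} is the concrete one carrying $[t_f]$ to $f$ — but this is exactly the way $t_f$ and $[t_f]$ were defined in Notation \ref{2.13}, so it is immediate once one is careful to distinguish the object $V$ appearing in the lemma (the variable on which the functor is evaluated) from the object $H_0$ that indexes the projective.
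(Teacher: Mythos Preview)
Your proof is correct and is exactly the argument the paper intends: the lemma is stated as an immediate consequence of Theorem \ref{2.6} and Notation \ref{2.13}, and your unwinding of the isomorphism $P_{H_0}^{(0)}(V)\simeq \FF[\mathrm{Hom}_{\E^f}(\FF^{\oplus 2},\epsilon(V))]$ together with the definition of $[t_f]$ is precisely that deduction. There is no additional content in the paper's proof beyond what you wrote.
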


By definition of the filtration, a canonical generator of  $P_{H_0}^{(1)}/P_{H_0}^{(0)} (V)$, represented by the morphism $T=[H_0 \xrightarrow{f}
V \bot L \xleftarrow{i_V} V]$ of $\Tq$,
satisfies the following property: $I=f(H_0) \cap i(V)$ is a quadratic space of dimension one.

\begin{lm} \label{3.2}
Let $T=[H_0 \xrightarrow{f} V \bot L \xleftarrow{i_V} V]$ be a morphism of $\Tq$ which represents a
canonical generator of $P_{H_0}^{(1)}/P_{H_0}^{(0)} (V)$, and $\{ a_0, b_0 \}$
be a symplectic basis of $H_0$, then the map $f$ in $T$
has one of the three following forms.
\begin{enumerate}
\item If $I=(f(a_0),0)$, the map $f: H_0 \rightarrow V \bot L$ is
  defined by:
$$f(a_0)=v \mathrm{\qquad and \qquad} f(b_0)=w+l$$
for $v$ and $w$ elements of $V$ satisfying $q(v)=0$ and $B(v,w)=1$
and $l$ a non-zero element of $L$.
\item If $I=(f(b_0),0)$, the map $f: H_0 \rightarrow V \bot L$ is
  defined by:
$$f(a_0)=v+l \mathrm{\qquad and \qquad} f(b_0)=w$$
for $v$ and $w$ elements of $V$ satisfying $q(w)=0$ and $B(v,w)=1$
and $l$ a non-zero element of $L$.
\item If $I=(f(a_0+b_0),1)$, the map $f: H_0 \rightarrow V \bot L$ is
  defined by:
$$f(a_0)=v+l \mathrm{\qquad and \qquad} f(b_0)=w+l$$
for $v$ and $w$ elements of $V$ satisfying $q(v+w)=1$ and $B(v,w)=1$
and $l$ a non-zero element of $L$.
\end{enumerate}
\end{lm}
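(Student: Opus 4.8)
The plan is to prove Lemma \ref{3.2} by a direct case analysis driven entirely by the constraint that the morphism $T$ represents a \emph{non-zero} canonical generator of $P_{H_0}^{(1)}/P_{H_0}^{(0)}(V)$, which by definition of the rank filtration means exactly that $I = f(H_0) \cap i_V(V)$ is a one-dimensional quadratic subspace of $V \bot L$ (rank at least $1$ kills the class modulo $P_{H_0}^{(0)}$ only if it is $1$, and rank $2$ would put $T$ into a later stage; actually rank $\le 1$ is automatic here since we work modulo $P_{H_0}^{(0)}$, so $I$ has dimension exactly $1$). First I would fix the symplectic basis $\{a_0, b_0\}$ of $H_0$ (so $q(a_0) = q(b_0) = 0$, $B(a_0,b_0) = 1$) and observe that $I$, being one-dimensional inside $f(H_0)$, is spanned by $f(x)$ for some nonzero $x \in H_0$. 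Since the nonzero vectors of $H_0$ are $a_0$, $b_0$, $a_0+b_0$ with $q$-values $0$, $0$, $1$ respectively, this immediately produces the three cases labelled by $I = (f(a_0),0)$, $I = (f(b_0),0)$, $I = (f(a_0+b_0),1)$ — noting that the quadratic form on $I$ is the restriction of $q$ to $V \bot L$, which agrees via $f$ with $q_{H_0}$ on the chosen generator.

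Second, in each case I would extract the normal form of $f$. Take the first case: $I = \mathrm{Span}(f(a_0))$ with $q(f(a_0)) = 0$, and $f(a_0) \in i_V(V)$, so (identifying $i_V(V)$ with $V$) we may write $f(a_0) = v$ with $v \in V$, $q(v) = 0$. Complete $f(a_0)$ to a basis of $f(H_0)$ by $f(b_0)$; decompose $f(b_0) = w + l$ along $V \bot L$. Because $I = f(H_0) \cap V$ is spanned by $f(a_0) = v$ and is one-dimensional, $f(b_0)$ cannot lie in $V$, forcing $l \ne 0$. Finally $f$ preserves the quadratic form and $B$, so $B(v, w+l) = B_{H_0}(a_0,b_0) = 1$; since $l \in L$ and $v \in V$ are orthogonal in the orthogonal sum, $B(v,w+l) = B(v,w)$, giving $B(v,w) = 1$. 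The remaining two cases are handled by the identical mechanism with the roles of $a_0$ and $b_0$ (resp. $a_0$ and $a_0+b_0$) exchanged: in case (2) the generator of $I$ is $f(b_0) = w \in V$ with $q(w) = 0$, $f(a_0) = v + l$ with $l \ne 0$, and $B(v,w) = 1$; in case (3) one works with $f(a_0+b_0) = v+w \in V$ (writing $f(a_0) = v + l$, $f(b_0) = w + l'$, then $f(a_0+b_0) \in V$ forces $l = l'$ and $l \ne 0$ since neither $f(a_0)$ nor $f(b_0)$ alone can be in $V$), with $q(v+w) = q(f(a_0+b_0)) = q_{H_0}(a_0+b_0) = 1$ and $B(v,w) = B_{H_0}(a_0,b_0) = 1$.

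The only genuine subtlety — the ``main obstacle'' such as it is — is checking in each case that the intersection condition $\dim I = 1$ (not $0$, not $2$) is correctly translated into ``$l \ne 0$ and the other coefficient vector lies in $V$''. One must be slightly careful: a priori $f(H_0) \cap V$ one-dimensional does not by itself pin down which vector of $H_0$ maps into $V$ up to scalar, but over $\FF$ there are no scalars to worry about, so the generator of $I$ is literally one of $f(a_0)$, $f(b_0)$, $f(a_0+b_0)$, and the other basis vector's image must have nonzero $L$-component (else $f(H_0) \subset V$ and the rank would be $2$, contradicting that the class survives in the quotient by $P_{H_0}^{(0)}$ — wait, rank $2$ means $T \in P_{H_0}^{(2)} = P_{H_0}$ but the point is it would then already be detected at a level making the class in $P_{H_0}^{(1)}/P_{H_0}^{(0)}$ zero; more directly, if $f(H_0) \subseteq V$ then $T$ factors as $[H_0 \to V \xleftarrow{\mathrm{Id}} V]$ which has rank $2 = \dim H_0$, hence lies in $\mathrm{Hom}_{\Tq}^{[\dim H_0]}$, contributing to $P_{H_0}/P_{H_0}^{(1)}$, not to $P_{H_0}^{(1)}$). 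I would state this dichotomy cleanly once and then let the three cases follow mechanically. Everything else is a routine bilinear-form bookkeeping computation that I would not belabor in the write-up.
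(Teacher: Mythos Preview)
Your proposal is correct and follows essentially the same approach as the paper: enumerate the three one-dimensional subspaces of $H_0$ (spanned by $a_0$, $b_0$, $a_0+b_0$) and in each case read off the normal form of $f$ from the constraint that $f$ preserves the quadratic and bilinear forms while exactly one basis image lies in $V$. The paper's own proof is a two-line sketch that just names the three lines and leaves the bookkeeping implicit; you have simply written out those details (and your slightly tangled aside about rank $2$ versus the filtration is unnecessary, since the paper states just before the lemma that a canonical generator of $P_{H_0}^{(1)}/P_{H_0}^{(0)}(V)$ has $\dim I = 1$ by definition).
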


\begin{proof}
The quadratic space $H_0$ has three subspaces of dimension one which are:
$\mathrm{Span}(a_0)$ and $\mathrm{Span}(b_0)$ isometric to $(x,0)$ and
$\mathrm{Span}(a_0+b_0)$ isometric to $(x,1)$. These three subspaces
give rise to each one of the maps $f$ defined in the statement.
\end{proof}
\begin{nota} \label{3.3}
The morphisms $[H_0 \xrightarrow{f} V \bot L \xleftarrow{i_V} V]$, where
$f$ is one of the morphisms described in the point $(1)$ (respectively $(2)$
and $(3)$) of the previous lemma, will
be known as type $A$ (respectively $B$ and $C$) morphisms.
\end{nota}
We have the following proposition.

\begin{prop}\label{3.4}
For $T=[H_0 \xrightarrow{f} V \bot L \xleftarrow{i_V} V]$ and
$T'=[H_0 \xrightarrow{f'} V \bot L' \xleftarrow{i'_V} V]$ morphisms of $\Tq$ which represent canonical generators
of $P_{H_0}^{(1)}/P_{H_0}^{(0)} (V)$, the following properties are equivalent.
\begin{enumerate}
\item The morphisms $T$ and $T'$ of
$\mathrm{Hom}_{\Tq}(H_0,V)$ have the same type and satisfy the relation $p_V \circ f= p'_V \circ f'$.
\item The morphisms $T$ and $T'$  of $\mathrm{Hom}_{\Tq}(H_0,V)$ are equal.
\end{enumerate} 
\end{prop}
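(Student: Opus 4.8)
The statement to prove is Proposition \ref{3.4}: for morphisms $T=[H_0 \xrightarrow{f} V \bot L \xleftarrow{i_V} V]$ and $T'=[H_0 \xrightarrow{f'} V \bot L' \xleftarrow{i'_V} V]$ representing canonical generators of $P_{H_0}^{(1)}/P_{H_0}^{(0)}(V)$, the conditions ``same type and $p_V \circ f = p'_V \circ f'$'' and ``$T = T'$'' are equivalent. The implication $(2) \Rightarrow (1)$ is essentially immediate: if $T = T'$ then applying the forgetful functor $\epsilon$ (precisely, post-composing with the orthogonal projection) gives $p_V \circ \mathcal O(f) = p'_V \circ \mathcal O(f')$, and the type is an invariant of the morphism since by Lemma \ref{3.2} it records which one-dimensional subspace of $H_0$ maps into $i_V(V)$, which is intrinsic to $T$. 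So the real content is $(1) \Rightarrow (2)$, and this is where the work lies.

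For $(1) \Rightarrow (2)$, the plan is to mimic the injectivity argument for $f_W$ in the proof of Proposition \ref{2.10}, using Witt's theorem for degenerate forms (Theorem \ref{2.12}). Fix a symplectic basis $\{a_0, b_0\}$ of $H_0$. Suppose $T$ and $T'$ both have, say, type $A$ (the other cases are symmetric). By Lemma \ref{3.2}, $f(a_0) = v$, $f(b_0) = w + l$ with $v, w \in V$, $q(v)=0$, $B(v,w)=1$, $l \in L \setminus \{0\}$; and $f'(a_0) = v'$, $f'(b_0) = w' + l'$ with analogous conditions in $V \bot L'$. The hypothesis $p_V \circ f = p'_V \circ f'$ forces $v = v'$ and $w = w'$. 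Now I would pass to the nondegenerate space $L \bot L'$ and define $\underline f \colon \mathrm{Span}(l) \to \mathrm{Span}(l')$ by $\underline f(l) = l'$. Since $l$ and $l'$ are each non-zero elements of one-dimensional nondegenerate-over-$\FF$ pieces used in the filtration (one should check $q(l) = q(l') = q(b_0) + q(w)$, forced by $f, f'$ preserving the quadratic form), $\underline f$ is an isometry of one-dimensional quadratic subspaces. Apply Theorem \ref{2.12} to get an isometry $F \colon L \bot L' \to L \bot L'$ extending $\underline f$, hence $\mathrm{Id}_V \bot F \colon V \bot L \bot L' \to V \bot L \bot L'$ is an isometry carrying the inclusion $V \bot L \hookrightarrow V \bot L \bot L'$ composed with $f$ to the inclusion $V \bot L' \hookrightarrow V \bot L \bot L'$ composed with $f'$, and fixing $i_V(V)$ pointwise. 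This gives a commutative diagram exhibiting $T = T'$ via the equivalence relation $\mathcal R$ of Definition \ref{1.1}, exactly as in the proof of injectivity of $f_W$.

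The cases where $T, T'$ are both type $B$ or both type $C$ are handled identically: in each the hypothesis pins down the $V$-components of $f(a_0), f(b_0)$, the remaining datum is a single non-zero vector $l$ (resp. $l'$) in the one-dimensional extra piece, the quadratic-form-preservation property forces $q(l) = q(l')$, and Witt's theorem provides the isometry. The only genuinely necessary extra observation is that if $T$ and $T'$ have \emph{different} types then they cannot be equal --- but this is automatic: by Lemma \ref{3.2} the type is determined by which of the three one-dimensional subspaces $\mathrm{Span}(a_0)$, $\mathrm{Span}(b_0)$, $\mathrm{Span}(a_0+b_0)$ of $H_0$ is the pullback $I = f(H_0) \cap i_V(V)$, and this subspace is determined by $T$.

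The main obstacle I anticipate is purely bookkeeping: verifying carefully that in each type the value of $q$ on the ``excess'' vector $l$ is forced to equal a quantity depending only on the common $V$-data (so that $\underline f$ really is an isometry and not merely a linear bijection), and handling the equivalence-relation manipulation cleanly so that the final diagram genuinely lives in $\Eq$ with injective morphisms. There is no conceptual difficulty beyond what is already present in the proof of Proposition \ref{2.10}; it is a direct, if slightly tedious, adaptation of that argument to the three explicit normal forms of Lemma \ref{3.2}.
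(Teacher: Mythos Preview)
Your proof is correct. For the direction $(1)\Rightarrow(2)$ it is essentially identical to the paper's: same reduction to a single non-zero ``excess'' vector $l$ (resp.\ $l'$), same observation that $q(l)=q(l')$ is forced by the quadratic-form preservation, same application of Witt's theorem (Theorem~\ref{2.12}) on $L\bot L'$ to produce the isometry witnessing $T=T'$.

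For the direction $(2)\Rightarrow(1)$ your route is genuinely different and more elementary. You argue that $\epsilon(T)=\epsilon(T')$ gives $p_V\circ f=p'_V\circ f'$ directly (since $\epsilon$ is a functor on $\Tq$), and that the type is intrinsic because the pullback in $\Eqd$, viewed as a one-dimensional subspace of $H_0$, is unchanged under the generating relation $\mathcal R$ (injectivity of the comparison map $\alpha$ makes the two pullback conditions coincide). The paper instead first proves a separate technical lemma (Lemma~\ref{3.5}) showing that $f(H_0)+i_V(V)\simeq f'(H_0)+i'_V(V)$ whenever $T=T'$, then applies Witt's theorem a second time to this isometry to produce $\beta=\mathrm{Id}_V\bot\beta'$, and reads off the conclusion from the explicit form of $\beta$. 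Your argument bypasses both Lemma~\ref{3.5} and this second use of Witt, at the cost of a short remark about invariance of the pullback under the equivalence relation generated by $\mathcal R$; the paper's route, while heavier here, yields Lemma~\ref{3.5} as a reusable byproduct.
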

The proof of the implication $(2) \Rightarrow (1)$ relies on the
following technical lemma.
\begin{lm} \label{3.5}
Let $T=[V \xrightarrow{g} X \xleftarrow{h} W]$ and $T'=[V
\xrightarrow{g'} X' \xleftarrow{h'} W']$ be morphisms of
$\mathrm{Hom}_{\Tq}(V,W)$. If $T=T'$, then $g(V) + h(W) \simeq g'(V)+ h'(W)$ in $\Eqd$.
\end{lm}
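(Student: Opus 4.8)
The plan is to reduce to a single application of the generating relation $\mathcal{R}$ and then use the fact that the connecting morphism is an isometric embedding.

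First, recall that the equality $T=T'$ in $\mathrm{Hom}_{\Tq}(V,W)$ means there is a finite chain of representatives $T=T_0, T_1, \ldots, T_n=T'$ in which each consecutive pair $(T_i,T_{i+1})$ is related by $\mathcal{R}$ in one of its two directions (with $T_i=[V \xrightarrow{g_i} X_i \xleftarrow{h_i} W]$). Since isometry of (possibly degenerate) quadratic spaces in $\Eqd$ is an equivalence relation, it suffices to establish the conclusion when $T$ and $T'$ are related in a single step, and then compose the resulting isometries along the chain. So assume $T=[V \xrightarrow{g} X \xleftarrow{h} W]$, $T'=[V \xrightarrow{g'} X' \xleftarrow{h'} W]$, and there is a morphism $\alpha\colon X \rightarrow X'$ of $\Eq$ with $\alpha \circ g=g'$ and $\alpha \circ h=h'$.

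In that situation one computes $g'(V)+h'(W)=\alpha(g(V))+\alpha(h(W))=\alpha\bigl(g(V)+h(W)\bigr)$. Because $\alpha$ is a morphism of $\Eq$, it is an \emph{injective} linear map that preserves the quadratic form; hence its restriction to the subspace $g(V)+h(W) \subset X$ is a linear isomorphism onto $g'(V)+h'(W) \subset X'$ which preserves the restricted quadratic forms, i.e. an isometry in $\Eqd$. This gives $g(V)+h(W) \simeq g'(V)+h'(W)$. The case where the single step goes the other way, i.e. where $\alpha\colon X' \rightarrow X$ satisfies $\alpha \circ g'=g$ and $\alpha \circ h'=h$, is handled by the identical computation with the roles of $X$ and $X'$ exchanged.

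The argument is essentially formal, so there is no serious obstacle; the only point requiring a moment's care is to remember that a morphism of $\Eq$ is by definition an isometric embedding, so it restricts to a genuine isometry on any subquadratic space of its domain — this is exactly what upgrades the isomorphism of underlying vector spaces to an isometry in $\Eqd$. The remaining bookkeeping, namely passing from a single step to a general zig-zag chain of $\mathcal{R}$-relations and composing the intermediate isometries, is routine.
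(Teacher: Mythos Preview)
Your proof is correct and follows the same overall strategy as the paper: reduce to a single application of the generating relation $\mathcal{R}$, then handle that case. The difference is in how the single-step case is dispatched. You observe directly that $\alpha(g(V)+h(W))=g'(V)+h'(W)$ and that $\alpha$, being an injective form-preserving map, restricts to an isometry of subquadratic spaces. The paper instead characterizes $g(V)+h(W)$ by a minimality property (the smallest possibly degenerate subspace through which both $g$ and $h$ factor), composes with $\delta$ to embed it in $X'$, and then invokes minimality on both sides to conclude. Your argument is more elementary and transparent; the paper's formulation has a slightly categorical flavor but ultimately encodes the same computation.
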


\begin{proof}
By definition of the equivalence relation given in Definition
\ref{1.1}, it is sufficient to prove that, for two morphisms $T$ and
$T'$ such that $T \mathcal{R} T'$, we have  $g(V) + h(W) \simeq g'(V) + h'(W)$. 

By definition, $g(V) + h(W)$ is the smallest, possibly degenerate, quadratic space such
that we have a commutative diagram in $\Eqd$, of the form:
$$\xymatrix{
  & W \ar[d]_{\beta} \ar[rdd]^{h}\\
V \ar[r]^-{\alpha}  \ar[drr]_{g} & g(V) + h(W) \ar[dr]^(.4){\gamma }\\
  &   &X.\\
}$$
Similarly, $g'(V) + h'(W)$ is the smallest quadratic space such
that we have an analogous commutative diagram.
By definition of the relation $\mathcal{R}$, we have the existence of
a morphism $\delta$ in $\Eq$ such that the following diagram is commutative:
$$\xymatrix{
  & W \ar[d]_{h} \ar[rdd]^{h'}\\
V \ar[r]^{g}  \ar[drr]_{g'} & X \ar[dr]^(.4){\delta }\\
  &   &X'.\\
}$$
By the consideration of the following commutative diagram in $\Eqd$
$$\xymatrix{
  & W \ar[d]_-{\beta} \ar[rdd]_-{h} \ar[rrddd]^-{h'}\\
V \ar[r]^-{\alpha}  \ar[drr]^-{g} \ar[ddrrr]_-{g'}& Y \ar[dr]\\
  &   &X \ar[dr]^(.4){\delta }\\
  &   &   & X'
}$$
where $Y=g(V) + h(W)$, we deduce from the minimality of $g'(V) + h'(W)$, the existence of
a morphism in $\Eqd$ from $g'(V) + h'(W)$ to $g(V) + h(W)$ such
that the corresponding diagram is commutative. Then, by minimality of
$g(V) + h(W)$ for $T$, we obtain: $g(V) + h(W) \simeq g'(V) + h'(W)$.
\end{proof}

\begin{proof}[Proof of Proposition \ref{3.4}]
Suppose that the morphisms $T$ and $T'$ of
$\mathrm{Hom}_{\Tq}(H_0,V)$ are of type $A$
such that $p_V \circ f= p'_V \circ f'$. We deduce that:
$$f(a_0)=v;\  f(b_0)=w+l \mathrm{\ and\ } f'(a_0)=v;\  f'(b_0)=w+l'$$
for $v$ and $w$ elements of $V$ and $l$ (resp. $l'$) a
non-zero element of $L$ (resp. $L'$).

Since the maps $f$ and $f'$ preserve the quadratic forms, we have \linebreak
$q(b_0)=q(w)+q(l)=q(w)+q(l')$. We deduce that $q(l)=q(l')$, thus the map $\mathrm{Span}(l)
\xrightarrow{\underline{\alpha}}  \mathrm{Span}(l')$, such that $\underline{\alpha}(l)=l'$, preserves the
quadratic form. Consequently, we can apply Theorem \ref{2.12} to
obtain the existence of a map $\alpha: L \bot L' \rightarrow L \bot
L'$ such that the following diagram is commutative:
$$\xymatrix{
L \bot L' \ar[r]^{\alpha} & L \bot L' \\
 \mathrm{Span}(l)  \ar@{^{(}->}[u] \ar[r]_{\underline{\alpha}}& \mathrm{Span}(l'). \ar@{^{(}->}[u] 
}$$
We deduce the commutativity of the diagram:
$$\xymatrix{
    & V \ar@{^{(}->}[d]^{i_V} \ar@{^{(}->}[ddr]^{i'_V} & \\
H_0 \ar[r]^-{f} \ar[drr]_{f'} & V \bot(L \bot L')
    \ar[rd]^-{\mathrm{Id} \bot \alpha} & \\
  &  &  V \bot(L \bot L').
}$$
Since
$T=[H_0 \xrightarrow{f} V \bot L \bot L' \xleftarrow{i_V} V ]$ and
    $T'=[H_0 \xrightarrow{f'} V \bot L \bot L' \xleftarrow{i'_V} V ]$,
    by inclusion, we deduce from the previous diagram that $T=T'$.

We reason in the same way, for the morphisms of type $B$ and $C$.

Conversely, if $T=T'$, by Lemma \ref{3.5} $f(H_0) + i_V(V) \simeq
f'(H_0) + i'_V(V)$. Consequently we deduce from Theorem
\ref{2.12} the existence of an isometry 
$\beta: V \bot L \bot L' \rightarrow V \bot L \bot L'$ making the
following diagram commutative:
 $$\xymatrix{
V \bot L \bot L'  \ar[r]^{\beta} & V \bot L \bot L'  \\
f(H_0) + i_V(V) \ar@{^{(}->}[u] \ar[r]_{\simeq}& f'(H_0) + i'_V(V). \ar@{^{(}->}[u] 
}$$
This yields the commutativity of the following diagram:
$$\xymatrix{
    & V \ar@{^{(}->}[d]^{i_V} \ar@{^{(}->}[ddr]^{i'_V} & \\
H_0 \ar[r]^{f} \ar[drr]_{f'} & V \bot L \bot L'
    \ar[rd]^{\beta} & \\
  &  &  V \bot L \bot L'
}$$
which implies that $\beta \circ i_V =i'_V$. Thus, $\beta=\mathrm{Id}_V \bot
\beta'$ where $\beta': L \bot L' \rightarrow L \bot L'$ is a morphism
of $\mathrm{Hom}_{\Eq}(L \bot L',L \bot L')$. Consequently, we have
$$f(a_0)=v+l;\  f(b_0)=w+l' \mathrm{\ and\ } f'(a_0)=v+\beta'(l);\
f'(b_0)=w+\beta'(l')$$
and we deduce that $p_V \circ f=p'_V$. Furthermore, since $\beta'$ is
inversible, for all $x$ in $L \bot L'$ we have: $x$ is non-zero if and
only if $\beta'(x)$ is non-zero. Consequently, $T$ and $T'$ have the
same type.
\end{proof}
This proposition justifies the following notation.
\begin{nota}
We will denote by $A_{v,w}$, $B_{v,w}$ and $C_{v,w}$ the morphisms of
$\mathrm{Hom}_{\Tq}(H_0,V)$ respectively of type $A$, $B$ and
$C$ and such that $p_V \circ f(a_0)=v$ and $p_V \circ f(b_0)=w$.
\end{nota}
The following result is a straightforward consequence of Lemma
\ref{3.2} and Proposition \ref{3.4}.
\begin{lm} \label{3.7}
A basis $\mathcal{B}_{H_0}^{(1)}$ of $P_{H_0}^{(1)}/P_{H_0}^{(0)} (V)$
is given by the set:
$$\begin{array}{ll}
\mathcal{B}_{H_0}^{(1)}=\{ & [A_{v,w}] \mathrm{\ for\ } v \mathrm{\ and\
  } w \mathrm{\ elements\ of\ } V \mathrm{\
    satisfying\ } q(v)=0 \mathrm{\ and\ } B(v,w)=1, \\
                        & [B_{v,w}] \mathrm{\ for\ } v \mathrm{\ and\
  } w \mathrm{\ elements\ of\ } V \mathrm{\
    satisfying\ } q(w)=0 \mathrm{\ and\ } B(v,w)=1, \\
                         &  [C_{v,w}] \mathrm{\ for\ } v \mathrm{\ and\
  } w \mathrm{\ elements\ of\ } V \mathrm{\
    satisfying\ } q(v+w)=1 \mathrm{\ and\ } B(v,w)=1 \}
\end{array}$$ 
\end{lm}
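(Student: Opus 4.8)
The plan is to identify $P_{H_0}^{(1)}/P_{H_0}^{(0)}(V)$ with the free $\FF$-vector space on the set of morphisms of $\Tq$ from $H_0$ to $V$ of rank \emph{exactly} one, and then to feed Lemma \ref{3.2} and Proposition \ref{3.4} into this description. First I would note that, by the very definition of the rank filtration, $P_{H_0}^{(1)}(V)=\FF[\mathrm{Hom}^{(1)}_{\Tq}(H_0,V)]$ and $P_{H_0}^{(0)}(V)=\FF[\mathrm{Hom}^{(0)}_{\Tq}(H_0,V)]$, and the second is spanned by a subset of the canonical generators of the first (since $\mathrm{Hom}^{(0)}_{\Tq}(H_0,V)\subseteq\mathrm{Hom}^{(1)}_{\Tq}(H_0,V)$). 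Hence the quotient is canonically the free $\FF$-vector space on $\mathrm{Hom}^{(1)}_{\Tq}(H_0,V)\setminus\mathrm{Hom}^{(0)}_{\Tq}(H_0,V)$, that is, on the set of morphisms of rank exactly one, and a basis of the quotient is given by the classes $[T]$ with $T$ running over that set without repetition.

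It then remains to match this set with $\mathcal{B}_{H_0}^{(1)}$. By Lemma \ref{3.2}, any $T=[H_0\xrightarrow{f}V\bot L\xleftarrow{i_V}V]$ representing such a generator, i.e. with $I=f(H_0)\cap i_V(V)$ one-dimensional, has $f$ of one of the three displayed shapes, so $T$ is of type $A$, $B$ or $C$ in the sense of Notation \ref{3.3}; setting $v=p_V\circ f(a_0)$ and $w=p_V\circ f(b_0)$ we get $T=A_{v,w}$, $T=B_{v,w}$ or $T=C_{v,w}$, with $(v,w)$ subject to exactly the conditions listed in the statement ($q(v)=0$ and $B(v,w)=1$; or $q(w)=0$ and $B(v,w)=1$; or $q(v+w)=1$ and $B(v,w)=1$). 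Conversely, for each admissible pair $(v,w)$ the construction in the proof of Lemma \ref{3.2} produces a genuine morphism of rank one of the corresponding type, so the three families in $\mathcal{B}_{H_0}^{(1)}$ together exhaust the morphisms of rank exactly one. Finally Proposition \ref{3.4} gives the absence of repetitions: two of the listed morphisms are equal in $\mathrm{Hom}_{\Tq}(H_0,V)$ if and only if they have the same type and the same pair $(v,w)$. Combining these points, $\mathcal{B}_{H_0}^{(1)}$ is exactly the set of classes $[T]$ with $T$ of rank exactly one, each occurring once, hence is a basis of $P_{H_0}^{(1)}/P_{H_0}^{(0)}(V)$.

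There is little to obstruct this; the only point that needs care is the bookkeeping tying the two cited results together. Lemma \ref{3.2} is stated for a fixed representative $T$, so one must check that the assignment $T\mapsto(\mathrm{type},\,p_V\circ f)$ descends to $\mathrm{Hom}_{\Tq}(H_0,V)$ — which is precisely what makes $A_{v,w},B_{v,w},C_{v,w}$ well defined as morphisms of $\Tq$ — and that a morphism of rank one cannot also have rank zero. The first is exactly the content of Proposition \ref{3.4}, which shows the assignment is a well-defined injection; the second is immediate from Definition \ref{2.1}. Once these are in hand, no computation beyond what is already in Lemma \ref{3.2} and Proposition \ref{3.4} is needed.
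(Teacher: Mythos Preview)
Your proof is correct and follows exactly the approach the paper intends: the paper simply states that the result is a straightforward consequence of Lemma \ref{3.2} and Proposition \ref{3.4}, and your argument spells out precisely that deduction.
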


By Proposition \ref{2.22}, we have $(P_{H_0}/
P_{H_0}^{(1)})(V) \simeq  \kappa(\mathrm{iso}_{H_0})(V)$. We deduce
the following result.
\begin{lm} \label{3.8}
A basis $\mathcal{B}_{H_0}^{(2)}$ of $P_{H_0}/P_{H_0}^{(1)} (V)$ is
given by the set:
$$\mathcal{B}_{H_0}^{(2)}=\{ [D_f] \  \mathrm{for\ } f \in
\mathrm{Hom}_{\Eq}(H_0, V) \},$$ 
where $D_f$ is the morphism of $\Tq$ represented by the diagram: $H_0 \xrightarrow{f} V
\xleftarrow{\mathrm{Id}} V$.
\end{lm}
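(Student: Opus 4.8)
The plan is to transport the statement through the identification of Proposition~\ref{2.22}. Since $\dim(H_0)=2$, the quotient $P_{H_0}/P_{H_0}^{(1)}$ is the top quotient $P_{H_0}/P_{H_0}^{(\dim(H_0)-1)}$, so by Proposition~\ref{2.22} it is isomorphic to $\kappa(\mathrm{iso}_{H_0})$; moreover the proof of Proposition~\ref{2.25} provides, for every object $V$ of $\Tq$, an \emph{explicit} natural isomorphism
$$\tau_V : (P_{H_0}/P_{H_0}^{(1)})(V) \xrightarrow{\ \simeq\ } \kappa(\mathrm{iso}_{H_0})(V), \qquad [H_0 \xrightarrow{f} V \xleftarrow{\mathrm{Id}} V] \longmapsto [H_0 \xleftarrow{\mathrm{Id}} H_0 \xrightarrow{f} V].$$
So it suffices to check that $\tau_V$ carries $\mathcal{B}_{H_0}^{(2)}$ bijectively onto a basis of $\kappa(\mathrm{iso}_{H_0})(V)$.

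\emph{Spanning.} By definition of the rank filtration, $(P_{H_0}/P_{H_0}^{(1)})(V)$ is generated by the classes of the morphisms of $\mathrm{Hom}_{\Tq}(H_0,V)$ of rank $2$. Applying Lemma~\ref{2.26} with its ``$V$'' equal to $H_0$ and its ``$W$'' equal to $V$, each such morphism is represented by a diagram $[H_0 \xrightarrow{g} V \bot V' \xleftarrow{i_V} V]$ with $g(H_0)\subset V$, hence by $[H_0 \xrightarrow{f} V \xleftarrow{\mathrm{Id}} V]$ where $g=i_V\circ f$; since $g$ preserves the quadratic form and $i_V$ is an isometric inclusion, $f$ lies in $\mathrm{Hom}_{\Eq}(H_0,V)$. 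Thus $\mathcal{B}_{H_0}^{(2)}=\{[D_f]\ :\ f\in\mathrm{Hom}_{\Eq}(H_0,V)\}$ spans $(P_{H_0}/P_{H_0}^{(1)})(V)$.

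\emph{Linear independence.} Under $\tau_V$ the generator $[D_f]$ goes to the canonical generator $[H_0 \xleftarrow{\mathrm{Id}} H_0 \xrightarrow{f} V]$ of $\kappa(\mathrm{iso}_{H_0})(V)$, which, as recalled after Theorem~\ref{1.5}, is isomorphic as a vector space to the subspace of $Q_{H_0}(V)=\FF[\mathrm{Hom}_{\mathrm{Sp}(\Eqd)}(H_0,V)]$ spanned by these generators. Distinct $f\in\mathrm{Hom}_{\Eq}(H_0,V)$ give distinct morphisms $[H_0 \xleftarrow{\mathrm{Id}} H_0 \xrightarrow{f} V]$ of $\mathrm{Sp}(\Eqd)$ --- the left leg being the identity forces any morphism of spans between two of them to be $\mathrm{Id}_{H_0}$ --- hence distinct elements of the canonical basis of $Q_{H_0}(V)$; so they are linearly independent and, spanning $\kappa(\mathrm{iso}_{H_0})(V)$ by definition, form a basis of it. Pulling back along the isomorphism $\tau_V$, $\mathcal{B}_{H_0}^{(2)}$ is a basis of $(P_{H_0}/P_{H_0}^{(1)})(V)$.

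The only delicate point is that, a priori, the equivalence relation defining the morphisms of $\Tq$ could identify two distinct $D_f$'s; this is exactly what is bypassed by working through $\tau_V$ inside the span category $\mathrm{Sp}(\Eqd)$, where the corresponding generators are transparently part of a free basis. Everything else is a direct invocation of Propositions~\ref{2.22} and~\ref{2.25} and Lemma~\ref{2.26}.
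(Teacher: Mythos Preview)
Your proof is correct and follows exactly the paper's approach: the paper simply states that ``By Proposition~\ref{2.22}, we have $(P_{H_0}/P_{H_0}^{(1)})(V) \simeq \kappa(\mathrm{iso}_{H_0})(V)$'' and declares the lemma as a consequence, whereas you have spelled out the details (spanning via Lemma~\ref{2.26}, linear independence via the explicit isomorphism $\tau_V$ and the structure of $\mathrm{iso}_{H_0}(V)$ inside $Q_{H_0}(V)$). Your remark that distinct $f$'s yield distinct spans in $\mathrm{Sp}(\Eqd)$ because the identity left leg rigidifies the equivalence is the one point the paper leaves implicit, and it is handled correctly.
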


We end this paragraph by the rules of composition for the
morphisms $t_f$, $A_{v,w}$, $B_{v,w}$, $C_{v,w}$ and $D_f$, summarized
in the following proposition. This technical result will be
fundamental in the following paragraph, to prove the splitting of the filtration.
\begin{lm} \label{3.9}
Let $T=V \xrightarrow{\varphi} W \bot L \xleftarrow{i_W}
W$ be a morphism of $\mathrm{Hom}_{\Tq}(V,W)$. The following relations are satisfied:
\begin{enumerate}
\item For $f$ a morphism of  $\mathrm{Hom}_{\E^f}(\FF^{\oplus 2},
  \epsilon(V))$ we have: 
$$T \circ t_f=t_{\varphi \circ f}.$$
\item
\begin{enumerate}
\item For $v$ and $w$ elements of $V$ satisfying $q(v)=0$ and
  $B(v,w)=1$, we have:
$$T \circ A_{v,w}=\left\lbrace
\begin{array}{cc}
A_{\varphi(v), p_W \circ \varphi(w)}&\mathrm{if\ } \varphi(v) \in W\\
t_{p_W \circ (\varphi \bot \mathrm{Id} ) \circ \alpha} & \mathrm{otherwise.}
\end{array}
\right.$$
\item For $v$ and $w$ elements of $V$ satisfying $q(w)=0$ and
  $B(v,w)=1$, we have:
$$T \circ B_{v,w}=\left\lbrace
\begin{array}{cc}
B_{p_W \circ \varphi(v), \varphi(w)}&\mathrm{if\ } \varphi(w) \in W\\
t_{p_W \circ (\varphi \bot \mathrm{Id} ) \circ \alpha} & \mathrm{otherwise.}
\end{array}
\right.$$
\item For $v$ and $w$ elements of $V$ satisfying $q(v+w)=1$ and
  $B(v,w)=1$, we have:
$$T \circ C_{v,w}=\left\lbrace
\begin{array}{cc}
C_{p_W \circ \varphi(v),p_W \circ \varphi(w)}&\mathrm{if\ } \varphi(v+w) \in W\\
t_{p_W \circ (\varphi \bot \mathrm{Id} ) \circ \alpha} & \mathrm{otherwise.}
\end{array}
\right.$$
\end{enumerate}
\item \label{calculs-D}  For $f$ a morphism of
  $\mathrm{Hom}_{\Eq}(H_0, V)$, we have: 
$$T \circ D_f=\left\lbrace
\begin{array}{cl}
D_{\varphi \circ f} &\mathrm{if\ } \varphi \circ f(a_0) \in W
\mathrm{\ and\ } \varphi \circ f(b_0) \in W\\
A_{\varphi \circ f(a_0), p_W \circ \varphi \circ f(b_0)}&\mathrm{if\ } \varphi \circ f(a_0) \in W
\mathrm{\ and\ } \varphi \circ f(b_0) \notin W\\
B_{ p_W \circ \varphi \circ f(a_0), \varphi \circ f(b_0)}&\mathrm{if\ } \varphi \circ f(a_0) \notin W
\mathrm{\ and\ } \varphi \circ f(b_0) \in W\\
C_{ p_W \circ \varphi \circ f(a_0), p_W \circ \varphi \circ f(b_0)}&\mathrm{if\ } \varphi \circ f(a_0) \notin W
\mathrm{\ and\ } \varphi \circ f(b_0) \notin W \\
&\mathrm{\ and\ } \varphi
\circ f(a_0+b_0) \in W\\
t_{p_W \circ (\varphi \bot \mathrm{Id} ) \circ \alpha}&\mathrm{if\ } \varphi \circ f(a_0) \notin W
\mathrm{\ and\ } \varphi \circ f(b_0) \notin W\\ 
&\mathrm{\ and\ } \varphi
\circ f(a_0+b_0) \notin W.
\end{array}
\right.$$
\end{enumerate}

\end{lm}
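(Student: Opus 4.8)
The plan is to derive all five formulas from a single computation, by exploiting that each of $t_f$, $A_{v,w}$, $B_{v,w}$, $C_{v,w}$ and $D_f$ is represented in $\Tq$ by a diagram of the shape $[H_0 \xrightarrow{\alpha} V \bot L' \xleftarrow{i_V} V]$, where $\alpha$ is an injective isometry and $L'$ is a (possibly zero) nondegenerate space. First I would recall from \cite{math.AT/0606484} that the pseudo push-out of a split orthogonal inclusion against an arbitrary morphism is transparent: the pseudo push-out of $V \bot L' \xleftarrow{i_V} V \xrightarrow{\varphi} W \bot L$ is $W \bot L \bot L'$, with structure maps $\varphi \bot \mathrm{Id}_{L'}$ and the canonical inclusion of $W \bot L$. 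Substituting this into the definition of composition in $\Tq$ (the diagram in the proof of Proposition \ref{2.3}) yields the master formula
$$T \circ [H_0 \xrightarrow{\alpha} V \bot L' \xleftarrow{i_V} V] = [H_0 \xrightarrow{\psi} W \bot L \bot L' \xleftarrow{i_W} W], \qquad \psi := (\varphi \bot \mathrm{Id}_{L'}) \circ \alpha ,$$
and $\psi$ is again an injective isometry. Thus the whole lemma reduces to classifying a morphism of the form $[H_0 \xrightarrow{\psi} W \bot M \xleftarrow{i_W} W]$ with $\psi$ an injective isometry.

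For that classification, note that since $\psi$ is injective the pullback in $\Eqd$ of $H_0 \xrightarrow{\psi} W \bot M \xleftarrow{i_W} W$ is the quadratic subspace $\{x \in H_0 \mid \psi(x) \in W\}$ of $H_0$, so the rank of the morphism equals its dimension. Since $H_0$ has exactly three nonzero vectors $a_0$, $b_0$, $a_0+b_0$, with $\mathrm{Span}(a_0) \cong \mathrm{Span}(b_0) \cong (x,0)$ and $\mathrm{Span}(a_0+b_0) \cong (x,1)$ (recalled in the proof of Lemma \ref{3.2}), there are exactly five mutually exclusive cases, according to which of $\psi(a_0)$, $\psi(b_0)$, $\psi(a_0+b_0)$ lie in $W$. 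If $\psi(a_0), \psi(b_0) \in W$ then $\psi(H_0) \subset W$, and the equivalence relation of Definition \ref{1.1} — with $i_W \colon W \hookrightarrow W \bot M$ as witness, exactly as in Lemma \ref{2.26} — identifies the morphism with $D_{p_W \circ \psi}$. If none of the three lies in $W$, the morphism has rank zero, hence equals $t_{p_W \circ \psi}$ by Proposition \ref{2.10}. In the three remaining cases, where exactly one of $\psi(a_0)$, $\psi(b_0)$, $\psi(a_0+b_0)$ lies in $W$ (rank one), the decomposition $\psi(x) = p_W\psi(x) + p_M\psi(x)$, together with the isometry property of $\psi$ (which supplies the required equalities of norms and inner products), shows the morphism is of type $A$, $B$ or $C$ respectively, with $p_W$-data $(p_W\psi(a_0), p_W\psi(b_0))$; Proposition \ref{3.4} then identifies it with $A_{p_W\psi(a_0),\,p_W\psi(b_0)}$, $B_{p_W\psi(a_0),\,p_W\psi(b_0)}$ or $C_{p_W\psi(a_0),\,p_W\psi(b_0)}$.

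It then remains to substitute the concrete representative of each generator into the master formula, evaluate $\psi$ on $a_0$, $b_0$, $a_0+b_0$, and determine which of the five cases can occur. For $t_f$ one has $L' = V'$ with $p_{V'} \circ \alpha$ injective (Lemma \ref{2.7}), so $\psi(x) \in W$ forces $p_{V'}\alpha(x) = 0$ and hence $x = 0$: only the rank-zero case survives and $T \circ t_f = t_{p_W \circ (\varphi \bot \mathrm{Id}) \circ \alpha} = t_{\varphi \circ f}$, which is part (1). For $A_{v,w}$ (so $\alpha(a_0) = v$, $\alpha(b_0) = w + l$ with $0 \neq l \in L'$) one computes $\psi(a_0) = \varphi(v)$, while $\psi(b_0) = \varphi(w) + l$ and $\psi(a_0+b_0) = \varphi(v+w) + l$ both lie outside $W$ because $l \neq 0$; hence only the case $\varphi(v) \in W$ and the rank-zero case occur, yielding the two lines of part (2a), and the arguments for $B_{v,w}$ and $C_{v,w}$ are symmetric, with $b_0$, resp.\ $a_0+b_0$, playing the distinguished role. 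For $D_f$ one has $L' = 0$, $\alpha = f$, $\psi = \varphi \circ f$, so all five cases genuinely arise and one reads off the five lines of part (3). The only real work is the bookkeeping in the rank-one cases: one must correctly match the normal-form data (the vectors $v$, $w$, and whether the two $L$-components agree) and verify the norm and inner-product constraints so that Proposition \ref{3.4} applies — but conceptually nothing is at stake once the master formula and the observation that $H_0$ has only three nonzero vectors are in place.
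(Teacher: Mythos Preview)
Your argument is correct and follows essentially the same route as the paper: compute the composite via the pseudo push-out (your ``master formula'' is exactly the diagram the paper writes down), then classify the resulting morphism $[H_0 \xrightarrow{\psi} W \bot M \xleftarrow{i_W} W]$ by which of $\psi(a_0)$, $\psi(b_0)$, $\psi(a_0+b_0)$ land in $W$. The only difference is organizational: the paper intertwines the case analysis with each input type ($t_f$, $A$, $D$, leaving $B$ and $C$ to the reader), whereas you factor out the five-case classification once and then specialize, invoking Proposition~\ref{3.4} explicitly for the rank-one identifications; this makes the structure a bit clearer but is not a genuinely different method.
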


\begin{proof}
By definition of the composition in $\Tq$, we have the following diagram:
$$\xymatrix{
   &   & W \ar@{^{(}->}[d] \\
   & V \ar[r]^{\varphi} \ar@{^{(}->}[d]& W \bot L \ar@{^{(}->}[d]\\
H_0 \ar[r]^-{\alpha}  & V \bot L' \ar[r]_-{\varphi \bot \mathrm{Id}} & W \bot L \bot L'\\
}\\$$
\begin{enumerate}
\item
For a morphism $t_f$, we have $\alpha(a_0)=f(a_0)+l$ and
$\alpha(b_0)=f(b_0)+m$. where $\{ l,m \}$ is a linearly independent
family of $L'$. Consequently:
$$ (\varphi \bot \mathrm{Id})\circ \alpha(a_0)=\varphi \circ f(a_0)+l
\mathrm{\ et\ } (\varphi \bot \mathrm{Id})\circ \alpha(b_0)=\varphi
\circ f(b_0)+m.$$
We deduce that $T \circ t_f=t_{\varphi \circ f}$.
\item
For $A_{v,w}$, we have $\alpha(a_0)=v$ and
$\alpha(b_0)=w+l'$, where $ l'$ is a non-zero element of $L'$. Consequently:
$$ (\varphi \bot \mathrm{Id})\circ \alpha(a_0)=\varphi(v)
\mathrm{\ et\ } (\varphi \bot \mathrm{Id})\circ \alpha(b_0)=\varphi
(w)+l'.$$
We have to distinguish two cases:
\begin{itemize}
\item if $\varphi(v) \in W$, since $\varphi$ preserves quadratic forms,
  we have $q(\varphi(v))=q(v)$ and, since $L'$ is orthogonal to
  $V$, $B(\varphi(v),p_W \circ
  \varphi(w))=B(\varphi(v),\varphi(w))=B(v,w).$ Thus the morphism $A_{\varphi(v), p_W \circ
    \varphi(w)}$ is defined and we have:
$T \circ A_{v,w}=A_{\varphi(v), p_W \circ \varphi(w)};$
\item otherwise, $\varphi(v)=p_{W} \circ \varphi(v)+m$ where $m$ is a
  non-zero element of $L$. Consequently, we obtain a morphism of nul
  rank and we have: $T \circ A_{v,w}=t_{p_W \circ (\varphi \bot \mathrm{Id} ) \circ \alpha}$.
\end{itemize}

The cases $B_{v,w}$ and $C_{v,w}$ are similar to the case of
$A_{v,w}$ and are left to the reader.

\item
For the morphism $D_f$, where $f$ is an element of
$\mathrm{Hom}_{\Eq}(H_0, V)$, we have $\alpha(a_0)=f(a_0)=v$ and
$\alpha(b_0)=f(b_0)=w$ where $v$ and $w$ are elements of $V$. Consequently:
$ (\varphi \bot \mathrm{Id})\circ \alpha(a_0)=\varphi(v)
\mathrm{\ and\ } (\varphi \bot \mathrm{Id})\circ \alpha(b_0)=\varphi(w).$
Since $\varphi \circ f$ preserves the quadratic forms, we have: $q(\varphi
\circ f(a_0))=q(\varphi \circ f(b_0))=0$ and $B(\varphi \circ
f(a_0),\varphi \circ f(b_0))=1$. Thus the morphisms $A_{\varphi
  \circ f(a_0), \varphi \circ f(b_0)}$, $B_{\varphi \circ f(a_0),
  \varphi \circ f(b_0)} $ and $C_{\varphi \circ f(a_0), \varphi \circ f(b_0)}$ are defined.

We have to distinguish four cases:
\begin{itemize}
\item if $\varphi(v) \in W$ and $\varphi(w) \in W$ then $T \circ D_f= D_{\varphi \circ f} $;
\item if $\varphi \circ f(a_0) \in W$  and $ \varphi \circ f(b_0)
  \notin W$, we have $\varphi \circ f(a_0)=w'$ and $ \varphi \circ
  f(b_0)=w''+l$, where $l$ is a non-zero element of $L$. Consequently,
  we obtain a morphism of type $A$ and we have $T
  \circ D_f=A_{\varphi \circ f(a_0), \varphi \circ f(b_0)}$;
\item if $\varphi \circ f(a_0) \notin W$ and $\varphi \circ f(b_0) \in
  W$, we have $\varphi \circ f(a_0)=w'+l$ and $ \varphi \circ
  f(b_0)=w''$, where $l$ is a non-zero element of $L$. Consequently,
  we obtain a morphism of type $B$ and we have $T
  \circ D_f=B_{\varphi \circ f(a_0), \varphi \circ f(b_0)}$;
\item if $\varphi \circ f(a_0) \notin W$, $\varphi \circ f(b_0) \notin
  W$ and $\varphi
\circ f(a_0+b_0) \in W$, we have $\varphi \circ f(a_0)=w'+l$ and $ \varphi \circ
  f(b_0)=w''+l$, where $l$ is a non-zero element of $L$. Consequently,
  we obtain a morphism of type $C$ and we have $T
  \circ D_f=C_{\varphi \circ f(a_0), \varphi \circ f(b_0)}$;
\item if $\varphi \circ f(a_0) \notin W$, $\varphi \circ f(b_0) \notin
  W $ and $\varphi
\circ f(a_0+b_0) \notin W$, we have $\varphi \circ f(a_0)=w'+l$ and $ \varphi \circ
  f(b_0)=w''+l'$, where $l$ and $l'$ are non-zero elements of
  $L$. Consequently, we obtain a morphism of nul rank and we have $T
  \circ D_f = t_{p_W \circ (\varphi \bot \mathrm{Id} ) \circ \alpha}$.
\end{itemize}

\end{enumerate}

\end{proof}

\subsubsection{Splitting of the filtration for the functor $P_{H_0}$}
In this paragraph, we prove the following result.

\begin{prop} \label{3.10}
The rank filtration splits for the functor $P_{H_0}$, namely:
$$P_{H_0}=P_{H_0}^{(0)} \oplus P_{H_0}^{(1)}/P_{H_0}^{(0)} \oplus
P_{H_0}/P_{H_0}^{(1)}.$$
\end{prop}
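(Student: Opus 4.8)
The plan is to convert the splitting into a statement about idempotents and then to produce the one missing idempotent by hand from the composition rules of Lemma \ref{3.9}. Recall that a direct sum decomposition of $P_{H_0}$ corresponds to a complete set of orthogonal idempotents in $\End_{\Fq}(P_{H_0}) \simeq P_{H_0}(H_0) = \FF[\mathrm{Hom}_{\Tq}(H_0,H_0)]$ (Yoneda). We already have the idempotent $[e_{H_0}]$ with $P_{H_0}\cdot[e_{H_0}] \simeq P_{H_0}^{(0)}$ by Proposition \ref{2.15} and Theorem \ref{2.6}; moreover, since $e_{H_0}$ has rank zero, right composition by $[e_{H_0}]$ sends $P_{H_0}^{(1)}$ into $P_{H_0}^{(0)}$ and is the identity on $P_{H_0}^{(0)}$ (Corollary \ref{2.14}), hence it restricts to an idempotent endomorphism of the subfunctor $P_{H_0}^{(1)}$ with image $P_{H_0}^{(0)}$. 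Therefore $P_{H_0}^{(1)} \simeq P_{H_0}^{(0)} \oplus P_{H_0}^{(1)}/P_{H_0}^{(0)}$, and the proposition reduces to showing that the short exact sequence
\[
0 \longrightarrow P_{H_0}^{(1)} \longrightarrow P_{H_0} \stackrel{q}{\longrightarrow} P_{H_0}/P_{H_0}^{(1)} \longrightarrow 0
\]
splits. Since $P_{H_0}/P_{H_0}^{(1)} \simeq \kappa(\mathrm{iso}_{H_0})$ by Proposition \ref{2.22}, and this functor is finite (Corollary \ref{1.6}) hence not projective, the splitting is not formal and is the real content of the statement.

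To split $q$ I would write down an explicit natural section $\psi : P_{H_0}/P_{H_0}^{(1)} \to P_{H_0}$. By Lemma \ref{3.8} the space $(P_{H_0}/P_{H_0}^{(1)})(V)$ has basis the classes $[D_f]$ with $f \in \mathrm{Hom}_{\Eq}(H_0,V)$; since $f$ preserves the quadratic form, the rank-one generators $A_{f(a_0),f(b_0)}$, $B_{f(a_0),f(b_0)}$, $C_{f(a_0),f(b_0)}$ are all defined (Lemma \ref{3.7}), and I set
\[
\psi_V([D_f]) = [D_f] + [A_{f(a_0),f(b_0)}] + [B_{f(a_0),f(b_0)}] + [C_{f(a_0),f(b_0)}] \in P_{H_0}(V).
\]
By construction $q \circ \psi = \mathrm{id}$, so the whole point is the naturality of $\psi$, which I would verify against an arbitrary morphism $T = [V \xrightarrow{\varphi} W \bot L \xleftarrow{i_W} W]$ of $\Tq$ using Lemma \ref{3.9}. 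Writing $u_1 = \varphi(f(a_0))$ and $u_2 = \varphi(f(b_0))$, the comparison of $T\cdot\psi_V([D_f])$ with $\psi_W(T\cdot[D_f])$ breaks into the five cases that govern $T \circ D_f$: when $u_1, u_2 \in W$ both sides equal $\psi_W([D_{\varphi\circ f}])$, because $T\cdot[A_{\ldots}], T\cdot[B_{\ldots}], T\cdot[C_{\ldots}]$ are exactly the rank-one generators attached to $\varphi\circ f$; in each of the other four cases $T\cdot[D_f] = 0$ in the quotient, and Lemma \ref{3.9} shows that the four terms $T\cdot[D_f]$, $T\cdot[A_{\ldots}]$, $T\cdot[B_{\ldots}]$, $T\cdot[C_{\ldots}]$ coincide in pairs — the "dropped" rank-one generator occurs twice, and the residual rank-zero morphisms $t_{p_W\circ(\varphi\bot\mathrm{Id})\circ\alpha}$ arising in the several subcases all have the same underlying linear map — so their sum vanishes over $\FF$ and $T\cdot\psi_V([D_f]) = 0$.

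Granting naturality, $\psi$ is a natural section of $q$, so $P_{H_0} = P_{H_0}^{(1)} \oplus \mathrm{im}(\psi)$ with $\mathrm{im}(\psi) \simeq P_{H_0}/P_{H_0}^{(1)}$; combined with the splitting $P_{H_0}^{(1)} \simeq P_{H_0}^{(0)} \oplus P_{H_0}^{(1)}/P_{H_0}^{(0)}$ from the first paragraph this gives the asserted decomposition. (Equivalently, $\psi\circ q$ is the idempotent of $\End_{\Fq}(P_{H_0})$ corresponding to $[D_{\mathrm{Id}}] + [A_{a_0,b_0}] + [B_{a_0,b_0}] + [C_{a_0,b_0}]$, and together with $[e_{H_0}]$ and the remaining complementary idempotent this is the desired complete orthogonal family.) The main obstacle is precisely the verification in the second paragraph: guessing the correction $[A]+[B]+[C]$ and then checking, case by case through Lemma \ref{3.9}, that every rank-one and rank-zero error term produced by a general $T$ occurs with even multiplicity. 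This is exactly where the characteristic-$2$ hypothesis enters; over any other field one would have to weight the correction term, and the required coincidences among the $t$-terms would no longer force cancellation.
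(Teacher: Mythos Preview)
Your argument is correct and follows the paper's own proof essentially verbatim: the paper also first invokes Theorem~\ref{2.6} to split off $P_{H_0}^{(0)}$, then defines the same section $s_V([D_f]) = [D_f]+[A_{f(a_0),f(b_0)}]+[B_{f(a_0),f(b_0)}]+[C_{f(a_0),f(b_0)}]$ and checks naturality through the five cases of Lemma~\ref{3.9}, observing the same pairwise cancellations over~$\FF$.

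One side remark in your write-up is wrong, though harmless for the proof: you say that $\kappa(\mathrm{iso}_{H_0})$ is ``finite (Corollary~\ref{1.6}) hence not projective''. Finiteness does not imply non-projectivity in $\Fq$; in fact Proposition~\ref{3.27}, which is a \emph{consequence} of the very splitting you are proving, shows that $\kappa(\mathrm{iso}_{H_0})$ \emph{is} projective. The correct statement is only that projectivity is not yet available at this point in the argument, so the section must be produced explicitly.
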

\begin{proof}
By Theorem \ref{2.6}, we have: $P_{H_0}^{(1)}=P_{H_0}^{(0)} \oplus P_{H_0}^{(1)}/P_{H_0}^{(0)}.$
To prove the proposition, it is sufficient to prove that $P_{H_0}=P_{H_0}^{(1)}
\oplus P_{H_0}/P_{H_0}^{(1)}$. 

By definition of the filtration, we have the short exact sequence:
\begin{equation}
 0 \rightarrow P_{H_0}^{(1)} \rightarrow P_{H_0} \xrightarrow{p}
P_{H_0}/P_{H_0}^{(1)} \rightarrow 0. \label{PH0-sec}
\end{equation}

Let $V$ be an object in $\Tq$, we consider a morphism $f$ of
$\mathrm{Hom}_{\Eq}(H_0, V)$ and the generator $[D_f]$ of
$P_{H_0}/P_{H_0}^{(1)}(V)$ associated to $f$. Since the map $f$
preserves the quadratic forms, we have: $q(f(a_0))=q(f(b_0))=0$, 
$q(f(a_0+b_0))=1$; thus $B(f(a_0),f(b_0))=1$. Consequently, the morphisms $A_{f(a_0), f(b_0)}$,  $B_{f(a_0), f(b_0)}$ and $C_{f(a_0),
  f(b_0)}$ of $\mathrm{Hom}_{\Tq}(H_0,V)$ are defined. We define a map $s_V: P_{H_0}/P_{H_0}^{(1)}(V) \rightarrow P_{H_0}(V)$ by:
$$ \begin{array}{cccl}
s_V : &  P_{H_0}/P_{H_0}^{(1)}(V) & \longrightarrow & P_{H_0}(V)  \\
       & [D_f] & \mapsto & [D_f]+[A_{f(a_0), f(b_0)}]+[B_{f(a_0), f(b_0)}]+[C_{f(a_0), f(b_0)}].
\end{array}$$
We verify the two following statements.
\begin{enumerate}
\item{$p_V \circ s_V=\mathrm{Id}.$\\}
For $[D_f]$ a canonical generator of $P_{H_0}/P_{H_0}^{(1)}(V)$, we have
$$p_V \circ s_V ([D_f])=[D_f]$$
since the morphisms $A_{f(a_0), f(b_0)}$, $B_{f(a_0),
  f(b_0)}$ and $C_{f(a_0), f(b_0)}$ have a rank equal to one.

\item{The maps $s_V$ define a natural map.\\}
One verifies that, for a morphism $T=V \xrightarrow{\varphi} W \bot L \xleftarrow{i_W}
W$ of $\mathrm{Hom}_{\Tq}(V,W)$, we have the commutativity of the
following diagram:
$$\xymatrix{
P_{H_0}/P_{H_0}^{(1)}(V) \ar[r]^-{s_V}
\ar[d]_{P_{H_0}/P_{H_0}^{(1)}(T)}&   P_{H_0}(V) \ar[d]^{P_{H_0}(T)}       \\
P_{H_0}/P_{H_0}^{(1)}(W) \ar[r]^-{s_W} &  P_{H_0}(W).
}$$

In order to simplify notation, we will write:
 $A'= A_{\varphi \circ f(a_0), p_W \circ \varphi \circ
  f(b_0)} $, $B'=  B_{p_W \circ \varphi \circ f(a_0), \varphi \circ
  f(b_0)}$, $C'=C_{p_W \circ \varphi \circ f(a_0), p_W \circ \varphi \circ
  f(b_0)}$ and $t'=t_{p_W \circ (\varphi \bot \mathrm{Id}) \circ
  \alpha}$.

On the one hand, by Lemma \ref{3.9}, we have:
$$\begin{array}{l}
P_{H_0}(T) \circ s_V ([D_f]) \\

 = P_{H_0}(T)([D_f]+[A_{f(a_0), f(b_0)}]+[B_{f(a_0), f(b_0)}]+[C_{f(a_0), f(b_0)}])\\

    = [T \circ D_f]+[ T \circ
  A_{f(a_0), f(b_0)}]+ [T \circ B_{f(a_0), f(b_0)}]+ [T \circ C_{f(a_0),
    f(b_0)}] 
    \end{array}$$
$$\begin{array}{l}
    = 
\left\lbrace
\begin{array}{llllll}
[D_{\varphi \circ f}] &+ [A'] &+ [B'] &+ [C'] & &\mathrm{if\ } \varphi \circ f(a_0) \in W
\mathrm{\ and\ } \varphi \circ f(b_0) \in W\\
 \lbrack A' \rbrack &+ \lbrack A'\rbrack &+ \lbrack t' \rbrack
  &+\lbrack t'\rbrack &=0&\mathrm{if\ } \varphi \circ f(a_0) \in W
\mathrm{\ and\ } \varphi \circ f(b_0) \notin W\\
\lbrack B' \rbrack &+ \lbrack t' \rbrack &+ \lbrack B' \rbrack &+
\lbrack t' \rbrack &=0&\mathrm{if\ } \varphi \circ f(a_0) \notin W
\mathrm{\ and\ } \varphi \circ f(b_0) \in W\\
\lbrack C' \rbrack &+ \lbrack t' \rbrack&+
 \lbrack t' \rbrack&+ \lbrack C' \rbrack &=0&\mathrm{if\ } \varphi \circ f(a_0) \notin W
\mathrm{\ and\ } \varphi \circ f(b_0) \notin W \\
   &     &   &     &  &\mathrm{\ and\ } \varphi
\circ f(a_0+b_0) \in W\\
\lbrack t' \rbrack &+ \lbrack t' \rbrack &+ \lbrack t' \rbrack &+
\lbrack t' \rbrack &=0 &\mathrm{if\ } \varphi \circ f(a_0) \notin W
\mathrm{\ and\ } \varphi \circ f(b_0) \notin W \\
   &      &   &     &  &\mathrm{\ and\ } \varphi
\circ f(a_0+b_0) \notin W.
\\
\end{array}
\right. \\
\\
= \left\lbrace
\begin{array}{llllll}
[D_{\varphi \circ f}] &+ [A'] &+ [B'] &+ [C'] & &\mathrm{if\ } \varphi \circ f(a_0) \in W
\mathrm{\ and\ } \varphi \circ f(b_0) \in W\\
0&&&&&\mathrm{otherwise}.
\\
\end{array}
\right. \\ 
\\   
\end{array}$$

On the other hand, by Lemma \ref{3.9}, we have:
$$P_{H_0}/P_{H_0}^{(1)}(T)([D_f])=\left\lbrace
\begin{array}{ll}
[D_{\varphi \circ f}]  &\mathrm{if\ } \varphi \circ f(a_0) \in W
\mathrm{\ and\ } \varphi \circ f(b_0) \in W\\
0&\mathrm{otherwise}
\\
\end{array}
\right.$$
since the morphisms $A$, $B$, $C$ and $t$ are zero in the quotient
$P_{H_0}/P_{H_0}^{(1)}(W)$. We deduce,
$$s_W \circ P_{H_0}/P_{H_0}^{(1)}(T)([D_f])=\left\lbrace
\begin{array}{ll}
[D_{\varphi \circ f}]  + [A'] + [B'] + [C'] & \mathrm{if\ } \varphi
\circ f(a_0) \in W\\
&
\mathrm{and\ } \varphi \circ f(b_0) \in W\\
0&\mathrm{otherwise}.
\\
\end{array}
\right.$$
\end{enumerate}

Consequently, the maps $s_V$ define a natural map which is a section
of $p$. This gives rise to the splitting of the exact sequence \ref{PH0-sec}.
\end{proof}

\subsubsection{Identification of the direct summands}
The aim of this paragraph is to identify the summands of the
decomposition given in Proposition \ref{3.10}. We begin by proving
that the morphisms of type $A$ (respectively of type $B$ and $C$)
define a subfunctor of $P_{H_0}^{(1)}/P_{H_0}^{(0)}$ which is a direct
summand of this functor.
\begin{lm} \label{3.11}
The functor $P_{H_0}^{(1)}/P_{H_0}^{(0)}$ admits the following
decomposition into direct summands:
$$P_{H_0}^{(1)}/P_{H_0}^{(0)} =F_A \oplus F_B \oplus F_C$$
where $F_A$, $F_B$ and $F_C$ are subfunctors of
$P_{H_0}^{(1)}/P_{H_0}^{(0)}$ generated by, respectively, the
morphisms of type $A$, $B$ and $C$.
\end{lm}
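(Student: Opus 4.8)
The plan is to exhibit explicit natural projections of $P_{H_0}^{(1)}/P_{H_0}^{(0)}$ onto three subfunctors $F_A$, $F_B$, $F_C$, and to check that these projections are orthogonal idempotents summing to the identity. First I would define $F_A$ (resp.\ $F_B$, $F_C$) to be the subobject of $P_{H_0}^{(1)}/P_{H_0}^{(0)}$ whose value on an object $V$ of $\Tq$ is the subspace of $(P_{H_0}^{(1)}/P_{H_0}^{(0)})(V)$ spanned by the basis elements $[A_{v,w}]$ (resp.\ $[B_{v,w}]$, $[C_{v,w}]$) from Lemma \ref{3.7}; by Lemma \ref{3.7} these three families together form a basis, so the vector space decomposition $(P_{H_0}^{(1)}/P_{H_0}^{(0)})(V) = F_A(V) \oplus F_B(V) \oplus F_C(V)$ is immediate. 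The content of the lemma is that each of $F_A$, $F_B$, $F_C$ is a subfunctor, i.e.\ stable under the action of morphisms of $\Tq$.

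The key step is to read this off from the composition rules of Lemma \ref{3.9}, parts (2a), (2b), (2c). For a morphism $T = [V \xrightarrow{\varphi} W \bot L \xleftarrow{i_W} W]$ of $\mathrm{Hom}_{\Tq}(V,W)$, those formulas show that $T \circ A_{v,w}$ is either a morphism of type $A$ or a morphism $t_{(\cdots)}$ of rank zero; but a rank zero morphism is zero in the quotient $P_{H_0}^{(1)}/P_{H_0}^{(0)}$, since by Theorem \ref{2.6} the class $[t_f]$ lies in $P_{H_0}^{(0)}$. Hence $P_{H_0}/P_{H_0}^{(1)}$ — more precisely the induced action on $P_{H_0}^{(1)}/P_{H_0}^{(0)}$ — sends $[A_{v,w}]$ either to a class $[A_{v',w'}]$ or to $0$, so $F_A(V) \to F_A(W)$ under $T$; thus $F_A$ is a subfunctor. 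The same argument with Lemma \ref{3.9}(2b) gives that $F_B$ is a subfunctor, and Lemma \ref{3.9}(2c) that $F_C$ is a subfunctor. Since the decomposition $(P_{H_0}^{(1)}/P_{H_0}^{(0)})(V) = F_A(V) \oplus F_B(V) \oplus F_C(V)$ is natural in $V$ (each summand being a subfunctor), it is a decomposition of functors: $P_{H_0}^{(1)}/P_{H_0}^{(0)} = F_A \oplus F_B \oplus F_C$.

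The only mild subtlety — and the step I would be most careful about — is the bookkeeping that no ``mixing'' occurs: one must be sure that the composite of a morphism with an $A$-type generator never produces a $B$-type or $C$-type generator (and similarly for the other two), so that the three spanning families are genuinely preserved separately and not merely preserved as a whole. This is exactly what Lemma \ref{3.9} guarantees: in each of the three cases the non-vanishing output stays in the same type. So the proof is essentially a citation of Lemma \ref{3.7} (for the basis) and Lemma \ref{3.9} (for functoriality of each piece), together with the observation that rank zero morphisms vanish in $P_{H_0}^{(1)}/P_{H_0}^{(0)}$; no further computation is needed.
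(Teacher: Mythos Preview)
Your proposal is correct and follows essentially the same approach as the paper's own proof: cite Lemma \ref{3.7} for the pointwise vector space decomposition, then use Lemma \ref{3.9}(2a)--(2c) to check that each of $F_A$, $F_B$, $F_C$ is stable under the $\Tq$-action (the rank-zero output $t_{(\cdots)}$ vanishing in the quotient $P_{H_0}^{(1)}/P_{H_0}^{(0)}$). The paper presents exactly this argument, spelling out the case of $F_A$ and leaving $F_B$, $F_C$ to the reader.
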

\begin{proof}
By Lemma \ref{3.7}, we have an isomorphism of vector spaces
$$P_{H_0}^{(1)}/P_{H_0}^{(0)}(V) =F_A(V) \oplus F_B(V) \oplus F_C(V),$$
for all objects $V$ in $\Tq$.

Consequently, it is sufficient to prove that $F_A$, $F_B$ and $F_C$
are subfunctors of $P_{H_0}^{(1)}/P_{H_0}^{(0)}$.

For $F_A$, we have to verify the commutativity of the diagram
$$\xymatrix{
F_A(V) \ar[r]^-{i_V}
\ar[d]_{F_A(T)}&   P_{H_0}^{(1)}/P_{H_0}^{(0)}(V) \ar[d]^{P_{H_0}^{(1)}/P_{H_0}^{(0)}(T)}       \\
F_A(W) \ar[r]^-{i_W} &  P_{H_0}^{(1)}/P_{H_0}^{(0)}(W)
}$$
where $T$ is a morphism of $\mathrm{Hom}_{\Tq}(V,W)$.
Let $[A_{v,w}]$ be a canonical generator of $F_A(V)$, we have by Lemma \ref{3.9}
$$T \circ A_{v,w}=\left\lbrace
\begin{array}{cc}
A_{\varphi(v), p_W \circ \varphi(w)}&\mathrm{if\ } \varphi(v) \in W\\
t_{p_W \circ (\varphi \bot \mathrm{Id} ) \circ \alpha} & \mathrm{otherwise.}
\end{array}
\right.$$
Consequently, 
$$P_{H_0}^{(1)}/P_{H_0}^{(0)}(T) \circ i_V([A_{v,w}])=\left\lbrace
\begin{array}{cc}
[A_{\varphi(v), p_W \circ \varphi(w)}]&\mathrm{if\ } \varphi(v) \in W\\
0 & \mathrm{otherwise, }
\end{array}
\right.$$
since the morphism $t_{p_W \circ (\varphi \bot \mathrm{Id} ) \circ
  \alpha}$, has nul rank.

We deduce that $P_{H_0}^{(1)}/P_{H_0}^{(0)}(T) \circ i_V([A_{v,w}])$ is
in the vector space $F_A(W)$; thus, $F_A$ is a subfunctor of $P_{H_0}^{(1)}/P_{H_0}^{(0)}$.

In the same way, by the use of values of $T \circ B_{v,w}$ and $T \circ C_{v,w}$
given in Lemma \ref{3.9}, we prove that the functors $F_B$
and $F_C$ are subfunctors of $P_{H_0}^{(1)}/P_{H_0}^{(0)}$.

\end{proof}
In the following lemma, we identify the functors $F_A$, $F_B$
and $F_C$ with certain mixed functors defined in \cite{Vespa2} and recalled in section \ref{1}.

\begin{lm} \label{3.12}
\begin{enumerate}
\item The functors $F_A$ and $F_B$ are isomorphic to the
  functor \M{0,1}.
\item  The functor $F_C$ is isomorphic to the functor \M{1,1}.
\end{enumerate}
\end{lm}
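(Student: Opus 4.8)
The plan is to construct explicit natural isomorphisms between each of $F_A$, $F_B$, $F_C$ and the appropriate mixed functor, using the combinatorial descriptions already available. Recall from Lemma \ref{3.7} that $F_A(V)$ has basis $\{[A_{v,w}]\}$ indexed by pairs $(v,w)$ with $q(v)=0$ and $B(v,w)=1$, and from the recollection in Section \ref{1} that $\M{0,1}(V)=\FF[S_V]$ with $S_V=\{(v_1,v_2)\mid q(v_1+v_2)=0,\ B(v_1,v_2)=1\}$. So first I would look for a bijection on the level of indexing sets that is natural with respect to the $\Tq$-action. For $F_B$ the symmetry $(v,w)\mapsto(w,v)$ swapping the roles of the two basis vectors of $H_0$ immediately identifies $F_B\simeq F_A$, so the essential content is $F_A\simeq\M{0,1}$ and $F_C\simeq\M{1,1}$.

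For the isomorphism $F_A\simeq\M{0,1}$, the defining conditions do not match on the nose: $A_{v,w}$ requires $q(v)=0$, $B(v,w)=1$, whereas a point of $S_V$ (with $\alpha=0$) requires $q(v_1+v_2)=0$, $B(v_1,v_2)=1$. The natural candidate bijection is $(v,w)\mapsto(v_1,v_2)=(v, v+w)$ (or $(v,w)\mapsto(v,w)$ after noting that the two conditions become equivalent once one substitutes): indeed if $q(v)=0$ and $B(v,w)=1$ then $q(v+w)=q(v)+q(w)+B(v,w)=q(w)+1$, which is not automatically $0$ — so the correct substitution must be chosen more carefully, likely $(v,w)\mapsto(v_1,v_2)$ with $v_1=v$, $v_2=w$ but reading $w$ off differently, or using that $A$-type morphisms are parametrized so that one of $q(v),q(w)$ is free. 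I would pin down the right affine-linear change of variables on $V\times V$ by comparing the two constraint sets directly, then check it is a bijection for every $V$. The key point to verify is naturality: one compares $P_{H_0}^{(1)}/P_{H_0}^{(0)}(T)$ applied to $[A_{v,w}]$, computed via Lemma \ref{3.9}(2)(a) — which gives $[A_{\varphi(v),p_W\circ\varphi(w)}]$ if $\varphi(v)\in W$ and $0$ otherwise — against $\M{0,1}(T)$ applied to the corresponding $[(v_1,v_2)]$, whose formula (recalled in Section \ref{1}) gives $[(p_W\circ f(v_1),p_W\circ f(v_2))]$ if $f(v_1+v_2)\in W$ and $0$ otherwise. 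One must check that the vanishing conditions "$\varphi(v)\in W$" and "$f(v_1+v_2)\in W$" correspond under the chosen change of variables, and that the images match; this is where the specific shape of the $A$-type morphism (namely that $f(a_0)=v$ lies in $V$ with no $L$-component, while $f(b_0)=w+l$) is used, since it is exactly $f(a_0)$ whose image being in $W$ controls whether the rank drops.

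For $F_C\simeq\M{1,1}$ the argument is parallel: $F_C(V)$ has basis $\{[C_{v,w}]\}$ with $q(v+w)=1$, $B(v,w)=1$, which matches $S_V$ with $\alpha=1$ essentially on the nose via $(v,w)\mapsto(v_1,v_2)=(v,w)$, and Lemma \ref{3.9}(2)(c) gives $T\circ C_{v,w}=C_{p_W\circ\varphi(v),p_W\circ\varphi(w)}$ if $\varphi(v+w)\in W$ and $t_{\cdots}$ (hence $0$ in the quotient) otherwise — which is precisely the $\M{1,1}$ transition formula with $f(v_1+v_2)=\varphi(v+w)$. So here the identity map on indexing pairs should already be the natural isomorphism, once one observes that for a type-$C$ morphism $f(a_0)=v+l$, $f(b_0)=w+l$ share the same $l\in L$, so $f(a_0+b_0)=v+w\in V$, and it is the image of this element under $\varphi$ that governs the rank; this is exactly $f(v_1+v_2)$ in the mixed-functor formula.

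I expect the main obstacle to be getting the change of variables for $F_A$ exactly right and then carefully matching the two case-splits (the "rank drops to zero" cases) so that the diagram commutes on every generator and for every morphism $T$ of $\Tq$; this is a finite but fiddly verification, entirely driven by Lemma \ref{3.9} and the explicit formula for $\M{\alpha,1}$ on morphisms. No new ideas are needed beyond those already assembled: the basis computations of Lemma \ref{3.7}, Lemma \ref{3.8}, the composition rules of Lemma \ref{3.9}, and the definition of the mixed functors recalled in Section \ref{1}. Once the bijections of indexing sets are fixed and shown to be natural, linearity extends them to the claimed functor isomorphisms.
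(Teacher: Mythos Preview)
Your plan is essentially the paper's own approach: write down an explicit bijection of indexing sets and check naturality against Lemma \ref{3.9} and the mixed-functor formula. Two small points. First, the change of variables you are groping for in the $F_A$ case is $[A_{v,w}]\mapsto[(w,\,v+w)]$ (not $(v,v+w)$): then $v_1+v_2=v$, so $q(v_1+v_2)=q(v)=0$ and $B(v_1,v_2)=B(w,v+w)=B(v,w)=1$, and the two vanishing conditions ``$\varphi(v)\in W$'' and ``$f(v_1+v_2)\in W$'' literally coincide, which is exactly what makes the naturality square commute. Second, your handling of $F_B$ via the isometry of $H_0$ swapping $a_0$ and $b_0$ is a perfectly good shortcut; the paper instead writes down the direct map $[B_{v,w}]\mapsto[(v,\,v+w)]$, but the content is the same. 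Your treatment of $F_C$ via the identity map on pairs matches the paper verbatim.
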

\begin{proof}
\begin{enumerate}
\item{The isomorphism $F_A \simeq \M{0,1}.$}

Let $[A_{v,w}]$ be a canonical generator of $F_A(V)$, we have, by definition,
$B(v,w)=1$. Consequently, the following linear map exists:
$$ \begin{array}{cccl}
\sigma^1_V: & F_A(V)& \rightarrow & \M{0,1}(V)\\
       & [A_{v,w}] & \longmapsto & [(w,v+w)].
\end{array}$$

The map $\sigma^1_V$ is an isomorphism, whose inverse is given by

$$ \begin{array}{cccl}
{(\sigma^1_V)}^{-1}: & \M{0,1}(V)& \rightarrow &  F_A(V)\\
       & [(v,w)] & \longmapsto & [A_{v+w,v}].
\end{array}$$

We have to verify that the maps $\sigma^1_V$ define a natural map;
namely, for a morphism $T=[V \xrightarrow{\varphi} W \bot L \xleftarrow{i_W}
W]$, that the following diagram is commutative:

$$\xymatrix{
F_A(V) \ar[r]^-{\sigma^1_V}
\ar[d]_{F_A(T)}&   \M{0,1}(V) \ar[d]^{\M{0,1}(T)}\\
F_A(W) \ar[r]^-{\sigma^1_W} &  \M{0,1}(W).
}$$
We have:
$$\begin{array}{ll}
\M{0,1}(T) \circ \sigma^1_V([A_{v,w}])&= \M{0,1}(T)[(w,v+w)]\\
&=\left\lbrace
\begin{array}{ll}
[(p_W \circ(\varphi(w)),p_W \circ(\varphi(v+w))]&\mathrm{if\ } \varphi
(v) \in W\\
0 & \mathrm{otherwise }
\end{array}
\right.
\end{array}$$
by the definition of the mixed functors, and
$$\begin{array}{ll}
 \sigma^1_W \circ F_A(T)([A_{v,w}])&= \sigma^1_W \left\lbrace
\begin{array}{ll}
[A_{\varphi(v), p_W \circ \varphi(w)}] &\mathrm{if\ } \varphi(v) \in W \\
0 & \mathrm{otherwise }
\end{array}
\right.\\
                                     &=\left\lbrace
\begin{array}{ll}
[(p_W \circ(\varphi(w)), \varphi (v) +p_W \circ(\varphi(w))]&\mathrm{if\ }  \varphi(v) \in W \\
0 & \mathrm{otherwise. }
\end{array}
\right.\\
\end{array}$$

When $\varphi(v) \in W$, we have:
$$[(p_W \circ(\varphi(w)), \varphi (v) +p_W
\circ(\varphi(w))]=[(p_W \circ(\varphi(w)),p_W \circ(\varphi(v+w))],$$
what proves the naturality of $\sigma^1$. 

Since the two following cases are very close to the previous one, we only
give the definition of the isomorphism of vector spaces and we leave the
reader to verify that they define natural equivalences.
\item{The isomorphism $F_B \simeq \M{1,0}.$}

$$ \begin{array}{cccl}
\sigma^2_V: & F_B(V)& \rightarrow & \M{0,1}(V)\\
       & B_{v,w} & \longmapsto & [(v,v+w)].
\end{array}$$

\item{The isomorphism $F_C \simeq \M{1,1}.$}

$$ \begin{array}{cccl}
\sigma^3_V: & F_C(V)& \rightarrow & \M{1,1}(V)\\
       & C_{v,w} & \longmapsto & [(v,w)]
\end{array}$$
\end{enumerate}
\end{proof}
We deduce the following proposition.
\begin{prop} \label{3.13}
The projective functor $P_{H_0}$ admits the following decomposition
into direct summands:
$$P_{H_0}=\iota(P^{\F}_{\epsilon(\FF^{\oplus 2})}) \oplus
(\M{0,1}^{\oplus 2} \oplus \M{1,1}) \oplus \kappa(\mathrm{iso}_{H_0})$$
where $\M{0,1}$ and $\M{1,1}$ are mixed functors and $\mathrm{iso}_{H_0}$
is an isotropic functor.
\end{prop}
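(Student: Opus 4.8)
The plan is to assemble the statement directly from the structural facts already established for the rank filtration of $P_{H_0}$. First I would invoke Proposition \ref{3.10}, which provides the direct sum decomposition
$$P_{H_0}=P_{H_0}^{(0)} \oplus \bigl(P_{H_0}^{(1)}/P_{H_0}^{(0)}\bigr) \oplus \bigl(P_{H_0}/P_{H_0}^{(1)}\bigr),$$
so that it only remains to identify each of the three summands in turn.

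For the bottom summand, I would apply the first point of Theorem \ref{2.6} with $V=H_0$, which gives $P_{H_0}^{(0)} \simeq \iota(P^{\F}_{\epsilon(H_0)})$; since the forgetful functor $\epsilon$ takes $H_0$ to its underlying vector space $\FF^{\oplus 2}$, this reads $P_{H_0}^{(0)} \simeq \iota(P^{\F}_{\epsilon(\FF^{\oplus 2})})$. For the top summand, I note that $\mathrm{dim}(H_0)=2$, so $P_{H_0}/P_{H_0}^{(1)}=P_{H_0}/P_{H_0}^{(\mathrm{dim}(H_0)-1)}$, and Proposition \ref{2.22} identifies this quotient with $\kappa(\mathrm{iso}_{H_0})$. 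For the middle summand, Lemma \ref{3.11} gives $P_{H_0}^{(1)}/P_{H_0}^{(0)}=F_A \oplus F_B \oplus F_C$, and Lemma \ref{3.12} identifies $F_A \simeq F_B \simeq \M{0,1}$ and $F_C \simeq \M{1,1}$, so this piece is $\M{0,1}^{\oplus 2} \oplus \M{1,1}$.

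Substituting the three identifications into the decomposition of Proposition \ref{3.10} yields the claimed formula. At this stage there is essentially no obstacle left: all the real work has already gone into proving that the filtration splits (Proposition \ref{3.10}) and into the explicit composition rules of Lemma \ref{3.9} that underlie Lemmas \ref{3.11} and \ref{3.12}, so the proposition is a formal consequence. The only point worth a line is the harmless identification of $\epsilon(H_0)$ with $\FF^{\oplus 2}$ as an object of $\E^f$. One could additionally remark, following the discussion opening Section \ref{3}, that this direct sum decomposition corresponds to a decomposition of $1 \in \mathrm{End}_{\Fq}(P_{H_0})$ into orthogonal idempotents, but this observation is not needed here.
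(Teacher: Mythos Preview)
Your proof is correct and follows exactly the same route as the paper: the paper's own proof simply cites Proposition~\ref{3.10}, Theorem~\ref{2.6}, Proposition~\ref{2.22}, and Lemmas~\ref{3.11} and~\ref{3.12} in one sentence, which is precisely the assembly you carry out with a bit more explanation.
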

 
\begin{proof}
This proposition is a straightforward consequence of Proposition
\ref{3.10}, Theorem \ref{2.6}, Proposition \ref{2.22} and Lemmas
\ref{3.11} and \ref{3.12}.
\end{proof}

\subsection{Decomposition of $P_{H_1}$} 
The study of the functor $P_{H_1}$ is analogous to that of the
functor $P_{H_0}$ given in the previous section. Consequently, for the
functor $P_{H_1}$, we give only the principal results without proofs.

\subsubsection{Explicit description of the subquotients of the filtration}

In this paragraph, we give basis of the vector spaces
$P_{H_1}^{(0)} (V)$, $P_{H_1}^{(1)}/P_{H_1}^{(0)} (V)$  and
$P_{H_1}/P_{H_1}^{(1)} (V)$ for $V$ an object in \Tq.

\begin{lm} \label{B0-H1}
A basis $\mathcal{B}_{H_1}^{(0)}$ of $P_{H_1}^{(0)} (V)$ is given by
the set:
$$\mathcal{B}_{H_1}^{(0)}=\{ t_f \  \mathrm{for\ } f \in
\mathrm{Hom}_{\E^f}(\FF^{\oplus 2}, \epsilon(V)) \}.$$
\end{lm}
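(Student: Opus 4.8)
The plan is to mimic exactly the argument given for Lemma \ref{3.1}, since $P_{H_1}^{(0)}$ is the rank-zero part of the standard projective $P_{H_1}$ and the whole analysis of the rank filtration from section \ref{2} applies verbatim to $V = H_1$. Concretely, by Theorem \ref{2.6}(\ref{PV0-1}) we have a natural equivalence $P_{H_1}^{(0)} \simeq \iota(P^{\F}_{\epsilon(H_1)})$, and since $\epsilon(H_1) = \mathcal{O}(H_1) \cong \FF^{\oplus 2}$ (because $H_1$ is two-dimensional), evaluating at an object $V$ of $\Tq$ gives $P_{H_1}^{(0)}(V) \simeq P^{\F}_{\FF^{\oplus 2}}(\epsilon(V)) = \FF[\mathrm{Hom}_{\E^f}(\FF^{\oplus 2}, \epsilon(V))]$. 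Thus a basis of $P_{H_1}^{(0)}(V)$ is given by the free generators of this last vector space, one for each linear map $f \in \mathrm{Hom}_{\E^f}(\FF^{\oplus 2}, \epsilon(V))$.

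First I would invoke Theorem \ref{2.6} (or rather its evaluated form, Proposition \ref{2.10}) to identify $P_{H_1}^{(0)}(V)$ with $P^{\F}_{\epsilon(H_1)}(\epsilon(V))$. Then I would recall, as in Notation \ref{2.13}, that the isomorphism $\mathrm{Hom}^{(0)}_{\Tq}(H_1, V) \xrightarrow{\simeq} \mathrm{Hom}_{\E^f}(\epsilon(H_1), \epsilon(V))$ sends a rank-zero morphism $t_f$ to the linear map $f$, so that the free $\FF$-module on $\mathrm{Hom}^{(0)}_{\Tq}(H_1,V)$ — which is precisely $P_{H_1}^{(0)}(V)$ by the definition of the filtration in Proposition \ref{2.3} — has as a basis the set $\{[t_f]\}$ indexed by $f \in \mathrm{Hom}_{\E^f}(\FF^{\oplus 2}, \epsilon(V))$. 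Finally I would note $\epsilon(H_1) \cong \FF^{\oplus 2}$ to rewrite the index set, giving exactly the claimed basis $\mathcal{B}_{H_1}^{(0)}$.

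There is essentially no obstacle here: the statement is a formal corollary of results already established in full generality in section \ref{2}, exactly parallel to Lemma \ref{3.1} for $P_{H_0}$. The only thing worth double-checking is that $\epsilon$ does send $H_1$ to a two-dimensional space, which is immediate from $\dim H_1 = 2$; since $\epsilon(H_0)$ and $\epsilon(H_1)$ are both $\FF^{\oplus 2}$, the basis of $P_{H_1}^{(0)}(V)$ is literally the same as that of $P_{H_0}^{(0)}(V)$, which is why the author states it with the identical notation. So the proof is a one-line deduction from Theorem \ref{2.6} and Notation \ref{2.13}.

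\begin{proof}
By Theorem \ref{2.6} and Notation \ref{2.13}, the vector space $P_{H_1}^{(0)}(V) = \FF[\mathrm{Hom}^{(0)}_{\Tq}(H_1,V)]$ is freely generated by the elements $[t_f]$, where $f$ ranges over $\mathrm{Hom}_{\E^f}(\epsilon(H_1), \epsilon(V))$. Since $\epsilon(H_1) \cong \FF^{\oplus 2}$, this gives the announced basis.
\end{proof}
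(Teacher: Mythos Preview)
Your proof is correct and follows exactly the same approach as the paper: the lemma is the $H_1$-analogue of Lemma \ref{3.1}, and the paper states both as immediate consequences of Theorem \ref{2.6} and Notation \ref{2.13}, which is precisely what you do.
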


\begin{lm} \label{types-H1}
Let $T=[H_1 \xrightarrow{f} V \bot L \xleftarrow{i_V} V]$ be a morphism of $\Tq$ which represents a canonical
generator of $P_{H_1}^{(1)}/P_{H_1}^{(0)} (V)$, and $\{ a_1, b_1 \}$ a
symplectic basis of $H_1$, then the map $f$ in $T$ has
one of the three following forms.
\begin{enumerate}
\item If $I=(f(a_1),0)$ the map $f: H_1 \rightarrow V \bot L $ is
  defined by:
$$f(a_1)=v \mathrm{\qquad et \qquad} f(b_1)=w+l$$
for $v$ and $w$ elements of $V$ satisfying $q(v)=1$ and $B(v,w)=1$
and $l$ a non-zero element of $L$.
\item If $I=(f(b_1),0)$ the map $f: H_1 \rightarrow V \bot L $ is
  defined by:
$$f(a_1)=v+l \mathrm{\qquad et \qquad} f(b_1)=w$$
for $v$ and $w$ elements of $V$ satisfying $q(w)=1$ and $B(v,w)=1$
and $l$ a non-zero element of $L$.
\item If $I=(f(a_1+b_1),1)$ the map $f: H_1 \rightarrow V \bot L $ is
  defined by:
$$f(a_1)=v+l \mathrm{\qquad et \qquad} f(b_1)=w+l$$
for $v$ and $w$ elements of $V$ satisfying $q(v+w)=1$ and $B(v,w)=1$
and $l$ a non-zero element of $L$.
\end{enumerate}
\end{lm}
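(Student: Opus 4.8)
The plan is to argue exactly as in the proof of Lemma \ref{3.2}, the only new feature being that the quadratic space $H_1$ is anisotropic. Fix a symplectic basis $\{a_1,b_1\}$ of $H_1$, so that $q(a_1)=q(b_1)=1$ and $B(a_1,b_1)=1$; then $q(a_1+b_1)=q(a_1)+q(b_1)+B(a_1,b_1)=1$, so that each of the three one-dimensional subspaces $\mathrm{Span}(a_1)$, $\mathrm{Span}(b_1)$, $\mathrm{Span}(a_1+b_1)$ of $H_1$ is isometric to $(x,1)$. These three subspaces will produce the three cases of the statement.

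First I would unwind what it means for $T=[H_1\xrightarrow{f}V\bot L\xleftarrow{i_V}V]$ to represent a nonzero canonical generator of $P_{H_1}^{(1)}/P_{H_1}^{(0)}(V)$: by definition of the rank filtration, the pullback in $\Eqd$ of the diagram $H_1\xrightarrow{f}V\bot L\xleftarrow{i_V}V$, namely $I=f(H_1)\cap i_V(V)$, has dimension exactly one. Since $f$ is injective, $f^{-1}(I)$ is a one-dimensional subspace of $H_1$, hence equal to one of $\mathrm{Span}(a_1)$, $\mathrm{Span}(b_1)$, $\mathrm{Span}(a_1+b_1)$; these three possibilities give the three items of the lemma.

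Then, in each case, I would read off the asserted normal form of $f$. Consider case (1), $f^{-1}(I)=\mathrm{Span}(a_1)$: then $f(a_1)\in i_V(V)$, say $f(a_1)=v$ with $v\in V$, and $q(v)=q(a_1)=1$ since $f$ preserves the quadratic form. Writing $f(b_1)=w+l$ with $w\in V$ and $l\in L$, one must have $l\neq 0$, for otherwise $f(H_1)\subseteq i_V(V)$ and $I$ would be two-dimensional, contradicting that the rank equals one. Finally, since $V$ and $L$ are orthogonal in $V\bot L$, $1=B(a_1,b_1)=B(f(a_1),f(b_1))=B(v,w+l)=B(v,w)$. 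Cases (2) and (3) are treated identically; the only mild subtlety is in case (3), where $f(a_1),f(b_1)\notin i_V(V)$ but $f(a_1+b_1)\in i_V(V)$: here one observes that $f(a_1)$ and $f(b_1)$ must have the \emph{same} nonzero $L$-component $l$ (forced by $f(a_1+b_1)=f(a_1)+f(b_1)$ having trivial $L$-component), so that $f(a_1)=v+l$ and $f(b_1)=w+l$ with $q(v+w)=q(f(a_1+b_1))=1$ and $B(v,w)=1$. There is no real obstacle: the whole argument is a routine verification, entirely parallel to that of Lemma \ref{3.2}.
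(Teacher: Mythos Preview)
Your proof is correct and follows exactly the approach the paper intends: the paper gives no separate proof of this lemma, stating only that the study of $P_{H_1}$ is analogous to that of $P_{H_0}$, and the proof of Lemma~\ref{3.2} is itself just the observation that the three one-dimensional subspaces of $H_0$ give the three cases. Your write-up is in fact more detailed than the paper's proof of Lemma~\ref{3.2}, and you correctly note the one genuine difference, namely that in $H_1$ all three lines $\mathrm{Span}(a_1)$, $\mathrm{Span}(b_1)$, $\mathrm{Span}(a_1+b_1)$ are isometric to $(x,1)$ rather than two copies of $(x,0)$ and one of $(x,1)$; this is consistent with the conditions $q(v)=1$, $q(w)=1$, $q(v+w)=1$ appearing in the three items (and shows, incidentally, that the labels ``$I=(f(a_1),0)$'' and ``$I=(f(b_1),0)$'' in the statement are typos for ``$I=(f(a_1),1)$'' and ``$I=(f(b_1),1)$'').
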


%\begin{proof}
%L'espace $H_1$ admet trois sous-espaces de dimension un qui sont:
%$\mathrm{Vect}(a_1)$, $\mathrm{Vect}(b_1)$ et
%$\mathrm{Vect}(a_1+b_1)$ qui sont isométriques à $(x,1)$. Ces trois sous
%espaces fournissent chacune des trois applications $f$ données dans
%l'énoncé.
%\end{proof}
\begin{nota}
The morphisms $[H_1 \xrightarrow{f} V \bot L \xleftarrow{i_V} V]$, where
$f$ is one of the morphisms described in the point $(1)$ (respectively
$(2)$ and $(3)$) of the previous lemma will be known as type $E$
(respectively $F$ and $G$) morphisms.
\end{nota}
The analogous proposition to Proposition \ref{3.4} holds for $H_1$. This justifies the following notation.
\begin{nota}
Denote by $E_{v,w}$, $F_{v,w}$ and $G_{v,w}$ the morphisms of
$\mathrm{Hom}_{\Tq}(H_1, V)$ respectively of type $E$, $F$ and $G$ and
such that $p_V \circ f(a_1)=v$ and $p_V \circ f(b_1)=w$.
\end{nota}
We deduce the following lemmas.
\begin{lm} \label{B1-PH1}
A basis
$\mathcal{B}_{H_1}^{(1)}$ of $P_{H_1}^{(1)}/P_{H_1}^{(0)} (V)$ is
given by the set:
$$\begin{array}{ll}
\mathcal{B}_{H_1}^{(1)}=\{ & [E_{v,w}] \mathrm{\ for\ } v \mathrm{\ and\
  } w \mathrm{\ elements\ of\ } V \mathrm{\ satisfying\ } q(v)=1 \mathrm{\ and\ } B(v,w)=1, \\
                        & [F_{v,w}] \mathrm{\ for\ } v \mathrm{\ and\
  } w \mathrm{\ elements\ of\ } V \mathrm{\
    satisfying\ } q(w)=1 \mathrm{\ and\ } B(v,w)=1, \\
                         &  [G_{v,w}] \mathrm{\ for\ } v \mathrm{\ and\
  } w \mathrm{\ elements\ of\ } V \mathrm{\ satisfying\ } q(v+w)=1 \mathrm{\ and\ } B(v,w)=1 \}
\end{array}$$ 
\end{lm}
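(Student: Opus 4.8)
The final statement to prove is Lemma~\ref{B1-PH1}, which gives a basis of $P_{H_1}^{(1)}/P_{H_1}^{(0)}(V)$ in terms of the morphisms $E_{v,w}$, $F_{v,w}$, $G_{v,w}$.

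\medskip

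The plan is to mimic exactly the argument that produced the analogous basis $\mathcal{B}_{H_0}^{(1)}$ for $P_{H_0}$ in Lemma~\ref{3.7}, since that lemma was deduced there as a straightforward consequence of Lemma~\ref{3.2} (the classification of the three shapes of $f$) together with Proposition~\ref{3.4} (the criterion for when two such morphisms coincide). First I would invoke Lemma~\ref{types-H1}: by definition of the filtration, a canonical generator of $P_{H_1}^{(1)}/P_{H_1}^{(0)}(V)$ is represented by a morphism $T=[H_1\xrightarrow{f}V\bot L\xleftarrow{i_V}V]$ whose intersection $I=f(H_1)\cap i_V(V)$ has dimension exactly one; since $H_1$ has precisely three one-dimensional subspaces — $\mathrm{Span}(a_1)$ and $\mathrm{Span}(b_1)$, each isometric to $(x,1)$, and $\mathrm{Span}(a_1+b_1)$, isometric to $(x,1)$ as well because $q(a_1+b_1)=q(a_1)+q(b_1)+B(a_1,b_1)=1+1+1=1$ in $\FF$ — the map $f$ has one of the three forms of types $E$, $F$, $G$. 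Hence every canonical generator is of the form $[E_{v,w}]$, $[F_{v,w}]$ or $[G_{v,w}]$ with the stated conditions on $q$ and $B$, so these elements span $P_{H_1}^{(1)}/P_{H_1}^{(0)}(V)$.

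\medskip

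It then remains to check linear independence, i.e. that distinct triples of (type, $v$, $w$) give distinct (hence linearly independent) canonical generators. This is where the analogue of Proposition~\ref{3.4} enters: two morphisms $T$, $T'$ representing generators of $P_{H_1}^{(1)}/P_{H_1}^{(0)}(V)$ are equal if and only if they have the same type and satisfy $p_V\circ f=p_V'\circ f'$. The forward direction uses Lemma~\ref{3.5} (if $T=T'$ then $f(H_1)+i_V(V)\simeq f'(H_1)+i_V'(V)$ in $\Eqd$) combined with the Witt-type extension theorem, Theorem~\ref{2.12}, applied to the nondegenerate space $V\bot L\bot L'$; the reverse direction builds an isometry $\alpha\colon L\bot L'\to L\bot L'$ from the isometry $\mathrm{Span}(l)\to\mathrm{Span}(l')$ (which exists because $q(l)=q(l')$, forced by $f,f'$ preserving $q$), again via Theorem~\ref{2.12}, and concludes $T=T'$ by inclusion. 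The only genuine change from the $H_0$ case is bookkeeping: the relations become $q(v)=1$ (type $E$), $q(w)=1$ (type $F$), $q(v+w)=1$ (type $G$), and one must recheck that the scalar $q(l)=q(b_1)+q(w)$ (resp.\ $q(a_1)+q(v)$) is well-defined independently of the chosen symplectic basis; this is immediate. Since morphisms of different types cannot be identified (their intersection spaces sit differently inside $H_1$, or one reruns the $\Eqd$-isomorphism argument), the listed generators are pairwise distinct, hence linearly independent in the $\FF$-vector space $\FF[\mathrm{Hom}_{\Tq}^{[1]}(H_1,V)]/(\text{rank }0)$.

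\medskip

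The main obstacle — really the only nontrivial point — is the ``same type and $p_V\circ f=p_V'\circ f'$ $\Rightarrow$ $T=T'$'' implication, i.e.\ transcribing the proof of Proposition~\ref{3.4} to $H_1$; everything else is a direct citation of Lemma~\ref{types-H1} and a routine counting argument. Since the paper explicitly states ``the analogous proposition to Proposition~\ref{3.4} holds for $H_1$'' and works over $\FF_2$ where the only arithmetic subtlety is the value $q(a_1+b_1)=1$ (as opposed to $q(a_0+b_0)=1$ for $H_0$, which happens to coincide), I would simply remark that the proof of Lemma~\ref{3.7} applies verbatim with $H_0$ replaced by $H_1$, $a_0,b_0$ replaced by $a_1,b_1$, and the conditions $q(v)=0$, $q(w)=0$, $q(v+w)=1$ replaced by $q(v)=1$, $q(w)=1$, $q(v+w)=1$ respectively — which is consistent with the stated intention of the section to ``give only the principal results without proofs'' for $P_{H_1}$.
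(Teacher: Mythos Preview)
Your proposal is correct and follows exactly the approach the paper intends: the paper gives no proof of this lemma, stating explicitly that the study of $P_{H_1}$ is ``analogous to that of the functor $P_{H_0}$'' and that it will ``give only the principal results without proofs,'' so transcribing the argument of Lemma~\ref{3.7} (via Lemma~\ref{3.2} and Proposition~\ref{3.4}) with the obvious substitutions is precisely what is expected. Your observation that all three one-dimensional subspaces of $H_1$ are isometric to $(x,1)$ is correct (and in fact the labels $I=(f(a_1),0)$, $I=(f(b_1),0)$ in the paper's Lemma~\ref{types-H1} appear to be copy-paste typos from the $H_0$ case; they should read $(f(a_1),1)$, $(f(b_1),1)$), but this does not affect the argument.
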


\begin{lm} \label{B2-bis}
A basis $\mathcal{B}_{H_1}^{(2)}$ of $P_{H_1}/P_{H_1}^{(1)} (V)$ is
given by the set:
$$\mathcal{B}_{H_1}^{(2)}=\{ [H_f] \  \mathrm{for\ } f \in
\mathrm{Hom}_{\Eq}(H_1, V) \},$$ 
where $H_f$ is the morphism of $\Tq$ represented by the diagram: $H_1 \xrightarrow{f} V
\xleftarrow{\mathrm{Id}} V$.
\end{lm}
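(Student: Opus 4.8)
The plan is to obtain this lemma exactly as its $P_{H_0}$ counterpart, Lemma \ref{3.8}, was obtained: the single substantive ingredient is Proposition \ref{2.22}, applied to the nondegenerate quadratic space $H_1$. Since $\dim(H_1)=2$, that proposition yields a natural equivalence $P_{H_1}/P_{H_1}^{(1)} \simeq \kappa(\mathrm{iso}_{H_1})$, so it suffices to transport a basis of $\kappa(\mathrm{iso}_{H_1})(V)$ back through this equivalence and to check that it is precisely $\mathcal{B}_{H_1}^{(2)}$.

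I would carry this out in three short steps. First, Lemma \ref{2.26} (with its ambient space taken to be $H_1$) shows that every non-zero canonical generator of $(P_{H_1}/P_{H_1}^{(1)})(V)$ is represented by a diagram $[H_1 \xrightarrow{f} V \xleftarrow{\mathrm{Id}} V]$ for some $f \in \mathrm{Hom}_{\Eq}(H_1,V)$, that is, by some $H_f$; hence $\mathcal{B}_{H_1}^{(2)}$ spans $(P_{H_1}/P_{H_1}^{(1)})(V)$. Second, by the formula for $\tau_W$ established in the proof of Proposition \ref{2.25}, the isomorphism $\tau_V$ sends $[H_f]$ to the canonical generator $[H_1 \xleftarrow{\mathrm{Id}} H_1 \xrightarrow{f} V]$ of $\mathrm{iso}_{H_1}(V)$. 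Third, distinct isometries $f$ give rise to distinct such generators, which are linearly independent in $\mathrm{iso}_{H_1}(V)$; since $\tau_V$ is an isomorphism, it therefore carries $\mathcal{B}_{H_1}^{(2)}$ bijectively onto a basis, so $\mathcal{B}_{H_1}^{(2)}$ is itself a basis.

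Once Proposition \ref{2.22} is in hand the argument is purely formal, so there is no real obstacle; the only point deserving a word of care is that the linear independence of $\{[H_f]\}$ is not visible from the spanning statement of Lemma \ref{2.26} alone but follows from Proposition \ref{2.25}, where $\tau_V$ is shown to be an isomorphism matching each $[H_f]$ with a distinct basis vector of $\mathrm{iso}_{H_1}(V)$. As already noted for $P_{H_0}$ in Lemmas \ref{3.1}--\ref{3.8}, all the statements of this subsection follow from those for $P_{H_0}$ by replacing $H_0$ with $H_1$ and adjusting the values of the quadratic form on the symplectic basis vectors, so no new idea is needed.
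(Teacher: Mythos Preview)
Your proposal is correct and follows exactly the approach the paper takes: the paper states Lemma \ref{B2-bis} without proof, noting that the $P_{H_1}$ analysis is analogous to $P_{H_0}$, and Lemma \ref{3.8} itself is deduced directly from Proposition \ref{2.22}. Your expansion via Lemma \ref{2.26} and the explicit isomorphism $\tau_V$ from Proposition \ref{2.25} simply unpacks that deduction, so there is nothing to add.
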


The rules of composition of morphisms $E_{v,w}$, $F_{v,w}$,
$G_{v,w}$ and $H_f$ are similar to those given for $A_{v,w}$,
$B_{v,w}$, $C_{v,w}$ and $D_f$ in Lemma \ref{3.9}. The details can be provided by the reader.

\subsubsection{Splitting of the filtration for the functor  $P_{H_1}$}

\begin{prop} \label{scindement-PH1}
The rank filtration splits for the functor $P_{H_1}$, namely:
$$P_{H_1}=P_{H_1}^{(0)} \oplus P_{H_1}^{(1)}/P_{H_1}^{(0)} \oplus
P_{H_1}/P_{H_1}^{(1)}.$$
\end{prop}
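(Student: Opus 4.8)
The plan is to mimic the proof of Proposition~\ref{3.10} essentially verbatim. By Theorem~\ref{2.6} we already have a splitting $P_{H_1}^{(1)} \simeq P_{H_1}^{(0)} \oplus P_{H_1}^{(1)}/P_{H_1}^{(0)}$, so it is enough to split the short exact sequence
$$0 \rightarrow P_{H_1}^{(1)} \rightarrow P_{H_1} \xrightarrow{p} P_{H_1}/P_{H_1}^{(1)} \rightarrow 0$$
coming from the rank filtration, i.e.\ to exhibit a natural section of $p$.

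To define the candidate section, fix an object $V$ of $\Tq$, a morphism $f \in \mathrm{Hom}_{\Eq}(H_1,V)$ and a symplectic basis $\{a_1,b_1\}$ of $H_1$. Since $f$ preserves the quadratic form we get $q(f(a_1)) = q(f(b_1)) = 1$, $B(f(a_1),f(b_1)) = 1$ and $q(f(a_1)+f(b_1)) = 1$; hence by Lemma~\ref{B1-PH1} the three morphisms $E_{f(a_1),f(b_1)}$, $F_{f(a_1),f(b_1)}$, $G_{f(a_1),f(b_1)}$ of $\mathrm{Hom}_{\Tq}(H_1,V)$ are all defined. Using the basis $\mathcal{B}_{H_1}^{(2)}$ of $(P_{H_1}/P_{H_1}^{(1)})(V)$ given in Lemma~\ref{B2-bis}, I would set
$$s_V([H_f]) = [H_f] + [E_{f(a_1),f(b_1)}] + [F_{f(a_1),f(b_1)}] + [G_{f(a_1),f(b_1)}].$$
Because morphisms of type $E$, $F$, $G$ have rank one, they vanish in the quotient $(P_{H_1}/P_{H_1}^{(1)})(V)$, so $p_V \circ s_V = \mathrm{Id}$.

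The substantive step is the naturality of $s = (s_V)_V$: for every $T = [V \xrightarrow{\varphi} W \bot L \xleftarrow{i_W} W]$ in $\mathrm{Hom}_{\Tq}(V,W)$ one must check $P_{H_1}(T) \circ s_V = s_W \circ (P_{H_1}/P_{H_1}^{(1)})(T)$. Here I would invoke the composition rules for $H_f$, $E_{v,w}$, $F_{v,w}$, $G_{v,w}$ (the $H_1$-analogues of Lemma~\ref{3.9}) and distinguish the five cases according to which of $\varphi\circ f(a_1)$, $\varphi\circ f(b_1)$, $\varphi\circ f(a_1+b_1)$ lie in $W$. When $\varphi\circ f(a_1) \in W$ and $\varphi\circ f(b_1) \in W$ one obtains $P_{H_1}(T)\circ s_V([H_f]) = [H_{\varphi\circ f}] + [E'] + [F'] + [G']$, where $E' = E_{\varphi\circ f(a_1),\, p_W\circ\varphi\circ f(b_1)}$ and $F'$, $G'$ are defined analogously; in each of the four remaining cases the four summands pair off and cancel over $\FF$, giving $0$. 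Since $(P_{H_1}/P_{H_1}^{(1)})(T)([H_f])$ equals $[H_{\varphi\circ f}]$ in the first case and $0$ otherwise, applying $s_W$ produces exactly the same answer, which establishes naturality. Hence $p$ has a natural section, the sequence splits, and combining with the splitting of $P_{H_1}^{(1)}$ we get $P_{H_1} = P_{H_1}^{(0)} \oplus P_{H_1}^{(1)}/P_{H_1}^{(0)} \oplus P_{H_1}/P_{H_1}^{(1)}$.

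The only non-formal point, and thus the main obstacle, is the bookkeeping in this five-case naturality computation: one must line up correctly the hypotheses ``$\varphi\circ f(a_1)\in W$'' and ``$\varphi(v)\in W$'' appearing in the various composition rules so that the $\FF$-coefficients cancel in pairs exactly as in Proposition~\ref{3.10}. Since the combinatorics of the five cases is insensitive to the values of $q$ on the basis vectors --- these only govern which of the types $E$, $F$, $G$ are available, and all three are simultaneously available here --- the verification is identical to the one carried out for $P_{H_0}$, which is why it can safely be omitted.
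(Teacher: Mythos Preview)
Your proposal is correct and follows exactly the paper's approach: the paper's proof of Proposition~\ref{scindement-PH1} simply writes down the same section $s_V([H_f]) = [H_f]+[E_{f(a_1),f(b_1)}]+[F_{f(a_1),f(b_1)}]+[G_{f(a_1),f(b_1)}]$ and states that one verifies it is a natural section of the projection, leaving the five-case computation (identical to that of Proposition~\ref{3.10}) to the reader. You have in fact supplied more detail than the paper does.
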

\begin{proof}
One verifies that the map $s_V: P_{H_1}/P_{H_1}^{(1)}(V) \rightarrow
P_{H_1}(V)$ given by:
$$ \begin{array}{cccl}
s_V : &  P_{H_1}/P_{H_1}^{(1)}(V) & \longrightarrow & P_{H_1}(V)  \\
       & [H_f] & \mapsto & [H_f]+[E_{f(a_1), f(b_1)}]+[F_{f(a_1), f(b_1)}]+[G_{f(a_1), f(b_1)}].
\end{array}$$
defines a natural map which is a section of the projection $P_{H_1} \rightarrow  P_{H_1}/P_{H_1}^{(1)}$.
\end{proof}

\subsubsection{Identification  of the direct summands}
We have the following lemma.
\begin{lm} \label{PH1-1}
The functor $P_{H_1}^{(1)}/P_{H_1}^{(0)}$ admits the following
decomposition into direct summands
$$P_{H_1}^{(1)}/P_{H_1}^{(0)} =F_E \oplus F_F \oplus F_G$$
where $F_E$, $F_F$ and $F_G$ are subfunctors of
$P_{H_1}^{(1)}/P_{H_1}^{(0)}$ generated by, respectively, the
morphisms of type $E$, $F$ and $G$.
\end{lm}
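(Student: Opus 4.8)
The plan is to mimic exactly the structure used for $P_{H_0}$ in Lemma \ref{3.11}, since the statement to be proved (Lemma \ref{PH1-1}) is the literal analogue. First I would recall that by Lemma \ref{B1-PH1} we already have, for each object $V$ of $\Tq$, a decomposition of vector spaces
\[
P_{H_1}^{(1)}/P_{H_1}^{(0)}(V) = F_E(V) \oplus F_F(V) \oplus F_G(V),
\]
where $F_E(V)$, $F_F(V)$ and $F_G(V)$ are the subspaces spanned by the canonical generators $[E_{v,w}]$, $[F_{v,w}]$ and $[G_{v,w}]$ respectively. So the whole content of the lemma is to check that each of $F_E$, $F_F$, $F_G$ is a subfunctor of $P_{H_1}^{(1)}/P_{H_1}^{(0)}$; the direct sum decomposition into summands then follows formally from the pointwise decomposition together with the fact that the three pieces are subfunctors.

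Second, to verify that $F_E$ is a subfunctor, I would take a morphism $T = [V \xrightarrow{\varphi} W \bot L \xleftarrow{i_W} W]$ of $\mathrm{Hom}_{\Tq}(V,W)$ and a canonical generator $[E_{v,w}]$ of $F_E(V)$, and compute $P_{H_1}^{(1)}/P_{H_1}^{(0)}(T)([E_{v,w}])$. Here I invoke the composition rules for $E_{v,w}$, which the excerpt states are entirely analogous to those for $A_{v,w}$ in Lemma \ref{3.9}: $T \circ E_{v,w}$ equals $E_{\varphi(v),\, p_W \circ \varphi(w)}$ when $\varphi(v) \in W$, and is a rank-zero morphism $t_{p_W \circ (\varphi \bot \mathrm{Id}) \circ \alpha}$ otherwise. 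Since the rank-zero term is killed in the quotient by $P_{H_1}^{(0)}$, the image of $[E_{v,w}]$ under $P_{H_1}^{(1)}/P_{H_1}^{(0)}(T)$ is either $[E_{\varphi(v),\, p_W \circ \varphi(w)}]$ or $0$, and in both cases it lies in $F_E(W)$. This gives commutativity of the square
\[
\xymatrix{
F_E(V) \ar[r]^-{i_V} \ar[d]_{F_E(T)} & P_{H_1}^{(1)}/P_{H_1}^{(0)}(V) \ar[d]^{P_{H_1}^{(1)}/P_{H_1}^{(0)}(T)} \\
F_E(W) \ar[r]^-{i_W} & P_{H_1}^{(1)}/P_{H_1}^{(0)}(W)
}
\]
so $F_E$ is indeed a subfunctor.

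Third, I would repeat the identical argument for $F_F$ and $F_G$, using the corresponding composition rules for $F_{v,w}$ and $G_{v,w}$ (which the excerpt says parallel those for $B_{v,w}$ and $C_{v,w}$): in each case the composite $T \circ F_{v,w}$ (resp. $T \circ G_{v,w}$) is either a morphism of the same type $F$ (resp. $G$) or a rank-zero morphism that vanishes in the quotient, so the relevant subspace is preserved. Having shown all three are subfunctors, the decomposition $P_{H_1}^{(1)}/P_{H_1}^{(0)} = F_E \oplus F_F \oplus F_G$ follows immediately from Lemma \ref{B1-PH1}.

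There is really no serious obstacle here: the only mildly delicate point is that one must be sure the off-diagonal terms in the composition formulas (the ones sending a type-$E$, $F$ or $G$ generator to something of a different type, or to a type-$H$ generator) genuinely do not occur at the level of $P_{H_1}^{(1)}/P_{H_1}^{(0)}$. For $P_{H_1}^{(1)}/P_{H_1}^{(0)}$ the only ``escape route'' in the composition rule is to rank zero, which is exactly what gets annihilated; type-$H$ (rank two) generators cannot appear because composition cannot increase rank. So the routine verification goes through exactly as in the proof of Lemma \ref{3.11}, and I would simply write it out for $F_E$ in full and remark that $F_F$ and $F_G$ are handled identically.
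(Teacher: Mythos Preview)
Your proposal is correct and follows exactly the approach the paper intends: the paper explicitly states that the study of $P_{H_1}$ is analogous to that of $P_{H_0}$ and gives Lemma \ref{PH1-1} without proof, so the implicit argument is precisely the transcription of the proof of Lemma \ref{3.11} that you have written out, using the analogue of Lemma \ref{3.9} for the composition rules of $E_{v,w}$, $F_{v,w}$, $G_{v,w}$.
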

In the following lemma, we identify the functors $F_E$, $F_F$
and $F_G$ with mixed functor.

\begin{lm} \label{PH1-2}
The functors $F_E$, $F_F$ and $F_G$ are equivalent to the functor \M{1,1}.
\end{lm}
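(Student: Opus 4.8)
The plan is to mirror exactly the three identifications $F_A \simeq \M{0,1}$, $F_B \simeq \M{0,1}$, $F_C \simeq \M{1,1}$ carried out in Lemma \ref{3.12}, using the description of $\M{1,1}(V) = \FF[S_V]$ with $S_V = \{(v_1,v_2) \mid q(v_1+v_2) = 1,\ B(v_1,v_2)=1\}$ recalled in Section \ref{1}. First I would treat $F_G$, which is the cleanest case: a canonical generator $[G_{v,w}]$ of $F_G(V)$ satisfies, by Lemma \ref{types-H1}(3), $q(v+w) = 1$ and $B(v,w) = 1$, so $(v,w) \in S_V$, and one defines $\sigma^G_V : F_G(V) \to \M{1,1}(V)$ by $[G_{v,w}] \mapsto [(v,w)]$, which is manifestly bijective. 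The bulk of the work is checking naturality: for a morphism $T = [V \xrightarrow{\varphi} W \bot L \xleftarrow{i_W} W]$, one compares $\M{1,1}(T) \circ \sigma^G_V$ with $\sigma^G_W \circ F_G(T)$ using the composition rules for $G_{v,w}$ (the analogue of Lemma \ref{3.9}, whose explicit form the excerpt leaves to the reader but which I would write out: $T \circ G_{v,w} = G_{p_W\varphi(v),\,p_W\varphi(w)}$ if $\varphi(v+w) \in W$, and a rank-zero morphism otherwise). Both composites vanish precisely when $\varphi(v+w) \notin W$, and agree with $[(p_W\varphi(v), p_W\varphi(w))]$ otherwise, using that $p_W\varphi$ preserves the relevant quadratic/bilinear data since $L \perp V$.

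Next I would handle $F_E$ and $F_F$. For $F_E$, a generator $[E_{v,w}]$ satisfies $q(v) = 1$ and $B(v,w) = 1$ (Lemma \ref{types-H1}(1)); I need to manufacture a pair in $S_V$ from $(v,w)$, i.e. a pair $(v_1,v_2)$ with $q(v_1+v_2)=1$ and $B(v_1,v_2)=1$. The natural choice, paralleling the $\sigma^1_V$ of Lemma \ref{3.12}, is $[E_{v,w}] \mapsto [(v, v+w)]$ or a similar affine substitution: then $v_1 + v_2 = w$ would force $q(w) = 1$, which is wrong, so instead one checks $[(w, v+w)]$ — here $v_1 + v_2 = v$ has $q(v) = 1$ (good) and $B(w, v+w) = B(w,v) + B(w,w) = 1 + 0 = 1$ in characteristic $2$ (good). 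So $\sigma^E_V([E_{v,w}]) = [(w, v+w)]$, bijective with inverse $[(v_1,v_2)] \mapsto [E_{v_1 + v_2,\, v_1}]$ after checking $q(v_1 + v_2) = 1$, $B(v_1+v_2, v_1) = B(v_2,v_1) = 1$. For $F_F$, by the symmetry between the roles of $a_1$ and $b_1$ in type $E$ versus type $F$, the analogous map is $\sigma^F_V([F_{v,w}]) = [(v, v+w)]$: here $v_1 + v_2 = w$ so $q(w) = 1$ holds by Lemma \ref{types-H1}(2), and $B(v, v+w) = B(v,w) = 1$. Naturality for both is verified exactly as for $F_G$, using the composition rules for $E_{v,w}$ and $F_{v,w}$.

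The main obstacle is purely bookkeeping rather than conceptual: one must be careful with the several affine changes of variable in $\FF$-linear algebra (all sign issues disappear, but the substitutions $v \mapsto v+w$ etc.\ must be consistent between the map, its inverse, and the naturality square), and one must spell out the composition rules for $E_{v,w}$, $F_{v,w}$, $G_{v,w}$ that the excerpt suppresses. I expect no genuine difficulty beyond this, since the quadratic constraint defining $S_V$ in $\M{1,1}$ ($q(v_1+v_2)=1$) matches the defining constraint of all three of types $E$, $F$, $G$ (in each case the ``diagonal direction'' of $H_1$ has $q = 1$, which is exactly why $P_{H_1}$ yields three copies of $\M{1,1}$ rather than the $\M{0,1}^{\oplus 2} \oplus \M{1,1}$ pattern of $P_{H_0}$). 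Combined with Lemma \ref{PH1-1}, these three isomorphisms give $P_{H_1}^{(1)}/P_{H_1}^{(0)} \simeq \M{1,1}^{\oplus 3}$, feeding into the decomposition of $P_{H_1}$ via Proposition \ref{scindement-PH1}, Theorem \ref{2.6} and Proposition \ref{2.22}.
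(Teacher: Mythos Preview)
Your proposal is correct and is exactly the approach the paper intends: the paper omits the proof of this lemma entirely, stating that the analysis of $P_{H_1}$ is ``analogous to that of the functor $P_{H_0}$'' and leaving the details to the reader, so your three maps $\sigma^E$, $\sigma^F$, $\sigma^G$ are precisely the $H_1$-analogues of the maps $\sigma^1$, $\sigma^2$, $\sigma^3$ of Lemma~\ref{3.12}. The bookkeeping you describe (affine substitutions and the suppressed composition rules for $E_{v,w}$, $F_{v,w}$, $G_{v,w}$) is indeed all that is needed.
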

%\begin{proof}
%Les applications linéaires fournissant les équivalences naturelles
%sont construites de la même manière que celles données pour le
%foncteur $P_{H_0}$ à la proposition \ref{PH0-2}.
%\end{proof}
We deduce the following decomposition.

\begin{prop} \label{decompo-H1} 
The projective functor $P_{H_1}$ admits the following decomposition
into direct summands:
$$P_{H_1}=\iota(P^{\F}_{\epsilon(\FF^{\oplus 2})}) \oplus
\M{1,1}^{\oplus 3}  \oplus \kappa(\mathrm{iso}_{H_1})$$
where $\M{1,1}$ is a mixed functor and
$iso_{H_1}$ is an isotropic functor.
\end{prop}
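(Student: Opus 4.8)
The decomposition of $P_{H_1}$ is to be obtained exactly as for $P_{H_0}$, following the three-step scheme of the previous subsection. First I would establish the analogue of Lemma~\ref{3.9}, i.e. the explicit rules of composition $T \circ E_{v,w}$, $T \circ F_{v,w}$, $T \circ G_{v,w}$ and $T \circ H_f$ for an arbitrary morphism $T = [V \xrightarrow{\varphi} W \bot L \xleftarrow{i_W} W]$ of $\Tq$; this is a routine case analysis, identical in spirit to the computation for the types $A$, $B$, $C$, $D$, the only difference being the values $q(v) = 1$ (rather than $q(v)=0$) for type $E$ and $q(w)=1$ for type $F$, which does not affect the combinatorics of which summand the composite lands in. These rules feed directly into Propositions~\ref{scindement-PH1} and~\ref{PH1-1}: the section $s_V$ of Proposition~\ref{scindement-PH1} is natural precisely because, in the composition formula for $T \circ H_f$, whenever the composite is not of top rank the three correction terms $[E'], [F'], [G']$ pair up and cancel in characteristic $2$, exactly as $[A'],[B'],[C']$ did; and the same formulae show that $F_E$, $F_F$, $F_G$ are stable under the functorial maps, giving the direct-sum decomposition of $P_{H_1}^{(1)}/P_{H_1}^{(0)}$ of Lemma~\ref{PH1-1}.

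Next I would prove Lemma~\ref{PH1-2}, the identification $F_E \simeq F_F \simeq F_G \simeq \M{1,1}$. For each of the three types one writes down an explicit linear isomorphism onto $\M{1,1}(V) = \FF[S_V]$ with $S_V = \{(v_1,v_2) \mid q(v_1+v_2)=1,\ B(v_1,v_2)=1\}$, mimicking the maps $\sigma^1, \sigma^2, \sigma^3$ of Lemma~\ref{3.12}. For type $G$ the map $[G_{v,w}] \mapsto [(v,w)]$ works verbatim since the defining condition $q(v+w)=1,\ B(v,w)=1$ matches $S_V$ exactly. For types $E$ and $F$ one needs a small linear change of variables sending the pair $(v,w)$ with $q(v)=1$, $B(v,w)=1$ (resp. $q(w)=1$, $B(v,w)=1$) to a pair in $S_V$; e.g. for type $E$ one checks $q(w + v) $ and $B$-values and takes something like $[E_{v,w}] \mapsto [(v, v+w)]$ after verifying $q(v+(v+w)) = q(w) \cdot(\text{adjust}) $— the correct normalization is found by the same bilinear-form bookkeeping as in Lemma~\ref{3.12}, and naturality then follows from the composition rules just as for $\sigma^1$.

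Finally, the decomposition of Proposition~\ref{decompo-H1} is assembled formally: Proposition~\ref{scindement-PH1} gives $P_{H_1} = P_{H_1}^{(0)} \oplus (P_{H_1}^{(1)}/P_{H_1}^{(0)}) \oplus (P_{H_1}/P_{H_1}^{(1)})$; Theorem~\ref{2.6} identifies the first summand with $\iota(P^{\F}_{\epsilon(\FF^{\oplus 2})})$ (using $\epsilon(H_1) \simeq \FF^{\oplus 2}$); Lemmas~\ref{PH1-1} and~\ref{PH1-2} identify the middle summand with $\M{1,1}^{\oplus 3}$; and Proposition~\ref{2.22} identifies the top quotient with $\kappa(\mathrm{iso}_{H_1})$. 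I expect the only real point requiring care — rather than a genuine obstacle — to be pinning down the precise change-of-variables formulae in Lemma~\ref{PH1-2} for types $E$ and $F$, since the mismatch between $q(v)=1$ there and the symmetric condition $q(v_1+v_2)=1$ in the definition of $\M{1,1}$ means the naive identity map does not work; everything else is a faithful transcription of the $P_{H_0}$ argument, which is why the paper is content to state the results without proof.
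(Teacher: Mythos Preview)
Your proposal is correct and follows exactly the paper's approach: the paper itself says the $P_{H_1}$ case is analogous to $P_{H_0}$ and merely states Lemmas~\ref{B0-H1}--\ref{PH1-2} and Proposition~\ref{scindement-PH1} without proof, then deduces Proposition~\ref{decompo-H1} formally from these together with Theorem~\ref{2.6} and Proposition~\ref{2.22}. One small remark: for types $E$ and $F$ the maps $[E_{v,w}]\mapsto[(w,v+w)]$ and $[F_{v,w}]\mapsto[(v,v+w)]$ (i.e.\ literally the same formulae as $\sigma^1,\sigma^2$ in Lemma~\ref{3.12}) already land in $\M{1,1}$, since $q(w+(v+w))=q(v)=1$ and $q(v+(v+w))=q(w)=1$ respectively, so no new change of variables is needed beyond what was used for $H_0$.
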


\subsection{Consequences of decompositions of functors $P_{H_0}$ and
  $P_{H_1}$} \label{consequences}

In this section, we draw the conclusions of the decompositions of
$P_{H_0}$ and $P_{H_1}$ given in Propositions \ref{3.13} and
\ref{decompo-H1}. We deduce the
indecomposability of the functors $\M{0,1}$ and $\M{1,1}$, we study
the projectivity of the first isotropic functors in $\Fq$ and we give the
classification of the ``small'' simple objects of $\Fq$.

\subsubsection{Indecomposability of functors \M{0,1} and \M{1,1}}
The aim of this paragraph is to prove the following result:
\begin{prop} \label{3.24}
The functors \M{0,1} and \M{1,1} are indecomposable.
\end{prop}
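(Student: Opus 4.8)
The plan is to deduce indecomposability of $\M{0,1}$ and $\M{1,1}$ from the structural results of \cite{Vespa2} recalled in Theorem \ref{1.9}, rather than from the decompositions of $P_{H_0}$ and $P_{H_1}$ directly. Recall from Theorem \ref{1.9} that for $\alpha \in \{0,1\}$ there is a short exact sequence $0 \to \m{\alpha,1} \to \M{\alpha,1} \to \m{\alpha,1} \to 0$, that $\m{\alpha,1}$ is uniserial with head $\kappa(iso_{(x,\alpha)})$ (a simple functor), and that all the other composition factors of $\m{\alpha,1}$ are the pairwise non-isomorphic simples $L^{d+1}_\alpha$, none of which lies in the image of $\iota$ or $\kappa$. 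In particular $\kappa(iso_{(x,\alpha)})$ occurs in the composition series of $\M{\alpha,1}$ with multiplicity exactly two (once in each copy of $\m{\alpha,1}$), while every other composition factor occurs with multiplicity two as well but is of a genuinely different isomorphism type in each ``block''.

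First I would recall the standard fact that a functor $F$ in $\Fq$ is indecomposable iff $\End_{\Fq}(F)$ has no nontrivial idempotents, equivalently $\End_{\Fq}(F)$ is local. So the strategy is to show that $\End_{\Fq}(\M{\alpha,1})$ is local. I would analyze this endomorphism ring using the filtration $0 \subset \m{\alpha,1} \subset \M{\alpha,1}$. Any endomorphism $\phi$ of $\M{\alpha,1}$ either maps $\m{\alpha,1}$ into itself or not; since $\m{\alpha,1}$ and the quotient $\M{\alpha,1}/\m{\alpha,1} \cong \m{\alpha,1}$ are uniserial with the specified (distinct, except possibly at the level of the head) composition factors, one controls $\Hom_{\Fq}(\m{\alpha,1}, \M{\alpha,1})$ and $\Hom_{\Fq}(\M{\alpha,1}, \m{\alpha,1})$. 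The key point is that $\m{\alpha,1}$ is uniserial, so its only endomorphisms are scalars (an endomorphism of a uniserial functor is determined by where it sends the head; using the simplicity of the head and of each layer $L^{d+1}_\alpha$ plus the fact these are pairwise non-isomorphic, any nonzero endomorphism is an isomorphism, and by uniseriality $\End_{\Fq}(\m{\alpha,1}) = \FF$). From this I would show that $\End_{\Fq}(\M{\alpha,1})$ has the form of an upper-triangular $2\times 2$ matrix over $\FF$ with equal diagonal entries — i.e. $\FF[\epsilon]/\epsilon^2$ — or at worst a local ring, because the extension $0 \to \m{\alpha,1} \to \M{\alpha,1} \to \m{\alpha,1} \to 0$ is nonsplit (this is implicit in \cite{Vespa2}; if needed, nonsplitness follows because a splitting would make $\M{\alpha,1}$ contain $\kappa(iso_{(x,\alpha)})$ as a direct summand, contradicting the infiniteness and uniserial structure). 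A local endomorphism ring gives indecomposability.

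The main obstacle I anticipate is pinning down $\End_{\Fq}(\M{\alpha,1})$ precisely: one must rule out that $\M{\alpha,1}$ splits as two copies of $\m{\alpha,1}$, and more subtly rule out idempotents arising from a hypothetical decomposition respecting the socle filtration only partially. The cleanest route is: (1) show $\Hom_{\Fq}(\m{\alpha,1}, \M{\alpha,1}) = \FF$, using that the only sub of $\M{\alpha,1}$ isomorphic to a quotient of $\m{\alpha,1}$ is the submodule $\m{\alpha,1}$ itself (by uniseriality and the distinctness of composition factors across the two layers, a nonzero map $\m{\alpha,1} \to \M{\alpha,1}$ must have image $\m{\alpha,1}$ and be an isomorphism onto it); (2) dually show $\Hom_{\Fq}(\M{\alpha,1}, \m{\alpha,1}) = \FF$ with every nonzero map being the quotient projection; (3) conclude, via the exact sequence $0 \to \Hom(\m{\alpha,1},\M{\alpha,1}) \to \Hom(\M{\alpha,1},\M{\alpha,1}) \to \Hom(\m{\alpha,1},\M{\alpha,1})$ obtained by applying $\Hom(-,\M{\alpha,1})$ wait — more carefully, apply $\Hom_{\Fq}(\M{\alpha,1},-)$ to $0 \to \m{\alpha,1} \to \M{\alpha,1} \to \m{\alpha,1} \to 0$, getting $0 \to \Hom(\M{\alpha,1},\m{\alpha,1}) \to \End(\M{\alpha,1}) \to \Hom(\M{\alpha,1},\m{\alpha,1})$, hence $\dim_{\FF}\End(\M{\alpha,1}) \le 2$; if it equals $1$ we are trivially done, and if it equals $2$ then $\End(\M{\alpha,1}) = \FF[\epsilon]/(\epsilon^2)$, which is local, since the non-scalar endomorphism factors through $\m{\alpha,1}$ and lands in $\m{\alpha,1}$ hence squares to zero. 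Either way $\M{\alpha,1}$ is indecomposable.

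I would present the argument uniformly in $\alpha$, treating $\M{0,1}$ and $\M{1,1}$ simultaneously, since Theorem \ref{1.9} covers both cases with identical formal structure; the only input specific to the present paper is the appearance of $\M{0,1}$ and $\M{1,1}$ as summands of $P_{H_0}$ and $P_{H_1}$, which I would not actually need for this particular proposition but which guarantees these functors are genuinely objects of interest. If the author instead wants a proof using the decompositions of $P_{H_0}$ and $P_{H_1}$, one could alternatively argue that since $P_{H_0}$ and $P_{H_1}$ are projective and their Krull--Schmidt decompositions are as stated in Propositions \ref{3.13} and \ref{decompo-H1}, a nontrivial decomposition of $\M{\alpha,1}$ would refine the Krull--Schmidt decomposition, and then match composition factors against the known simple quotients; but the uniseriality argument above is more direct and self-contained.
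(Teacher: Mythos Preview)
Your approach differs fundamentally from the paper's, and it contains a genuine circularity that breaks the argument.

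The paper proceeds via Yoneda and the decomposition of $P_{H_0}$: one has $\Hom_{\Fq}(P_{H_0},\M{0,1})\cong\M{0,1}(H_0)$, which has dimension~$4$; since $\M{0,1}$ occurs in $P_{H_0}$ with multiplicity~$2$ (Proposition~\ref{3.13}), one reads off $\dim_{\FF}\End_{\Fq}(\M{0,1})=2$, exhibits the explicit basis $\{\mathrm{Id},\tau\}$ with $\tau([(u,v)])=[(v,u)]$, and checks by hand that the four-element ring $\{0,\mathrm{Id},\tau,\mathrm{Id}+\tau\}$ has no nontrivial idempotent. The case of $\M{1,1}$ is handled the same way using $\dim\M{1,1}(H_0)=2$ and multiplicity~$1$ in $P_{H_0}$. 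The decomposition of $P_{H_0}$ is used essentially; the internal filtration of $\M{\alpha,1}$ is not.

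Your route, by contrast, rests entirely on the short exact sequence $0\to\m{\alpha,1}\to\M{\alpha,1}\to\m{\alpha,1}\to 0$ from Theorem~\ref{1.9}. The gap is that both of your key claims---(1) $\Hom(\m{\alpha,1},\M{\alpha,1})=\FF$ and (2) $\Hom(\M{\alpha,1},\m{\alpha,1})=\FF$---are \emph{equivalent} to the nonsplitness of this sequence. Indeed, applying $\Hom(\m{\alpha,1},-)$ (resp.\ $\Hom(-,\m{\alpha,1})$) to the sequence and using $\End(\m{\alpha,1})=\FF$ gives $\dim\le 2$, with equality exactly when the identity on $\m{\alpha,1}$ lifts (resp.\ extends), i.e., when the sequence splits. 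But nonsplitness is not part of Theorem~\ref{1.9}; the paper derives it as a \emph{corollary} of Proposition~\ref{3.24}, not as an input. Your proposed justification for nonsplitness---that a splitting would force $\kappa(iso_{(x,\alpha)})$ to be a direct summand of $\M{\alpha,1}$---is incorrect: a splitting yields $\M{\alpha,1}\cong\m{\alpha,1}^{\oplus 2}$, whose indecomposable summands are the uniserial $\m{\alpha,1}$, not its simple head. Without an independent argument for nonsplitness, your bound degrades to $\dim\End(\M{\alpha,1})\le 4$, which permits $\End\cong M_2(\FF)$ and hence decomposability---exactly the case you must exclude. To repair your approach you would need a direct, non-circular proof that the extension does not split; the paper's Yoneda-plus-$P_{H_0}$ computation is precisely such an external input.
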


The proof of this proposition relies on the following obvious lemma.

\begin{lm}
If the functor $F$ of \Fq\ decomposes as a direct sum: $F_1 \oplus \ldots \oplus
F_n$, then the projections $\pi_i: F \rightarrow F_i$ and the
inclusions $j_i: F_i \rightarrow F$ induce idempotents $e_i=j_i
\circ \pi_i$ in the ring $\mathrm{End}(F)$.
\end{lm}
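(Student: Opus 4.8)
The plan is to reduce the statement to the defining relations of a finite direct sum in an additive category and then to carry out a one-line computation. Recall from the recollections above that $\Fq$ is abelian, hence in particular additive, so the decomposition $F = F_1 \oplus \cdots \oplus F_n$ is a finite biproduct. This means precisely that the inclusions $j_i : F_i \to F$ and the projections $\pi_i : F \to F_i$ satisfy the two structural relations
$$\pi_k \circ j_i = \delta_{ki}\,\mathrm{Id}_{F_i} \qquad \text{and} \qquad \sum_{i=1}^n j_i \circ \pi_i = \mathrm{Id}_F,$$
where $\delta_{ki}$ is the Kronecker symbol. For the present lemma only the first relation, in the diagonal case $k = i$, is needed: including a summand $F_i$ into $F$ and then projecting back recovers $\mathrm{Id}_{F_i}$.

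First I would record that $\mathrm{End}(F) = \mathrm{Hom}_{\Fq}(F,F)$ is a ring under composition, again because $\Fq$ is abelian, with unit $\mathrm{Id}_F$; an idempotent is by definition an element $e$ with $e^2 = e$. Then, setting $e_i = j_i \circ \pi_i$, I compute, using associativity of composition together with $\pi_i \circ j_i = \mathrm{Id}_{F_i}$,
$$e_i^2 = (j_i \circ \pi_i) \circ (j_i \circ \pi_i) = j_i \circ (\pi_i \circ j_i) \circ \pi_i = j_i \circ \mathrm{Id}_{F_i} \circ \pi_i = j_i \circ \pi_i = e_i.$$
Thus each $e_i$ lies in $\mathrm{End}(F)$ and squares to itself, which is exactly the assertion.

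This argument is entirely formal and presents no genuine obstacle; the only point requiring any attention is to invoke the correct structural relation, namely $\pi_i \circ j_i = \mathrm{Id}_{F_i}$ (and, off the diagonal, $\pi_k \circ j_i = 0$ for $k \neq i$). I would not need the second biproduct relation $\sum_i e_i = \mathrm{Id}_F$ to prove idempotency, but it is worth noting in passing that it shows in addition that the $e_i$ are pairwise orthogonal and sum to the unit $1 = \mathrm{Id}_F$ of $\mathrm{End}(F)$. It is precisely this stronger packaging of a decomposition as a complete system of orthogonal idempotents that is used in the subsequent paragraphs to establish the indecomposability of the mixed functors $\M{0,1}$ and $\M{1,1}$.
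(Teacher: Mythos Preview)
Your proof is correct; the paper itself gives no proof, labelling the lemma as ``obvious'' and stating it without argument. Your reduction to the biproduct relation $\pi_i\circ j_i=\mathrm{Id}_{F_i}$ is exactly the standard verification one would supply, and your additional remark on orthogonality and summing to the identity is appropriate context for its use in Proposition~\ref{3.24}.
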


\begin{proof}[Proof of Proposition \ref{3.24}]
\begin{enumerate}
\item
By the Yoneda lemma, we have:
$$\mathrm{Hom}(P_{H_0},\mathrm{Mix}_{0,1})=\mathrm{Mix}_{0,1}(H_0).$$
By a calculation, we obtain that the dimension of the space
$\M{0,1}(H_0)$ is equal to $4$. According to Proposition
\ref{3.13}, the order of multiplicity of the
summand $\mathrm{Mix}_{0,1}$ in the decomposition of $P_{H_0}$ is equal to
$2$. Consequently, the dimension of the vector space
$E:=\mathrm{Hom}(\mathrm{Mix}_{0,1},\mathrm{Mix}_{0,1})$ is $2$. We
have the following basis: $\{Id, \tau \}$ where the map $\tau$ is
given by: $\tau (\lbrack (u,v) \rbrack) = (\lbrack (v,u) \rbrack)$.
Consequently, $E=( \{ 0,Id,\tau,Id+\tau
\},+,\circ )$, as a ring, and it is easy to see that this ring does not
admit a non-trivial idempotent.
\item
Similarly, we have
$\mathrm{Hom}(P_{H_0},\mathrm{Mix}_{1,1})=\mathrm{Mix}_{1,1}(H_0)$
and\\ $\mathrm{dim}(\M{1,1}(H_0))=2$. The order of multiplicity of the
summand $\mathrm{Mix}_{1,1}$ in the decomposition of $P_{H_0}$ is equal to
$1$. We deduce that the ring
$\mathrm{Hom}(\mathrm{Mix}_{1,1},\mathrm{Mix}_{1,1})$ does not
admit a non-trivial idempotent.
\end{enumerate}
\end{proof}

We deduce from this proposition the following result, which complements
Theorem \ref{1.9}, obtained in \cite{Vespa2}:
\begin{cor}
The short exact sequence
$0 \rightarrow \m{\alpha,1} \rightarrow \M{\alpha,1} \rightarrow \m{\alpha,1} \rightarrow 0$
does not split.
\end{cor}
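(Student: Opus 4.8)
The plan is to deduce this immediately from the indecomposability established in Proposition \ref{3.24}. Suppose, for contradiction, that the short exact sequence
$$0 \rightarrow \m{\alpha,1} \rightarrow \M{\alpha,1} \rightarrow \m{\alpha,1} \rightarrow 0$$
splits. Then there is a natural equivalence $\M{\alpha,1} \simeq \m{\alpha,1} \oplus \m{\alpha,1}$ in $\Fq$.

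Next I would observe that $\m{\alpha,1}$ is a nonzero functor: by Theorem \ref{1.9}, $\m{\alpha,1}$ is uniserial with head $\m{\alpha,1}/k_1\m{\alpha,1} \simeq \kappa(iso_{(x,\alpha)})$, and $\kappa(iso_{(x,\alpha)})$ is a (nonzero) simple object of $\Fq$ by Corollary \ref{1.7}; in particular $\m{\alpha,1} \neq 0$. Consequently the decomposition $\M{\alpha,1} \simeq \m{\alpha,1} \oplus \m{\alpha,1}$ exhibits $\M{\alpha,1}$ as a direct sum of two nonzero subfunctors, i.e. $\M{\alpha,1}$ is decomposable. This contradicts Proposition \ref{3.24}, which asserts that $\M{0,1}$ and $\M{1,1}$ are indecomposable. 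Hence no such splitting exists.

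There is no real obstacle here beyond recording the nonvanishing of $\m{\alpha,1}$; the content of the corollary has already been absorbed into Proposition \ref{3.24}, whose proof in turn rests on the explicit multiplicity count of $\M{\alpha,1}$ in the decomposition of $P_{H_0}$ (Proposition \ref{3.13}) together with the computation that $\mathrm{End}_{\Fq}(\M{\alpha,1})$ has no nontrivial idempotents. One could alternatively phrase the argument directly at the level of endomorphism rings — a splitting would produce a nontrivial idempotent in $\mathrm{End}_{\Fq}(\M{\alpha,1})$ — but this is exactly the obstruction ruled out in the proof of Proposition \ref{3.24}, so the shortest route is the one above.
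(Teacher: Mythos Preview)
Your argument is correct and is exactly the approach the paper takes: the corollary is stated without a separate proof, as an immediate consequence of Proposition~\ref{3.24} (``We deduce from this proposition the following result''). Your spelling out that a splitting would give $\M{\alpha,1}\simeq \m{\alpha,1}\oplus\m{\alpha,1}$ with $\m{\alpha,1}\neq 0$, contradicting indecomposability, is precisely the intended one-line deduction.
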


\subsubsection{Projectivity of certain isotropic functors in \Fq}

The decompositions given in Propositions \ref{3.13} and
\ref{decompo-H1} allow us to study the projectivity of isotropic functors
in $\Fq$. Corollary $4.37$ in \cite{math.AT/0606484} shows that the set of functors $\{ iso_V | V \in \mathcal{S} \}$ is a set of
projective generators of $\Gq$, where $\mathcal{S}$ is a set of
representatives of isometry classes of (possibly degenerate) quadratic spaces.

Since the functor $\kappa(iso_{H_0})$ (respectively
$\kappa(iso_{H_1})$ ) is a direct summand of the functor $P_{H_0}$
(respectively $P_{H_1}$) we have the following result.
\begin{prop} \label{3.27}
The functors $\kappa(iso_{H_0})$ and $\kappa(iso_{H_1})$ are
projective in the category \Fq.
\end{prop}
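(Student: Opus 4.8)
The plan is to observe that this proposition is an immediate corollary of the direct-summand decompositions already established. Recall the general principle (recorded at the start of Section~\ref{3}): a direct summand of a projective object in an abelian category with enough projectives is again projective, since the summand is a retract of the projective, and retracts of projectives are projective. This is the only structural fact needed.

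First I would invoke Proposition~\ref{3.13}, which exhibits $\kappa(\mathrm{iso}_{H_0})$ as one of the direct summands in the decomposition
$$P_{H_0}=\iota(P^{\F}_{\epsilon(\FF^{\oplus 2})}) \oplus (\M{0,1}^{\oplus 2} \oplus \M{1,1}) \oplus \kappa(\mathrm{iso}_{H_0}).$$
Since $P_{H_0}$ is a standard projective object of $\Fq$ by the Yoneda lemma, and $\kappa(\mathrm{iso}_{H_0})$ is a retract of it (the composite of the inclusion of the summand and the projection onto it being the identity), $\kappa(\mathrm{iso}_{H_0})$ is projective in $\Fq$. Concretely, one can also phrase this via the idempotent in $\End_{\Fq}(P_{H_0})$ cutting out this summand, exactly as in the discussion opening Section~\ref{3}.

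Then I would argue identically for $\kappa(\mathrm{iso}_{H_1})$, this time using Proposition~\ref{decompo-H1}, which displays $\kappa(\mathrm{iso}_{H_1})$ as a direct summand of the standard projective $P_{H_1}$. The same retract-of-a-projective argument yields that $\kappa(\mathrm{iso}_{H_1})$ is projective in $\Fq$.

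There is essentially no obstacle here: the proposition is purely a harvesting of the two decomposition theorems. The only thing worth a sentence of care is making explicit that ``direct summand of a projective is projective,'' which is standard homological algebra and requires nothing special about $\Fq$ beyond its being abelian with enough projectives (both already noted in Section~\ref{1}). So the proof is a two-line deduction: cite Proposition~\ref{3.13} for $H_0$, cite Proposition~\ref{decompo-H1} for $H_1$, and in each case apply the fact that a retract of a projective object is projective.
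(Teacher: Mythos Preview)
Your proposal is correct and matches the paper's own argument exactly: the paper simply notes that $\kappa(iso_{H_0})$ and $\kappa(iso_{H_1})$ are direct summands of the standard projectives $P_{H_0}$ and $P_{H_1}$ (by Propositions~\ref{3.13} and~\ref{decompo-H1}), hence projective. There is nothing to add.
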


We deduce from Corollary \ref{1.6} and the previous proposition, the
following result.

\begin{cor}
The category $\Fq$ contains non-constant finite, projective objects.
\end{cor}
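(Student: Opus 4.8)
The plan is to produce an explicit object of $\Fq$ witnessing the statement, the obvious candidate being the isotropic functor $\kappa(iso_{H_0})$ (the functor $\kappa(iso_{H_1})$ serves equally well), and then to verify the three required features in turn: that it is projective, that it is finite, and that it is non-constant. Two of these are already available as quoted results, so the real content is organising them, together with one elementary observation for the last point.

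Projectivity is exactly Proposition \ref{3.27}: in the decomposition of $P_{H_0}$ obtained in Proposition \ref{3.13}, the functor $\kappa(iso_{H_0})$ occurs as a direct summand of the standard projective generator $P_{H_0}$ of $\Fq$, hence is itself projective. Finiteness is Corollary \ref{1.6}: via the equivalence $\Gq \simeq \prod_{V} \FF[O(V)]\text{-mod}$ of Theorem \ref{1.5} together with the exactness and full faithfulness of $\kappa$ (Theorem \ref{1.4}), the functor $iso_{H_0}$ corresponds to the finite-dimensional module $\FF[O(H_0)]$, so $\kappa(iso_{H_0})$ has a finite composition series; in fact Corollary \ref{1.7}(2) displays it explicitly through the non-split short exact sequence $0 \to R_{H_0} \to \kappa(iso_{H_0}) \to R_{H_0} \to 0$ with $R_{H_0}$ simple.

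The only step requiring a genuine (but tiny) argument is non-constancy, and I would give it in one of two ways. First option: every constant functor of $\Fq$ is a direct sum of copies of the constant functor $\FF$, hence semisimple, whereas $\kappa(iso_{H_0})$ is a non-split self-extension of $R_{H_0}$ by Corollary \ref{1.7}(2) and so is not semisimple. Second, more direct option: $R_{H_0}$ is one of the non-constant simple functors listed in the classification recalled in the Introduction, and a subfunctor of a constant functor is constant; since $\kappa(iso_{H_0})$ contains $R_{H_0}$, it cannot be constant. I expect no real obstacle here: the corollary is essentially a repackaging of Proposition \ref{3.27} and Corollary \ref{1.6}, the only additional ingredient being the remark on constant functors above, and $\kappa(iso_{H_1})$ provides a second witness by the same reasoning.
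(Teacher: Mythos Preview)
Your proposal is correct and follows essentially the same approach as the paper: the paper simply invokes Proposition~\ref{3.27} for projectivity and Corollary~\ref{1.6} for finiteness, leaving non-constancy implicit. Your added justification of non-constancy via Corollary~\ref{1.7}(2) is a harmless elaboration of what the paper takes for granted.
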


This corollary constitutes one of the new features of the category
$\Fq$ compared to $\F$. Recall that, according to Corollary B7 in
\cite{KuhnI}, due to Lionel Schwartz, the category $\F$ does not contain
non-constant finite projective functors.

Recall that, the functor $\kappa(iso_{(x,0)})$ is the top composition factor of \M{0,1}\ and $\kappa(iso_{(x,1)})$ is that of
  \M{1,1}. 
We have the following result.
\begin{prop} \label{3.29}
The projective cover of $\kappa(iso_{(x,0)})$ (respectively
  $\kappa(iso_{(x,1)})$) is the functor $\M{0,1}$ (respectively
  $\M{1,1}$). In particular, the functors $\kappa(iso_{(x,0)})$ and $\kappa(iso_{(x,1)})$ are not
projective in \Fq. 
\end{prop}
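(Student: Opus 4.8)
The plan is to derive both assertions from facts already in hand: $\M{0,1}$ and $\M{1,1}$ are direct summands of $P_{H_0}$ (Proposition \ref{3.13}), hence projective; they are indecomposable with small endomorphism rings (Proposition \ref{3.24} and its proof); and $\kappa(iso_{(x,0)})$ (resp.\ $\kappa(iso_{(x,1)})$) is the head of $\M{0,1}$ (resp.\ $\M{1,1}$) — this is the ``top composition factor'' recalled just above the statement — and is a simple functor by Corollary \ref{1.7}. I would treat $\alpha \in \{0,1\}$ uniformly, writing $S_\alpha = \kappa(iso_{(x,\alpha)})$.

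First I would check that $\mathrm{End}_{\Fq}(\M{\alpha,1})$ is a \emph{local} ring, not merely that $\M{\alpha,1}$ is indecomposable. It is finite dimensional over $\FF$: by the Yoneda lemma $\mathrm{Hom}_{\Fq}(P_{H_0},\M{\alpha,1}) \simeq \M{\alpha,1}(H_0)$ is finite dimensional, and $\mathrm{End}_{\Fq}(\M{\alpha,1})$ is an $\FF$-linear direct summand of it since $\M{\alpha,1}$ is a summand of $P_{H_0}$. By (the proof of) Proposition \ref{3.24} this algebra has no nontrivial idempotent, and a finite dimensional algebra over a field with no nontrivial idempotent is local. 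For $\M{0,1}$ this is even visible directly from the proof of Proposition \ref{3.24}: the ring is $\{0,\mathrm{Id},\tau,\mathrm{Id}+\tau\}$ with $\tau^2 = \mathrm{Id}$ and $(\mathrm{Id}+\tau)^2 = 0$, hence local with maximal ideal $\{0,\mathrm{Id}+\tau\}$.

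The crux is then the following standard lemma, which I would prove in passing: \emph{if $P$ is a projective object with local endomorphism ring and $P$ admits a simple quotient $S$, then the epimorphism $P \twoheadrightarrow S$ is a projective cover.} To prove it, let $K$ denote the kernel and let $L \subseteq P$ be a subobject with $L + K = P$; then the restriction $L \twoheadrightarrow P/K = S$ is an epimorphism, and lifting $P \twoheadrightarrow S$ through it (using projectivity of $P$) produces $h : P \to L$ whose composite $\varphi$ with $L \hookrightarrow P$ has the same composite with $P \twoheadrightarrow S$ as $\mathrm{id}_P$ does; thus $\mathrm{im}(\mathrm{id}_P - \varphi) \subseteq K \subsetneq P$, so $\mathrm{id}_P - \varphi$ is not an automorphism and, by locality of $\mathrm{End}(P)$, $\varphi$ is an automorphism; since $\varphi$ factors through $L$ this gives $L = P$, so $K$ is superfluous. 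Applying the lemma with $P = \M{\alpha,1}$ and $S = S_\alpha$ yields that $\M{\alpha,1}$ is the projective cover of $S_\alpha$.

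Finally, the non-projectivity is then immediate: were $S_\alpha$ projective, the epimorphism from its projective cover $\M{\alpha,1}$ onto it would split, exhibiting $S_\alpha$ as a direct summand of $\M{\alpha,1}$; by indecomposability of $\M{\alpha,1}$ (Proposition \ref{3.24}) this forces $\M{\alpha,1} \simeq S_\alpha$, contradicting that $\M{\alpha,1}$ is infinite (Theorem \ref{1.9}(1)) while $S_\alpha$ is simple, hence finite. I do not expect a real obstacle; the only delicate point is that one must invoke the \emph{local} endomorphism ring rather than mere indecomposability, since $\Fq$ is not known a priori to admit projective covers — verifying locality is exactly the extra input needed, and it is cheap given the computations already made for Proposition \ref{3.24}.
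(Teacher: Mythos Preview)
Your proof is correct and in fact more complete than the paper's. The paper's argument addresses only the non-projectivity, and does so by a different, more direct route: since $\kappa(iso_{(x,\alpha)})(H_0)\neq 0$, projectivity would force $\kappa(iso_{(x,\alpha)})$ to be a direct summand of $P_{H_0}$, and one simply inspects the decomposition of Proposition~\ref{3.13} to see that none of the indecomposable summands is isomorphic to $\kappa(iso_{(x,\alpha)})$. The paper gives no explicit argument for the projective-cover assertion itself, apparently treating it as standard once $\M{\alpha,1}$ is known to be an indecomposable projective with simple top $\kappa(iso_{(x,\alpha)})$.

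Your approach fills this in honestly: you verify that $\mathrm{End}_{\Fq}(\M{\alpha,1})$ is \emph{local} (finite-dimensional with no nontrivial idempotent, using the computations from the proof of Proposition~\ref{3.24}), and then prove the general lemma that a projective with local endomorphism ring surjecting onto a simple object is its projective cover. This is the right level of care, since $\Fq$ is not a module category over a finite-dimensional algebra and one cannot simply quote Krull--Schmidt or the existence of projective covers. The trade-off is that the paper's route to non-projectivity is a one-liner given Proposition~\ref{3.13}, while yours passes through the projective cover; conversely, your argument actually establishes the full statement, whereas the paper's written proof covers only the ``in particular'' clause.
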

\begin{proof}
Since $\kappa(iso_{(x,0)})(H_0) \ne \{ 0\}$ and $\kappa(iso_{(x,1)})(H_0) \ne \{ 0\}$,
 if these two functors were projective, they would be direct summands
 of the functor $P_{H_0}$. We deduce from Proposition \ref{3.13}, that
 these functors are not projective. \end{proof}
\begin{rem}
Propositions \ref{3.27} and \ref{3.29} let us conjecture that, for a
nondegenerate $\FF$-quadratic space $H$, $\kappa(iso_H)$ is a
projective functor in $\Fq$ and, for a degenerate quadratic space $D$,
$\kappa(iso_D)$ is not a projective functor in $\Fq$ and its
projective cover is a generalized mixed functor. This result will be
the subject of future work.
\end{rem}

\subsubsection{Classification of simple objects $S$ of  $\Fq$ such
  that either $S(H_0) \ne 0$ or  $S(H_1) \ne 0$}
If $S$ is a simple object in $\Fq$, such that $S(H_0) \ne 0$, the Yoneda lemma implies that $\mathrm{Hom}(P_{H_0},S)=S(H_0) \ne
0.$ Consequently, there exists a morphism of $\Fq$ from $P_{H_0}$
to $S$ which is an epimorphism, by simplicity of $S$. We deduce from
the decompositions given in Proposition \ref{3.13} and \ref{decompo-H1},
from Corollary \ref{1.7} concerning the functors $iso_{H_0}$ and
$iso_{H_1}$ and from the study of the functors $\M{0,1}$ and $\M{1,1}$
done in \cite{Vespa2} and recalled in section \ref{1}, the following result.

\begin{prop} \label{3.30}
The isomorphism classes of non-constant simple functors of $\Fq$ such
that either  $S(H_0) \ne 0$ or  $S(H_1) \ne 0$ are:
$$\iota(\Lambda^1),\ \iota(\Lambda^2),\ \iota(S_{(2,1)}),\
\kappa(iso_{(x,0)}),\ \kappa(iso_{(x,1)}),\ R_{H_0},\ R_{H_1},\ S_{H_1}$$
where $ R_{H_0},\ R_{H_1}$ and $ S_{H_1}$ are the simple functors
introduced in Corollary \ref{1.7}.
\end{prop}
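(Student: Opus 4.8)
The plan is to combine the Yoneda lemma with the explicit decompositions of $P_{H_0}$ and $P_{H_1}$ from Propositions \ref{3.13} and \ref{decompo-H1}, reducing the classification to a determination of the simple quotients (heads) of the summands occurring there. First I would observe that a simple functor $S$ satisfies $S(H_i)\ne 0$ if and only if $S$ is a quotient of $P_{H_i}$: indeed $\mathrm{Hom}_{\Fq}(P_{H_i},S)\simeq S(H_i)$ by Yoneda, and any nonzero morphism onto a simple functor is surjective. So the functors to be listed are exactly the non-constant simple quotients of $P_{H_0}$ together with those of $P_{H_1}$. Since in an abelian category a simple quotient of a finite direct sum $\bigoplus_j F_j$ is the same thing as a simple quotient of one of the $F_j$ (and conversely), it suffices, by Propositions \ref{3.13} and \ref{decompo-H1}, to find the simple quotients of each of $\iota(P^{\F}_{\FF^{\oplus 2}})$, $\M{0,1}$, $\M{1,1}$, $\kappa(iso_{H_0})$ and $\kappa(iso_{H_1})$.

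For the summand $\iota(P^{\F}_{\FF^{\oplus 2}})$: since $\iota$ is exact and fully faithful (Theorem \ref{1.3}) and $\iota(\F)$ is thick in $\Fq$ (Theorem \ref{2.19}), every simple quotient is of the form $\iota(T)$ with $T$ a simple quotient of the projective $P^{\F}_{\FF^{\oplus 2}}$ of $\F$, i.e. with $T(\FF^{\oplus 2})\ne 0$; by \cite{KuhnII} the non-constant such $T$ are $\Lambda^1$, $\Lambda^2$ and $S_{(2,1)}$. For the isotropic summands, Corollary \ref{1.7} gives that $\kappa(iso_{H_0})$ is a non-split self-extension of $R_{H_0}$, with unique simple quotient $R_{H_0}$, and that $\kappa(iso_{H_1})\cong F_{H_1}\oplus S_{H_1}^{\oplus 2}$ with $F_{H_1}$ a non-split self-extension of $R_{H_1}$, so its simple quotients are $R_{H_1}$ and $S_{H_1}$. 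For the mixed functors, Theorem \ref{1.9} supplies a short exact sequence $0\to\m{\alpha,1}\to\M{\alpha,1}\to\m{\alpha,1}\to 0$ with $\m{\alpha,1}$ uniserial of head $\kappa(iso_{(x,\alpha)})$ and all its other composition factors of the form $L^{d+1}_{\alpha}$; I would deduce that $\kappa(iso_{(x,\alpha)})$ is the unique simple quotient of $\M{\alpha,1}$ by noting that a surjection $\M{\alpha,1}\twoheadrightarrow S$ onto a simple functor either annihilates the subfunctor $\m{\alpha,1}$, hence factors through $\M{\alpha,1}/\m{\alpha,1}\cong\m{\alpha,1}$, or restricts to a surjection on that subfunctor, and in either case uniseriality of $\m{\alpha,1}$ forces $S\cong\kappa(iso_{(x,\alpha)})$; alternatively one may use the local endomorphism ring of $\M{\alpha,1}$ computed in the proof of Proposition \ref{3.24}.

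Putting this together, the non-constant simple quotients of $P_{H_0}$ are $\iota(\Lambda^1),\iota(\Lambda^2),\iota(S_{(2,1)}),\kappa(iso_{(x,0)}),\kappa(iso_{(x,1)}),R_{H_0}$ and those of $P_{H_1}$ are $\iota(\Lambda^1),\iota(\Lambda^2),\iota(S_{(2,1)}),\kappa(iso_{(x,1)}),R_{H_1},S_{H_1}$, whose union is the list of eight functors in the statement; each of them does satisfy $S(H_0)\ne 0$ or $S(H_1)\ne 0$ since it has just been exhibited as a quotient of $P_{H_0}$ or $P_{H_1}$. I expect the only genuinely nontrivial point to be the uniqueness of the head of $\M{\alpha,1}$; the rest is bookkeeping with the two decompositions and the Yoneda isomorphism.
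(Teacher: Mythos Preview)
Your proof is correct and follows essentially the same approach as the paper: the paper also argues via Yoneda that a simple $S$ with $S(H_i)\ne 0$ is a quotient of $P_{H_i}$, then invokes the decompositions of Propositions \ref{3.13} and \ref{decompo-H1}, Corollary \ref{1.7} for the isotropic summands, and the structure of the mixed functors from \cite{Vespa2} (Theorem \ref{1.9}). Your write-up is in fact more explicit than the paper's---in particular your argument that $\M{\alpha,1}$ has unique simple head $\kappa(iso_{(x,\alpha)})$ via the short exact sequence and uniseriality of $\m{\alpha,1}$ spells out what the paper only alludes to (the paper implicitly relies on this via Proposition \ref{3.29}, whose proof does not actually establish the projective-cover claim).
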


\subsubsection{Extension groups in $\Fq$}
By Theorem \ref{1.4} and \ref{1.5} we obtain an exact, fully-faithful functor
$\prod_{V \in \mathcal{S}} \FF[O(V)]-mod \xrightarrow{\tilde{\kappa}} \Fq,$
where $\mathcal{S}$ is a set of representatives of isometry classes of
quadratic spaces (possibly degenerate). Consequently, for $M$ and $N$ two
$\FF[O(V)]-$ modules, this functor induces a morphism of extension groups:
$$\mathrm{Ext}^*_{\FF[O(V)]-mod}(M,N) \xrightarrow{(\tilde{\kappa})_*}
\mathrm{Ext}^*_{\Fq}(\tilde{\kappa}(M),\tilde{\kappa}(N)).$$
We have the following proposition.
\begin{prop} \label{3.31}
For $V \in \{ H_0, H_1 \}$, the morphism $(\tilde{\kappa})_*$ is an isomorphism.
\end{prop}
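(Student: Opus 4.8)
The plan is to use the fact that $\kappa(iso_{H_0})$ and $\kappa(iso_{H_1})$ are \emph{projective} in $\Fq$ (Proposition \ref{3.27}), together with the description of the decomposition of $\kappa(iso_V)$ into indecomposable summands as an object of $\Gq$ coming from $\FF[O(V)]$-modules (Theorem \ref{1.5} and Corollary \ref{1.7}). Fix $V\in\{H_0,H_1\}$ and write $G=O(V)$, so $G\simeq\Sym_2$ or $\Sym_3$; these are of order prime to nothing in particular, but the key point is that $\FF[O(H_0)]=\FF[\Sym_2]$ and $\FF[O(H_1)]=\FF[\Sym_3]$ are the corresponding group algebras over $\FF$. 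First I would reduce the statement to the two-variable computation $\Ext^n_{\Fq}(\tilde\kappa(M),\tilde\kappa(N))\simeq\Ext^n_{\FF[G]\text{-mod}}(M,N)$ for $M,N$ running over a set of generators of $\FF[G]$-mod, and by additivity of $\Ext$ in each variable it suffices to treat the case where $M$ and $N$ are summands of the regular representation $\FF[G]$, i.e.\ we may take $M=N=\FF[G]$, or more precisely run over the indecomposable projective $\FF[G]$-modules.

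The crucial input is that $\tilde\kappa(\FF[G])=\kappa(iso_V)$ is a projective object of $\Fq$ by Proposition \ref{3.27}. Hence for $n\geq 1$ we get $\Ext^n_{\Fq}(\kappa(iso_V),-)=0$ on the whole of $\Fq$, and in particular $\Ext^n_{\Fq}(\tilde\kappa(M),\tilde\kappa(N))=0$ whenever $M$ is a projective $\FF[G]$-module. On the other side, $\FF[G]$ itself is projective in $\FF[G]$-mod, so $\Ext^n_{\FF[G]\text{-mod}}(\FF[G],N)=0$ for $n\geq 1$ as well. Thus in the range $n\geq 1$ both sides vanish when the first argument is projective, and the map $(\tilde\kappa)_*$ is trivially an isomorphism $0\to 0$. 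For general $M$ one then takes a projective resolution $P_\bullet\to M$ in $\FF[G]$-mod; since $\tilde\kappa$ is exact (Theorems \ref{1.4}, \ref{1.5}) and sends projectives to projectives (by Proposition \ref{3.27} applied to the indecomposable summands of the $iso_V$, which generate), $\tilde\kappa(P_\bullet)\to\tilde\kappa(M)$ is a projective resolution in $\Fq$, and computing $\Ext^*$ with either resolution gives the same complex $\Hom_{\FF[G]}(P_\bullet,N)\xrightarrow{\sim}\Hom_{\Fq}(\tilde\kappa(P_\bullet),\tilde\kappa(N))$, the isomorphism of complexes being induced by full faithfulness of $\tilde\kappa$ (Theorem \ref{1.4}). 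Taking cohomology yields the desired isomorphism in all degrees.

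The step I expect to be the main obstacle is verifying that $\tilde\kappa$ sends \emph{all} projective $\FF[G]$-modules (for $G=O(H_0)$ or $O(H_1)$), not merely $\FF[G]$ itself, to projective objects of $\Fq$, so that $\tilde\kappa$ of a projective resolution is again a projective resolution. Proposition \ref{3.27} gives this for the free module of rank one; to get it for an arbitrary indecomposable projective $\FF[G]$-module $P$ one uses that $P$ is a direct summand of $\FF[G]$, hence $\tilde\kappa(P)=\kappa(iso_{V'})$ (for the appropriate $V'$) is a direct summand of $\kappa(iso_V)$, and a direct summand of a projective object is projective. Concretely, for $\Sym_2$ over $\FF$ the module $\FF[\Sym_2]$ is indecomposable (it is the nonsplit self-extension appearing in Corollary \ref{1.7}(2)), so $\kappa(iso_{H_0})$ is already indecomposable projective; for $\Sym_3$ over $\FF$ one has the decomposition $\FF[\Sym_3]=(\text{principal block part})\oplus(\text{sign-twisted summand})^{\oplus?}$ recorded in Corollary \ref{1.7}(3) as $\kappa(iso_{H_1})=F_{H_1}\oplus S_{H_1}^{\oplus 2}$, and each summand, being a summand of a projective, is projective in $\Fq$. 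Once this is in place the argument is purely formal, using only exactness and full faithfulness of $\tilde\kappa$ together with the comparison of projective resolutions.
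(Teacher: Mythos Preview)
Your proposal is correct and follows essentially the same route as the paper: the key lemma (stated in the paper as Lemma \ref{3.32}) is that $\tilde\kappa$ sends finite projective $\FF[O(V)]$-modules to projectives in $\Fq$, proved exactly as you indicate via Proposition \ref{3.27} and the fact that summands of projectives are projective; one then transports a projective resolution through the exact, fully faithful $\tilde\kappa$ and compares the resulting Hom-complexes. Two small remarks: the reduction in your first paragraph to $M,N$ summands of $\FF[G]$ is unnecessary (the resolution argument already handles arbitrary $M,N$), and the phrase ``$\tilde\kappa(P)=\kappa(iso_{V'})$ for the appropriate $V'$'' is a slip --- $\tilde\kappa(P)$ is a summand of $\kappa(iso_V)$, not itself an isotropic functor for another quadratic space.
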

The proof of this proposition relies on the following lemma.
\begin{lm} \label{3.32}
For $V \in \{ H_0, H_1 \}$, if $P$ is a finite projective $\FF[O(V)]$-module,
$\tilde{\kappa}(P)$ is projective in $\Fq$.
\end{lm}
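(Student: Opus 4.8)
The plan is to exploit the fact that, by Theorem \ref{1.5}, a finite projective $\FF[O(V)]$-module is a direct summand of a finite direct sum of copies of $\FF[O(V)]$ itself, and that $\FF[O(V)]$ corresponds under the equivalence $\Gq \simeq \prod_{W \in \mathcal{S}} \FF[O(W)]\text{-mod}$ precisely to the isotropic functor $iso_V$. Since $\tilde{\kappa}$ (equivalently $\kappa$ composed with the equivalence of Theorem \ref{1.5}) is additive, it suffices to show that $\kappa(iso_V)$ is projective in $\Fq$ for $V \in \{H_0, H_1\}$; projectivity then passes to direct summands of finite direct sums. But this is exactly the content of Proposition \ref{3.27}, which we obtained from the decompositions of $P_{H_0}$ and $P_{H_1}$ in Propositions \ref{3.13} and \ref{decompo-H1}: $\kappa(iso_{H_0})$ is a direct summand of $P_{H_0}$ and $\kappa(iso_{H_1})$ is a direct summand of $P_{H_1}$, hence both are projective.

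Concretely, I would first write a finite projective $\FF[O(V)]$-module $P$ as a retract of $\FF[O(V)]^{\oplus m}$ for some $m$, fixing splitting maps $i \colon P \to \FF[O(V)]^{\oplus m}$ and $r \colon \FF[O(V)]^{\oplus m} \to P$ with $r \circ i = \mathrm{id}_P$. Applying the exact functor $\tilde{\kappa}$ yields maps $\tilde{\kappa}(i)$ and $\tilde{\kappa}(r)$ exhibiting $\tilde{\kappa}(P)$ as a retract of $\tilde{\kappa}(\FF[O(V)])^{\oplus m} = \kappa(iso_V)^{\oplus m}$ in $\Fq$. Since $\kappa(iso_V)$ is projective in $\Fq$ by Proposition \ref{3.27}, so is the finite direct sum $\kappa(iso_V)^{\oplus m}$, and a retract (direct summand) of a projective object in an abelian category is projective. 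Therefore $\tilde{\kappa}(P)$ is projective in $\Fq$.

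There is essentially no hard obstacle here: the only point requiring a little care is the identification of the object of $\Gq$ corresponding to the regular representation $\FF[O(V)]$ with the isotropic functor $iso_V$, which is recorded in the discussion following Theorem \ref{1.5}, together with the compatibility of $\tilde{\kappa}$ with $\kappa$ under the equivalence of categories. Once these identifications are in place, the argument is purely formal — exactness and additivity of $\tilde{\kappa}$, plus the stability of projectivity under direct summands — so the genuine mathematical input is entirely concentrated in the earlier decomposition results for $P_{H_0}$ and $P_{H_1}$.
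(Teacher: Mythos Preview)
Your proposal is correct and follows essentially the same argument as the paper: write $P$ as a direct summand of a free module $\FF[O(V)]^{\oplus m}$, apply the exact (hence additive) functor $\tilde{\kappa}$, identify $\tilde{\kappa}(\FF[O(V)])$ with $\kappa(iso_V)$, and invoke Proposition~\ref{3.27} together with stability of projectivity under direct summands. The paper's proof is the same outline, phrased via a complementary summand $Q$ rather than retraction maps $i,r$.
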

\begin{proof}
If $P$ is a finite projective $\FF[O(V)]$-module, there exists a $\FF[O(V)]$-module $Q$ such that $P \oplus Q \simeq \FF[O(V)]^{\oplus N}.$
We deduce from the exactness of $\kappa$ that $\tilde{\kappa}(P \oplus Q) \simeq
\tilde{\kappa}(P) \oplus \tilde{\kappa}(Q)$. Since
$\tilde{\kappa}(\FF[O(V)])=\kappa(iso_V)$ and the functors $\kappa(iso_{H_0})$ and $\kappa(iso_{H_1})$ are projective, by
Proposition \ref{3.27}, we obtain that $\tilde{\kappa}(P)$ is projective.
\end{proof}

\begin{proof}[Proof of Proposition \ref{3.31}]
Let $M$ and $N$ be $\FF[O(V)]$-modules for $V \in \{ H_0, H_1 \}$ and
$P_\bullet \rightarrow M$ be a projective resolution of $M$. Lemma
\ref{3.32} implies that $\tilde{\kappa}(P_\bullet)$ is a projective
resolution of $\tilde{\kappa}(M)$. The functor $\tilde{\kappa}$
induces a morphism of cochain complexes
$$\mathrm{Hom}_{\FF[O(V)]-mod}(P_\bullet,N) \rightarrow
\mathrm{Hom}_{\Fq}(\tilde{\kappa}(P_\bullet),\tilde{\kappa}(N)) $$
which induces the morphism $(\tilde{\kappa})_*$ in cohomology. Since the functor $\tilde{\kappa}$ is fully-faithful the previous morphism is an isomorphism and so induces an isomorphism in cohomology.

\end{proof}

We deduce the following corollary:
\begin{cor} \label{3.33}
For $n$ a natural number, we have:
$$\mathrm{Ext}^n_{\Fq}(R_{H_0},R_{H_0}) \simeq \FF \mathrm{\quad and
  \quad} \mathrm{Ext}^n_{\Fq}(R_{H_1},R_{H_1}) \simeq \FF $$
where $R_{H_0}$ and $R_{H_1}$ are the simple functors introduced in
Corollary \ref{1.7}.
\end{cor}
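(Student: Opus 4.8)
The plan is to reduce the statement entirely to a computation of group cohomology, using Proposition \ref{3.31}. Recall from Corollary \ref{1.7} that, by definition, $R_{H_0}$ is the image under $\tilde\kappa$ of the trivial $\FF[O(H_0)]$-module (sitting in the factor of $\prod_{V \in \mathcal{S}}\FF[O(V)]$-mod indexed by $H_0$), and likewise $R_{H_1} = \tilde\kappa(\FF)$ for the trivial $\FF[O(H_1)]$-module. Since $H_0$ and $H_1$ are nondegenerate, Proposition \ref{3.31} applies to each of them; taking $M = N = \FF$ the trivial module yields, for every natural number $n$,
$$\mathrm{Ext}^n_{\Fq}(R_{H_0},R_{H_0}) \simeq \mathrm{Ext}^n_{\FF[O(H_0)]-mod}(\FF,\FF) = H^n(O(H_0);\FF),$$
and similarly $\mathrm{Ext}^n_{\Fq}(R_{H_1},R_{H_1}) \simeq H^n(O(H_1);\FF)$, where $\FF = \FF_2$.

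It then remains to compute these two cohomology algebras. For the first, $O(H_0) \simeq \mathfrak{S}_2 \simeq \Z/2$, and the classical computation $H^*(\Z/2;\FF_2) \simeq \FF_2[t]$, with $t$ in degree one, shows that $H^n(O(H_0);\FF_2)$ is one-dimensional for every $n$, giving the left-hand isomorphism. For the second, $O(H_1) \simeq \mathfrak{S}_3 \simeq \Z/3 \rtimes \Z/2$; since $3$ is a unit in $\FF_2$, one has $H^{>0}(\Z/3;\FF_2) = 0$, so the Lyndon--Hochschild--Serre spectral sequence of the extension $1 \to \Z/3 \to \mathfrak{S}_3 \to \Z/2 \to 1$ collapses and gives $H^*(\mathfrak{S}_3;\FF_2) \simeq H^*(\Z/2;\FF_2) \simeq \FF_2[t]$. (Equivalently, a Sylow $2$-subgroup of $\mathfrak{S}_3$ is $\Z/2$ and is self-normalizing, so $H^*(\mathfrak{S}_3;\FF_2)$ agrees with the cohomology of this Sylow subgroup by the stable-element theorem.) Hence $H^n(O(H_1);\FF_2)$ is also one-dimensional for every $n$, which gives the right-hand isomorphism.

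There is essentially no obstacle here: the only point needing care is the identification of $R_{H_0}$ and $R_{H_1}$ with the trivial modules under $\tilde\kappa$, which is precisely how these functors were defined in Corollary \ref{1.7}. The substantive input is Proposition \ref{3.31}, whose proof has already been carried out and rests on the projectivity of $\kappa(iso_{H_0})$ and $\kappa(iso_{H_1})$ in $\Fq$ (Proposition \ref{3.27}).
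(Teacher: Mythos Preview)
Your proof is correct and follows essentially the same route as the paper: reduce via Proposition \ref{3.31} (with $M=N=\FF$ the trivial module) to $H^*(O(H_\epsilon);\FF)$, then invoke the classical computations for $\mathfrak{S}_2$ and $\mathfrak{S}_3$. The only minor inaccuracy is your phrasing ``since $H_0$ and $H_1$ are nondegenerate, Proposition \ref{3.31} applies'': that proposition is stated and proved specifically for $V\in\{H_0,H_1\}$ (via the projectivity of $\kappa(iso_{H_0})$ and $\kappa(iso_{H_1})$), not for arbitrary nondegenerate $V$.
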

\begin{proof}
Let $\epsilon$ be an element in $\{0,1\}$. For $V=H_{\epsilon}$, by Corollary \ref{1.7} (1), we have $\tilde{\kappa}(\FF)=R_{H_{\epsilon}}$. So, applying Proposition \ref{3.31} to $M=N=\FF$ we obtain:
$$\mathrm{Ext}^*_{\Fq}(R_{H_{\epsilon}},R_{H_{\epsilon}}) \simeq \mathrm{Ext}^*_{\FF[O(H_{\epsilon})]-mod}(\FF,\FF)=H^*(O(H_{\epsilon}),\FF).$$
Since $O(H_0) \simeq \mathfrak{S}_2 \simeq C_2$ and $O(H_1)\simeq \mathfrak{S}_3 \simeq GL_{2}(\FF)$ we know by classical results of cohomology of groups that 
$$H^n(O(H_0) ,\FF)=H^n(O(H_1) ,\FF)=\FF.$$
\end{proof}

\begin{rem}
This corollary exhibits an important difference between the
categories $\F$ and $\Fq$; recall that in $\F$
 $\mathrm{Ext}^1_{\F}(S,S)=0$ for all simple objects $S$ of $\F$ (see \cite{piriou-schwartz-K-theorie}). 
\end{rem}

%%%%%%%%%%%%%%%%%%%%%%%%%%%%%%%%%%%%%%%%%
\section{Application: the polynomial functors of $\Fq$} \label{4}
In this section, having generalized the notion of polynomial
functor to the category \Fq, we prove, by induction, that the polynomial functors of $\Fq$ are in the image of the functor $\iota:
\F \rightarrow \Fq$.

\subsection{Definition of polynomial functors of  \Fq}

\subsubsection{The difference functors of  \Fq}

We define the difference functors of $\Fq$ which generalize the notion of
difference functor of $\F$. Recall that, according to \cite{Sch}, 
the difference functor $\Delta: \F \rightarrow \F$ is the functor
given by
$$ \Delta F(V):= \mathrm{Ker} (F(V \oplus \FF)  \xrightarrow{F(p)}
F(V)),$$
for $F$ an object in $\F$, $V$ an object in $\E^f$ and $p: V \oplus \FF
\rightarrow V$ the projection.

\begin{defi}
The difference functors $\Delta_{H_0}: \Fq \rightarrow \Fq$ and
$\Delta_{H_1}: \Fq \rightarrow \Fq$ are the functors defined by:
$$ \Delta_{H_0} F(V):= \mathrm{Ker} (F(V \bot H_0)  \xrightarrow{F(T_0)}
F(V)),$$
$$ \Delta_{H_1} F(V):= \mathrm{Ker} (F(V \bot H_1)  \xrightarrow{F(T_1)}
F(V)),$$
for $F$ an object in $\Fq$, $V$ an object in $\Tq$, and $T_i=[V \bot
H_i \xrightarrow{\mathrm{Id}} V \bot H_i \xleftarrow{i_V} V]$ for $i
\in \{0,1\}$ .
\end{defi}
We have the following result:
\begin{lm} \label{4.2}
The functors $\Delta_{H_0}$ and $\Delta_{H_1}$ are exact.
\end{lm}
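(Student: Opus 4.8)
The final statement to prove is that the difference functors $\Delta_{H_0}$ and $\Delta_{H_1}$ are exact.

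\medskip

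The plan is to exploit the defining short exact sequence for each difference functor. For $i \in \{0,1\}$ and $F$ an object in $\Fq$, the morphism $T_i = [V \bot H_i \xrightarrow{\mathrm{Id}} V \bot H_i \xleftarrow{i_V} V]$ admits a splitting: the canonical inclusion $V \hookrightarrow V \bot H_i$ induces a morphism $s_i = [V \xrightarrow{i_V} V \bot H_i \xleftarrow{\mathrm{Id}} V \bot H_i]$ in $\Tq$ (this is of rank $\dim V$), and one checks directly from the composition rule in $\Tq$ that $T_i \circ s_i = \mathrm{Id}_V$. Hence for every $F$ in $\Fq$ and every $V$ in $\Tq$, the map $F(T_i) \colon F(V \bot H_i) \to F(V)$ is a split epimorphism, so that
\[
0 \to \Delta_{H_i} F(V) \to F(V \bot H_i) \xrightarrow{F(T_i)} F(V) \to 0
\]
is a split short exact sequence of $\FF$-vector spaces, natural in $V$. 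Thus $\Delta_{H_i} F$ is the kernel of the natural transformation $F(- \bot H_i) \Rightarrow F(-)$, and this kernel is computed pointwise.

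\medskip

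First I would record that the assignment $F \mapsto F(- \bot H_i)$ is itself an exact functor $\Fq \to \Fq$: it is precomposition with the functor $- \bot H_i \colon \Tq \to \Tq$, and precomposition with any functor is exact since limits and colimits (in particular kernels and cokernels) in a functor category with abelian target are computed objectwise. The same reasoning applies to the identity functor $F \mapsto F$. The natural transformation $\eta_F \colon F(- \bot H_i) \Rightarrow F$ given by $F(T_i)$ is clearly natural in $F$ as well, so we have a natural transformation of exact functors whose objectwise value is an epimorphism, and $\Delta_{H_i}$ is its objectwise kernel.

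\medskip

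Next, given a short exact sequence $0 \to F' \to F \to F'' \to 0$ in $\Fq$, I would evaluate at $V \bot H_i$ and at $V$ and use naturality of $\eta$ to obtain a commutative diagram with exact rows
\[
\xymatrix{
0 \ar[r] & F'(V \bot H_i) \ar[r] \ar[d]_{\eta_{F'}} & F(V \bot H_i) \ar[r] \ar[d]_{\eta_F} & F''(V \bot H_i) \ar[r] \ar[d]_{\eta_{F''}} & 0 \\
0 \ar[r] & F'(V) \ar[r] & F(V) \ar[r] & F''(V) \ar[r] & 0
}
\]
in which, by the splitting observed above, every vertical map is surjective. The snake lemma then produces an exact sequence
\[
0 \to \Delta_{H_i}F'(V) \to \Delta_{H_i}F(V) \to \Delta_{H_i}F''(V) \to 0,
\]
the last term being zero because the cokernels of the vertical maps vanish. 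Since this holds for every $V$ in $\Tq$ and all the maps are natural, the sequence $0 \to \Delta_{H_i}F' \to \Delta_{H_i}F \to \Delta_{H_i}F'' \to 0$ is exact in $\Fq$, which is precisely the exactness of $\Delta_{H_i}$.

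\medskip

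I do not anticipate a genuine obstacle here: the only point requiring a small verification is the existence of the splitting $s_i$ of $T_i$ in $\Tq$, i.e. that the diagram $V \xrightarrow{i_V} V \bot H_i \xleftarrow{\mathrm{Id}} V \bot H_i$ really composes with $T_i$ to the identity under the pseudo push-out composition of Definition \ref{1.1}; this is immediate from unwinding the definitions. Everything else is the formal statement that an objectwise kernel of a surjective natural transformation between exact functors is exact, via the snake lemma.
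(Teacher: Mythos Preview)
Your proof is correct. The paper states Lemma \ref{4.2} without proof, treating it as routine; your argument supplies exactly the standard details one would expect---the section $s_i$ of $T_i$ making $F(T_i)$ a split epimorphism, followed by the snake lemma applied to the resulting map of short exact sequences. The verification that $T_i \circ s_i = \mathrm{Id}_V$ is indeed immediate: the composite is represented by $[V \xrightarrow{i_V} V \bot H_i \xleftarrow{i_V} V]$, which is equivalent to $[V \xrightarrow{\mathrm{Id}} V \xleftarrow{\mathrm{Id}} V]$ via the inclusion $i_V$.
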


\subsubsection{Definition of polynomial functors}
Before giving the definition of polynomial functors in $\Fq$, let us recall
that of polynomial functors of $\F$ ( \cite{Sch}). For
an object $F$ of $\F$, $F$ is a polynomial functor of
degree $0$ if and only if $\Delta F=0$ and, for an integer $d$, $F$ is
polynomial of degree at most $d+1$ if and only if $\Delta F$ is
polynomial of degree at most $d$.

\begin{defi} \label{4.3}
Let $F$ be an object in $\Fq$:
\begin{enumerate}
\item
the functor $F$ is polynomial of degree $0$ if and only if $\Delta_{H_0}F=\Delta_{H_1}F=0$;
\item
for an integer $d$, the functor $F$ is polynomial of degree at most
$d+1$, if and only if $\Delta_{H_0}F$ and $\Delta_{H_1}F$ are
polynomial of degree at most $d$.
\end{enumerate}
\end{defi}

The following proposition allows us to simplify the definition of a
polynomial functor of degree $0$.

\begin{lm} \label{4.4}
Let $F$ be an object in \Fq. The functor $\Delta_{H_0}F$ is zero if
and only if the functor $\Delta_{H_1}F$ is zero. 
\end{lm}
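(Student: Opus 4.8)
The plan is to exploit the relation $H_0\bot H_0\simeq H_1\bot H_1$ together with the exactness of the difference functors and the fact that they commute with each other, which follows from the definition via iterated kernels and the isomorphism of the iterated orthogonal sums. First I would record the commutation isomorphism $\Delta_{H_i}\Delta_{H_j}\simeq\Delta_{H_j}\Delta_{H_i}$: since $\Delta_{H_j}F(V)$ is a subfunctor of $F(V\bot H_j)$ and $\Delta_{H_i}$ is computed by restricting to $V\bot H_i$, both composites are identified with a suitable kernel inside $F(V\bot H_i\bot H_j)$, naturally in $V$; the only subtlety is checking that the two projection maps $T_i, T_j$ commute appropriately inside $\Tq$, which is immediate from Definition \ref{4.3} and the fact that the relevant morphisms are induced by canonical inclusions. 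In particular $\Delta_{H_0}\Delta_{H_0}\Delta_{H_1}\Delta_{H_1}F\simeq\Delta_{H_1}\Delta_{H_1}\Delta_{H_0}\Delta_{H_0}F$.

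Next, I would prove the auxiliary fact that, because $H_0\bot H_0\simeq H_1\bot H_1$, there is a natural isomorphism $\Delta_{H_0}\Delta_{H_0}F\simeq\Delta_{H_1}\Delta_{H_1}F$ for every $F$ in $\Fq$; indeed $\Delta_{H_0}\Delta_{H_0}F(V)$ and $\Delta_{H_1}\Delta_{H_1}F(V)$ are both the kernel of $F(V\bot H_0\bot H_0)\to F(V)$ (resp. with $H_1\bot H_1$), and the isometry $H_0\bot H_0\simeq H_1\bot H_1$ induces, via an automorphism of $\Tq$ fixing $V$, a natural identification of these two kernels. Granting this, suppose $\Delta_{H_0}F=0$. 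Then $\Delta_{H_0}\Delta_{H_0}F=0$, hence by the previous paragraph $\Delta_{H_1}\Delta_{H_1}F=0$. Now I would argue that for a functor $G$ in $\Fq$, if $\Delta_{H_1}\Delta_{H_1}G=0$ then $\Delta_{H_1}G$ itself must vanish, or more precisely reduce the claim $\Delta_{H_1}F=0$ to this. The natural map to use is the ``diagonal'' map $F(V\bot H_1)\to F(V\bot H_1\bot H_1)$ induced by $[V\bot H_1\to V\bot H_1\bot H_1\leftarrow V\bot H_1]$ with the second component the inclusion of the first two factors, combined with the two projections $V\bot H_1\bot H_1\to V\bot H_1$; a standard argument (exactly as for the classical $\Delta$ in $\F$, where $\Delta^2 F=0\not\Rightarrow\Delta F=0$ in general but the symmetric situation is different) shows that $\Delta_{H_1}G$ is a retract of $\Delta_{H_1}\Delta_{H_1}G$ up to the relevant splitting.

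The step I expect to be the main obstacle is precisely this last reduction: showing $\Delta_{H_1}\Delta_{H_1}F=0\Rightarrow\Delta_{H_1}F=0$. In $\F$ the analogous implication $\Delta^2=0\Rightarrow\Delta=0$ is false, so the argument must genuinely use the quadratic structure, most likely the presence of nontrivial automorphisms of $H_1$ (recall $O(H_1)\simeq\mathfrak S_3$) acting on $\Delta_{H_1}F$, or the relation $H_1\bot H_1\simeq H_0\bot H_0$ once more to re-express $\Delta_{H_1}\Delta_{H_1}$ as $\Delta_{H_0}\Delta_{H_0}$ and play the two difference functors against each other. Concretely, I would try to show that the map $F(V\bot H_1)\xrightarrow{F(\delta)}F(V\bot H_1\bot H_1)$ induced by a suitably chosen morphism $\delta$ of $\Tq$ splits the projection, so that $\Delta_{H_1}F(V)$ embeds into $\Delta_{H_1}F(V\bot H_1)/(\text{image})$ and ultimately into $\Delta_{H_1}\Delta_{H_1}F(V)$; if that embedding exists naturally then $\Delta_{H_1}\Delta_{H_1}F=0$ forces $\Delta_{H_1}F=0$, and by symmetry the converse holds. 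The remaining parts of the argument (commutation, the $H_0\bot H_0\simeq H_1\bot H_1$ identification, exactness from Lemma \ref{4.2}) are routine and I would treat them briefly.
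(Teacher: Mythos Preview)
Your approach does not work; both of the two main steps are actually false.

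First, the ``auxiliary fact'' $\Delta_{H_0}\Delta_{H_0}F\simeq\Delta_{H_1}\Delta_{H_1}F$ fails in general. Your justification misidentifies $\Delta_{H}\Delta_{H}F(V)$: it is not the kernel of the single map $F(V\bot H\bot H)\to F(V)$ but the intersection of the kernels of the \emph{two} projections $F(V\bot H\bot H)\to F(V\bot H)$. The isometry $H_0\bot H_0\simeq H_1\bot H_1$ does not carry the two copies of $H_0$ to the two copies of $H_1$, so there is no reason for these intersections to match. Concretely, for $F=\kappa(\mathrm{iso}_{H_0})$ one has $F(0)=0$, $F(H_0)\simeq\FF^2$, $F(H_1)=0$, hence $\Delta_{H_1}^2F(0)=F(H_1\bot H_1)$ while $\Delta_{H_0}^2F(0)$ has dimension $\dim F(H_0\bot H_0)-4$; since $F(H_1\bot H_1)\simeq F(H_0\bot H_0)$, these differ.

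Second, the implication $\Delta_{H_1}\Delta_{H_1}F=0\Rightarrow\Delta_{H_1}F=0$ that you flag as the ``main obstacle'' is simply false, and no amount of quadratic structure rescues it: for $F=\iota(\Lambda^1)$ one computes $\Delta_{H_1}F\simeq\FF^2$ (constant) and $\Delta_{H_1}^2F=0$. So there is no natural embedding of $\Delta_{H_1}F$ into $\Delta_{H_1}^2F$.

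The paper's argument avoids all of this and is almost a one-liner. The inclusion $V\hookrightarrow V\bot H_i$ in $\Tq$ is split by $T_i$, so $F(V)\hookrightarrow F(V\bot H_i)$ is always a split monomorphism and $\Delta_{H_i}F=0$ is equivalent to this map being an isomorphism for every $V$. Assuming $\Delta_{H_0}F=0$, one gets
\[
F(V)\xrightarrow{\ \sim\ }F(V\bot H_0)\xrightarrow{\ \sim\ }F(V\bot H_0\bot H_0)\simeq F(V\bot H_1\bot H_1),
\]
the last isomorphism coming from the isometry. But this composite is exactly the chain of split monomorphisms $F(V)\hookrightarrow F(V\bot H_1)\hookrightarrow F(V\bot H_1\bot H_1)$; a composite of split monomorphisms which is an isomorphism forces each factor to be an isomorphism, hence $\Delta_{H_1}F=0$. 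The converse is symmetric. The only ingredient you were missing is that the inclusion/projection pair already gives the splitting $F(V\bot H_i)\simeq F(V)\oplus\Delta_{H_i}F(V)$, which makes the problem elementary.
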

\begin{proof}
If $\Delta_{H_0}F=0$, we have, for all objects $V$ of $\Tq$, $F(V) \simeq F(V \bot H_0).$
Consequently,
$$F(V) \simeq F(V \bot H_0) \simeq F(V \bot H_0 \bot H_0) \simeq F(V
\bot H_1 \bot H_1),$$
where the last isomorphism is obtained from the isomorphism $ H_0 \bot H_0 \simeq  H_1 \bot H_1,$ recalled in section \ref{1}.
We deduce from the existence of morphisms $F(V) \hookrightarrow F(V \bot
H_1)$ and $F(V \bot H_1) \hookrightarrow F(V \bot H_1 \bot H_1)$,
induced by the inclusions and from the isomorphism between $F(V)$ and $F(V \bot H_1 \bot H_1)$, that
$$F(V) \simeq F(V \bot H_1) \simeq F(V \bot H_1 \bot H_1).$$
Therefore $\Delta_{H_1}F=0$.

The proof of the converse is similar.
\end{proof}

\subsection{Study of polynomial functors of $\Fq$}
The aim of this section is to prove the following result:
\begin{thm} \label{4.5}
The polynomial functors of $\Fq$ are in the image of the functor $\iota: \F \rightarrow \Fq$.
\end{thm}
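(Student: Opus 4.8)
The strategy is an induction on the polynomial degree $d$ of $F$, using Lemma \ref{2.21}(1) as the criterion for membership in $\iota(\F)$: it suffices to show that $F(e_V)F(V)=F(V)$ for every object $V$ of $\Tq$. The base case $d=0$ should be treated first and is where most of the genuine content lies. For the inductive step, assuming the theorem for functors of degree $\le d$, let $F$ be polynomial of degree $\le d+1$. Then $\Delta_{H_0}F$ and $\Delta_{H_1}F$ are polynomial of degree $\le d$, hence lie in $\iota(\F)$ by the induction hypothesis. One then wants to deduce that $F$ itself lies in $\iota(\F)$; since $\iota(\F)$ is thick in $\Fq$ by Theorem \ref{2.19}, it is enough to realize $F$ as built from functors in $\iota(\F)$ by the operations (sub, quotient, extension) that a thick subcategory is closed under. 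The natural device is the inclusion $T_i$-induced short exact sequence
$$0 \rightarrow \Delta_{H_i}F \rightarrow F(-\bot H_i) \rightarrow \mathrm{Im}(F(T_i)) \rightarrow 0$$
together with a comparison of $F$ with the functor $V\mapsto F(V\bot H_i)$: the inclusion $i_V: V\hookrightarrow V\bot H_i$ gives a natural monomorphism $F\hookrightarrow F(-\bot H_i)$ (monomorphism because $F(T_i)\circ F(i_V)=\mathrm{Id}$, so the composite with the projection is the identity), exhibiting $F$ as a subfunctor of $F(-\bot H_i)$, which itself sits in an extension of a subfunctor of $F$ by $\Delta_{H_i}F$. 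Running this carefully — or more cleanly, showing directly that $F(e_V)F(V)=F(V)$ for all $V$ by using the decomposition of $F(V\bot H_i)$ and the degree reduction — closes the induction.

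For the base case, suppose $\Delta_{H_0}F=\Delta_{H_1}F=0$. By Lemma \ref{4.4} the two hypotheses are equivalent, and as in the proof of that lemma one gets $F(V)\simeq F(V\bot H_0)\simeq F(V\bot H_1)$ naturally, i.e. every inclusion of a nondegenerate quadratic space into a larger one induces an isomorphism on $F$. The goal is to show $F(e_V)F(V)=F(V)$, i.e. that $F$ kills the part of $\mathrm{End}(P_V)$ complementary to $[e_V]$; equivalently, that $F$ factors through the quotient of $\Tq$ in which every morphism is identified with a rank-zero one. Concretely: given $V$ and any morphism $[V\xrightarrow{\varphi} W\bot W'\xleftarrow{i_W} W]$ in $\Tq$, I want $F$ of this morphism to equal $F$ of the rank-zero morphism $t_f$ associated to $f=p_W\circ\varphi$. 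Using Remark \ref{1.2}, write a general morphism as $V\xrightarrow{g} W\bot W'\xleftarrow{i_W} W$; factor $g$ through $V\bot W' \to W \bot W'$ (a rank-zero-type splitting) and observe that $W\hookrightarrow W\bot W'$ is a morphism on which $F$ is an isomorphism by degree-zero-ness, so that $F(V)\to F(W\bot W')$ and $F(V)\to F(W)$ carry the same information — this forces $F$ of the original morphism and $F(t_f)$ to agree. This is the step I expect to be the main obstacle: making precise the claim that "$F$ of a morphism only depends on its rank-zero part" purely from $\Delta_{H_0}F=\Delta_{H_1}F=0$, since one must control composites with inclusions $V\bot W'\hookleftarrow$ something and use that these induce isomorphisms. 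Once that is done, $F(e_V)F(V)=F(V)$ follows and Lemma \ref{2.21}(1) gives $F\in\iota(\F)$.

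Assembling the two pieces: the base case shows degree-$0$ functors lie in $\iota(\F)$; the inductive step uses Lemma \ref{4.2} (exactness of $\Delta_{H_i}$), the induction hypothesis applied to $\Delta_{H_i}F$, the natural short exact sequence above, the natural monomorphism $F\hookrightarrow F(-\bot H_i)$, and the thickness of $\iota(\F)$ (Theorem \ref{2.19}) to conclude $F\in\iota(\F)$. An alternative route for the inductive step, avoiding explicit extension bookkeeping, is again to verify the criterion $F(e_V)F(V)=F(V)$ directly: restricting this identity to $\mathrm{Ker}(F(T_i))=\Delta_{H_i}F(V)$ it holds by the induction hypothesis (and Lemma \ref{2.17}, $e_{V\bot H_i}=e_V\bot e_{H_i}$), and one propagates it from $\Delta_{H_i}F(V)\subset F(V\bot H_i)$ back to $F(V)$ via the section $F(i_V)$ of $F(T_i)$. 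Either way the theorem follows by induction on $d$.
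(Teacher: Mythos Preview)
Your inductive step contains a genuine gap. The short exact sequence you write down,
\[
0 \longrightarrow \Delta_{H_i}F \longrightarrow F(-\bot H_i) \longrightarrow F \longrightarrow 0,
\]
is split (the section is $F(i_V)$), so $F(-\bot H_i)\simeq F\oplus\Delta_{H_i}F$. Knowing $\Delta_{H_i}F\in\iota(\F)$ therefore tells you nothing about $F$: thickness lets you pass from two pieces of an extension to the third, but here the unknown piece \emph{is} $F$. Your ``alternative route'' has the same circularity. The idempotent $e_V\bot 1_{H_i}$ does respect the splitting $F(V\bot H_i)=F(V)\oplus\Delta_{H_i}F(V)$, and on the second summand it acts as $\Delta_{H_i}F(e_V)=\mathrm{Id}$ by induction; but on the first summand it acts as $F(e_V)$, which is precisely what you are trying to compute. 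Iterating to $F(-\bot H_i\bot H_j\bot\cdots)$ only produces more copies of the same unknown summand $F$.

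The paper breaks this circularity by first reducing to \emph{simple} functors: Lemma~\ref{4.8} shows polynomial functors with finite-dimensional values are finite, and Lemma~\ref{4.9} (thickness) reduces the question to composition factors. For a simple $S$, the inductive step is the contrapositive of Proposition~\ref{4.7}: if $S\notin\iota(\F)$ then some $\Delta_{H_i}S\notin\iota(\F)$. The proof exploits simplicity in an essential way: one takes $W$ of \emph{minimal} dimension with $S(W)\neq 0$, so that $S$ vanishes on all proper summands of $W$, and this vanishing is what makes the idempotent manipulation with $e_W=e_{H_0}\bot e_{H_\epsilon}\bot e_V$ go through when $\dim W>2$. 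When $\dim W=2$, the argument instead invokes the explicit classification of simples with $S(H_0)\neq 0$ or $S(H_1)\neq 0$ (Proposition~\ref{3.30}), which rests on the decompositions of $P_{H_0}$ and $P_{H_1}$. The same classification underlies the paper's base case (Lemma~\ref{4.6}). Your proposal has no substitute for this input, and without the reduction to simples there is no minimality to exploit.

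(Incidentally, your base case can be completed more simply than you sketch: if $\Delta_{H_0}F=0$ then every inclusion $0\hookrightarrow V$ induces an isomorphism on $F$, and since $\mathrm{Hom}_{\Tq}(0,0)$ is a singleton, every morphism of $\Tq$ acts as the identity after identifying all $F(V)$ with $F(0)$; so $F$ is constant. But this does not rescue the inductive step.)
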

We will prove this theorem by induction over the degree of the polynomial functors.

\subsubsection{Polynomial functors of degree zero of  $\Fq$}
In this paragraph, we start the induction. The proof of the following
result relies, in an essential way, on the classification of simple functors $S$ of
$\Fq$ such that $S(H_0) \ne 0$ or $S(H_1) \ne
0$, obtained in Proposition \ref{3.30}.

\begin{lm} \label{4.6}
Let $S$ be a simple functor of $\Fq$, $S$ is a polynomial functor of
degree zero if and only if $S$ is the constant functor $\FF$.
\end{lm}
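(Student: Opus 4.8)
The plan is to use the classification of ``small'' simple functors from Proposition \ref{3.30} together with the vanishing conditions defining degree-zero polynomiality. First I would observe that the constant functor $\FF$ is obviously polynomial of degree zero, since for the constant functor $F(V \bot H_i) \to F(V)$ is the identity on $\FF$, so both kernels $\Delta_{H_0}\FF$ and $\Delta_{H_1}\FF$ vanish; this gives the ``if'' direction immediately.

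For the converse, suppose $S$ is a simple functor with $\Delta_{H_0}S = \Delta_{H_1}S = 0$. By Lemma \ref{4.4} these two conditions are equivalent, and the condition $\Delta_{H_0}S = 0$ means precisely that the inclusion-induced map $S(V) \hookrightarrow S(V \bot H_0)$ is an isomorphism for every object $V$ of $\Tq$. The key point is that, since every nondegenerate quadratic space contains a copy of $H_0$ or $H_1$ (indeed every even-dimensional nondegenerate space is an orthogonal sum of copies of $H_0$ and $H_1$, and $H_1 \bot H_1 \simeq H_0 \bot H_0$), iterating this isomorphism shows that $S$ is determined by its value on the zero space: $S(V) \simeq S(0)$ naturally for all $V$. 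In particular, taking $V = 0$ gives $S(H_0) \simeq S(0)$ and $S(H_1) \simeq S(0)$. If $S$ is non-constant then $S(0) \ne 0$ would force $S$ to be one of the eight non-constant simple functors listed in Proposition \ref{3.30}, so it suffices to check that none of those eight functors satisfies $\Delta_{H_0}S = 0$; and if $S(0) = 0$ one must rule out the non-constant case by a separate argument (see below). For the listed functors this is a finite verification: for each of $\iota(\Lambda^1)$, $\iota(\Lambda^2)$, $\iota(S_{(2,1)})$, $\kappa(iso_{(x,0)})$, $\kappa(iso_{(x,1)})$, $R_{H_0}$, $R_{H_1}$, $S_{H_1}$ one computes (or cites from \cite{Vespa2} and Corollary \ref{1.7}) the dimensions of the values on a small space and on its orthogonal sum with $H_0$, and sees they differ — for instance $\iota(\Lambda^1)(H_0)$ has dimension $2$ while $\iota(\Lambda^1)(H_0 \bot H_0)$ has dimension $4$ since $\Lambda^1 = \epsilon$ is additive, so $\Delta_{H_0}\iota(\Lambda^1) \ne 0$.

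The main obstacle, and the place where the argument must be handled with care, is the case of a hypothetical non-constant simple $S$ with $S(H_0) = 0$ and $S(H_1) = 0$ (equivalently $S(0) = 0$). Such an $S$ is not covered by Proposition \ref{3.30}, so one cannot conclude directly from the list. The resolution is to show that a degree-zero polynomial simple functor cannot have $S(0) = 0$: from $\Delta_{H_0}S = 0$ we get $S(V) \simeq S(0) = 0$ for every $V$ obtained from $0$ by adding copies of $H_0$, hence (using $H_1 \bot H_1 \simeq H_0 \bot H_0$ and the inclusion $S(V) \hookrightarrow S(V \bot H_1)$ sandwiched inside $S(V \bot H_1 \bot H_1) \simeq 0$) that $S(V) = 0$ on \emph{all} objects $V$ of $\Tq$, contradicting $S \ne 0$. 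This is essentially the same sandwich argument as in the proof of Lemma \ref{4.4}. Thus $S(0) \ne 0$, and then simplicity forces $S(0)$ to be a simple $O(0)$-representation, i.e. one-dimensional, and the natural isomorphisms $S(V) \simeq S(0)$ together with functoriality identify $S$ with the constant functor $\FF$. Assembling these pieces gives the equivalence.
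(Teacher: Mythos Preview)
Your proof is correct and actually takes a more direct route than the paper's. The paper argues by case distinction on whether $S(H_0)$ or $S(H_1)$ is nonzero: in the nonzero case it invokes the classification of Proposition \ref{3.30} and checks the eight listed functors one by one; in the zero case it uses a minimal-dimension argument to exhibit a nonzero $\Delta_{H_\epsilon}S$. Your core argument instead establishes directly from the hypothesis $\Delta_{H_0}S = \Delta_{H_1}S = 0$ that the inclusion-induced maps $S(0) \to S(V)$ are isomorphisms for all $V$ (iterating on $H_0$-summands and using the sandwich from Lemma \ref{4.4} for $H_1$-summands), and then concludes that $S$ is a constant functor, hence $\FF$ by simplicity. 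This bypasses the explicit case-check against the list in Proposition \ref{3.30} entirely --- indeed each of the eight functors there has $S(0)=0$, so once you know $S(V)\simeq S(0)$ the list is irrelevant.

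That said, your write-up is tangled: the middle paragraph proposing to verify the eight functors from Proposition \ref{3.30} is redundant given your final argument, and you would do better to drop it. The clean version is simply: from the degree-zero hypothesis, $S(0)\to S(V)$ is an isomorphism for every $V$; if $S(0)=0$ then $S=0$, contradicting simplicity; if $S(0)\ne 0$ then $S$ is naturally the constant functor with value $S(0)$, and simplicity forces $\dim S(0)=1$. One minor point: your justification ``simplicity forces $S(0)$ to be a simple $O(0)$-representation'' reaches the right conclusion but the real reason is that any vector subspace of $S(0)$ determines a constant subfunctor of $S$ via the natural isomorphisms, not anything about an $O(0)$-action.
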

\begin{proof}

In order to prove the direct implication, we have to distinguish two cases.
\begin{enumerate}
\item{If $S(H_0)=S(H_1)=0$.}

By the classification of the nondegenerate quadratic spaces over 
$\FF$, if $W$ is a space of minimal dimension, satisfying $S(W) \ne
0$, we have the existence of an element $\epsilon$ of $\{0,1 \}$ and a
nondegenerate quadratic space $V$ which is non-zero, such that: $W \simeq H_{\epsilon} \bot V.$
Since $W$ is of minimal dimension, we have $S(V)=0$. This implies: 
$$\Delta_{H_{\epsilon}}S(V)=S(H_{\epsilon} \bot V) \ne 0.$$
We deduce the result in this case.
\item{If $S(H_0) \ne 0$ or $S(H_1) \ne 0$.}

In this case, we use the classification of simple functors $S$ of
$\Fq$ such that $S(H_0) \ne 0$ or $S(H_1) \ne
0$ obtained in Proposition \ref{3.30}. By an explicit calculation for all
the functors $S$ obtained in this classification, we obtain that the
functors $\Delta_{H_0}S$ are non-zero except for the constant functor $S=\FF$. 

\end{enumerate}
The converse is trivial.

\end{proof}

\subsubsection{Proof of Theorem \ref{4.5} }

To prove Theorem \ref{4.5}, we need the following result where the 
idempotents $[e_V]$, obtained in Proposition \ref{2.15}, play a
crucial r\^ole.

\begin{prop} \label{4.7}
Let $S$ be a non-trivial simple functor of $\Fq$ which is not in the
image of the functor $\iota: \F \rightarrow \Fq$, then, one of the
functors $\Delta_{H_0}S$ or $\Delta_{H_1}S$ is not in the image of the
functor $\iota: \F \rightarrow \Fq$.
\end{prop}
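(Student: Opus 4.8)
The plan is to argue by contraposition: assume that both $\Delta_{H_0}S$ and $\Delta_{H_1}S$ lie in $\iota(\F)$, and deduce that $S$ itself lies in $\iota(\F)$. By Lemma \ref{2.21}(2), it suffices to produce an object $W$ of $\Tq$ with $S(e_W)S(W) \ne 0$; equivalently, since $\iota(\F)$ is thick (Theorem \ref{2.19}) and $S$ is simple, it suffices to show that $S$ is a subquotient of some $P_W^{(0)}$, or directly that the idempotent $[e_W]$ acts nontrivially on $S(W)$ for some $W$.

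First I would unwind the hypothesis using Definition \ref{4.3}-style notation and the exactness of $\Delta_{H_0},\Delta_{H_1}$ (Lemma \ref{4.2}). The key point is to relate $e_{V\bot H_i}$ to $e_V$ via Lemma \ref{2.17}: $e_{V\bot H_i} = e_V \bot e_{H_i}$. Since $\Delta_{H_i}S(V) = \mathrm{Ker}(S(V\bot H_i)\to S(V))$ is a subspace of $S(V\bot H_i)$ on which the endomorphism $S(e_{V\bot H_i}) = S(e_V \bot e_{H_i})$ acts, and since $\Delta_{H_i}S \in \iota(\F)$ forces (by Lemma \ref{2.21}(1)) that $S(e_V\bot e_{H_i})$ restricts to the identity on $\Delta_{H_i}S(V)$ for all $V$, one gets control of the action of these idempotents on the kernels. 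The goal is then to bootstrap: if $S(e_W)S(W)=0$ for every $W$ (i.e. $S$ is \emph{not} in $\iota(\F)$), derive a contradiction with the assumption that all the $\Delta_{H_i}S(V)$ are fixed by the corresponding idempotents, using that $S\ne \FF$ is simple and hence (if not in $\iota(\F)$) has some $W$ with $S(W)\ne 0$ but $S(e_W)S(W)=0$, and that for the smallest such $W$ one can split off an $H_0$ or $H_1$ summand $W\simeq V\bot H_\epsilon$ exactly as in the proof of Lemma \ref{4.6}, landing inside $\Delta_{H_\epsilon}S(V)$.

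The main obstacle I anticipate is the interaction between the idempotent $e_{H_i}$ on the two-dimensional piece and the kernel defining $\Delta_{H_i}$: one must check that an element of $S(W) = S(V\bot H_\epsilon)$ which is killed by $S(e_W)$ but which lies in $\Delta_{H_\epsilon}S(V)$ cannot be annihilated, because $\Delta_{H_\epsilon}S\in\iota(\F)$ would force $S(e_V\bot e_{H_\epsilon})$ to act as the identity there, whereas $e_W = e_V\bot e_{H_\epsilon}$ — contradiction, unless that kernel is zero, i.e.\ $S(V)\xrightarrow{\sim} S(V\bot H_\epsilon)$, which for the minimal $W$ would force $S(W)=0$. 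Making this dichotomy airtight — that either $S$ is in $\iota(\F)$, or else at the minimal non-$\iota$-witness $W$ the difference functor $\Delta_{H_\epsilon}S$ is itself not in $\iota(\F)$ — is the crux, and it will require carefully distinguishing whether the minimal bad $W$ itself has $S$ vanishing on its proper nondegenerate summands or not, mirroring the two-case structure of the proof of Lemma \ref{4.6}. Once that case analysis is set up, the conclusion is immediate by contraposition, and Theorem \ref{4.5} then follows by the stated induction together with Lemma \ref{4.6} for the base case.
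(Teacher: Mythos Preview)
Your contrapositive strategy and the ingredients you cite (Lemma \ref{2.17}, Lemma \ref{2.21}, minimality of $W$) are exactly right, but there is a genuine error in how you apply Lemma \ref{2.21}(1) to $\Delta_{H_\epsilon}S$. The functor $\Delta_{H_\epsilon}S$ acts on a morphism $f:V\to V$ via the restriction of $S(f \bot 1_{H_\epsilon})$ to the kernel, \emph{not} via $S(f \bot e_{H_\epsilon})$. Hence the hypothesis $\Delta_{H_\epsilon}S\in\iota(\F)$ only tells you that $S(e_V \bot 1_{H_\epsilon})$ is the identity on $\Delta_{H_\epsilon}S(V)$; it says nothing directly about $S(e_V\bot e_{H_\epsilon})=S(e_W)$. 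Your contradiction therefore does not go through: knowing $S(e_V\bot 1_{H_\epsilon})$ fixes the kernel is compatible with $S(e_W)$ killing it, because the idempotent $e_{H_\epsilon}$ on the extra summand is simply never controlled by the single difference functor $\Delta_{H_\epsilon}S$.

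The paper's proof repairs exactly this point by peeling off \emph{two} hyperbolic summands, writing $W\simeq H_0\bot H_\epsilon\bot V$ when $\dim W>2$. Then $\Delta_{H_0}S\in\iota(\F)$ forces $S(1_{H_0}\bot e_{H_\epsilon}\bot e_V)$ to be the identity on $S(W)=\Delta_{H_0}S(H_\epsilon\bot V)$, while $\Delta_{H_\epsilon}S\in\iota(\F)$ forces $S(e_{H_0}\bot 1_{H_\epsilon}\bot e_V)$ to be the identity on $S(W)=\Delta_{H_\epsilon}S(H_0\bot V)$. Composing the two and using Lemma \ref{2.17} yields $S(e_W)S(W)=S(W)\ne 0$. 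The remaining case $\dim W=2$, where one cannot split off two factors, is handled separately by the explicit classification of Proposition \ref{3.30}. Your single-factor argument cannot be salvaged without this second application of a difference functor (or the explicit low-dimensional check), so you should restructure along these lines.
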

\begin{proof}
Let $W$ be a nondegenerate quadratic space of minimal dimension, such
that $S(W) \ne 0$. We distinguish the two following cases.
\begin{enumerate}
\item{If $\mathrm{dim}(W)=2$.}

By an explicit calculation for all
the functors $S$ of the classification given in Proposition
\ref{3.30}, we obtain the result.
\item{If $\mathrm{dim}(W)>2$.}

There exists a nondegenerate quadratic space $V$, possibly trivial,
and an element $\epsilon$ of $\{0,1 \}$, such that:
$W \simeq H_0 \bot H_{\epsilon} \bot V.$
Suppose that $\Delta_{H_0}S$ and $\Delta_{H_1}S$ are in the image of
the functor $\iota$, we prove, below, that this implies
that $S$ is in the image of $\iota$. By Lemma
\ref{2.21} it is sufficient to show the existence of an object $W$ in $\Tq$ such that:
$S(e_W)S(W) \ne 0.$ By Lemma \ref{2.17}, we have: $e_W=e_{H_0}
\bot e_{H_{\epsilon}} \bot e_V.$ 

Since $W$ is assumed to be a space of minimal dimension such that $S(W) \ne 0$, we
have
 $S(H_0 \bot V)=S(H_{\epsilon} \bot V)=0.$ This implies that
\begin{eqnarray}
\Delta_{H_0}S(H_{\epsilon} \bot V) \simeq S(W) \label{poly-eqn1}\\
\Delta_{H_{\epsilon}}S(H_{0} \bot V) \simeq S(W).\label{poly-eqn2}
\end{eqnarray}
These isomorphisms are natural and, for (\ref{poly-eqn1}), the action of $\mathrm{End}_{\Tq}(H_{\epsilon} \bot V)$ on
$\Delta_{H_0}S(H_{\epsilon} \bot V)$ corresponds to the restriction of
the action of $\mathrm{End}_{\Tq}(W)$ on $S(W)$. In the same way, for
(\ref{poly-eqn2}), the action of $\mathrm{End}_{\Tq}(H_{0} \bot V)$ on
$\Delta_{H_{\epsilon}}S(H_0 \bot V)$ corresponds to the restriction of
the action of $\mathrm{End}_{\Tq}(W)$ on $S(W)$. Suppose that $\Delta_{H_0}S$ and
$\Delta_{H_1}S$ are in the image of $\iota$. We deduce that:
$$S(1_{H_{0}} \bot e_{H_{\epsilon}} \bot  e_V)S(W) = \Delta_{H_{0}}S(e_{H_{\epsilon}}
\bot e_V) \Delta_{H_0}S(H_{\epsilon} \bot V)$$
$$= \Delta_{H_0}S(H_{\epsilon} \bot V) = S(W)$$
where the first equality comes from the action described
previously, the second is a consequence of Lemma \ref{2.17} and
the third is given by \ref{poly-eqn1}. In the same way, we obtain:
$$S(e_{H_{0}} \bot 1_{H_{\epsilon}} \bot  e_V)S(W) =\Delta_{H_{\epsilon}}S(e_{H_{0}}
\bot e_V) \Delta_{H_{\epsilon}}S(H_{0} \bot V)$$
$$=\Delta_{H_{\epsilon}}S(H_{0} \bot V)=S(W).$$
We deduce that: $S(1_{H_{0}} \bot e_{H_{\epsilon}} \bot  e_V) \circ S(e_{H_{0}} \bot
1_{H_{\epsilon}} \bot  e_V) S(W)=S(W).$
Since $S(1_{H_{0}} \bot e_{H_{\epsilon}} \bot  e_V) \circ S(e_{H_{0}} \bot
1_{H_{\epsilon}} \bot  e_V)=S(e_W)$ by Lemma \ref{2.17}, we have:
$S(e_W)S(W) \ne 0$, as required.
\end{enumerate}

\end{proof}
The proof of the theorem relies, also, on the following lemmas.
\begin{lm} \label{4.8}
\begin{enumerate}
\item
A functor $F$ of $\Fq$ which takes values in finite vector spaces and
such that  $\Delta_{H_{\epsilon}} F$ is finite, is finite.
\item
A polynomial functor $F$ of $\Fq$ which takes values in finite vector spaces is finite.
\end{enumerate}
\end{lm}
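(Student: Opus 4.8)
The plan is to prove Lemma \ref{4.8} in two parts, using the exactness of the difference functors (Lemma \ref{4.2}) and the induction scheme underlying the definition of polynomial functors. First I would establish part (1). Given a functor $F$ of $\Fq$ taking values in finite $\FF$-vector spaces and such that $\Delta_{H_\epsilon}F$ is finite, I want to show $F$ is finite, i.e. admits a finite composition series with simple subquotients. The key observation is that for any object $V$ of $\Tq$, there is a short exact sequence
$$0 \rightarrow \Delta_{H_\epsilon}F(V) \rightarrow F(V \bot H_\epsilon) \xrightarrow{F(T_\epsilon)} F(V)$$
coming from the definition of $\Delta_{H_\epsilon}$, together with the natural splitting $F(V) \hookrightarrow F(V \bot H_\epsilon)$ induced by the inclusion $i_V$; hence $F(V \bot H_\epsilon) \simeq F(V) \oplus \Delta_{H_\epsilon}F(V)$ naturally in $V$. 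Running this argument inductively on the dimension, one sees that $F(W)$ is built from finitely many copies of evaluations of $\Delta_{H_\epsilon}F$ plus the ``bottom'' piece on spaces of low dimension; since $F$ takes finite values and $\Delta_{H_\epsilon}F$ is finite, one deduces that $F$ has finite composition length. More precisely, I would argue that a proper filtration of $\Delta_{H_\epsilon}F$ by subfunctors with simple quotients pulls back, via the exact functor relating $F$ and $\Delta_{H_\epsilon}F$, to a filtration of $F$; the extra subquotients needed to reach $F$ itself are supported on the finitely many isometry classes of quadratic spaces of dimension less than that of some fixed $W$ with $F(W)\ne 0$ above which $F$ and its shift agree, and each contributes only finitely many simple pieces because $F$ is finite-dimensional there.

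Then part (2) follows by induction on the polynomial degree $d$ of $F$. For $d=0$ we have $\Delta_{H_0}F = \Delta_{H_1}F = 0$ by Definition \ref{4.3} and Lemma \ref{4.4}, so $F(V)\simeq F(V\bot H_0)$ for all $V$; by the classification of nondegenerate quadratic spaces over $\FF$, every such space is an orthogonal sum of copies of $H_0$ and $H_1$, and using $H_0\bot H_0 \simeq H_1 \bot H_1$ one gets $F(V) \simeq F(0)$ for all $V$, with the transition maps being isomorphisms; hence $F$ is constant with finite value $F(0)$, so it is finite. For the inductive step, suppose the statement holds for polynomial functors of degree at most $d$, and let $F$ be polynomial of degree at most $d+1$ with finite values. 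By definition $\Delta_{H_0}F$ and $\Delta_{H_1}F$ are polynomial of degree at most $d$; moreover they take finite values since $\Delta_{H_\epsilon}F(V)$ is a subspace of the finite space $F(V\bot H_\epsilon)$. By the induction hypothesis $\Delta_{H_0}F$ is finite, and then part (1) applied with $\epsilon = 0$ gives that $F$ is finite.

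The main obstacle I expect is making part (1) genuinely rigorous rather than heuristic: one must control how a composition series of $\Delta_{H_\epsilon}F$ assembles into one for $F$, and in particular bound the number of additional simple subquotients coming ``from below'' (on spaces where the shift isomorphism has not yet kicked in). The clean way to handle this is to first reduce to the case where $F$ vanishes on all spaces of dimension below some bound — for instance by noting that the ``bottom'' part of $F$, namely the subfunctor generated by $F$ on small spaces, is itself finite because $F$ is finite-dimensional and there are only finitely many isometry classes in each dimension — and then to argue that on the complementary quotient, $\Delta_{H_\epsilon}$ is faithful enough that finiteness of the shift forces finiteness of the quotient. Alternatively, one can invoke the standard Noetherian-type argument: the hypotheses imply $F$ has no infinite strictly increasing chain of subfunctors, since such a chain would, after applying the exact functor $\Delta_{H_\epsilon}$ and discarding the eventually-stable part, produce an infinite chain in $\Delta_{H_\epsilon}F$ or an unbounded-dimensional value of $F$, both excluded. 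Either route is routine once set up, so I would present the Noetherian argument as the cleanest.
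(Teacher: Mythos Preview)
Your argument for part (2) by induction on degree is correct and matches the paper's ``apply the first point recursively''. The content is in part (1), and here your proposal is more laboured than necessary. Your first approach --- pulling back a composition series of $\Delta_{H_\epsilon}F$ to one of $F$ --- does not obviously work: there is no evident functor in the reverse direction along which to transport a filtration, and the pointwise splitting $F(V\bot H_\epsilon)\simeq F(V)\oplus\Delta_{H_\epsilon}F(V)$ is not a decomposition of $F$ as a functor. You recognise this and retreat to a Noetherian-type chain argument, which is the right idea; note however that you only treat ascending chains, whereas finite length needs both chain conditions (the descending case is of course symmetric).

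The paper's proof packages your Noetherian argument into a single stroke. Consider the functor
\[
\gamma:\Fq\longrightarrow \E\times\Fq,\qquad F\longmapsto (F(0),\,\Delta_{H_\epsilon}F).
\]
It is exact by Lemma \ref{4.2}, and it is faithful: if $\gamma(F)=0$ then $\Delta_{H_\epsilon}F=0$, hence by Lemma \ref{4.4} also $\Delta_{H_{1-\epsilon}}F=0$, so $F$ is constant with value $F(0)=0$, i.e.\ $F=0$. An exact faithful functor between abelian categories reflects finite length, and $\gamma(F)$ is finite by hypothesis (the first coordinate because $F(0)$ is finite-dimensional, the second by assumption). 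This is precisely your chain argument, but with the bookkeeping of the ``bottom piece'' absorbed into the $F(0)$ coordinate and with ascending and descending chains handled simultaneously. Your instinct that the extra simple pieces ``from below'' are controlled by the finite-dimensionality of $F$ on small spaces is exactly right; the point is that only $F(0)$ is needed, not a whole range of small spaces.
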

\begin{proof} 
\begin{enumerate}
\item
The functor 
$$\begin{array}{ccc}
\Fq & \xrightarrow{\gamma} & \E \times \Fq \\
F    &\mapsto & (F(0), \Delta_{H_{\epsilon}}F)
\end{array}$$
is exact and faithful by Lemma \ref{4.2}. Hence, if $\gamma(F)$ is finite then $F$ is finite.
\item
If $F$ takes values in finite vector spaces, so does $\Delta_{H_{\epsilon}}F$. Consequently we can apply the first point recursively to obtain the result.
\end{enumerate}
\end{proof}

\begin{lm} \label{4.9}
A finite object $F$ of $\Fq$ whose composition factors are in the
image of the functor $\iota$ is in the image of the functor $\iota$.
\end{lm}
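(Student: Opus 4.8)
The plan is a straightforward induction on the length of a composition series of $F$, using the extension-closure half of the thickness of $\iota(\F)$.

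First I would fix, using the hypothesis that $F$ is finite, a composition series $0 = F_0 \subset F_1 \subset \cdots \subset F_n = F$ in which every subquotient $S_i := F_i/F_{i-1}$ is simple; by Jordan--H\"older these $S_i$ are, up to reordering, the composition factors of $F$, so by hypothesis each $S_i$ lies in $\iota(\F)$. The induction is on $n$. For $n \le 1$ there is nothing to prove, since then $F$ is either $0 = \iota(0)$ or a simple object already assumed to be in $\iota(\F)$.

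For the inductive step, I would note that $F_{n-1}$ inherits the truncated series $0 = F_0 \subset \cdots \subset F_{n-1}$ of length $n-1$, with composition factors $S_1, \dots, S_{n-1} \in \iota(\F)$, so the inductive hypothesis gives $F_{n-1} \in \iota(\F)$; and $F/F_{n-1} \cong S_n \in \iota(\F)$. Then I would invoke Theorem \ref{2.19}: $\iota(\F)$ is a thick subcategory of $\Fq$, hence in particular closed under extensions, so the short exact sequence
$$0 \longrightarrow F_{n-1} \longrightarrow F \longrightarrow F/F_{n-1} \longrightarrow 0$$
forces $F \in \iota(\F)$. This closes the induction.

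The argument is purely formal once Theorem \ref{2.19} is available; the only point requiring attention is that the relevant content of ``thick'' here is closure under extensions (the horseshoe-lemma part of the proof of Theorem \ref{2.19}, built on Lemma \ref{2.18}), not merely closure under subobjects and quotients. Consequently there is no real obstacle: the substance of the lemma is entirely contained in the thickness statement proved earlier.
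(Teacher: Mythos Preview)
Your proof is correct and is precisely the argument the paper has in mind: the paper's own proof simply says this is a straightforward consequence of the thickness of $\iota(\F)$ in $\Fq$ (Theorem \ref{2.19}), and your induction on the length of a composition series is the routine way to unpack that. There is nothing to add.
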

\begin{proof}
This result is a straightforward consequence of the thickness of the
subcategory $\iota(\F)$ in $\Fq$ given in Theorem \ref{2.19}.
\end{proof}

\begin{proof}[Proof of Theorem \ref{4.5}]
Since a functor of $\Fq$ is colimit of its subfunctors which take values in finite vector spaces, we deduce from Lemma \ref{4.8}, that it is sufficient to prove the result for the finite polynomial functors of $\Fq$. Furthermore, since the functors 
$\Delta_{H_0}$ and $\Delta_{H_1}$ are exact by Lemma \ref{4.2},
it is sufficient to consider the case of a simple functor $S$. We
prove the theorem by induction over the polynomial degree.

If $S$ is polynomial of degree $0$, according to Lemma \ref{4.2},
$S$ is in the image of the functor $\iota$.

Suppose that all simple polynomial functors of $\Fq$ and of degree $d$
are in the image of the functor $\iota$ and consider a simple
polynomial functor $S$ of $\Fq$ such that $\mathrm{deg}(S)=d+1$. By
the definition of polynomial functor in $\Fq$ given in \ref{4.3}, the
functors $\Delta_{H_0}S$ and $\Delta_{H_1}S$ are polynomial of degree
$d$. We deduce that all composition factors of $\Delta_{H_0}S$ and
$\Delta_{H_1}S$ are polynomial of degree smaller than or equal to $d$ and,
by induction, we obtain that they are in the image of $\iota$. Since
$S$ is a simple functor, it is a quotient of a standard projective
functor $P_V$. Consequently $S$ takes its values in finite dimensional
vector spaces. Therefore, $\Delta_{H_0}S$ and
$\Delta_{H_1}S$ take their  values in finite dimensional
vector spaces. We deduce from Lemma \ref{4.8} that the functors
$\Delta_{H_0}S$ and $\Delta_{H_1}S$ are finite, and, by Lemma
\ref{4.9}, we obtain that $\Delta_{H_0}S$ and $\Delta_{H_1}S$ are in
the image of the functor $\iota$. Consequently, by Proposition
\ref{4.7}, $S$ is in the image of the functor $\iota$.
\end{proof}

%%%%%%%%%%%%%%%%%%%%%%%%%%%%%%%%%%%%%%%%%%%%%%%%%%%%%%%%%%%%%%%%%%%%
\bibliographystyle{amsplain}
\bibliography{these}

\end{document}